\newcommand*{\circled}[1]{\lower.7ex\hbox{\tikz\draw (0pt, 0pt)%
		circle (.5em) node {\makebox[1em][c]{\small #1}};}}
\newtheorem{Thm}{Theorem}[section]
\newtheorem{Lem}[Thm]{Lemma}
\newtheorem{Def}[Thm]{Definition}
\newtheorem{Cor}[Thm]{Corollary}
\newtheorem{Prop}[Thm]{Proposition}
\newtheorem{Ex1}[Thm]{Example}
\newtheorem{Rem1}[Thm]{Remark}
\tikzset{curve/.style={settings={#1},to path={(\tikztostart)
    .. controls ($(\tikztostart)!\pv{pos}!(\tikztotarget)!\pv{height}!270:(\tikztotarget)$)
    and ($(\tikztostart)!1-\pv{pos}!(\tikztotarget)!\pv{height}!270:(\tikztotarget)$)
    .. (\tikztotarget)\tikztonodes}},
    settings/.code={\tikzset{quiver/.cd,#1}
        \def\pv##1{\pgfkeysvalueof{/tikz/quiver/##1}}},
    quiver/.cd,pos/.initial=0.35,height/.initial=0}
\title[Covering theory]{A lecture note on covering theory\\ in representation theory of algebras$^*$}
\author{Yuming Liu, Nengqun Li, Bohan Xing, Pengyun Chen}
\address{Yuming Liu, Bohan Xing, Pengyun Chen
\newline School of Mathematical Sciences
\newline Laboratory of Mathematics and Complex Systems
\newline Beijing Normal University
\newline Beijing 100875
\newline P.R.China}
\email{ymliu@bnu.edu.cn}
\email{bhxing@mail.bnu.edu.cn}
\email{pychen@mail.bnu.edu.cn}
\address{Nengqun Li
\newline School of Mathematics
\newline Liaoning Normal University
\newline Dalian 116029
\newline P.R.China}
\email{linengqun@lnnu.edu.cn}
  \date{version of \today}
\newcommand{\lra}{\longrightarrow}
\newcommand{\ra}{\rightarrow}
\newcommand{\sdp}{\times\kern-.2em\vrule height1.1ex depth-.05ex}
\newcommand{\epi}{\lra \kern-.8em\ra}
\newcommand{\Q}{{\mathbb Q}}
\newcommand{\cal}{\mathcal}
\newcommand{\mo}{\mathrm{mod}}
\newcommand{\mm}{\mathrm{Mod}}
\newcommand{\ho}{\mathrm{Hom}}
\newcommand{\Hom}{\operatorname{Hom}}
\newcommand{\End}{\operatorname{End}}
\newcommand{\xra}{\xrightarrow}   
\newcommand{\Mod}{\operatorname{MOD}}
\renewcommand{\mod}{\operatorname{mod}}
\newcommand{\proj}{\operatorname{proj}}
\newcommand{\gr}{\operatorname{gr}}
\renewcommand{\cal}[1]{\mathcal #1}
\newcommand{\bb}[1]{\mathbb #1}
\thanks{$^*$This lecture note arises from a series of lectures on “Covering theory in representation theory of algebras” given by the authors at the 24th Workshop on Representation Theory of Algebras in China, which was held at Anhui University from June 25th to 29th in 2025.}
\begin{document}
\renewcommand{\thefootnote}{\alph{footnote}}
\renewcommand{\thefootnote}{\alph{footnote}}
\setcounter{footnote}{-1} \footnote{\it{Mathematics Subject
Classification(2010)}: 16Gxx, 16B50.}
\renewcommand{\thefootnote}{\alph{footnote}}
\setcounter{footnote}{-1} \footnote{\it{Keywords}: covering of quiver with relations, push-down, Galois covering, representation-infinite algebra, Galois $G$-covering.}

\begin{abstract}
Covering theory is an important tool in representation theory of algebras, however, the results and the proofs are scattered in the literature. We give an introduction to covering theory at a level as elementary as possible.
\end{abstract}

\maketitle

\section*{Some quotes from P. Dowbor and A. Skowronski \cite{DS1985,DS1987}}

``Coverings techniques in representation theory were introduced and developed by
Bongartz-Gabriel \cite{BG1982}, Gabriel \cite{Ga1981}, Green \cite{Gr1981}, and Riedtmann \cite{Riedtmann1980a,Riedtmann1980b} for the study of
algebras of finite representation type. In this theory one of the most important results is
the following theorem on Galois coverings proved by Gabriel \cite{Ga1981} and completed by
Martinez-Villa and de la Pe\~{n}a \cite{MP2}:

`Let $\Lambda$ be a locally bounded $k$-category over an algebraically closed field $k$ and let a
group $G$ act freely on $\Lambda$. Then $\Lambda$ is locally representation-finite if and only if $\Lambda/G$ is so. In
this case the push-down functor $F_{\lambda}: \mathrm{mod}\Lambda \rightarrow \mathrm{mod}(\Lambda/G)$ associated with the Galois covering
$F: \Lambda \rightarrow \Lambda/G$ induces a bijection between the $G$-orbits of isoclasses of finitely generated
indecomposable $\Lambda$-modules and the isoclasses of finitely generated indecomposable $\Lambda/G$-modules.'

Therefore, if $\Lambda$ is locally representation-finite, mod$(\Lambda/G)$ coincides with the full
subcategory mod$_1(\Lambda/G)$ formed by all modules of the form $F_{\lambda} M, M\in \mathrm{mod}\Lambda$; in the
general case we call these $F_{\lambda}M$ $\Lambda/G$-modules of the first kind. The authors showed
in \cite{DS1985}, \cite{DLS1984} that mod$(\Lambda/G)$ = mod$_1(\Lambda/G)$ holds for a wider class of locally bounded
categories consisting of all locally support-finite ones.''

``The main object investigated in this paper is the full subcategory mod$_2(\Lambda/G)$ of
mod$(\Lambda/G)$ formed by all modules having no direct summands of the first kind; we
call them $\Lambda/G$-modules of the second kind (with respect to a fixed Galois
covering $F: \Lambda\rightarrow \Lambda/G)$. Our main theorem (3.1) asserts that for some class of
Galois coverings $F: \Lambda\rightarrow \Lambda/G$ the investigation of mod$_2(\Lambda/G)$ can be reduced to
the quotient categories associated with the supports of some periodic, indecomposable, locally finite dimensional $\Lambda$-modules.''

\section{Covering of quivers with relations}

\subsection{Covering of quivers with relations}
\

Throughout this lecture note, we denote $k$ an algebraically closed field, all algebras, categories and functors will be $k$-linear. We assume that the reader is familiar with the basic notions and results in representation theory of finite dimensional algebras (see, for example  \cite{ARS}).

We first define the covering map between quivers.

\begin{Def} {\rm(\cite[Section I.10.1]{E1990})}
Let $Q=(Q_0,Q_1)$, $Q'=(Q_0',Q_1')$ be two quivers.
\begin{itemize}
\item A quiver morphism $f:Q\rightarrow Q'$ is a pair of maps $f_0:Q_0\rightarrow Q'_0$ and $f_1:Q_1\rightarrow Q'_1$ such that if $\alpha\in Q_1$ is an arrow from $x$ to $y$, then $f(\alpha)\in Q'_1$ is an arrow from $f(x)$ to $f(y)$.
\item  A quiver morphism $f$ is said to be a covering if for each $x\in Q_0$, $f$ induces a bijection between the set of arrows of $Q$ starting (resp. ending) at $x$ and the set of arrows of $Q'$ starting (resp. ending) at $f(x)$.
\end{itemize}
\end{Def}

By this definition, if $f: Q\rightarrow Q'$ is a covering map such that $Q$, $Q'$ have no double arrows, then for each $x\in Q_0$, $f$ induces a bijection between the set $x^+$ (resp. $x^-$) of successors (resp. predecessors) of $x$ and the set $f(x)^+$ (resp. $f(x)^-$) of successors (resp. predecessors) of $f(x)$.

It is important that a covering map $f$ has the unique lifting of paths property, that is, for a path $p'$ in $Q'$ and for a vertex $x\in Q_0$ with $f(x)$ the starting (resp. the ending) of $p'$, there exists a unique path $p$ starting (resp. ending) at $x$ in $Q$ such that $f(p)=p'$. In particular, $p$ and $p'$ have the same length. Moreover, by the unique lifting of paths property, it is easy to see that if $f: Q\rightarrow Q'$ is a covering map such that $Q'$ is connected, then $f$ is surjective both on vertices and on arrows.

Recall from \cite{BG1982} that a $k$-linear functor $F: M\rightarrow N$ between two $k$-categories is said to be a {\it covering functor} if for every two objects $a,b$ of $N$, $F$ induces isomorphisms $\bigoplus_{z/a}M(z,y)\rightarrow N(a,b)$ and $\bigoplus_{z/b}M(x,z)\rightarrow N(a,b)$, where $x,y$ are objects of $M$ with $a=F(x)$, $b=F(y)$. In particular, a covering functor is a faithful functor.

The following result is well-known.

\begin{Lem}  {\rm(cf. \cite[Lemma 4.2]{LL2025})} \label{covering of quivers induces covering functor}
If $f:Q\rightarrow Q'$ is a covering of quivers, then $f$ induces a covering functor $kf:kQ\rightarrow kQ'$ between the associated path categories.
\end{Lem}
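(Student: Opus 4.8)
The plan is to verify the two defining conditions of a covering functor directly on the canonical bases of the morphism spaces, using the unique lifting of paths property recalled above. Recall that in $kQ$ the objects are the vertices of $Q$, and for vertices $a,b$ the space $kQ(a,b)$ has as a $k$-basis the set of all paths from $a$ to $b$; composition is concatenation of paths extended bilinearly, with the trivial path $e_x$ serving as the identity at $x$. The functor $kf$ sends a vertex $x$ to $f(x)$ and a path $p=\alpha_n\cdots\alpha_1$ to $f(\alpha_n)\cdots f(\alpha_1)$, extended $k$-linearly. Since $f$ preserves sources, targets, and concatenation, $kf$ is a well-defined $k$-linear functor, and because each morphism space is free on its paths, it suffices to show that the two structure maps restrict to bijections between the relevant sets of paths.

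First I would fix objects $a',b'$ of $kQ'$ together with vertices $x,y$ of $Q$ satisfying $f(x)=a'$ and $f(y)=b'$, and analyse the map $\bigoplus_{z:\,f(z)=a'}kQ(z,y)\to kQ'(a',b')$. On bases this sends a pair $(z,p)$, with $p$ a path from $z$ to $y$, to the path $f(p)$ from $a'$ to $b'$. For surjectivity, take any path $p'$ from $a'$ to $b'$; since $f(y)=b'$ is the terminus of $p'$, the unique lifting property produces a path $p$ ending at $y$ with $f(p)=p'$, and its source $z$ satisfies $f(z)=a'$, so $(z,p)$ is a preimage. For injectivity, if $(z_1,p_1)$ and $(z_2,p_2)$ both map to $p'$, then $p_1$ and $p_2$ are both liftings of $p'$ ending at $y$, whence $p_1=p_2$ and $z_1=z_2$ by uniqueness. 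This gives the first isomorphism.

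The second isomorphism $\bigoplus_{z:\,f(z)=b'}kQ(x,z)\to kQ'(a',b')$ is handled symmetrically: a path $p'$ from $a'$ to $b'$ now has $f(x)=a'$ as its source, so it lifts uniquely to a path $p$ starting at $x$ whose terminus $z$ lies in the fibre of $b'$, and uniqueness again yields injectivity. Combining the two gives the required isomorphisms for all choices of $a',b'$, so $kf$ is a covering functor.

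The argument is essentially bookkeeping once the setup is fixed; the only real content is the unique lifting of paths property, which is precisely where the covering hypothesis on $f$ enters, through the bijection on arrows at each vertex. The main point to be careful about is matching the direction of the fibre sum to the direction in which paths are lifted, namely lifting along the fixed terminus $y$ for the first map and along the fixed source $x$ for the second, and confirming that the length-zero case (trivial paths) is covered, which it is since each trivial path $e_{a'}$ lifts to the trivial path at the unique vertex of the appropriate fibre incident to the chosen endpoint.
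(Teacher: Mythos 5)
Your proof is correct and follows essentially the same route as the paper: both arguments reduce the two structure maps to bijections between the canonical path bases of the morphism spaces, with the unique lifting of paths property (one lift fixing the source, one fixing the target) supplying both surjectivity and injectivity. No gaps.
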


\begin{proof}
Clearly, any morphism $f:Q\rightarrow Q'$ induces a functor $kQ\rightarrow kQ'$ of path categories which we denote by $kf$. Since $f$ is a covering of quivers, for each $x\in Q_0$ and $a\in Q'_0$ with $f(x)=a$, $f$ maps paths of $Q$ which start at $x$ bijectively to paths of $Q'$ which start at $a$. Therefore for each $b\in Q'_0$, $f$ induces a bijection between $\mathscr{B}=\{$paths $p$ of $Q$ with $s(p)=x$ and $t(p)\in f^{-1}(b)\}$ and $\mathscr{B'}=\{$paths $p'$ of $Q'$ with $s(p')=a$ and $t(p')=b\}$. Since $\mathscr{B}$ is a basis of $\bigoplus_{y/b}kQ(x,y)$ and $\mathscr{B'}$ is a basis of $kQ'(a,b)$, $kf$ induces a bijection $\bigoplus_{y/b}kQ(x,y)\rightarrow kQ'(a,b)$. Similarly, for each $y\in Q_0$ and $a,b\in Q'_0$ with $f(y)=b$, $kf$ induces a bijection $\bigoplus_{x/a}kQ(x,y)\rightarrow kQ'(a,b)$.
\end{proof}

We now recall from \cite{MP1983} the definition of a covering of quivers with relations. Our definition here is slightly more general than the original one. Indeed, we only request that the relations are contained in the ideal generated by arrows and we do not assume that the covering is induced by an (admissible) group of automorphisms (see Section 1.4). For the absence of a group action, we also request that every minimal relation (see definition below) can be lifted to a minimal relation by a covering map.

\begin{Def} {\rm(\cite[Definition 1.3]{MP1983})} \label{minimal-relation}
Let $Q$ be a quiver and $I$ be an ideal of the path category $kQ$ (not necessarily admissible). A relation $\rho=\sum_{i=1}^{n}\lambda_i u_i\in I(x,y)$ with $\lambda_i\in k^{*}$ and $u_i$ a path from $x$ to $y$, is a minimal relation if $n\geq 2$ and for every non-empty proper subset $K$ of $\{1,\cdots,n\}$, $\sum_{i\in K}\lambda_i u_i\notin I(x,y)$; when $n=1$, $\rho$ is called a zero relation or monomial relation.
\end{Def}

Note that every relation of $I$ is a sum of minimal relations and zero relations.

\begin{Def}  {\rm(cf. \cite[Definition 4.3]{LL2025})}  \label{morphism-and-covering-of-quivers-with-relations}
Let $(Q,I)$ and $(Q',I')$ be two quivers with relations, where $I$ (resp. $I'$) is contained in the ideal of $kQ$ (resp. $kQ'$) generated by arrows.
\begin{itemize}
\item A morphism of quivers with relations $f:(Q,I)\rightarrow (Q',I')$ is a morphism of quivers $f:Q\rightarrow Q'$ such that the $k$-linear functor $kf:kQ\rightarrow kQ'$ induced by $f$ maps the morphisms in $I$ onto $I'$;
\item if, moreover, $f$ is a covering of quivers, such that for any $\rho'\in I'(a',b')$ with $\rho'$ a minimal relation or a zero relation of $I'$ and for any $a\in f^{-1}(a')$ (resp. $b\in f^{-1}(b')$), there exists some $b\in f^{-1}(b')$ (resp. $a\in f^{-1}(a')$) and some $\rho\in I(a,b)$ such that $(kf)(\rho)=\rho'$, then $f:(Q,I)\rightarrow (Q',I')$ is called a covering of quivers with relations.
\end{itemize}
\end{Def}

Note that since $f$ is a covering of quivers, the above lifting $\rho$ of the relation $\rho'$ is unique. Moreover, if $\rho'$ is a minimal relation, then so is $\rho$.

\begin{Lem}  {\rm(cf. \cite[Lemma 4.4]{LL2025})}  \label{preserve-minimal-relation}
Let $f:(Q,I)\rightarrow (Q',I')$ be a covering of quivers with relations. Let $\rho\in I(a,b)$ be a relation of $I$ and let $\rho'=(kf)(\rho)\in I'(f(a),f(b))$. Then $\rho$ is a minimal relation of $I$ if and only if $\rho'$ is a minimal relation of $I'$. In particular, we have also the unique lifting of minimal relations property.
\end{Lem}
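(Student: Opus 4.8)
The plan is to prove the two implications separately. Throughout I write $\rho=\sum_{i=1}^{n}\lambda_i u_i$ with the $u_i$ pairwise distinct paths from $a$ to $b$ and $\lambda_i\in k^{*}$. First I would record the counting observation underlying everything: since $f$ is a covering of quivers, $kf$ maps the paths of $Q$ starting at $a$ bijectively onto the paths of $Q'$ starting at $f(a)$ (unique lifting of paths, as used already in the proof of Lemma \ref{covering of quivers induces covering functor}). Hence $f(u_1),\dots,f(u_n)$ are again pairwise distinct, so $\rho'=\sum_i\lambda_i f(u_i)$ is a genuine combination of $n$ distinct paths from $f(a)$ to $f(b)$, and the two requirements ``$n\ge 2$'' coincide for $\rho$ and $\rho'$. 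The backward implication is then immediate: if $\rho$ were non-minimal, some proper non-empty $K$ would satisfy $\sum_{i\in K}\lambda_i u_i\in I(a,b)$, and applying $kf$ (which maps $I$ into $I'$, as $f$ is a morphism of quivers with relations) would give $\sum_{i\in K}\lambda_i f(u_i)\in I'(f(a),f(b))$, contradicting the minimality of $\rho'$.

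The content lies in the forward implication, for which I would first isolate a fibre-wise \emph{reflection} statement. Fix $a\in Q_0$ and $b'\in Q'_0$, and let $\Phi\colon\bigoplus_{z\in f^{-1}(b')}kQ(a,z)\xrightarrow{\sim}kQ'(f(a),b')$ be the isomorphism provided by the covering functor $kf$ (Lemma \ref{covering of quivers induces covering functor}). The claim is that $\Phi$ carries $\bigoplus_{z}I(a,z)$ exactly onto $I'(f(a),b')$; that is, for $\theta=(\theta_z)_z$ one has $\Phi(\theta)\in I'(f(a),b')$ if and only if $\theta_z\in I(a,z)$ for every $z$. Granting this, the forward implication follows: if $\rho$ is minimal but $\rho'$ is not, choose a proper non-empty $K$ with $\tau':=\sum_{i\in K}\lambda_i f(u_i)\in I'(f(a),f(b))$; since each $f(u_i)$ lifts uniquely to $u_i$, which ends at $b$, the element $\Phi^{-1}(\tau')$ is supported in the single component $z=b$ and equals $\sum_{i\in K}\lambda_i u_i$, so the reflection statement yields $\sum_{i\in K}\lambda_i u_i\in I(a,b)$, contradicting the minimality of $\rho$.

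To prove the reflection statement, the implication ``$\theta_z\in I$ for all $z \Rightarrow \Phi(\theta)\in I'$'' is clear because $kf$ maps $I$ into $I'$. For the converse, suppose $\sigma':=\Phi(\theta)\in I'(f(a),b')$ and write $\sigma'=\sum_j\mu_j$ as a sum of minimal relations and zero relations of $I'$, all lying in $I'(f(a),b')$; this is possible by the remark following Definition \ref{minimal-relation}. By the defining lifting property of a covering of quivers with relations (Definition \ref{morphism-and-covering-of-quivers-with-relations}), applied to each $\mu_j$ and to the fixed preimage $a$ of $f(a)$, there exist $b_j\in f^{-1}(b')$ and $\mu_j^{\sharp}\in I(a,b_j)$ with $(kf)(\mu_j^{\sharp})=\mu_j$. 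Collecting these by target gives $\eta=(\eta_z)_z$ with $\eta_z=\sum_{j:\,b_j=z}\mu_j^{\sharp}\in I(a,z)$, and $\Phi(\eta)=\sum_j\mu_j=\sigma'=\Phi(\theta)$; injectivity of $\Phi$ forces $\theta=\eta$, so $\theta_z\in I(a,z)$ for every $z$.

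The main obstacle is exactly this last reflection step. The subtlety is that $kf$ is only assumed to map $I$ \emph{onto} $I'$, so one cannot expect $I=(kf)^{-1}(I')$ within a single hom-space; the correct formulation is necessarily fibre-wise over $\Phi$, and the fact that the individual lifts $\mu_j^{\sharp}$ may land in different fibres $b_j$ is precisely what is controlled by the injectivity of $\Phi$. Finally, the ``in particular'' clause comes out as a by-product: given a minimal relation $\rho'\in I'(a',b')$ and a preimage $a$ of $a'$, unique lifting of paths forces the only possible lift to be the path-wise lift of the terms of $\rho'$, the existence of such a lift inside some $I(a,b)$ is guaranteed by Definition \ref{morphism-and-covering-of-quivers-with-relations} (which also pins down $b$), and its minimality is exactly the backward implication proved above.
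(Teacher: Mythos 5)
Your proof is correct. The backward implication is exactly the paper's argument. For the forward implication you take a mildly different route: you isolate a fibre-wise ``reflection'' lemma, namely that the covering isomorphism $\Phi\colon\bigoplus_{z\in f^{-1}(b')}kQ(a,z)\to kQ'(f(a),b')$ carries $\bigoplus_{z}I(a,z)$ exactly onto $I'(f(a),b')$, prove it by decomposing an element of $I'(f(a),b')$ into minimal and zero relations, lifting each one via Definition \ref{morphism-and-covering-of-quivers-with-relations}, and invoking injectivity of $\Phi$; the minimality of $\rho$ is then contradicted in one line. The paper instead reduces the offending sub-sum $\rho''=\sum_{i\in K}\lambda_i f(p_i)$ to a single minimal or zero relation (by shrinking $K$), lifts it once, and identifies the lift with $\sum_{i\in K}\lambda_i p_i$ by comparing standard forms term by term using unique lifting of paths. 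The two arguments rest on the same two ingredients (the lifting axiom for minimal and zero relations, and unique path lifting, i.e.\ injectivity of the covering functor on the relevant direct sum), but your intermediate lemma is a cleaner and slightly stronger statement -- it is essentially the surjectivity half of Proposition \ref{covering of quotient categories} specialized to path categories -- and it replaces the explicit matching of standard forms by a single appeal to injectivity of $\Phi$. Your treatment of the ``in particular'' clause likewise agrees with the remark following Definition \ref{morphism-and-covering-of-quivers-with-relations} combined with the lemma itself.
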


\begin{proof}
Assume that the relation $\rho'$ of $I'$ is minimal. Suppose that $\rho=\sum_{i=1}^{n}\lambda_i p_i$, where $\lambda_i\in k^{*}$ and $p_i$'s are distinct paths of $Q$ from $a$ to $b$, is not minimal, then there exists a non-empty proper subset $K$ of $\{1,\cdots ,n\}$ such that $\rho_1=\sum_{i\in K}\lambda_i p_i\in I$. Then $\rho'=\sum_{i=1}^{n}\lambda_i f(p_i)\in I'$, where $f(p_i)$'s are distinct paths of $Q'$, and $\sum_{i\in K}\lambda_i f(p_i)\in I'$. This contradicts the minimality of $\rho'$.

Conversely, let $\rho=\sum_{i=1}^{n}\lambda_i p_i$ be minimal, where $\lambda_i\in k^{*}$ and $p_i$'s are distinct paths of $Q$ from $a$ to $b$; we call it the standard form of $\rho$. Since $f:Q\rightarrow Q'$ is a covering of quivers, $f(p_i)$'s are distinct paths of $Q'$, and $\rho'=\sum_{i=1}^{n}\lambda_i f(p_i)$ is the standard form of $\rho'$. Suppose that $\rho'$ is not minimal, then there exists a non-empty proper subset $K$ of $\{1,\cdots ,n\}$ such that $\rho''=\sum_{i\in K}\lambda_i f(p_i)\in I'$. We may assume that $\rho''$ is a minimal relation or a zero relation of $I'$, so there exists some $c\in Q_0$ with $f(c)=f(b)$ and some $r\in I(a,c)$ such that $(kf)(r)=\rho''$. If $r=\sum_{j=1}^{m}\mu_j q_j$ is the standard form of $r$, then $\sum_{j=1}^{m}\mu_j f(q_j)$ is the standard form of $\rho''$. So there exists a bijection $\sigma:\{1,\cdots,m\}\rightarrow K$ such that $\mu_j=\lambda_{\sigma(j)}$ and $f(q_j)=f(p_{\sigma(j)})$ for each $j\in\{1,\cdots,m\}$. Since $f:Q\rightarrow Q'$ is a covering of quivers and since $s(q_j)=s(p_{\sigma(j)})$ for each $j\in\{1,\cdots,m\}$, by the unique lifting of paths property, we have $q_j=p_{\sigma(j)}$ for each $j\in\{1,\cdots,m\}$. Then $r=\sum_{i\in K}\lambda_i p_i\in I$, which contradicts the minimality of $\rho$.
\end{proof}

\begin{Ex1}\label{ex:simple-example}
Let $Q$ be the quiver
$$
\begin{tikzpicture}
\draw[->] (0.2,0.1) -- (1.8,0.1);
\draw[->] (1.8,-0.1) -- (0.2,-0.1);
\node at(0,0) {$1$};
\node at(2,0) {$2$};
\node at(1,0.3) {$\alpha$};
\node at(1,-0.3) {$\beta$};
\end{tikzpicture}
$$
and let $I=\langle \alpha\beta,\beta\alpha\rangle$ be an ideal in $kQ$. Moreover, let $Q'$ be the quiver
$$
\begin{tikzpicture}
\draw[->] (-0.2,0.1) arc (15:345:0.5);
\node at(0,0) {$v$};
\node at(-1.4,0) {$x$};
\end{tikzpicture}
$$
and let $I'=\langle x^2\rangle$ be an ideal in $kQ'$. Then there is a covering
$$
f:(Q,I)\rightarrow(Q',I')
$$
of quivers with relations, which is given by $f(1)=f(2)=v$ and $f(\alpha)=f(\beta)=x$.
\end{Ex1}

\begin{Ex1} \label{a-covering-of-quivers-with-relations-but-not-Galois}
Let $Q$ be the quiver
$$\begin{tikzpicture}
\draw[->] (0.2,0.1) -- (1.8,0.1);
\draw[->] (1.8,-0.1) -- (0.2,-0.1);
\draw[->] (2.2,0.1) -- (3.8,0.1);
\draw[->] (3.8,-0.1) -- (2.2,-0.1);
\draw[->] (-0.2,0.1) arc (15:345:0.5);
\draw[->] (4.2,-0.1) arc (-165:165:0.5);
\node at(0,0) {$x$};
\node at(2,0) {$y$};
\node at(4,0) {$z$};
\node at(-1.4,0) {$\alpha$};
\node at(5.4,0) {$\beta$};
\node at(1,0.3) {$\beta$};
\node at(1,-0.3) {$\beta$};
\node at(3,0.3) {$\alpha$};
\node at(3,-0.3) {$\alpha$};
\end{tikzpicture}$$
and $I=\langle\alpha^2-\beta^2,\alpha\beta,\beta\alpha\rangle$ be an ideal in $kQ$. Moreover, Let $Q'$ be the quiver
$$\begin{tikzpicture}
\draw[->] (-0.2,0.1) arc (15:345:0.5);
\draw[->] (0.2,-0.1) arc (-165:165:0.5);
\node at(0,0) {$v$};
\node at(-1.4,0) {$a$};
\node at(1.4,0) {$b$};
\end{tikzpicture}$$
and $I'=\langle a^2-b^2,ab,ba\rangle$ be an ideal in $kQ'$. Then there is a covering $f:(Q,I)\rightarrow(Q',I')$ of quivers with relations, which is given by $f(x)=f(y)=f(z)=v$ and $f(\alpha)=a$, $f(\beta)=b$.
\end{Ex1}

Since a quiver with relations $(Q,I)$ defines a quotient category $kQ/I$, it is natural to ask whether a covering of quivers with relations induces a covering functor between the corresponding quotient categories.  Note that in this direction there is a similar but different result which was stated without proof in \cite[Section 3.1]{BG1982}.

 \begin{Prop}  {\rm(cf. \cite[Proposition 4.5]{LL2025})} \label{covering of quotient categories}
Let $M$, $N$ be $k$-categories and $F:M\rightarrow N$ be a covering functor. Let $R$ (resp. $R'$) be a class of morphisms of $M$ (resp. $N$), $I_M$ (resp. $I_N$) be the ideal of $M$ (resp. $N$) generated by $R$ (resp. $R'$). If the following conditions hold:
\begin{enumerate}[label=\rm{{(\arabic*)}}]
\item for every $\rho\in R$, $F(\rho)\in R'$;
\item for every $\rho'\in N(a,b)\cap R'$ and for each $x\in F^{-1}(a)$, there exist $y\in F^{-1}(b)$ and $\rho\in M(x,y)\cap R$ such that $F(\rho)=\rho'$;
\item for every $\rho'\in N(a,b)\cap R'$ and for each $y\in F^{-1}(b)$, there exist $x\in F^{-1}(a)$ and $\rho\in M(x,y)\cap R$ such that $F(\rho)=\rho'$,
\end{enumerate}
then $F$ induces a covering functor $\widetilde{F}:M/I_M\rightarrow N/I_N$ between the quotient categories.
\end{Prop}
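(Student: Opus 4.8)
The plan is to verify that $\widetilde{F}$ is well defined, reduce the covering condition to a single statement comparing the isomorphisms furnished by $F$ with the ideals $I_M,I_N$, and then prove that statement by lifting the generators of $I_N$ one factor at a time.

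First, since $F$ is $k$-linear and functorial and $F(R)\subseteq R'$ by (1), $F$ sends the ideal $I_M$ generated by $R$ into the ideal $I_N$ generated by $R'$; in particular $F(I_M(x,y))\subseteq I_N(F(x),F(y))$, so $F$ descends to a functor $\widetilde{F}\colon M/I_M\to N/I_N$ which is the identity on objects. Now fix objects $a,b$ of $N$, pick $y\in F^{-1}(b)$, and let $\Phi\colon\bigoplus_{z\in F^{-1}(a)}M(z,y)\to N(a,b)$, $(m_z)_z\mapsto\sum_z F(m_z)$, be the isomorphism provided by the covering functor $F$. Because direct sums commute with quotients, and the map that $\widetilde{F}$ induces on $\bigoplus_{z}(M/I_M)(z,y)$ coincides with the map $\overline{\Phi}$ that $\Phi$ induces on quotients, the first half of the covering condition for $\widetilde{F}$ is equivalent to $\overline{\Phi}$ being an isomorphism. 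As $\Phi$ is a linear isomorphism, this holds exactly when $\Phi\big(\bigoplus_z I_M(z,y)\big)=I_N(a,b)$. The inclusion $\subseteq$ is immediate from $F(I_M)\subseteq I_N$, so everything reduces to the reverse inclusion.

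For the reverse inclusion it suffices to realise each generator $\eta=v\rho'u$ of $I_N(a,b)$, with $\rho'\in R'(c,d)$, $u\in N(a,c)$ and $v\in N(d,b)$, as $\Phi$ of an element of $\bigoplus_z I_M(z,y)$, since such $\eta$ span $I_N(a,b)$. I would lift the three factors working inward from the fixed target $y$. Using the covering isomorphism on the pair $(d,b)$ with target $y$, write $v=\sum_{w\in F^{-1}(d)}F(v_w)$ with $v_w\in M(w,y)$. For each such $w$, condition (3) applied to $\rho'$ with prescribed target $w$ yields $w'\in F^{-1}(c)$ and $\rho_w\in M(w',w)\cap R$ with $F(\rho_w)=\rho'$; then $v_w\rho_w\in I_M(w',y)$, and collecting these by source into $\psi_{w'}\in I_M(w',y)$ gives $\sum_{w'\in F^{-1}(c)}F(\psi_{w'})=v\rho'$. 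Finally, for each $w'$ the covering isomorphism on the pair $(a,c)$ with target $w'$ gives $u=\sum_{z\in F^{-1}(a)}F(u_{z,w'})$ with $u_{z,w'}\in M(z,w')$; setting $m_z=\sum_{w'}\psi_{w'}u_{z,w'}\in I_M(z,y)$ and using bilinearity of composition together with additivity of $F$ yields $\sum_z F(m_z)=\big(\sum_{w'}F(\psi_{w'})\big)u=(v\rho')u=\eta$. Hence $\eta\in\Phi\big(\bigoplus_z I_M(z,y)\big)$, which completes the reverse inclusion and thus the first half of the covering condition.

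The second half, involving the isomorphism $\bigoplus_{z\in F^{-1}(b)}M(x,z)\to N(a,b)$ with source fixed at some $x\in F^{-1}(a)$, is proved by the symmetric argument, using condition (2) in place of condition (3) at the middle stage. I expect the only real difficulty to lie in the bookkeeping of the reverse inclusion: the factors of $\eta$ must be lifted in the right order so that the prescribed-endpoint lifting of $\rho'$ (condition (3)) meets the endpoints produced by the covering-functor lifts of $v$ and $u$, and the several sums over fibres must be organised so that additivity of $F$ collapses them back to $\eta$.
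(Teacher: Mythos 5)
Your proposal is correct and follows essentially the same route as the paper: reduce the covering property of $\widetilde{F}$ to showing that the covering isomorphisms for $F$ carry $\bigoplus I_M$ onto $I_N$ (injectivity/one inclusion being automatic), then realise each generator $u'\rho'v'$ of $I_N(a,b)$ by lifting the outer factor adjacent to the fixed endpoint via the covering isomorphism, lifting $\rho'$ with prescribed endpoint via condition (2) or (3), lifting the remaining factor fibrewise, and collecting terms so that additivity of $F$ reassembles the product. The only cosmetic difference is that you fix the target and use condition (3) for your main computation where the paper fixes the source and uses condition (2); both treat the other half by symmetry.
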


\begin{proof}
By condition (1), $F$ maps morphisms in $I_M$ to morphisms in $I_N$, therefore it induces a functor $\widetilde{F}:M/I_M\rightarrow N/I_N$. To show $\widetilde{F}$ is a covering functor, it suffices to show that $F$ induces bijections $\bigoplus_{t/b}I_M(x,t)\rightarrow I_N(Fx,b)$ and $\bigoplus_{z/a}I_M(z,y)\rightarrow I_N(a,Fy)$ for every objects $a$, $b$ of $N$ and for every objects $x$, $y$ of $M$. Since $F$ is a covering functor, the above maps are injective.

For every objects $a$, $b$ of $N$ and for every object $x$ of $M$ such that $Fx=a$, let $s'\in I_N(a,b)$ be a morphism of the form $u'\rho'v'$, where $u'\in N(d,b)$, $v'\in N(a,c)$ and $\rho'\in N(c,d)\cap R'$. Since $F$ induces a bijection $\bigoplus_{z/c}M(x,z)\rightarrow N(a,c)$, there exists $(v_z)_{z/c}\in\bigoplus_{z/c}M(x,z)$ such that $\sum_{z/c}F(v_z)=v'$. By condition (2), for each $z\in F^{-1}(c)$, there exist $t_z\in F^{-1}(d)$ and $\rho_z\in M(z,t_z)\cap R$ such that $F(\rho_z)=\rho'$. For each $z\in F^{-1}(c)$, since $F$ induces a bijection $\bigoplus_{y/b}M(t_z,y)\rightarrow N(d,b)$, there exists $(u_{zy})_{y/b}\in\bigoplus_{y/b}M(t_z,y)$ such that $\sum_{y/b}F(u_{zy})=u'$. For each $y\in F^{-1}(b)$, let $s_y=\sum_{z/c}u_{zy}\rho_{z}v_{z}$. Then $s_y\in I_M(x,y)$ and \begin{multline*}\sum_{y/b}F(s_y)=\sum_{y/b}\sum_{z/c}F(u_{zy})F(\rho_z)F(v_z)=\sum_{z/c}(\sum_{y/b}F(u_{zy}))F(\rho_z)F(v_z)\\ =\sum_{z/c}u'F(\rho_z)F(v_z)=\sum_{z/c}u'\rho'F(v_z)=u'\rho'v'=s'.\end{multline*}
Since each morphism in $I_N(a,b)$ is a sum of morphisms of the form $u'\rho'v'$, where $\rho'\in R'$, we imply that the map $\bigoplus_{y/b}I_M(x,y)\rightarrow I_N(a,b)$ is surjective. For every objects $a$, $b$ of $N$ and for every object $y$ of $M$ such that $Fy=b$, it can be shown similarly that the map $\bigoplus_{x/a}I_M(x,y)\rightarrow I_N(a,b)$ is also surjective.
\end{proof}

\begin{Cor}  {\rm(cf. \cite[Corollary 4.6]{LL2025})} \label{Cor}
If $f:(Q,I)\rightarrow (Q',I')$ is a covering of quivers with relations, then $f$ induces a covering functor $kQ/I\rightarrow kQ'/I'$.
\end{Cor}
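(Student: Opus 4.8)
The plan is to deduce this directly from Proposition~\ref{covering of quotient categories} by feeding it the right data. I would set $M = kQ$, $N = kQ'$, and take $F = kf$, the functor between path categories induced by $f$. Since $f$ is in particular a covering of quivers, Lemma~\ref{covering of quivers induces covering functor} tells us that $F = kf$ is a covering functor, so the standing hypothesis of Proposition~\ref{covering of quotient categories} is satisfied. It then remains only to exhibit generating classes $R$ and $R'$ of $I$ and $I'$ for which the three numbered conditions of that proposition hold.

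The natural choice is to let $R$ (resp. $R'$) be the set of all minimal relations and zero relations of $I$ (resp. $I'$). By the remark following Definition~\ref{minimal-relation}, every relation of an ideal is a sum of minimal relations and zero relations, all sharing the same source and target; hence the ideal generated by $R$ is exactly $I$ and the ideal generated by $R'$ is exactly $I'$, so that $I_M = I$ and $I_N = I'$ in the notation of the proposition. With this choice the induced functor $\widetilde F$ is precisely the functor $kQ/I \to kQ'/I'$ we want.

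Next I would verify the three conditions. For condition (1), take $\rho \in R$. If $\rho$ is a minimal relation, then $F(\rho) = (kf)(\rho)$ is again minimal by Lemma~\ref{preserve-minimal-relation}; if $\rho$ is a zero relation, i.e. a single path scaled by a nonzero element, then $(kf)(\rho)$ is again a single path (because $f$ carries a path to a path) lying in $I'$ (because $kf$ maps $I$ into $I'$), hence a zero relation of $I'$. In both cases $F(\rho) \in R'$. Conditions (2) and (3) are exactly the lifting requirement built into the definition of a covering of quivers with relations (Definition~\ref{morphism-and-covering-of-quivers-with-relations}): given a minimal or zero relation $\rho' \in I'(a,b)$ and any $x \in f^{-1}(a)$ (resp. any $y \in f^{-1}(b)$), that definition produces a lift $\rho \in I(x,y)$ with $(kf)(\rho) = \rho'$. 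The remark after that definition, together with Lemma~\ref{preserve-minimal-relation}, guarantees that this lift is again minimal when $\rho'$ is; and when $\rho'$ is a zero relation the unique lifting of paths forces $\rho$ to be a single path as well, so in either case $\rho \in R$. Hence all three hypotheses hold and Proposition~\ref{covering of quotient categories} yields the desired covering functor.

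I expect the only genuinely delicate point to be bookkeeping rather than new mathematics: one must ensure that the chosen classes $R, R'$ simultaneously generate the correct ideals and satisfy condition (1), and that the lifting clause in Definition~\ref{morphism-and-covering-of-quivers-with-relations} matches conditions (2) and (3) once the source and target vertices are tracked carefully. Checking that a zero relation lifts to a zero relation, rather than to a longer linear combination of paths, is the one spot where I would invoke the uniqueness of path lifting explicitly; everything else is a direct transcription of the standing hypotheses into the form demanded by the proposition.
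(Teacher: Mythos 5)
Your proposal is correct and follows essentially the same route as the paper's own proof: invoke Lemma \ref{covering of quivers induces covering functor} to get the covering functor $kf$, take $R$, $R'$ to be the minimal and zero relations, verify condition (1) via Lemma \ref{preserve-minimal-relation}, and read off conditions (2) and (3) from Definition \ref{morphism-and-covering-of-quivers-with-relations}. Your extra care with the zero-relation case (using unique path lifting) is a detail the paper leaves implicit but adds nothing structurally different.
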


\begin{proof}
By Lemma \ref{covering of quivers induces covering functor}, $f$ induces a covering functor $kf:kQ\rightarrow kQ'$ of associated path categories. Let $R$ (resp. $R'$) be the set of minimal relations together with zero relations in $I$ (resp. $I'$). Then $R$ (resp. $R'$) generates $I$ (resp. $I'$). According to Lemma \ref{preserve-minimal-relation}, for every $\rho\in R$, we have $(kf)(\rho)\in R'$, so condition $(1)$ in Proposition \ref{covering of quotient categories} holds. Since $f:(Q,I)\rightarrow (Q',I')$ is a covering of quivers with relations, condition $(2)$ and condition $(3)$ in Proposition \ref{covering of quotient categories} also hold. Therefore $f$ induces a covering functor $kQ/I\rightarrow kQ'/I'$.
\end{proof}

\subsection{Galois covering}
\

Let $G$ be a group which consists of quiver automorphisms of $Q$ or $Q$-automorphisms. For any vertex $x$ of $Q$, we denote $\overline{x}$ the $G$-orbit of $x$. Then we can define the {\it orbit quiver} $\overline{Q}:=Q/G$ as follows. The vertices of $\overline{Q}$ are given by the $G$-orbits of $Q_0$, and the arrows from $\overline{x}$ to $\overline{y}$ are given by the $G$-orbits of the arrows $s\rightarrow t$ in $Q_1$ where $s\in \overline{x}$ and $t\in \overline{y}$. The natural projection $Q\rightarrow \overline{Q}$ is a morphism of quivers.

Recall that a group $G$ of $Q$-automorphisms
\begin{enumerate}
\item is said to {\it act freely on $Q$} if for every $g\in G$ and for every $a\in Q_0$, $g(a)=a$ implies $g=id_{Q}$,
\item is said to be {\it admissible} if for each $x\in Q_0$, any $G$-orbit $G\cdot y$ contains at most one vertex from $x^{+}$ and at most one vertex from $x^{-}$.
\end{enumerate}

Clearly every $Q$-automorphism induces an automorphism of the path category $kQ$. For a quiver with relations $(Q,I)$, we consider a group $G$ of $(Q,I)$-automorphisms, that is, the $Q$-automorphisms preserving the ideal $I$ of $kQ$. Moreover, let $\overline{I}$ be the image of $I$ under the natural projection $kQ\rightarrow k\overline{Q}$, that is, for every two vertices $\overline{x},\overline{y}$ of $\overline{Q}$, $\overline{I}(\overline{x},\overline{y})$ is the $k$-vector space generated by morphisms $\overline{\rho}$, where $\rho\in I(a,b)$ with $a\in\overline{x}$, $b\in\overline{y}$ and $\overline{\rho}$ denotes the image of $\rho$ under the natural projection $kQ\rightarrow k\overline{Q}$.

\begin{Prop}  \label{elementary-properties-of-projection} Let $G$ be a group of $(Q,I)$-automorphisms, and let $\overline{I}$ be defined as above. Then we have the following.
\begin{enumerate}[label=\rm{{(\arabic*)}}]
\item The natural projection $kQ\rightarrow k\overline{Q}$ induces a morphism $(Q,I)\rightarrow (\overline{Q},\overline{I})$ of quivers with relations.
\item If $G$ acts freely on $Q$, then the natural projection $Q\rightarrow \overline{Q}$ is a covering of quivers.
\item If $G$ acts freely on $Q$, then the natural projection $kQ\rightarrow k\overline{Q}$ induces a covering $(Q,I)\rightarrow (\overline{Q},\overline{I})$ of quivers with relations.
\item Suppose that $Q$ is connected and without double arrows, and that the group $G$ acts admissibly on $Q$. Then $G$ acts freely on $Q$.
\end{enumerate}
\end{Prop}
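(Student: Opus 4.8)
The plan is to prove the four assertions in order, using each as a stepping stone for the next, so that only part (3) requires real work. Throughout I write $\pi\colon Q\to\overline Q$ for the natural projection and $k\pi\colon kQ\to k\overline Q$ for the induced functor on path categories.

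For part (1), the key point is that by the construction of the orbit quiver $\pi$ is surjective on vertices and arrows, so every arrow—and hence every path—of $\overline Q$ lifts (not necessarily uniquely) to $Q$. Given a generator $\overline\sigma$ of $\overline I$ with $\sigma\in I(a',b')$ and a path $\overline p$ of $\overline Q$ composable with $\overline\sigma$, I would lift $\overline p$ to a path $p$ of $Q$ with matching endpoint and note that $p\sigma\in I$ while $k\pi(p\sigma)=\overline p\,\overline\sigma$; together with the symmetric computation this shows $\overline I$ is an ideal. It lies in the ideal generated by arrows because $I$ does and arrows map to arrows, and $k\pi(I)=\overline I$ holds by the definition of $\overline I$, so the projection is a morphism of quivers with relations essentially by unwinding the definitions. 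For part (2) I would check the bijection on arrows directly: surjectivity onto the arrows out of $\overline x$ holds because any arrow class is represented by an arrow out of some vertex in the orbit of $x$, which is transported back to an arrow out of $x$ by a suitable $g\in G$; and if two arrows $\alpha,\alpha'$ out of $x$ satisfy $k\pi(\alpha)=k\pi(\alpha')$ then $\alpha'=g\alpha$ for some $g\in G$, whence comparing sources gives $gx=x$, so $g=\mathrm{id}$ by freeness and $\alpha=\alpha'$. The case of arrows into $x$ is symmetric, so $\pi$ is a covering of quivers.

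For part (3), by parts (1) and (2) it remains only to establish the lifting property for minimal and zero relations. Fix a minimal or zero relation $\overline\rho\in\overline I(\overline a,\overline b)$ and a vertex $a\in\pi^{-1}(\overline a)$. The first step is to produce \emph{some} lift in $I$ starting at $a$: writing $\overline\rho$ as a combination of classes $\overline{\sigma_j}$ with $\sigma_j\in I(a_j,b_j)$, $a_j\in\overline a$, I replace each $\sigma_j$ by $g_j\sigma_j$ where $g_j a_j=a$, using that $G$ preserves $I$; this gives $\rho=\sum_j c_j g_j\sigma_j\in I$ with $k\pi(\rho)=\overline\rho$ and $\rho\in\bigoplus_{b'/\overline b}I(a,b')$. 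The second and crucial step is to show $\rho$ is concentrated in a single target. Lifting each path of the standard form $\overline\rho=\sum_i\lambda_i\overline{u_i}$ uniquely through the covering $\pi$ to a path $u_i$ starting at $a$ and ending at some $b_i\in\overline b$, the covering-functor isomorphism $\bigoplus_{b'/\overline b}kQ(a,b')\xrightarrow{\sim}k\overline Q(\overline a,\overline b)$ forces $\rho=\sum_i\lambda_i u_i$, so its homogeneous components are $\rho_{b'}=\sum_{i:\,b_i=b'}\lambda_i u_i\in I(a,b')$. Hence each $k\pi(\rho_{b'})$ is the sub-sum of $\overline\rho$ over $K_{b'}=\{i:b_i=b'\}$ and lies in $\overline I$. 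If two distinct targets occurred, each $K_{b'}$ would be a non-empty proper subset, contradicting the minimality of $\overline\rho$ (for a zero relation there is a single path, hence a single target automatically). Thus $\rho\in I(a,b)$ lifts $\overline\rho$, and the dual statement, fixing $b$ and producing $a$, follows from the symmetric covering-functor isomorphism.

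For part (4), suppose $g\in G$ fixes a vertex $a$. I would show the set of $g$-fixed vertices is closed under successors and predecessors: if $g$ fixes $x$ and $\alpha\colon x\to y$ is an arrow, then $g\alpha\colon x\to gy$ shows $gy\in x^{+}$, and since $y$ and $gy$ lie in the same $G$-orbit, admissibility forces $y=gy$; the predecessor case is identical. Connectedness of $Q$ then propagates fixedness along walks from $a$ to every vertex, so $g$ fixes all vertices; finally, for any arrow $\alpha\colon x\to y$ the image $g\alpha$ is again an arrow $x\to y$, so the absence of double arrows yields $g\alpha=\alpha$, whence $g=\mathrm{id}_Q$. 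The main obstacle I anticipate is the concentration step in part (3): obtaining a lift of $\overline\rho$ in $I$ is routine, but arguing that the unique path-lifts share a common target is exactly where the minimality hypothesis is indispensable, used together with the elementary fact that an ideal of a $k$-category is the direct sum of its morphism-space components, so that each partial sum $\rho_{b'}$ again lies in $I$. The remaining parts are formal consequences of surjectivity of $\pi$, freeness, and connectedness.
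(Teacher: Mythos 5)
Your proposal is correct, and for parts (1), (2) and (4) it follows essentially the paper's own route: (1) is the routine verification the paper dismisses as obvious, (2) is the same surjectivity-plus-freeness check on arrows, and (4) is the same propagation of fixed vertices along walks (your ``closed under successors and predecessors'' step is the paper's induction on walk length), with the absence of double arrows then forcing $g$ to fix arrows as well. The genuine difference is part (3): the paper disposes of the lifting of minimal relations by citing \cite[Proposition 1.4]{MP1983} (with a pointer to compare with the proof of Lemma \ref{preserve-minimal-relation}), whereas you prove it outright. Your two-step argument --- first using transitivity of $G$ on fibres together with $G$-invariance of $I$ to manufacture a lift $\rho$ of $\overline{\rho}$ lying in $\bigoplus_{b'/\overline{b}}I(a,b')$, then using the covering-functor isomorphism $\bigoplus_{b'/\overline{b}}kQ(a,b')\cong k\overline{Q}(\overline{a},\overline{b})$ to identify $\rho$ with the sum $\sum_i\lambda_i u_i$ of the unique path-lifts, so that minimality of $\overline{\rho}$ forbids the index sets $K_{b'}$ from giving proper non-empty sub-sums in $\overline{I}$ and hence forces a single target --- is precisely the content of the cited result, and is the orbit-quiver counterpart of the paper's proof of Lemma \ref{preserve-minimal-relation}. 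What your version buys is self-containedness (the reader never leaves the paper, and the exact point where minimality is indispensable is made explicit, including the correct observation that the homogeneous components $\rho_{b'}$ of an element of an ideal again lie in the ideal); what the paper's version buys is brevity. One further merit of your write-up: in part (1) you correctly note that path-lifting \emph{exists} (non-uniquely) even without freeness, because $G$ acts transitively on each fibre; this is needed to see that $\overline{I}$ is an ideal, a point the paper glosses over entirely.
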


\begin{proof}
(1) This is obvious.

(2) For each vertex $x$ of $Q$, the natural projection $Q\rightarrow \overline{Q}$ induces a surjection from the set of arrows in $Q$ starting (resp. ending) at $x$ to the set of arrows in $\overline{Q}$ starting (resp. ending) at $\overline{x}$. For two arrows $\alpha$, $\beta\in Q_1$ starting (resp. ending) at $x$, if $\overline{\alpha}=\overline{\beta}$, then $\alpha$ and $\beta$ belong to the same $G$-orbit. Suppose that $g(\alpha)=\beta$ for some $g\in G$, then $g(x)=g(s(\alpha))=s(g(\alpha))=s(\beta)=x$ (resp. $g(x)=g(t(\alpha))=t(g(\alpha))=t(\beta)=x$). Since $G$ acts freely on $Q$, this shows that $g=id_{Q}$. Therefore $\alpha=\beta$, and the natural projection $Q\rightarrow \overline{Q}$ is a covering of quivers.

(3) According to (2), the natural projection $Q\rightarrow \overline{Q}$ is a covering of quivers. To show that it induces a covering $(Q,I)\rightarrow (\overline{Q},\overline{I})$ of quivers with relations, it suffices to show that every minimal relation in $\overline{I}$ can be lifted to a minimal relation $I$, this is just \cite[Proposition 1.4]{MP1983} (compare to the proof of Lemma \ref{preserve-minimal-relation}).

(4) Suppose that  $G$ acts admissibly on $Q$ and that $g(x)=x$ for some $g\in G$ and $x\in Q_0$. Since $Q$ has no double arrows, to show that $g=id_Q$, it suffices to show that $g(y)=y$ for every vertex $y$ in $Q$. Since $Q$ is connected, there exists a walk (see definition in Section 1.4) $w$ in $Q$ from $x$ to $y$. We will show that $g(y)=y$ by induction on the length $l(w)$ of $w$. If $l(w)=1$, then $w$ or $w^{-1}$ is an arrow in $Q$. For simplicity we assume that $w\in Q_1$. Since $g(x)=x$, $g(w)$ is an arrow starting at $x$. Then $g(y)=g(t(w))=t(g(w))\in x^{+}$. Since $y,g(y)\in x^{+}$ and since $G$ acts admissibly on $Q$, we have $g(y)=y$. If $l(w)>1$, then we write $w=vw'$, where $v$ is walk of length $1$. By induction we have $g(t(w'))=t(w')$. By the same argument as the case $l(w)=1$, we have $g(y)=y$.
\end{proof}

\begin{Def} {\rm(\cite[Section 1]{MP1983})} \label{galois-covering}
A morphism $f:(Q,I)\rightarrow (Q',I')$ of quivers with relations is said to be a Galois covering if there exist a group $G$ of $(Q,I)$-automorphisms which acts freely on $Q$ and an isomorphism $\nu:(Q/G,\overline{I})\xrightarrow{\sim}(Q',I')$ of quivers with relations such that the diagram
$$\xymatrix{
		& (Q,I) \ar[dr]^{f}\ar[dl]_{\pi} &  \\
		(Q/G,\overline{I})\ar[rr]_{\nu}^{\sim} & & (Q',I')
	}$$
commutes, where $\pi:(Q,I)\rightarrow (Q/G,\overline{I})$ is the natural projection.
\end{Def}

According to this definition, if $G$ is a group of $(Q,I)$-automorphisms such that $G$ acts freely on $Q$, then the covering $(Q,I)\rightarrow (\overline{Q},\overline{I})$ of quivers with relations given in (3) of Proposition \ref{elementary-properties-of-projection}  is indeed a Galois covering.

Note that the covering map $f$ in Example~\ref{ex:simple-example} is a Galois covering with group $G=\mathbb{Z}/2\mathbb{Z}$. However, the covering map $f$ in Example \ref{a-covering-of-quivers-with-relations-but-not-Galois} is not a Galois covering since there is no nontrivial group of $(Q,I)$-automorphisms which acts freely on $Q$; we leave the reader to find a nontrivial Galois covering of $(Q',I')$ in this example.

We also remark that the original definition for Galois covering in \cite{MP1983} requests that $Q$ is connected and without double arrows and that the group action is admissible on $Q$, but in our general situation these conditions are not necessary.

We now turn to the discussion of Galois coverings between locally finite dimensional categories.

\begin{Def} {\rm(\cite[Definition 2.1]{BG1982})}  \label{locally-bounded-category}
A locally finite dimensional (resp. locally bounded) category is a $k$-category $\Lambda$ satisfying the following three conditions $(a)$, $(b)$, $(c)$ (resp. $(a)$, $(b)$, $(c')$):
\begin{itemize}
  \item[$(a)$] For each $x\in \Lambda$, the endomorphism algebra $\Lambda(x,x)$ is local.
  \item[$(b)$] Distinct objects of $\Lambda$ are not isomorphic.
  \item[$(c)$] For each $x,y\in \Lambda$, $\mathrm{dim}_{k}\Lambda(x,y)< \infty$.
  \item[$(c')$] For each $x,y\in \Lambda$, $\sum_{y\in \Lambda}\mathrm{dim}_{k}\Lambda(x,y)< \infty$ and $\sum_{y\in \Lambda}\mathrm{dim}_{k}\Lambda(y,x)< \infty$.
\end{itemize}
\end{Def}

The categories discussed above can be interpreted using quivers.

\begin{itemize}
	\item A locally finite dimensional category $\Lambda$ has the form $kQ/I$, where $Q$ is a quiver (coincides with the Gabriel quiver of $\Lambda$) and $kQ$ is the path category associated with $Q$, $I$ is an ideal of $kQ$ contained in $kQ^{\geq 2}$.
	\item If $\Lambda$ is moreover locally bounded, then $Q$ is locally finite and for each vertex $x\in Q_0$, there exists a natural number $N_x$ such that $I$ contains all paths of length $\geq N_x$ which start or terminate at $x$.
	\item In particular, any finite dimensional algebra $kQ/I$ given by a quiver $Q$ with an admissible ideal $I$ can be viewed as a locally bounded category.
\end{itemize}

The Galois coverings for locally finite dimensional categories are defined as follows.

\begin{Def} {\rm(\cite[Section 3.1]{Ga1981})} \label{galois-covering-functor}
Let $M,N$ be locally finite dimensional categories. A covering functor $F: M\rightarrow N$ is called a Galois covering if $F$ is surjective on objects and there exists a group $G$ of $k$-linear automorphisms of $M$ which acts freely on (objects of) $M$, such that $F\circ g=F$ for any $g\in G$ and $G$ acts transitively on $F^{-1}(a)$ for each object $a$ of $N$.
\end{Def}

Indeed, the notion of Galois coverings for locally finite dimensional categories is compatible with that for quivers with relations.

\begin{Prop}  {\rm(cf. \cite[Lemma 4.14]{LL2025})} \label{induce-Galois-covering}
Let $(Q,I)$, $(Q',I')$ be quivers with relations such that $kQ/I$, $kQ'/I'$ are locally finite dimensional categories. A Galois covering of quivers with relations $f:(Q,I)\rightarrow (Q',I')$ induces a Galois covering $kQ/I\rightarrow kQ'/I'$ of associated categories.
\end{Prop}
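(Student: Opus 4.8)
The plan is to unwind the definition of a Galois covering of quivers with relations and then verify, one by one, the extra conditions that upgrade a covering functor to a Galois covering functor in the sense of Definition~\ref{galois-covering-functor}. By Definition~\ref{galois-covering}, the hypothesis furnishes a group $G$ of $(Q,I)$-automorphisms acting freely on $Q$ together with an isomorphism $\nu\colon(Q/G,\overline{I})\xrightarrow{\sim}(Q',I')$ of quivers with relations such that $f=\nu\circ\pi$, where $\pi\colon(Q,I)\to(Q/G,\overline{I})$ is the natural projection. By part~(3) of Proposition~\ref{elementary-properties-of-projection}, $\pi$ is a covering of quivers with relations, and since an isomorphism of quivers with relations is trivially such a covering (one may pull back minimal and zero relations along $\nu^{-1}$ before lifting through $\pi$), the composite $f$ is itself a covering of quivers with relations. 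Hence Corollary~\ref{Cor} produces a covering functor $F\colon kQ/I\to kQ'/I'$; what remains is exactly the Galois data.

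First I would promote $G$ to a group acting on the source category. Each $g\in G$ is by definition a $Q$-automorphism preserving $I$, so it induces a $k$-linear automorphism of the path category $kQ$ carrying $I$ to $I$, which therefore descends to a $k$-linear automorphism of the quotient $M:=kQ/I$. The objects of $M$ are the vertices $Q_0$, and $G$ acts on them through its action on $Q$; since $G$ acts freely on $Q$ it acts freely on the objects of $M$. This supplies the group of $k$-linear automorphisms of $M$ required by Definition~\ref{galois-covering-functor}.

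Next I would check the compatibility $F\circ g=F$ for every $g\in G$. On objects this is immediate, since $g(x)$ and $x$ lie in the same $G$-orbit, so that $\pi(g(x))=\pi(x)$ and therefore $F(g(x))=\nu(\pi(g(x)))=\nu(\pi(x))=F(x)$. On morphisms the point is that the projection functor $k\pi$ sends each arrow to its $G$-orbit, so that $k\pi\circ g=k\pi$ already at the level of path categories; passing to the quotient gives $F\circ g=\nu\circ(k\pi)\circ g=\nu\circ(k\pi)=F$. Surjectivity of $F$ on objects follows because $\pi$ is surjective onto the $G$-orbits and $\nu$ is a bijection on vertices. Finally, for transitivity I would identify the fibre: for a vertex $a$ of $Q'$ one has $F^{-1}(a)=\{x\in Q_0 : \overline{x}=\nu^{-1}(a)\}$, which is precisely the $G$-orbit $\nu^{-1}(a)$ regarded as a subset of $Q_0$, and $G$ acts transitively on any one of its orbits by construction.

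The essential mathematical content, namely that $F$ is a covering functor at all, is already absorbed into Corollary~\ref{Cor} (which in turn rests on Lemma~\ref{preserve-minimal-relation} and Proposition~\ref{covering of quotient categories}); the steps above are then largely bookkeeping about the group action. The only places where I expect to need genuine care are the identity $k\pi\circ g=k\pi$ on morphisms and its descent to $M$, together with the verification that the fibre $F^{-1}(a)$ really coincides with a single $G$-orbit rather than a union of several — both of which hinge on the freeness of the $G$-action and on $\nu$ being an isomorphism.
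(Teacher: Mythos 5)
Your proposal is correct and follows essentially the same route as the paper: reduce to (or keep track of) the natural projection $\pi\colon(Q,I)\to(Q/G,\overline{I})$, invoke Corollary \ref{Cor} to get the covering functor, and then verify the Galois conditions (induced free $G$-action on $kQ/I$, the identity $F\circ g=F$, surjectivity on objects, and transitivity on fibres). The paper simply states these last verifications as obvious, whereas you spell them out; the mathematical content is the same.
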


\begin{proof}
We may assume that $f$ is the natural projection $(Q,I)\rightarrow (\overline{Q},\overline{I})$, where $\overline{Q}=Q/G$ with $G$ a group of $(Q,I)$-automorphisms which acts freely on $Q$. Since a Galois covering of quivers with relations is a covering of quivers with relations, by Corollary \ref{Cor}, $f$ induces a covering functor $F: kQ/I\rightarrow k\overline{Q}/\overline{I}$ of associated categories.

Denote by $\widetilde{G}$ the group of automorphisms of $kQ/I$ induced by $G$. Since $G$ acts freely on $Q$, we see that $\widetilde{G}$ acts freely on $kQ/I$. It is obvious that $F\circ g=F$ for any $g\in \widetilde{G}$ and $\widetilde{G}$ acts transitively on $F^{-1}(a)$ for each object $a$ of $k\overline{Q}/\overline{I}$. Therefore $F: kQ/I\rightarrow k\overline{Q}/\overline{I}$ is a Galois covering.
\end{proof}

When we restrict our discussion to locally bounded categories, the target category of a Galois covering is typically described via orbit categories. For completeness, we briefly review the definition of an orbit category below (see \cite[Proposition 3.1]{Ga1981} for further details).

Let $\Lambda$ be a locally finite dimensional category and $G$ a group of automorphisms such that the action of $G$ on $\Lambda$ is free and locally bounded (for each pair of objects $(x,y)$ in $\Lambda$, there are only finitely many $g\in G$ such that $\Lambda(x,gy)\neq 0$).
 The {\it orbit category} $\Lambda/G$: The objects of $\Lambda/G$ are orbits of $G$ in the set of objects of $\Lambda$. A morphism $f\colon X\rightarrow Y$ of $\Lambda/G$ is a family $$f=(_yf_x)\in\Pi_{x,y}\Lambda(x,y),$$
    where $x,y$ range over $X,Y$ respectively, and $f$ satisfies the relation $$g(_yf_x)=\ _{gy}f_{gx}$$ for all $g\in G$ and all $x,y$. The composition $e\circ f$ of $f:X\rightarrow Y$ and $e:Y\rightarrow Z$ in $\Lambda/G$ is defined by $ _z(e\circ f)_x=\sum_{y\in Y}\;_ze_{yy}f_x$; this sum makes sense since the action of $G$ is locally bounded.
	
Same as in Definition \ref{galois-covering}, there is a canonical projection $F: \Lambda\rightarrow\Lambda/G$ which maps an object $x$ onto its orbit; it maps a morphism $\phi\in \Lambda(x,y)$ onto the family $F\phi$ such that $_{hy}F\phi_{gx}=g\phi$ or $0$ according as $h=g$ or $h\neq g\in G$. It turns out that $F$ is a Galois covering. Moreover, if $E:\Lambda\rightarrow \Lambda'$ is a Galois covering with group $G$ and $F: \Lambda\rightarrow \Lambda/G$ is the canonical projection, then there exists an isomorphism $H:\Lambda/G\xrightarrow{\sim}\Lambda'$ such that $E=HF$ (see \cite[Remark 3.1]{Ga1981}).

For a given locally bounded category $\Lambda$ (corresponding to $(Q,I)$) and a Galois covering $f:(Q,I)\rightarrow (Q',I')$ with group $G$, denote by $\widetilde{G}$ the group of automorphisms of $\Lambda$ induced by $G$. By Proposition \ref{induce-Galois-covering}, $f$ induces a Galois covering functor $\Lambda\rightarrow\Lambda'$ with group $\widetilde{G}$, where $\Lambda'=kQ'/I'$. Therefore $\Lambda'$ is isomorphic to $\Lambda/\widetilde{G}$.

Consequently, for Galois coverings between locally bounded categories, we shall (by mild abuse of terminology) identify these two perspectives (Definition \ref{galois-covering-functor} and the orbit category viewpoint) interchangeably, particularly in Section 3.

We note that Galois coverings and orbit categories can be viewed as an effective way to understand skew group algebras, which are in general not basic algebras.

\begin{Def}
	Let $A$ be a $k$-algebra and let $G$ be a finite group acting on $A$ by algebra automorphisms. The skew group algebra $A * G$ is defined as the vector space
$$
A * G=\bigoplus_{g\in G} Ag,
$$
with multiplication determined by
$$
(a g)(b h)=a\, g(b)\, gh
$$
for all $a,b\in A$ and $g,h\in G$.
\end{Def}

Recall our setting in Definition~\ref{galois-covering}. When considering a Galois covering
$$
f:(Q,I)\rightarrow (Q',I')
$$
of quivers with relations with finite group $G$, the group $G$ acts freely on $Q$ by $(Q,I)$-automorphisms, and hence induces a group of automorphisms of the algebra $A=kQ/I$. Therefore, one can naturally define the skew group algebra $A*G$ and relate it to the orbit algebra $kQ'/I'$.

\begin{Prop} {\rm(cf. \cite[Section 5.2]{RR1985} and \cite[Section 3]{Asa2011})}
	Under the above notation, the skew group algebra $A*G$ is Morita equivalent to the orbit algebra $kQ'/I'$.
\end{Prop}

We conclude this section with the following simple example.

\begin{Ex1}\textnormal{({Example \ref{ex:simple-example} revisited})}
	Let $A = kQ/I$, and let
$$
G = \mathbb{Z}/2\mathbb{Z} = \langle g \rangle
$$
act on $A$ by exchanging the vertices $1, 2$ and the arrows $\alpha, \beta$. Then there is an algebra isomorphism
$$
\Phi : A \ast G \longrightarrow M_2(k[x]/\langle x^2 \rangle)
$$
given by
$$
\Phi(e_1) = E_{11}, \qquad \Phi(e_2) = E_{22},
$$
$$
\Phi(\alpha) = xE_{21}, \qquad \Phi(\beta) = xE_{12},
$$
$$
\Phi(e_1 g) = E_{12}, \qquad \Phi(e_2 g) = E_{21},
$$
$$
\Phi(\alpha g) = xE_{22}, \qquad \Phi(\beta g) = xE_{11}.
$$
A direct computation shows that $\Phi$ preserves all defining relations and is surjective. Since both algebras have dimension $8$ over $k$, $\Phi$ is an isomorphism. Therefore,
$$
A \ast G \cong M_2(k[x]/\langle x^2 \rangle).
$$
Hence $A \ast G$ is Morita equivalent to $k[x]/\langle x^2 \rangle \cong kQ'/I'.$
\end{Ex1}

\subsection{Universal cover}
\

In the following we always assume that the quivers considered are connected. For a quiver $Q$, a {\it walk} of $Q$ is a sequence of the form $\alpha_n\cdots\alpha_2\alpha_1$, where each $\alpha_i$ is an arrow or a formal inverse of an arrow in $Q$, such that the source of $\alpha_i$ is equal to the terminal of $\alpha_{i-1}$ for each $i$. Define an equivalence relation $\sim$ on the set of walks of $Q$: $\sim$ is generated by
\begin{itemize}
	\item $\alpha^{-1}\alpha\sim 1_{s(\alpha)}$ and $\alpha\alpha^{-1}\sim 1_{t(\alpha)}$ for every arrow $\alpha$ of $Q$;
    \item If $w_1\sim w_2$, then $uw_1 v\sim uw_2 v$ for every walks $u,v$ of $Q$ whenever the compositions make sense.
\end{itemize}
For a walk $w$ of $Q$, we denote $[w]$ the equivalence class of walks containing $w$.

For a vertex $x$ of $Q$, denote $\Pi(Q,x)$ the {\it fundamental group} of $Q$ at $x$, which is the group of equivalence classes of closed walks of $Q$ at $x$ under concatenation.

We now turn to the definition of the fundamental group of a quiver with relations, which is first introduced in \cite{MP1983}.

\begin{Def}\label{homotopy-relation-with-respect-to-ideal}
Let $Q$ be a quiver and $I$ be an ideal of the path category $kQ$, denote $m(I)$ the set of minimal relations of $I$. Let $\sim_{I}$ be the equivalence relation on the set of walks of $Q$ generated by
\begin{itemize}
	\item[$(E1)$] $\alpha^{-1}\alpha\sim_{I} 1_{s(\alpha)}$ and $\alpha\alpha^{-1}\sim_{I} 1_{t(\alpha)}$ for every arrow $\alpha$ of $Q$;
    \item[$(E2)$] $w_1\sim_{I} w_2$ if there exists some $\rho=\sum^{n}_{i=1}\lambda_i p_i\in m(I)$ with $w_1=p_1$ and $w_2=p_2$;
    \item[$(E3)$] If $w_1\sim_{I} w_2$, then $uw_1 v\sim_{I} uw_2 v$ for every walks $u,v$ of $Q$ whenever the compositions make sense.
\end{itemize}
\end{Def}

The fundamental group $\Pi(Q,I)$ at a vertex $x\in Q_0$ is defined to be the group of equivalence classes of closed walks of $Q$ at $x$ under the equivalence relation $\sim_{I}$. Clearly this definition is independent to the choice of the vertex $x$. It is straightforward to show that $$\Pi(Q,I)\cong \Pi(Q,x)/N(Q,m(I),x),$$
where $x$ is a vertex of $Q$ and $N(Q,m(I),x)$ is the normal subgroup of $\Pi(Q,x)$ generated by elements of the form $[\gamma^{-1}u^{-1}v\gamma]$, where $\gamma$ is a walk of $Q$ from $x$ to $y$ and $u,v$ are paths of $Q$ from $y$ to $z$ such that there exists some $\rho=\sum^{n}_{i=1}\lambda_i p_i\in m(I)$ with $u=p_1$ and $v=p_2$.

\begin{Rem1}
By standard argument in covering theory (cf. \cite[Theorem 4.1 of Chapter five]{Ma1977}), a covering $f:(Q,I)\rightarrow (Q',I')$ of quivers with relations corresponds to an injection $\Pi(Q,I)\hookrightarrow \Pi(Q',I')$ of fundamental groups.
\end{Rem1}

\begin{Def} {\rm(cf. \cite[Corollary 1.5]{MP1983})} \label{universal-cover-def}
Let $(Q,I)$ be a quiver with relation. A universal cover of $(Q,I)$ is a covering $\pi: (\widetilde{Q},\widetilde{I})\rightarrow (Q,I)$ such that
\begin{itemize}
\item[$(1)$] for any other covering $f: (\overline{Q},\overline{I})\rightarrow (Q,I)$, there exists a covering $\pi': (\widetilde{Q},\widetilde{I})\rightarrow (\overline{Q},\overline{I})$ with $f\pi'=\pi$;
\item[$(2)$] if, moreover, $\widetilde{x}\in \widetilde{Q}_0$ and $\overline{x}\in \overline{Q}_0$ are such that $\pi\widetilde{x}=f\overline{x}$, then $\pi'$ can be uniquely chosen so that $\pi'(\widetilde{x})=\overline{x}$.
\end{itemize}
\end{Def}

\begin{Lem} \label{trivial-fundamental-group}
Let $\pi:(R,J)\rightarrow (Q,I)$ be a covering of quivers with relations. If $\Pi(R,J)=\{1\}$, then $\pi$ is a universal cover of $(Q,I)$.
\end{Lem}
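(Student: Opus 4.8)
The plan is to verify the two defining properties of a universal cover in Definition~\ref{universal-cover-def} directly, by adapting the standard lifting machinery of covering theory to the homotopy relation $\sim_J$. Write the candidate universal cover as $\pi:(R,J)\to(Q,I)$, and let $f:(\ol Q,\ol I)\to(Q,I)$ be an arbitrary covering of quivers with relations; I must produce a covering $\pi':(R,J)\to(\ol Q,\ol I)$ with $f\pi'=\pi$, and, for the pointed clause, show it is unique once a base point is fixed. First I would fix base vertices $\widetilde x\in R_0$ and $\ol x\in\ol Q_0$ with $\pi(\widetilde x)=f(\ol x)$; such a pair exists because $f$ is surjective on vertices (its target $Q$ is connected). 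I set $\pi'(\widetilde x)=\ol x$ and extend by lifting walks.

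The heart of the construction is the definition of $\pi'$ on vertices. For $y\in R_0$ I choose a walk $w$ from $\widetilde x$ to $y$; then $\pi(w)$ is a walk in $Q$ from $f(\ol x)$ to $\pi(y)$, and by the unique lifting of walks enjoyed by the covering $f$ it has a unique lift $\widetilde w$ starting at $\ol x$. I define $\pi'(y)$ to be the terminal vertex of $\widetilde w$, and the crux is to show this is independent of the choice of $w$. I would argue in three steps. (i) If $w_1,w_2$ are two walks $\widetilde x\to y$, then $w_2^{-1}w_1$ is a closed walk at $\widetilde x$, and since $\Pi(R,J)=\{1\}$ we have $w_2^{-1}w_1\sim_J 1_{\widetilde x}$, whence $w_1\sim_J w_2$. (ii) Applying $\pi$ preserves the homotopy relation, i.e.\ $w_1\sim_J w_2$ implies $\pi(w_1)\sim_I\pi(w_2)$; the only nontrivial generator to check is $(E2)$, where a minimal relation of $J$ is carried to a minimal relation of $I$ by Lemma~\ref{preserve-minimal-relation}. (iii) Two $\sim_I$-equivalent walks with a common starting point lift through $f$ to walks with a common terminal point; again the essential case is $(E2)$, where the minimal relation of $I$ lifts, by the covering-of-quivers-with-relations property of $f$, to a minimal relation of $\ol I$ all of whose terms share source and target, so the unique lifts of $p_1$ and $p_2$ end at the same vertex. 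Combining (i)--(iii) shows $\pi'(y)$ is well defined.

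Next I would extend $\pi'$ to arrows by sending an arrow $\alpha\colon y\to z$ to the unique $f$-lift of $\pi(\alpha)$ starting at $\pi'(y)$; choosing a walk to $z$ of the form $\alpha w$ shows its terminus is $\pi'(z)$, so $\pi'$ is a morphism of quivers with $f\pi'=\pi$ by construction. Then I would check that $\pi'$ is a covering of quivers: since $\pi=f\pi'$ with $\pi$ and $f$ both coverings, the map induced by $\pi'$ on the arrows at $y$ equals the inverse of the bijection induced by $f$ composed with the bijection induced by $\pi$, hence is a bijection onto the arrows at $\pi'(y)$. To promote this to a covering of quivers with relations I would verify $k\pi'(J)=\ol I$ together with the lifting of minimal and zero relations. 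For the lifting property: given a minimal (or zero) relation $\rho'\in\ol I(a,b)$ and $\widetilde a\in\pi'^{-1}(a)$, its image $kf(\rho')$ is a minimal (resp.\ zero) relation of $I$, which lifts through the covering $\pi$ to some $\rho\in J$ at $\widetilde a$, and uniqueness of lifting for $f$ forces $k\pi'(\rho)=\rho'$. For the equality $k\pi'(J)=\ol I$, the inclusion $\supseteq$ follows by applying this lifting to the generators $u'\rho'v'$, while $\subseteq$ follows because a minimal (resp.\ zero) relation $\rho$ of $J$ has $k\pi(\rho)=kf(k\pi'(\rho))$ a minimal (resp.\ zero) relation of $I$, whose unique $f$-lift at $\pi'(a)$ is a relation of $\ol I$ and, by uniqueness of path lifting, coincides with $k\pi'(\rho)$.

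Finally, for the uniqueness clause (2) of Definition~\ref{universal-cover-def}, I would argue that any covering $\pi''$ with $f\pi''=\pi$ and $\pi''(\widetilde x)=\ol x$ must send each $y$ to the terminus of the $f$-lift of $\pi(w)$ at $\ol x$, because $\pi''(w)$ is precisely such a lift; unique walk lifting for $f$ then gives $\pi''=\pi'$ on vertices and, by the same token, on arrows. I expect the main obstacle to be step (iii) in combination with (ii): making precise that the homotopy relation $\sim_I$ is respected by lifting through $f$, which rests squarely on the defining lifting property of a covering of quivers with relations and on the two-sided preservation of minimality in Lemma~\ref{preserve-minimal-relation}; everything else is the routine bookkeeping of unique path lifting.
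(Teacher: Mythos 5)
Your proposal is correct and follows essentially the same route as the paper's proof: define $\pi'$ on vertices by lifting walks through $f$, use $\Pi(R,J)=\{1\}$ together with Lemma \ref{preserve-minimal-relation} and the minimal-relation lifting property of $f$ to get well-definedness, extend to arrows by unique lifting, and check the relation conditions and pointed uniqueness exactly as the paper does. The only difference is that you spell out a few routine verifications (e.g.\ that $\pi'$ is a covering of quivers, and both inclusions for $k\pi'(J)$) that the paper leaves implicit.
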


\begin{proof}
It suffices to show that for any covering $f:(Q',I')\rightarrow(Q,I)$ and for any fixed $x\in R_0$, $x'\in Q'_0$ with $\pi(x)=f(x')$, there exists a unique covering $\pi':(R,J)\rightarrow (Q',I')$ such that $f\pi'=\pi$ and $\pi'(x)=x'$.

For any $y\in R_0$, choose a walk $w$ in $R$ from $x$ to $y$. Then $\pi(w)$ is a walk in $Q$ starting at $\pi(x)$. Since $f$ is a covering, there exists a unique walk $w'$ in $Q'$ starting at $x'$ with $f(w')=\pi(w)$, and define $\pi'(y)$ as the terminal of $w'$. We need to show that the value of $\pi'(y)$ is independent to the choice of the walk $w$. If $v$ is another walk in $R$ from $x$ to $y$, since $\Pi(R,J)=\{1\}$, we have $w\sim_{J}v$. According to Lemma \ref{preserve-minimal-relation}, $\pi(w)\sim_{I}\pi(v)$. Let $v'$ be the walk in $Q'$ starting at $x'$ with $f(v')=\pi(v)$. Since $f:(Q',I')\rightarrow(Q,I)$ is a covering of quivers with relations, any minimal relation in $I$ can be lifted to a minimal relation in $I'$. Therefore $w'\sim_{I'}v'$, and $w'$ and $v'$ have the same terminal.

For each arrow $\alpha:y\rightarrow z$ in $R$, since $f(\pi'(y))=\pi(y)$ and since $f$ is a covering of quivers, there exists a unique arrow $\alpha'$ in $Q'$ starting at $\pi'(y)$ with $f(\alpha')=\pi(\alpha)$. We define $\pi'(\alpha)=\alpha'$. Then $\pi':R\rightarrow Q'$ becomes a covering of quivers with $f\pi'=\pi$ and $\pi'(x)=x'$.

For any $\rho\in J(y,z)$ with $\rho$ a minimal relation or a zero relation in $J$, by Lemma \ref{preserve-minimal-relation} $(k\pi)(\rho)$ is a minimal relation or a zero relation in $I$. Since $f:(Q',I')\rightarrow(Q,I)$ is a covering, there exists $z'\in Q'_0$ and $\rho'\in I'(\pi'(y),z')$ such that $(kf)(\rho')=(k\pi)(\rho)$. Since both $(k\pi')(\rho)$ and $\rho'$ are relations starting at $\pi'(y)$ and since $(kf)((k\pi')(\rho))=(kf)(\rho')$, we have $(k\pi')(\rho)=\rho'\in I'$. Therefore $k\pi':kR\rightarrow kQ'$ maps the morphisms in $I$ to the morphisms in $I'$.

For any $\rho'\in I'(y',z')$ with $\rho'$ a minimal relation or a zero relation in $I'$, and for any $y\in R_0$ with $\pi'(y)=y'$, by Lemma \ref{preserve-minimal-relation} $(kf)(\rho')$ is a minimal relation or a zero relation in $I$. Since $\pi:(R,J)\rightarrow(Q,I)$ is a covering, there exists $z\in R_0$ and $\rho\in J(y,z)$ such that $(k\pi)(\rho)=(kf)(\rho')$. Since both $(k\pi')(\rho)$ and $\rho'$ are relations starting at $y'$ and since $(kf)((k\pi')(\rho))=(kf)(\rho')$, we have $(k\pi')(\rho)=\rho'$. Similarly, for any $z\in R_0$ with $\pi'(z)=z'$, there exists $y\in R_0$ and $\rho\in J(y,z)$ such that $(k\pi')(\rho)=\rho'$. Therefore $\pi':(R,J)\rightarrow(Q',I')$ is a covering of quivers with relations such that $f\pi'=\pi$ and $\pi'(x)=x'$. Since $R$ is connected, it can be shown that such $\pi'$ is unique.
\end{proof}

\begin{Prop}  {\rm(\cite[Corollary 1.5]{MP1983})} \label{universal-cover}
Let $(Q,I)$ be a quiver with relations. Then there exists a universal cover $\pi:(\widetilde{Q},\widetilde{I})\rightarrow(Q,I)$, which is a Galois covering with group $\Pi(Q,I)$.
\end{Prop}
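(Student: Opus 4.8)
The plan is to build $(\widetilde Q,\widetilde I)$ explicitly out of homotopy classes of walks, verify that the natural projection is a covering of quivers with relations with trivial fundamental group (so that Lemma \ref{trivial-fundamental-group} applies directly), and finally realize $\Pi(Q,I)$ as a group of deck transformations exhibiting $\pi$ as a Galois covering. Fix a base vertex $x_0\in Q_0$. First I would let the vertices of $\widetilde Q$ be the $\sim_I$-classes $[w]$ of walks $w$ with $s(w)=x_0$, and set $\pi([w])=t(w)$. For each arrow $\alpha\colon a\to b$ of $Q$ and each vertex $[w]$ over $a$, I put an arrow $[w]\to[\alpha w]$ lifting $\alpha$; relation $(E3)$ of Definition \ref{homotopy-relation-with-respect-to-ideal} makes this well defined, the unique lifting of paths makes $\pi\colon\widetilde Q\to Q$ a covering of quivers, and $\widetilde Q$ is connected since the lift of $w$ from $[1_{x_0}]$ joins it to $[w]$. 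To define $\widetilde I$, I lift each minimal relation $\rho=\sum_i\lambda_i p_i\in I(a,b)$ at each vertex $[w]$ over $a$ to $\tilde\rho=\sum_i\lambda_i\tilde p_i$, where $\tilde p_i$ is the lift of $p_i$ starting at $[w]$; these share the common terminal $[p_1 w]$ precisely because minimality forces $p_i\sim_I p_j$ via $(E2)$. Taking $\widetilde I$ to be generated by all such $\tilde\rho$ (and lifts of zero relations), a direct check against Definition \ref{morphism-and-covering-of-quivers-with-relations} shows $\pi\colon(\widetilde Q,\widetilde I)\to(Q,I)$ is a covering of quivers with relations.

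Next I would prove $\Pi(\widetilde Q,\widetilde I)=\{1\}$. Given a closed walk $\tilde u$ at $[1_{x_0}]$, its projection $u=\pi(\tilde u)$ is closed at $x_0$; since $\tilde u$ is the unique lift of $u$ at $[1_{x_0}]$ and terminates at the vertex $[u]$, closedness gives $[u]=[1_{x_0}]$, i.e. $u\sim_I 1_{x_0}$. I then lift the homotopy realizing $u\sim_I 1_{x_0}$ step by step: each elementary move $(E1)$ or $(E2)$ downstairs lifts to the corresponding move upstairs, using that arrows lift uniquely and that minimal relations lift to minimal relations (Lemma \ref{preserve-minimal-relation}, together with the construction of $\widetilde I$), while every intermediate walk stays closed at $[1_{x_0}]$ because each is $\sim_I 1_{x_0}$. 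This yields $\tilde u\sim_{\widetilde I}1_{[1_{x_0}]}$, so the fundamental group upstairs is trivial and Lemma \ref{trivial-fundamental-group} identifies $\pi$ as a universal cover.

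To see that $\pi$ is Galois with group $\Pi(Q,I)$, I would let $[\gamma]\in\Pi(Q,I)$ act on vertices by $[\gamma]\cdot[w]=[w\gamma^{-1}]$ and on the lift of $\alpha$ at $[w]$ by the lift of $\alpha$ at $[\gamma]\cdot[w]$; a convention check makes this a left action by $(\widetilde Q,\widetilde I)$-automorphisms commuting with $\pi$, and since it merely permutes the generating lifts $\tilde\rho$ it preserves $\widetilde I$. The action is free, because $[w\gamma^{-1}]=[w]$ forces $\gamma\sim_I 1_{x_0}$ after cancelling $w^{-1}w\sim_I 1_{x_0}$; and it is transitive on each fibre $\pi^{-1}(a)$, since whenever $t(w)=t(w')$ the class $[\gamma]=[w'^{-1}w]$ sends $[w]$ to $[w']$. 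Hence $\widetilde Q/\Pi(Q,I)\cong Q$ with $\overline{\widetilde I}$ corresponding to $I$, and by Definition \ref{galois-covering} the projection $\pi$ is a Galois covering with group $\Pi(Q,I)$.

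I expect the main obstacle to be the interlocking construction of $\widetilde I$ and the homotopy-lifting argument for $\Pi(\widetilde Q,\widetilde I)=\{1\}$: one must define $\widetilde I$ so that it is exactly the ideal making $\pi$ a covering of quivers with relations (ensuring minimal relations lift in both directions), and then lift each elementary homotopy move while carefully tracking base points and checking that closedness is preserved at every intermediate stage. The remaining verifications (covering of quivers, freeness, transitivity on fibres) are routine once the conventions for concatenation are fixed.
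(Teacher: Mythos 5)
Your proposal is correct and follows essentially the same route as the paper: the same construction of $\widetilde Q$ from $\sim_I$-classes of walks based at a fixed vertex, the same lifting of minimal relations using $(E2)$ to identify the terminals $\widetilde{p_iu}$, the same deck action $[\gamma]\cdot[w]=[w\gamma^{-1}]$, and the same reduction to Lemma \ref{trivial-fundamental-group} via $\Pi(\widetilde Q,\widetilde I)=\{1\}$. The only cosmetic difference is that you take $\widetilde I$ to be generated by the lifted relations whereas the paper sets $\widetilde I(a,b)=(k\pi)^{-1}I(\pi(a),\pi(b))$; both choices satisfy the required covering conditions and the argument is unaffected.
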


\begin{proof}
We first construct a covering $(\widetilde{Q},\widetilde{I})$ of $(Q,I)$ as follows. Fix a vertex $x$ of $Q$. For a walk $u$ of $Q$, denote $\widetilde{u}$ the equivalence class of $u$ under $\sim_{I}$. Let
$$\widetilde{Q}_0:=\{\widetilde{u}\mid u \mbox{ is a walk of } Q\mbox{ with }s(u)=x\},$$
$$\widetilde{Q}_1:=\{(\widetilde{u},\alpha)\mid \widetilde{u}\in \widetilde{Q}_0, \alpha\in Q_1, s(\alpha)=t(u)\},$$
and define $s(\widetilde{u},\alpha)=\widetilde{u}, t(\widetilde{u},\alpha)=\widetilde{\alpha u}$. Then it is easy to see that $\pi: \widetilde{Q}\rightarrow Q$  $(\widetilde{u}\mapsto t(u), (\widetilde{u},\alpha)\mapsto \alpha)$ is a covering of quivers.

For $a,b\in \widetilde{Q}_0$, denote $\widetilde{I}(a,b)=(k\pi)^{-1}I(\pi(a),\pi(b))$. Then $\pi: \widetilde{Q}\rightarrow Q$ induces a morphism of quivers and relations $\pi: (\widetilde{Q},\widetilde{I})\rightarrow (Q,I)$. Moreover, the fundamental group $\Pi(Q,I)$ acts on $(\widetilde{Q},\widetilde{I})$: $\widetilde{v}\cdot \widetilde{u}=\widetilde{uv^{-1}}$ for $\widetilde{u}\in \widetilde{Q}_0, \widetilde{v}\in \Pi(Q,I)$. Clearly $\Pi(Q,I)$ acts freely on $\widetilde{Q}$. We will show that $\pi: (\widetilde{Q},\widetilde{I})\rightarrow (Q,I)$ is a Galois covering of quivers with relations with group $\Pi(Q,I)$.

First we show that $\pi: (\widetilde{Q},\widetilde{I})\rightarrow (Q,I)$ is a covering of quivers with relations. For a minimal relation $\rho=\sum_{i=1}^{n}\lambda_i p_i\in I(y,z)$ and for $\widetilde{u}\in \widetilde{Q}_0$ with $\pi(\widetilde{u})=y$, denote $q_i$ the path in $\widetilde{Q}$ such that $s(q_i)=\widetilde{u}$ and $\pi(q_i)=p_i$. By the definition of $\pi$, the terminal of $q_i$ is $\widetilde{p_i u}$. Since $\rho=\sum_{i=1}^{n}\lambda_i p_i$ is a minimal relation in $I$, $\widetilde{p_i u}=\widetilde{p_j u}$ for all $i,j\in\{1,2,\cdots,n\}$. Denote $\widetilde{v}=\widetilde{p_1 u}$, then $\widetilde{\rho}=\sum_{i=1}^{n}\lambda_i q_i$ belongs to $\widetilde{I}(\widetilde{u},\widetilde{v})$ and $(k\pi)(\widetilde{\rho})=\rho$. Similarly, for a minimal relation $\rho=\sum_{i=1}^{n}\lambda_i p_i\in I(y,z)$ and for $\widetilde{v}\in \widetilde{Q}_0$ with $\pi(\widetilde{v})=z$, there exists $\widetilde{u}\in\widetilde{Q}_0$ with $\pi(\widetilde{u})=y$ and $\widetilde{\rho}\in\widetilde{I}(\widetilde{u},\widetilde{v})$ such that $(k\pi)(\widetilde{\rho})=\rho$.

Then we show that $\pi: (\widetilde{Q},\widetilde{I})\rightarrow (Q,I)$ is a Galois covering with group $\Pi(Q,I)$. Since for any $\widetilde{v}\in \Pi(Q,I)$, the following diagram $$\xymatrix{
		(\widetilde{Q},\widetilde{I})\ar[rr]^{\widetilde{v}}\ar[dr]_{\pi} & & (\widetilde{Q},\widetilde{I})\ar[dl]^{\pi} \\
		 & (Q,I) &
	}$$
commutes, $\pi: (\widetilde{Q},\widetilde{I})\rightarrow (Q,I)$ induces a morphism of quivers with relations
$$\mu: (\widetilde{Q}/\Pi(Q,I),J)\rightarrow (Q,I),$$
where $J$ is the image of $\widetilde{I}$ under the natural projection $k\widetilde{Q}\rightarrow k(\widetilde{Q}/\Pi(Q,I))$. It is straightforward to show that $\mu: \widetilde{Q}/\Pi(Q,I)\rightarrow Q$ is an isomorphism of quivers. To show that $\mu$ is an isomorphism of quivers with relations, it suffices to show that for any vertices $a,b$ of $\widetilde{Q}/\Pi(Q,I)$, $k\mu:J(a,b)\rightarrow I(\mu(a),\mu(b))$ is bijective. Since $\mu$ is an isomorphism of quivers, the map $k\mu:J(a,b)\rightarrow I(\mu(a),\mu(b))$ is injective. For any $\rho\in I(\mu(a),\mu(b))$ with $\rho$ a minimal relation or a zero relation, since $\pi: (\widetilde{Q},\widetilde{I})\rightarrow (Q,I)$ is a covering, there exists  $\widetilde{u},\widetilde{v}\in\widetilde{Q}_0$ and $\widetilde{\rho}\in\widetilde{I}(\widetilde{u},\widetilde{v})$ such that $(k\pi)(\widetilde{\rho})=\rho$. Let $\xi$ be the image of $\widetilde{\rho}$ in $k(\widetilde{Q}/\Pi(Q,I))$, then $\xi\in J(a,b)$ and $\mu(\xi)=\rho$. Therefore $k\mu$ is also surjective.

According to Lemma \ref{trivial-fundamental-group}, to show that $\pi: (\widetilde{Q},\widetilde{I})\rightarrow (Q,I)$ is a universal cover of $(Q,I)$, it suffices to show that $\Pi(\widetilde{Q},\widetilde{I})=\{1\}$. Let $\gamma$ be a closed walk of $\widetilde{Q}$ at $\widetilde{1_x}$ and let $u=\pi(\gamma)$ be a closed walk of $Q$ at $x$. Then the terminal of $\gamma$ is $\widetilde{u}$, which implies that $\widetilde{u}=\widetilde{1_x}$ or $u\sim_I1_x$. Since $\pi: (\widetilde{Q},\widetilde{I})\rightarrow (Q,I)$ is a covering of quivers with relations, $\gamma\sim_{\widetilde{I}}1_{\widetilde{1_x}}$ and $\Pi(\widetilde{Q},\widetilde{I})=\{1\}$.
\end{proof}

\begin{Cor} \label{trivial-fundamental-group-the-converse} If $\pi: (R,J)\rightarrow (Q,I)$ is a universal cover of $(Q,I)$, then $\Pi(R,J)=\{1\}$.
\end{Cor}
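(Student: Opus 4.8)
The plan is to compare the given universal cover $\pi\colon (R,J)\to (Q,I)$ with the concrete universal cover $p\colon (\widetilde{Q},\widetilde{I})\to (Q,I)$ produced in Proposition \ref{universal-cover}, whose total space satisfies $\Pi(\widetilde{Q},\widetilde{I})=\{1\}$ by construction. If I can show that $(R,J)$ and $(\widetilde{Q},\widetilde{I})$ are isomorphic as coverings of $(Q,I)$, then an isomorphism of quivers with relations transports walks to walks and, by Lemma \ref{preserve-minimal-relation}, minimal relations to minimal relations, hence induces an isomorphism $\Pi(R,J)\xrightarrow{\sim}\Pi(\widetilde{Q},\widetilde{I})=\{1\}$, which is exactly the desired conclusion.

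To build such an isomorphism I first fix compatible base points. Pick $r\in R_0$; since $Q$ is connected and $p$ is a covering, $p$ is surjective on vertices, so I may choose $\widetilde{x}\in\widetilde{Q}_0$ with $p(\widetilde{x})=\pi(r)$. Now I apply the universal property twice. Since $(R,J)$ is a universal cover and $p$ is a covering, parts $(1)$ and $(2)$ of Definition \ref{universal-cover-def} yield a unique covering $\phi\colon (R,J)\to(\widetilde{Q},\widetilde{I})$ with $p\phi=\pi$ and $\phi(r)=\widetilde{x}$. Symmetrically, since $(\widetilde{Q},\widetilde{I})$ is a universal cover (Proposition \ref{universal-cover}) and $\pi$ is a covering, I obtain a unique covering $\psi\colon(\widetilde{Q},\widetilde{I})\to(R,J)$ with $\pi\psi=p$ and $\psi(\widetilde{x})=r$.

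Next I show these are mutually inverse. The composite $\phi\psi\colon(\widetilde{Q},\widetilde{I})\to(\widetilde{Q},\widetilde{I})$ satisfies $p(\phi\psi)=(p\phi)\psi=\pi\psi=p$ and $\phi\psi(\widetilde{x})=\phi(r)=\widetilde{x}$; but the identity $\mathrm{id}_{(\widetilde{Q},\widetilde{I})}$ also has these two properties, so the uniqueness clause $(2)$ in the universal property of $p$ — applied with the ``other covering'' taken to be $p$ itself and both base points equal to $\widetilde{x}$ — forces $\phi\psi=\mathrm{id}_{(\widetilde{Q},\widetilde{I})}$. The same argument with the roles reversed (using $\pi(\psi\phi)=\pi$ and $\psi\phi(r)=r$, together with the uniqueness in the universal property of $\pi$) gives $\psi\phi=\mathrm{id}_{(R,J)}$. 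Thus $\phi$ and $\psi$ are mutually inverse isomorphisms of quivers with relations, and $\Pi(R,J)=\{1\}$ follows as explained.

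I expect the main (though still mild) obstacle to be the careful bookkeeping of base points and the correct invocation of the uniqueness clause $(2)$: it must be applied with the tautological choice, namely $p$ regarded as a covering over itself, so that the identity becomes a competitor, which is precisely what pins down $\phi\psi$ as the identity. Everything else — surjectivity of $p$ to produce $\widetilde{x}$, and the fact that an isomorphism of coverings induces an isomorphism on fundamental groups through Lemma \ref{preserve-minimal-relation} — is routine.
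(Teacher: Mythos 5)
Your proposal is correct and follows the same route as the paper: compare $(R,J)$ with the universal cover $(\widetilde{Q},\widetilde{I})$ of Proposition \ref{universal-cover}, which has trivial fundamental group, and transport that fact through an isomorphism of covers. The paper simply asserts that any two universal covers are isomorphic, whereas you carry out the standard base-point/uniqueness argument establishing this; the details you supply (including the tautological application of the uniqueness clause) are all sound.
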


\begin{proof} According to Proposition \ref{universal-cover}, there is a universal cover $(\widetilde{Q},\widetilde{I})$ of $(Q,I)$ with trivial fundamental group.
However, any two universal covers of $(Q,I)$ are isomorphic, it follows that the fundamental group $\Pi(R,J)$ is also trivial.	
\end{proof}

Combining Lemma \ref{trivial-fundamental-group} with Corollary \ref{trivial-fundamental-group-the-converse}, we know that $\pi: (R,J)\rightarrow (Q,I)$ is a universal cover of $(Q,I)$ if and only if $\Pi(R,J)=\{1\}$. In particular, if $I$ is generated by zero relations, then $\widetilde{Q}$ is just the topological universal cover of $Q$ and it must be a (finite or infinite) tree since the fundamental group of any graph is a free group generated by the edges not belonging to a spanning tree of the graph (cf. \cite[Theorem 5.2 of Chapter six]{Ma1977}). Note also that if $\pi: (\widetilde{Q},\widetilde{I})\rightarrow (Q,I)$ is a universal cover as in Proposition \ref{universal-cover} and if $Q$ contains no double arrows, then $\Pi(Q,I)$ acts on $(\widetilde{Q},\widetilde{I})$ admissibly. Indeed, suppose that there are two different vertices $y,z\in x^{+}$ of $\widetilde{Q}$ with $y$, $z$ belonging to the same $\Pi(Q,I)$-orbit. Let $\alpha:x\rightarrow y$ and $\beta:x\rightarrow z$ be two arrows in $\widetilde{Q}$. Since $y$, $z$ belong to the same $\Pi(Q,I)$-orbit, the images of $y$, $z$ in $Q$ are equal. Since $Q$ contains no double arrows, the images of $\alpha$, $\beta$ in $Q$ are equal, contradicts the fact that $\widetilde{Q}\rightarrow Q$ is a covering of quivers.

\begin{Ex1}\label{Kron-example}
Let $Q$ be the quiver
$$\begin{tikzpicture}
\draw[->] (0.3,0.1) -- (1.7,0.1);
\draw[->] (0.3,-0.1) -- (1.7,-0.1);
\node at(0,0) {$y$};
\node at(2,0) {$x$};
\node at(1,0.3) {$a$};
\node at(1,-0.4) {$b$};
\end{tikzpicture}$$
and $I$ be the zero ideal in $kQ$. Then $(\widetilde{Q},\widetilde{I})$ is a universal cover of $(Q,I)$, where $\widetilde{Q}$ is the quiver
$$\begin{tikzpicture}
\draw[->] (-3.2,0.8) -- (-3.8,0.2);
\draw[->] (-2.8,0.8) -- (-2.2,0.2);
\draw[->] (-1.2,0.8) -- (-1.8,0.2);
\draw[->] (-0.8,0.8) -- (-0.2,0.2);
\draw[->] (0.8,0.8) -- (0.2,0.2);
\draw[->] (1.2,0.8) -- (1.8,0.2);
\draw[->] (2.8,0.8) -- (2.2,0.2);
\draw[->] (3.2,0.8) -- (3.8,0.2);
\node at(-4,0) {$-4$};
\node at(-3,1) {$-3$};
\node at(-2,0) {$-2$};
\node at(-1,1) {$-1$};
\node at(0,0) {$0$};
\node at(2,0) {$2$};
\node at(1,1) {$1$};
\node at(3,1) {$3$};
\node at(4,0) {$4$};
\node at(-5,0.5) {$\cdots$};
\node at(5,0.5) {$\cdots$};
\node at(-3.65,0.65) {$\alpha$};
\node at(-2.35,0.7) {$\beta$};
\node at(-1.65,0.65) {$\alpha$};
\node at(-0.35,0.7) {$\beta$};
\node at(0.35,0.65) {$\alpha$};
\node at(1.65,0.7) {$\beta$};
\node at(2.35,0.65) {$\alpha$};
\node at(3.65,0.7) {$\beta$};
\end{tikzpicture}$$
and $\widetilde{I}$ is the zero ideal in $k\widetilde{Q}$. Note that $\Pi(Q,I)$ is an infinite cyclic group generated by the walk $ba^{-1}$, which induces an automorphism of $\widetilde{Q}$ sending each vertex $i$ to $i-2$. So the action of $\Pi(Q,I)$ on $(\widetilde{Q},\widetilde{I})$ is not admissible.
\end{Ex1}

\begin{Ex1} \label{universal-cover-example1}
$$\xymatrix{
& & \cdot \ar[dr]^{\alpha_2} & & \\Q: & x\cdot \ar[ur]^{\alpha_1}\ar[dr]_{\alpha_3} & & \cdot & I_1=\langle \alpha_2\alpha_1\rangle, &I_2=\langle \alpha_2\alpha_1-\alpha_4\alpha_3\rangle. \\ & & \cdot \ar[ur]_{\alpha_4} & & &
 }$$
In case the ideal $I_1$, $\widetilde{Q}$ is the topological universal cover of $Q$ and it follows that the universal cover of $(Q,I_1)$ is $(\widetilde{Q},\widetilde{I_1})$, where $\widetilde{Q}$ is given by
$$\xymatrix{
\cdots\quad \cdot \ar[r]^{\widetilde{\alpha_1}}&\cdot \ar[r]^{\widetilde{\alpha_2}} & \cdot& \cdot\ar[l]_{\widetilde{\alpha_4}}&\cdot\ar[l]_{\widetilde{\alpha_3}}\ar[r]^{\widetilde{\alpha_1}}&\cdot \ar[r]^{\widetilde{\alpha_2}} & \cdot& \cdot\ar[l]_{\widetilde{\alpha_4}}&\cdot\ar[l]_{\widetilde{\alpha_3}} \quad \cdots
}$$
and  $\widetilde{I_1}$ is generated by all elements of the form $\widetilde{\alpha_2}\widetilde{\alpha_1}$.

\noindent In case the ideal $I_2$, $\widetilde{Q}_0=\{\widetilde{1_x}, \widetilde{\alpha_1}, \widetilde{\alpha_2\alpha_1}, \widetilde{\alpha_3}\}$ and it follows that $\widetilde{Q}\cong Q$ and $(\widetilde{Q},\widetilde{I_2})\cong(Q,I_2)$.
\end{Ex1}

\begin{Ex1} {\rm(by Riedtmann)} \label{universal-cover-example2}
$$\xymatrix{
Q: \quad & 1\ar@/^/[r]^{\beta}\ar@(dl,ul)^{\alpha} & 2\ar@/^/[l]^{\gamma} & I_1=\langle\alpha^2-\gamma\beta,\beta\gamma-\beta\alpha\gamma,\alpha^4\rangle, & I_2=\langle\alpha^2-\gamma\beta,\beta\gamma\rangle.
 }$$
The universal cover of $(Q,I_1)$ is itself since $\Pi(Q,I_1)=\{1\}$.

\noindent The universal cover of $(Q,I_2)$ is $(\widetilde{Q},\widetilde{I_2})$, where $\widetilde{Q}$ is given by
$$\xymatrix{
\cdots&\widetilde{1}\ar[r]^{\widetilde{\beta}}\ar[dr]^{\widetilde{\alpha}}& \widetilde{2}\ar[r]^{\widetilde{\gamma}}&\widetilde{1}\ar[r]^{\widetilde{\beta}}\ar[dr]^{\widetilde{\alpha}}& \widetilde{2}\ar[r]^{\widetilde{\gamma}}&\widetilde{1} \cdots\\
\cdots\widetilde{1}\ar[r]_{\widetilde{\beta}}\ar[ur]^{\widetilde{\alpha}}&\widetilde{2}\ar[r]_{\widetilde{\gamma}}&\widetilde{1}\ar[r]_{\widetilde{\beta}}\ar[ur]^{\widetilde{\alpha}}&
\widetilde{2}\ar[r]_{\widetilde{\gamma}}&\widetilde{1}\ar[r]_{\widetilde{\beta}}\ar[ur]^{\widetilde{\alpha}}&\cdots
}$$
and $\widetilde{I_2}$ is generated by all elements of the form $\widetilde{\alpha}^2-\widetilde{\gamma}\widetilde{\beta}, \widetilde{\beta}\widetilde{\gamma}$. Note that $\Pi(Q,I_2)\cong \mathbb{Z}$ and it is generated by $[\alpha]$.
\end{Ex1}

Note that when the characteristic of the field $k$ is not equal to $2$,  $kQ/I_2\cong kQ/I_1$ (by $e_i\mapsto e_i, \alpha\mapsto \alpha-\frac{1}{2}\alpha^2, \beta\mapsto \beta-\beta\alpha,\gamma\mapsto\gamma$) in Example \ref{universal-cover-example2}, therefore the universal cover depends on the choice of an ideal.

\subsection{Standardness}
\

Recall that for a locally bounded category $\Lambda$, a {\it finitely generated} $\Lambda$-module $\ell$ is a contravariant functor $\ell\colon\Lambda\rightarrow \mathrm{MOD} k$ such that $\sum_{x\in \Lambda}\mathrm{dim}_{k}\ell(x)<\infty$, where $\mathrm{MOD} k$ is the category of $k$-vector spaces. We denote by mod$\Lambda$ the category of all finitely generated $\Lambda$-modules. Note that if $\Lambda=kQ/I$ for a quiver with relations $(Q,I)$, then mod$\Lambda$ is equivalent to the category of finite dimensional right $A$-modules, where $A$ is the quiver algebra $kQ/I$ defined by $(Q,I)$. We also denote mod$\Lambda$ as mod$(Q,I)$ if $\Lambda=kQ/I$.

\begin{Def} {\rm(\cite[Definition 2.2]{BG1982})}  \label{locally-representation-finite-category}
A locally representation-finite category is a locally bounded category $\Lambda$ such that for every object $x$ of $\Lambda$, the number of isomorphism classes of finitely generated indecomposable $\Lambda$-module $\ell$ such that $\ell(x)\neq 0$ is finite.
\end{Def}

Note that if $\Lambda$ is locally representation-finite and connected, then it admits the Auslander-Reiten sequences and its Auslander-Reiten quiver is also connected.

For a translation quiver $\Gamma$, we let $k\Gamma$ be its path category, and let $k(\Gamma)$ be the mesh category of $\Gamma$, which is a factor category of $k\Gamma$ by the mesh ideal. For a locally bounded category $\Lambda$, we denote by ind$\Lambda$ the category formed by chosen representatives of the finitely generated indecomposable modules and by $[\mathrm{ind}\Lambda]$ the set of isomorphism classes of the finitely generated indecomposable modules. It is easy to see that a locally bounded category $\Lambda$ is locally representation-finite if and only if ind$\Lambda$ is locally bounded.

Since two different admissible ideal presentations $(Q,I_1)$, $(Q,I_2)$ of a locally bounded category $\Lambda$ may lead to two different universal covers (see Example \ref{universal-cover-example2}), we can not define the universal cover of a locally bounded category $\Lambda$ directly. However, if $\Lambda$ is locally representation-finite, then there exists a locally representation-finite category $\widetilde{\Lambda}$ (which is called the universal cover of $\Lambda$) which is determined by $\Lambda$ up to isomorphism together with a covering functor $\widetilde{\Lambda}\rightarrow\Lambda$ (see \cite[Section 2.1]{Ga1981}), which can be described as follows.

Let $\Lambda$ be a locally representation-finite category. According to \cite[Section 2.1]{Ga1981}, the universal cover $\widetilde{\Lambda}$ of $\Lambda$ is defined to be the full subcategory of $k(\widetilde{\Gamma}_{\Lambda})$ formed by projective vertices of $\widetilde{\Gamma}_{\Lambda}$, where $\widetilde{\Gamma}_{\Lambda}$ denotes the universal cover of the Auslander-Reiten quiver $\Gamma_{\Lambda}$ of $\Lambda$ (see \cite[Section 1.3]{BG1982}). Moreover, there exists a covering functor $F:\widetilde{\Lambda}\rightarrow\Lambda$, which is given by the commutative diagram
$$\xymatrix@R=1.3pc{
	k(\widetilde{\Gamma}_{\Lambda})\ar[r]^{E} & \mathrm{ind}\Lambda \\
    \widetilde{\Lambda}\ar[r]^{F}\ar@{^(-_>}[u]^{\mathrm{incl.}} &\Lambda\ar@{^(-_>}[u]_{(-)^{*}}, \\
}$$
where $(-)^{*}:\Lambda\rightarrow\mathrm{ind}\Lambda$, $a\mapsto\Lambda(-,a)$ is the Yoneda embedding and $E:k(\widetilde{\Gamma}_{\Lambda})\rightarrow \mathrm{ind}\Lambda$ is a well-behaved covering functor (see \cite[Section 3.1]{BG1982}). However, the above covering functor $F:\widetilde{\Lambda}\rightarrow\Lambda$ is in general not a Galois covering of locally bounded categories.

\begin{Ex1}\label{universal-cover-example3}
Let $k=\overline{k}$ be a field of characteristic $2$ and let $\Lambda$ be the locally representation-finite category given by the quiver with relations $(Q,I_1)$ in Example \ref{universal-cover-example2}. Since $\Lambda$ and $\Lambda_0=kQ/I_2$ have isomorphic Auslander-Reiten quivers, where $(Q,I_2)$ is the quiver with relations given in Example \ref{universal-cover-example2}, the universal covers of $\Lambda$ and $\Lambda_0$ are isomorphic. Since $\Lambda_0$ is a standard RFS category of type $(D_6,\frac{1}{3},1)$ (see \cite{Asashiba2003}), according to Remark \ref{remark-universal-cover-of-standard-lrf-category}, the universal cover $\widetilde{\Lambda_0}$ of $\Lambda_0$ is given by the quiver with relations $(\widetilde{Q},\widetilde{I_2})$ in Example \ref{universal-cover-example2}. Therefore the universal cover $\widetilde{\Lambda}$ of $\Lambda$ is also given by the quiver with relations $(\widetilde{Q},\widetilde{I_2})$.

Note that there is no covering map from $(\widetilde{Q},\widetilde{I_2})$ to $(Q,I_1)$. However, there exists a covering functor $F:\widetilde{\Lambda}\rightarrow\Lambda$ which can be described as follows. Let $R$ be the quiver
\begin{center}
\tikzset{every picture/.style={line width=0.75pt}}
\begin{tikzpicture}[x=30pt,y=30pt,yscale=1,xscale=1]
\node at(0,0) {$x'$};
\node at(2,0) {$y'$};
\node at(0,2) {$x$};
\node at(2,2) {$y$};
\node at(-0.5,1) {$\alpha_1$};
\node at(0.5,1) {$\alpha_2$};
\node at(1,2.5) {$\beta_1$};
\node at(1,1.5) {$\gamma_1$};
\node at(1,0.5) {$\beta_2$};
\node at(1,-0.5) {$\gamma_2$};
\draw[->]    (0.3,0.2)--(1.7,0.2) ;
\draw[->]    (1.7,-0.2)--(0.3,-0.2) ;
\draw[->]    (0.3,2.2)--(1.7,2.2) ;
\draw[->]    (1.7,1.8)--(0.3,1.8) ;
\draw[->]    (-0.2,1.7)--(-0.2,0.3) ;
\draw[->]    (0.2,0.3)--(0.2,1.7) ;
\end{tikzpicture}
\end{center}
and $J=\langle\alpha_2\alpha_1-\gamma_1\beta_1,\alpha_1\alpha_2-\gamma_2\beta_2,\beta_1\gamma_1,\beta_2\gamma_2\rangle$. Then $\Lambda'=kR/J$ is a locally representation-finite category and the corresponding algebra $A'$ of $\Lambda'$ is an RFS algebra of type $(D_6,\frac{2}{3},1)$ (see \cite{Asashiba2003}). Denote by $G$ the subgroup of $\Pi(Q,I_2)$ generated by $[\alpha^2]$. Then $G$ acts on $(\widetilde{Q},\widetilde{I_2})$ and there exists a Galois covering $(\widetilde{Q},\widetilde{I_2})\rightarrow(R,J)$ with group $G$, which induces a Galois covering functor $H:\widetilde{\Lambda}\rightarrow\Lambda'$. Moreover, we have a covering functor $H':\Lambda'\rightarrow\Lambda$, where $H'(x)=H'(x')=1$, $H'(y)=H'(y')=2$, $H'(\alpha_1)=\alpha-\alpha^2$, $H'(\alpha_2)=\alpha$, $H'(\beta_1)=H'(\beta_2)=\beta-\beta\alpha$, $H'(\gamma_1)=H'(\gamma_2)=\gamma$. Then $F=H'H$ becomes a covering functor from $\widetilde{\Lambda}$ to $\Lambda$.
\end{Ex1}

\begin{Def} {\rm(\cite[Definition 5.1]{BG1982})} \label{standard l.r.f. category}
A locally representation-finite category $\Lambda$ is said to be standard if $k(\Gamma_{\Lambda})$ is isomorphic to $\mathrm{ind}\Lambda$, where $\Gamma_{\Lambda}$ is the Auslander-Reiten quiver of $\Lambda$.
\end{Def}

\begin{Rem1}\label{remark-universal-cover-of-standard-lrf-category}
Let $\Lambda$ be a standard locally representation-finite category. According to \cite[Theorem 3.8]{MP1983}, there exists an admissible ideal presentation $kQ/I$ of $\Lambda$ such that the quiver $\widetilde{Q}$ of the universal cover $(\widetilde{Q},\widetilde{I})$ of $(Q,I)$ has no oriented cycles and the universal cover $\widetilde{\Lambda}$ of $\Lambda$ is isomorphic to $k\widetilde{Q}/\widetilde{I}$. Moreover, if $kQ/J$ is another admissible ideal presentation of $\Lambda$ such that the quiver $\widetilde{R}$ of the universal cover $(\widetilde{R},\widetilde{J})$ of $(Q,J)$ has no oriented cycles, then according to \cite[Lemma 2.3 and Theorem 2.7]{MP1983}, the categories $k\widetilde{Q}/\widetilde{I}$ and $k\widetilde{R}/\widetilde{J}$ are isomorphic. Therefore for each admissible ideal presentation $kQ/J$ of $\Lambda$ such that the quiver $\widetilde{R}$ of the universal cover $(\widetilde{R},\widetilde{J})$ of $(Q,J)$ has no oriented cycles, the category $k\widetilde{R}/\widetilde{J}$ is isomorphic to the universal cover $\widetilde{\Lambda}$ of $\Lambda$.
\end{Rem1}

The standardness for a general locally bounded category is defined as follows.

\begin{Def}  {\rm(\cite[Section 1]{Sk1989})} \label{simply-connected-and-standard}
Let $\Lambda$ be a connected locally bounded category with Gabriel quiver $Q$.
\begin{itemize}
\item $\Lambda$ is called simply connected if $Q$ contains no oriented cycles and for any admissible ideal $I$ of $kQ$ with $\Lambda\cong kQ/I$, $\Pi(Q,I)=\{1\}$;
\item $\Lambda$ is called standard if it admits a simply connected Galois covering $\widetilde{\Lambda}\rightarrow\Lambda$, or equivalently if there exists some admissible ideal $I$ of $kQ$ with $\Lambda\cong kQ/I$ such that $(Q,I)$ has a simply connected universal cover $(\widetilde{Q},\widetilde{I})$.
\end{itemize}
\end{Def}

It is known that if $\Lambda$ is a locally representation-finite category, then the above two definitions coincide (cf. \cite{BG1982,Br-G1983}).

\begin{Ex1}
\begin{itemize}
\item[$(1)$] The locally bounded category given by the quiver with relations $(\widetilde{Q},\widetilde{I_2})$ in Example \ref{universal-cover-example2} is simply connected (by Corollary \ref{trivial-fundamental-group-the-converse} and Remark \ref{criterion-simply-connected} below), and therefore the locally bounded category $kQ/I_2$ given by the quiver with relations $(Q,I_2)$ in Example \ref{universal-cover-example2} is standard.
\item[$(2)$]  The locally bounded category $\Lambda=kQ/I_1$ given by the quiver with relations $(Q,I_1)$ in Example \ref{universal-cover-example2} is not simply connected since $Q$ contains a loop. However, when $\mathop{\rm char} k\neq 2$, since $\Lambda\cong kQ/I_2$, it is standard by $(1)$.
\item[$(3)$] When $\mathop{\rm char} k=2$, it is known that the locally bounded category $kQ/I_1$ is not standard by Riedtmann's famous work (see \cite{Riedtmann1983}).
\end{itemize}
\end{Ex1}

\begin{Rem1}  {\rm(see \cite[Lemma 4.21]{LL2025})}  \label{criterion-simply-connected}
Let $\Lambda=kQ/I$ be a connected locally bounded category, where $Q$ is the Gabriel quiver $Q$ and $I$ is an admissible ideal of $kQ$, such that $\Pi(Q,I)=\{1\}$ and $Q$ contains no oriented cycles. If for each arrow $x\xrightarrow{\alpha}y$ of $Q$, $\alpha$ is the only path of $Q$ from $x$ to $y$, then $\Lambda$ is simply connected.
\end{Rem1}

\section{Push-down and pull-up}

The contents of this section are mainly based on \cite[Section I.10]{E1990} and \cite{Ga1981}. Throughout this section all the categories are assumed to be locally bounded $k$-categories.

	\subsection{Definitions of push-down and pull-up}\label{sec:push-down-pull-up}
\

Recall that for a connected, locally bounded $k$-category $\Lambda$, denote by $\mathrm{MOD} \Lambda$ the category of $\Lambda$-modules, that is, the category of contravariant functors $M\colon\Lambda\rightarrow \mathrm{MOD} k$, and $\mm\Lambda$ the category of {\it locally finite dimensional} $\Lambda$-modules, that is, the category of $\Lambda$-modules with $\mathrm{dim}_{k}M(x)<\infty$ for all $x\in \Lambda$, and $\mo \Lambda$ the category of finitely generated $\Lambda$-modules, that is, the category of  $\Lambda$-modules with $\sum_{x\in \Lambda}\mathrm{dim}_{k}M(x)<\infty$. For convenience, for each object or morphism $x$ in $\Lambda$, we denote $M(x)$ by $M_x$ when $M$ is a module of quiver with relations.

Let $\pi:( R,L)\rightarrow( Q,I)$ be a covering of quivers with relations which is defined in Definition \ref{morphism-and-covering-of-quivers-with-relations}. Denote by $F: kR/L\rightarrow kQ/I$ the covering functor induced by $\pi$ (see Corollary \ref{Cor}). This covering functor induces the corresponding {\it push-down} functor:
$$F_{\lambda}:\mathrm{MOD}( R,L)\rightarrow\mathrm{MOD}( Q,I)$$
To be more specific, let $M\in \mathrm{MOD}( R,L)$, then $V=F_{\lambda}(M)$ is defined as follows: For any vertex $a\in  Q_0$, set
$$V_a:=\bigoplus_{\pi x=a}M_x.$$
Suppose $a\xrightarrow{\alpha}b$ is an arrow in $ Q$. For any $x\in R_0$ with $\pi x=b$ there is a unique arrow $\tilde{\alpha}$ of $ R$ ending at $x$ with $\pi(\tilde{\alpha})=\alpha$ (and necessarily $\pi$ takes the start point of $\tilde{\alpha}$ to $a$). Then $V_\alpha:V_b\rightarrow V_a$ is the linear transformation which takes $m\in M_x$ to $M_{\tilde{\alpha}}(m)\in M_y$ for each $y$ with $y=s(\tilde{\alpha})$. For a morphism $f: M\rightarrow N$ in $\mathrm{MOD}( R,L)$, we define $F_\lambda(f): F_{\lambda}(M)\rightarrow F_{\lambda}(N)$ in $\mathrm{MOD}( Q,I)$ via $$\bigoplus_{\pi x=a}f_x: \bigoplus_{\pi x=a}M_x\rightarrow \bigoplus_{\pi x=a}N_x.$$ In particular, $F_{\lambda}$ induces a functor $$F_{\lambda}:\mo( R,L)\rightarrow\mo( Q,I).$$

The covering map can also induce the corresponding {\it pull-up} functor:
$$F_\bullet :\mathrm{MOD}( Q,I)\rightarrow\mathrm{MOD}(R,L)$$
To be more specific, let $V\in \mathrm{MOD}( Q,I)$, then $M=F_\bullet (V)$ is defined as follows: For any vertex $x\in  R_0$ and any arrow $\tilde{\alpha}\in  R_1$, set
$$M_x:=V_{\pi x},\; M_{\tilde{\alpha}}:=V_{\pi\tilde{\alpha}}.$$
For a morphism $\varphi: V\rightarrow W$ in $\mathrm{MOD}( Q,I)$, we define $F_\bullet(\varphi): F_\bullet(V)\rightarrow F_\bullet(W)$ in $\mathrm{MOD}( R,L)$ via $\varphi_{\pi x}: V_{\pi x}\rightarrow W_{\pi x}$. Clearly $F_{\bullet}$ induces a functor $$F_{\bullet}: \mm( Q,I)\rightarrow\mm( R,L).$$ In general, the pull-up functor cannot induce a functor from $\mo( Q,I)$ to $\mo( R,L)$ (see for example in Example \ref{exa:pull-up-push-down}).

It is obvious that $F_{\lambda}$ and $F_\bullet $ are exact, and $(F_{\lambda},F_\bullet )$ is an adjoint pair, that is, there is a natural isomorphism (for each $(R,L)$-module $M$ and each $(Q,I)$-module $V$)
$$\ho_{(Q,I)}(F_\lambda M, V)\cong \ho_{(R,L)}(M, F_\bullet V).$$

\begin{Ex1}\textnormal{({Example \ref{Kron-example} revisited})}\label{exa:pull-up-push-down}
Let $\pi:(\widetilde{Q},\widetilde{I})\rightarrow( Q,I)$ be a covering of quivers with relations. Consider $(Q,I)$-module $V$, whose representation is given by
$$\begin{tikzpicture}
\draw[<-] (0.3,0.1) -- (1.7,0.1);
\draw[<-] (0.3,-0.1) -- (1.7,-0.1);
\node at(0,0) {$k$};
\node at(2,0) {$k$};
\node at(1,0.3) {$1$};
\node at(1,-0.4) {$0$};
\end{tikzpicture}$$
and the $(\widetilde{Q},\widetilde{I})$-module $N$, whose representation is given by
$$\begin{tikzpicture}
\draw[<-] (-3.2,0.8) -- (-3.8,0.2);
\draw[<-] (-2.8,0.8) -- (-2.2,0.2);
\draw[<-] (-1.2,0.8) -- (-1.8,0.2);
\draw[<-] (-0.8,0.8) -- (-0.2,0.2);
\draw[<-] (0.8,0.8) -- (0.2,0.2);
\draw[<-] (1.2,0.8) -- (1.8,0.2);
\draw[<-] (2.8,0.8) -- (2.2,0.2);
\draw[<-] (3.2,0.8) -- (3.8,0.2);
\node at(-4,0) {$k$};
\node at(-3,1) {$0$};
\node at(-2,0) {$k$};
\node at(-1,1) {$0$};
\node at(0,0) {$k$};
\node at(2,0) {$k$};
\node at(1,1) {$0$};
\node at(3,1) {$0$};
\node at(4,0) {$k$};
\node at(-5,0.5) {$\cdots$};
\node at(5,0.5) {$\cdots$};
\node at(-3.65,0.65) {$0$};
\node at(-2.35,0.7) {$0$};
\node at(-1.65,0.65) {$0$};
\node at(-0.35,0.7) {$0$};
\node at(0.35,0.65) {$0$};
\node at(1.65,0.7) {$0$};
\node at(2.35,0.65) {$0$};
\node at(3.65,0.7) {$0$};
\end{tikzpicture}$$
Then $F_\bullet V$ is a $(\widetilde{Q},\widetilde{I})$-module, whose representation is given by
$$\begin{tikzpicture}
\draw[<-] (-3.2,0.8) -- (-3.8,0.2);
\draw[<-] (-2.8,0.8) -- (-2.2,0.2);
\draw[<-] (-1.2,0.8) -- (-1.8,0.2);
\draw[<-] (-0.8,0.8) -- (-0.2,0.2);
\draw[<-] (0.8,0.8) -- (0.2,0.2);
\draw[<-] (1.2,0.8) -- (1.8,0.2);
\draw[<-] (2.8,0.8) -- (2.2,0.2);
\draw[<-] (3.2,0.8) -- (3.8,0.2);
\node at(-4,0) {$k$};
\node at(-3,1) {$k$};
\node at(-2,0) {$k$};
\node at(-1,1) {$k$};
\node at(0,0) {$k$};
\node at(2,0) {$k$};
\node at(1,1) {$k$};
\node at(3,1) {$k$};
\node at(4,0) {$k$};
\node at(-5,0.5) {$\cdots$};
\node at(5,0.5) {$\cdots$};
\node at(-3.65,0.65) {$1$};
\node at(-2.35,0.7) {$0$};
\node at(-1.65,0.65) {$1$};
\node at(-0.35,0.7) {$0$};
\node at(0.35,0.65) {$1$};
\node at(1.65,0.7) {$0$};
\node at(2.35,0.65) {$1$};
\node at(3.65,0.7) {$0$};
\end{tikzpicture}$$
and $F_\lambda N=\oplus_{\Pi(Q,I)}S_x$, where $S_x$ is the simple module at $x$ in $Q$.
\end{Ex1}

Next, we prove that these two functors are faithful.

\begin{Lem}
$F_\lambda$ and $F_\bullet$ are faithful.
\end{Lem}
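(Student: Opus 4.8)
The plan is to argue directly from the explicit vertex-wise descriptions of the two functors on morphisms. In both cases a morphism of representations is nothing but a family of linear maps indexed by the vertices, and each functor acts on such a family either by forming direct sums (for $F_\lambda$) or by reindexing along $\pi$ (for $F_\bullet$); faithfulness should then drop out of the observation that a map built block-by-block is zero exactly when each block is zero.

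First I would treat $F_\lambda$. A morphism $f\colon M\to N$ in $\mathrm{MOD}(R,L)$ is the family $(f_x)_{x\in R_0}$ of its components $f_x\colon M_x\to N_x$, and by the definition of the push-down its image $F_\lambda(f)$ is, at each vertex $a\in Q_0$, the block-diagonal map $\bigoplus_{\pi x=a}f_x$ on $\bigoplus_{\pi x=a}M_x$. Since a direct sum of linear maps vanishes precisely when every summand vanishes, the hypothesis $F_\lambda(f)=0$ forces $f_x=0$ for each $x$ with $\pi x=a$. As every vertex $x\in R_0$ satisfies $\pi x=a$ for exactly one $a\in Q_0$, this yields $f_x=0$ for all $x$, hence $f=0$. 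Thus $F_\lambda$ is faithful with essentially no computation.

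For $F_\bullet$ the bookkeeping is dual: a morphism $\varphi\colon V\to W$ in $\mathrm{MOD}(Q,I)$ is the family $(\varphi_a)_{a\in Q_0}$, and by the definition of the pull-up its image $F_\bullet(\varphi)$ is, at each $x\in R_0$, simply the component $\varphi_{\pi x}\colon V_{\pi x}\to W_{\pi x}$. Hence $F_\bullet(\varphi)=0$ says exactly that $\varphi_{\pi x}=0$ for all $x\in R_0$. To conclude $\varphi=0$ I need every vertex $a\in Q_0$ to arise as $\pi x$ for some $x\in R_0$, i.e. that $\pi$ is surjective on vertices. This holds because $\pi\colon R\to Q$ is a covering of quivers and $Q$ is connected (all our categories being connected and locally bounded), and a covering of quivers onto a connected quiver is surjective on both vertices and arrows by the unique path-lifting property recorded in Section~I. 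Therefore $\varphi_a=\varphi_{\pi x}=0$ for every $a\in Q_0$, giving $\varphi=0$, so $F_\bullet$ is faithful as well.

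The only point requiring any attention — and the closest thing to an obstacle — is this surjectivity of $\pi$ on objects in the $F_\bullet$ case; without it a nonzero morphism supported on vertices outside the image of $\pi$ would lie in the kernel of $F_\bullet$, and faithfulness could fail. Once that is invoked, everything else is immediate from the definitions, so I do not anticipate any genuine difficulty in carrying out the argument.
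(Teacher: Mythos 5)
Your argument for $F_\lambda$ is essentially the paper's own proof (the paper phrases it as $F_\lambda(f)=F_\lambda(g)\Rightarrow f=g$ rather than via the kernel, but the content is identical), and it is correct. The paper explicitly omits the $F_\bullet$ case, which you supply correctly, rightly identifying that the one nontrivial ingredient is surjectivity of $\pi$ on vertices, guaranteed here because $\pi$ is a covering of quivers onto the connected quiver $Q$.
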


\begin{proof}
	We only verify the case of the push-down functor $F_\lambda$. Consider two morphisms $f,g\in\mathrm{Hom}_{\mathrm{MOD}(R,L)}(M,N)$ such that $F_\lambda(f)=F_\lambda(g)$. For any $x\in R_0$, we have $$F_\lambda(f)_{\pi x}=F_\lambda(g)_{\pi x}.$$ Since $F_\lambda(M)_{\pi x}=\bigoplus_{\pi y=\pi x}M_y$ and $F_\lambda(N)_{\pi x}=\bigoplus_{\pi y=\pi x}N_y$, this equality implies that the restrictions of $f$ and $g$ to each component $M_y$ must coincide. In particular, taking $y=x$, we obtain $f_x=g_x$. Because this holds for every $x\in R_0$, it follows that $f=g$. Hence, $F_\lambda$ is faithful.
\end{proof}

We use the following example to demonstrate that these functors are not full, even when $(Q,I)$ corresponds to a representation-finite algebra.

\begin{Ex1}
	Let $Q$ be the quiver
	$$\begin{tikzcd}
\bullet \arrow["\alpha"', loop, distance=2em, in=215, out=145]
\end{tikzcd}$$
with the ideal $I=\langle\alpha^2\rangle$. The universal cover $(R,L)$ of $(Q,I)$ is given by the infinite linear quiver
$$
\begin{tikzcd}
\cdots \arrow[r, "\widetilde{\alpha}"] & \bullet \arrow[r, "\widetilde{\alpha}"] & \bullet \arrow[r, "\widetilde{\alpha}"] & \bullet \arrow[r, "\widetilde{\alpha}"] & \bullet \arrow[r, "\widetilde{\alpha}"] & \bullet \arrow[r, "\widetilde{\alpha}"] & \cdots
\end{tikzcd}$$
with relations generated by $\widetilde{\alpha}^2=0$.

Consider $(R,L)$-modules $M,N$, whose represntations are given by
$$
\begin{tikzcd}
\cdots & 0 \arrow[l, "0"'] & 0 \arrow[l, "0"'] & k \arrow[l, "0"'] & 0 \arrow[l, "0"'] & 0 \arrow[l, "0"'] & \cdots \arrow[l, "0"']
\end{tikzcd}$$
and $$
\begin{tikzcd}
\cdots & 0 \arrow[l, "0"'] & 0 \arrow[l, "0"'] & 0 \arrow[l, "0"'] & k \arrow[l, "0"'] & 0 \arrow[l, "0"'] & \cdots \arrow[l, "0"']
\end{tikzcd}$$ respectively. Then $\mathrm{Hom}_{\mathrm{MOD}(R,L)}(M,N)=0$ and $\mathrm{Hom}_{\mathrm{MOD}(Q,I)}(F_\lambda M,F_\lambda N)\cong k$. Therefore, $F_\lambda$ is not full.

Consider $(Q,I)$-modules $V$, whose represntation is given by
	$$\begin{tikzcd}
k \arrow["0"', loop, distance=2em, in=215, out=145]
\end{tikzcd}.$$ Then $\mathrm{Hom}_{\mathrm{MOD}(Q,I)}(V,V)\cong k$ and $\mathrm{Hom}_{\mathrm{MOD}(R,L)}(F_\bullet V,F_\bullet V)\cong \prod_{\mathbb{Z}}k$. Therefore, $F_\bullet$ is not full.
\end{Ex1}

\subsection{Properties of push-down and pull-up}
\

Let $\pi:( R,L)\rightarrow( Q,I)$ be a covering of quivers with relations. Assume now that $\pi$ is a Galois covering with group $G$, which means that there exists an isomorphism $\nu:(R/G, \overline{L})\xrightarrow{\sim} (Q,I)$.

\begin{Def}
	Let $M\in\mo( R,L)$ and $g\in G$. Then the translate $^gM$ is the module of $(R,L)$ with
	$$(^gM)_x:=M_{g^{-1}x}\;(x\in R_0);\;\;(^gM)_\alpha:=M_{g^{-1}\alpha}\;(\alpha\in R_1).$$
\end{Def}

\begin{Lem}{\rm(\cite[Lemma 3.2]{Ga1981} and \cite[Lemma I.10.5(c)]{E1990})}\label{pushdown-pull-up}
	For each $M\in\mo( R,L)$ and each $g\in G$, we have $F_{\lambda} \;^gM\cong F_{\lambda}M$ and $\oplus_{h\in G} \;^hM\cong F_{\bullet}F_\lambda M$ canonically.
\end{Lem}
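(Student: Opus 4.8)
The plan is to construct both isomorphisms explicitly, first at each vertex and then checking compatibility with the arrow actions, using throughout the single structural fact that, since $G$ acts freely on $R$ and $Q=R/G$, the fibre $\pi^{-1}(a)$ over a vertex $a$ of $Q$ is exactly a $G$-orbit. Concretely, for any fixed $y\in R_0$ the map $h\mapsto hy$ is a bijection from $G$ onto $\pi^{-1}(\pi y)$, and for any $g\in G$ the map $x\mapsto g^{-1}x$ permutes each fibre $\pi^{-1}(a)$ since $\pi(g^{-1}x)=\pi(x)$.

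For the first isomorphism $F_{\lambda}\,{}^{g}M\cong F_{\lambda}M$, I would start from
\[
(F_{\lambda}\,{}^{g}M)_a=\bigoplus_{\pi x=a}({}^{g}M)_x=\bigoplus_{\pi x=a}M_{g^{-1}x},
\]
and reindex by $x'=g^{-1}x$, which runs again over $\pi^{-1}(a)$, to identify this summand-wise (by identity maps) with $\bigoplus_{\pi x'=a}M_{x'}=(F_{\lambda}M)_a$; call this $\phi_a$. The substantive step is arrow compatibility: for an arrow $\alpha:a\to b$ of $Q$ and $x\in\pi^{-1}(b)$, let $\tilde\alpha_x$ be the unique arrow of $R$ ending at $x$ with $\pi(\tilde\alpha_x)=\alpha$. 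By uniqueness of lifts one has $g^{-1}\tilde\alpha_x=\tilde\alpha_{g^{-1}x}$, so the map $M_{g^{-1}\tilde\alpha_x}$ that defines $(F_{\lambda}\,{}^{g}M)_\alpha$ on the $x$-summand is carried by the reindexing exactly onto the map $M_{\tilde\alpha_{x'}}$ defining $(F_{\lambda}M)_\alpha$ on the $x'$-summand. This shows $\phi=(\phi_a)_a$ intertwines the arrow actions, hence is a morphism of modules, and it is an isomorphism since each $\phi_a$ is invertible.

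For the second isomorphism I would compute
\[
(F_{\bullet}F_{\lambda}M)_y=(F_{\lambda}M)_{\pi y}=\bigoplus_{\pi x=\pi y}M_x=\bigoplus_{h\in G}M_{hy},
\]
the last equality by freeness, and compare with $\bigl(\bigoplus_{h}{}^{h}M\bigr)_y=\bigoplus_{h\in G}M_{h^{-1}y}$; the substitution $h\mapsto h^{-1}$ gives a vertex-wise isomorphism $\psi_y$, again defined by identity maps on summands. For arrow compatibility, take $\tilde\beta:y\to z$ in $R$ with $\beta=\pi\tilde\beta$: the lift of $\beta$ ending at $h'z$ is $h'\tilde\beta$, so $(F_{\lambda}M)_\beta$ acts on the $h'$-summand of $(F_{\bullet}F_{\lambda}M)_z$ via $M_{h'\tilde\beta}$, while on the translate side the $h$-summand carries $({}^{h}M)_{\tilde\beta}=M_{h^{-1}\tilde\beta}$; matching $h'=h^{-1}$ makes the relevant square commute.

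The conceptual content of both parts is just the observation that a free action turns fibres into orbits; the only real work, and the step most prone to indexing errors, is verifying these arrow-compatibility squares, where one must correctly invoke the unique lifting of arrows for the push-down action while keeping the $g^{-1}$ (respectively $h^{-1}$) reindexing straight against the contravariant direction of the module maps. Once arrow compatibility holds, the relations in $L$ and $I$ impose nothing further, since morphisms of representations are only required to intertwine the arrow maps, so $\phi$ and $\psi$ are genuine isomorphisms in $\mo(Q,I)$ and $\mo(R,L)$; they are canonical because no choices enter their construction, and naturality in $M$ follows at once from the fact that $F_{\lambda}(f)$, $F_{\bullet}F_{\lambda}(f)$ and $\bigoplus_{h}{}^{h}f$ all act on summands by the same (reindexed) components of $f$.
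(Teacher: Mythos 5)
Your proof is correct and follows essentially the same route as the paper: both construct the isomorphisms vertex-wise by the canonical reindexing of summands ($x\mapsto g^{-1}x$ for the first, $h\mapsto h^{-1}$ against the fibre-equals-orbit identification for the second) with identity maps on each summand. The only difference is that you explicitly verify the arrow-compatibility squares using uniqueness of arrow lifts, a check the paper leaves implicit, so your write-up is simply a more detailed version of the same argument.
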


\begin{proof}
	For each $a\in ( Q,I)$, we have $F_\lambda^gM_a=\oplus_{x\in a}M_{g^{-1}x}$ and $F_{\lambda}M_a=\oplus_{x\in a}M_{x}$. The canonical isomorphism $F_{\lambda} \;^gM\cong F_{\lambda}M$ maps the summand $M_{g^{-1}x}$ of $F_\lambda^gM_a$ with index $x$ identically onto the summand $M_{g^{-1}x}$ of $F_\lambda M_a$ with index $g^{-1}x$.
	
	Similarly, we have $F_{\bullet}F_\lambda M_x=F_{\lambda}M_{\pi x}=\oplus_{\pi y= \pi x}M_y=\oplus_{h\in G}M_{h^{-1}x}=\oplus_{h\in G}\;^hM_x$ for each object $x\in ( R,L)$.
\end{proof}

\begin{Lem}{\rm(\cite[Lemma I.10.5(d)]{E1990})}\label{pushdown-indec}
	Suppose that $M\in\mathrm{ind}(R,L)$ and that $^gM\not\cong M$ if $1\neq g\in G$. Then $F_{\lambda}M$ is indecomposable. Moreover, each $W\in\mo( R,L)$ such that $F_{\lambda}W\cong F_{\lambda}M$ is isomorphic to $^gM$ for some $g\in G$.
\end{Lem}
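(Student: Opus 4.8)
The plan is to transport both assertions to the pull-up side, where Lemma \ref{pushdown-pull-up} provides the completely explicit object $F_\bullet F_\lambda M\cong\bigoplus_{h\in G}{}^hM$, and then to exploit the $G$-action on it. Two structural inputs will drive everything. First, the hypothesis ${}^gM\not\cong M$ for $1\neq g$ says exactly that the translates $\{{}^hM\}_{h\in G}$ are pairwise non-isomorphic (since ${}^gM\cong{}^{g'}M$ iff ${}^{g^{-1}g'}M\cong M$), and each is indecomposable with local finite-dimensional endomorphism ring; hence, by the Krull--Remak--Schmidt--Azumaya theorem in the locally finite dimensional category $\mm(R,L)$, the display $\bigoplus_{h}{}^hM$ is a decomposition into pairwise non-isomorphic indecomposables, each of multiplicity one. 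Second, for any $V\in\mo(Q,I)$ the pull-up $F_\bullet V$ is strictly $G$-invariant: since $(F_\bullet V)_x=V_{\pi x}$ and $\pi(hx)=\pi x$, one has ${}^h(F_\bullet V)=F_\bullet V$ for all $h\in G$, while under the canonical identification of Lemma \ref{pushdown-pull-up} the $G$-action on $F_\bullet F_\lambda M$ permutes the summands ${}^hM$ by left translation on the index $h$, hence transitively.

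For indecomposability I would argue by contradiction. Suppose $F_\lambda M\cong V_1\oplus V_2$; applying the exact functor $F_\bullet$ and Lemma \ref{pushdown-pull-up} yields a direct sum decomposition $\bigoplus_{h\in G}{}^hM\cong F_\bullet V_1\oplus F_\bullet V_2$ in $\mm(R,L)$. Each $F_\bullet V_i$ is itself a pull-up, so by the same computation $(F_\bullet V_i)_{hx}=V_{i,\pi x}=(F_\bullet V_i)_x$ it is a $G$-invariant summand. By uniqueness of the decomposition into the pairwise non-isomorphic indecomposables ${}^hM$, the summand $F_\bullet V_i$ corresponds to a subset $T_i\subseteq G$ with $F_\bullet V_i\cong\bigoplus_{h\in T_i}{}^hM$, and $T_1\sqcup T_2=G$. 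The $G$-invariance ${}^g(F_\bullet V_i)=F_\bullet V_i$ then translates into $gT_i=T_i$ for all $g$; since $G$ acts transitively on itself by left translation, the only invariant subsets are $\emptyset$ and $G$, so one $T_i$ is empty, i.e. $F_\bullet V_i=0$. As $\pi$ is surjective and $(F_\bullet V_i)_x=V_{i,\pi x}$, this forces $V_i=0$, and $F_\lambda M$ is indecomposable.

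For the second assertion I would write $W\cong\bigoplus_{j}W_j$ as a finite direct sum of indecomposables in $\mo(R,L)$. From $F_\lambda W\cong F_\lambda M$ and Lemma \ref{pushdown-pull-up} we obtain $\bigoplus_{h\in G}\bigoplus_j {}^hW_j\cong\bigoplus_{h\in G}{}^hM$ in $\mm(R,L)$, and I then compare multiplicities of a fixed isoclass ${}^aM$ on both sides via Krull--Remak--Schmidt--Azumaya: on the right it is $1$. Taking $h=1$ already shows each $W_j\cong{}^{h_j}M$ is a translate of $M$; consequently for each summand $W_j$ there is exactly one index $h=ah_j^{-1}$ with ${}^hW_j\cong{}^aM$, so the left-hand multiplicity equals the number of summands $W_j$. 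Equating this to $1$ shows $W$ has a single indecomposable summand, whence $W\cong{}^gM$ for some $g\in G$.

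The main obstacle is not the module-theoretic algebra but the bookkeeping of the infinite object $\bigoplus_{h\in G}{}^hM$: I must check that it is a genuine, locally finite dimensional module (guaranteed by local boundedness of the $G$-action, so only finitely many translates are supported at any vertex), that Krull--Remak--Schmidt--Azumaya uniqueness genuinely applies to such locally finite direct sums of modules with local endomorphism rings, and --- most delicately --- that the canonical isomorphism of Lemma \ref{pushdown-pull-up} intertwines the $G$-action with left translation of the index set, so that ``$G$-invariant direct summand'' really corresponds to ``left-translation-invariant subset of $G$''. Once this compatibility and the uniqueness of decomposition are pinned down, both transitivity arguments become immediate.
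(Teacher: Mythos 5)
Your proposal is correct and follows essentially the same route as the paper: apply $F_\bullet$, use Lemma \ref{pushdown-pull-up} to identify $F_\bullet F_\lambda M$ with $\bigoplus_{h\in G}{}^hM$, observe that pull-ups are strictly $G$-invariant while the $G$-action permutes the summands by left translation, and conclude via uniqueness of the decomposition into pairwise non-isomorphic indecomposables. The only cosmetic difference is in the second assertion, where you count multiplicities of ${}^aM$ across a decomposition of $W$, whereas the paper more briefly notes that $W$ is a direct summand of $\bigoplus_g{}^gM$ and invokes the already-proved indecomposability of $F_\lambda W\cong F_\lambda M$; both arguments rest on the same Krull--Schmidt input.
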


\begin{proof}
	Assume that $F_{\lambda}M=N\oplus N'$ where $N\neq0$. Then we have that $\oplus \;^gM\cong F_\bullet F_{\lambda} M=F_\bullet N\oplus F_\bullet N'$. Since the $^gM$ are pairwise non-isomorphic, we must have that $F_\bullet N\cong\oplus\;^hM$ and $F_\bullet N'\cong\oplus\;^fM$ for some subset $H$ and $F:=G\backslash H$ of $G$. On the other hand, we have $F_\bullet N={^gF_\bullet N}\cong\oplus\;^{gh}M$ for each $g\in G$. This implies $H=gH$ for each $g\in G$, hence $H=G$ and $N'=0$.
	
	Assume that $F_{\lambda}W=F_{\lambda}M$. Then $W$ is isomorphic to a direct summand of
	$$F_\bullet F_{\lambda}W\cong F_\bullet F_{\lambda}M\cong\oplus\;^gM.$$
	Since $F_{\lambda}W=F_{\lambda}M$ is indecomposable, we must have $W\cong\;^gM$ for some $g\in G$.
\end{proof}

We shall now study the restrictions of push-down functors to subcategories. Recall that a module $M$ of a quiver $T$ is said to be {\it sincere} if $M_x\neq 0$ for each vertex $x$ of $T$.

\begin{Prop}{\rm(cf. \cite[Proposition~I.10.6]{E1990})} \label{Sin}
Let $\pi\colon (R,L)\to (Q,I)$ be a Galois covering of quivers with relations with group $G$.
Choose a finite subquiver $T\subset R$ satisfying the following conditions:
\begin{enumerate}
\item For every $1\neq g\in G$, we have ${}^gT\neq T$.
  \item For each relation $\rho\in I$ and for every path $p$ appearing in $\rho$, if there exists a lift $\tilde{p}$ of $p$ in $T$ (that is, $\pi(\tilde{p})=p$), then there also exists a lift $\tilde{\rho}$ of $\rho$ in $T$ such that $\pi(\tilde{\rho})=\rho$.
\end{enumerate}
Set $L' := kT\cap L$.
Let $\mathrm{mod}(T,L')_s$ denote the full subcategory of $\mathrm{mod}(T,L')$ consisting of sincere modules. Then
	\begin{enumerate}[label=\textnormal{(\roman*)}]
		\item The restriction $F_{\lambda}\colon\mo(T,L')_s\rightarrow \mo(Q,I)$ preserves indecomposability and reflects isomorphism.
		
		\item There is a finitely generated $kT/L'$-$k Q/I$-bimodule $B$ which is free as a left $(T,L')$-module such that on $\mo(T,L')_s$, $F_{\lambda}\cong-\otimes_{kT/L'}B$.
	\end{enumerate}
\end{Prop}

\begin{proof}
   (i)\; We first show $F_\lambda$ preserves indecomposability. Suppose $M$ is an indecomposable $(T,L')_s$-module. Then we consider $M$ as a $kR$-module (with $M_x=0$ for $x$ not in $T$). By Lemma \ref{pushdown-indec}, it is suffices to show that $^gM\neq M$ for $1\neq g$. If $1\neq g$, then $^gT\neq T$ and hence the supports of $^gM$ and $M$ are distinct. Thus, $^gM\neq M$.

        Then we show $F_\lambda$ reflects isomorphisms. Let $F_\lambda(W)\cong F_\lambda(M)$ where $W$ and $M$ are in $\mo(T,L')_s$. Then consider $W$ and $M$ as $kR$-module as above. By Lemma \ref{pushdown-indec}, we have $W\cong ^gM$ for some $g\in G$. Hence
        $$T=\mathrm{supp}M=\mathrm{supp}W=\mathrm{supp}^gM,$$
        and $^gT=T$. Therefore, $g=1$ and $W\cong M$.

   (ii)\; Define $B$ to be the free $(T,L')$-module which is given by $B=\oplus_{x\in T_0}(kT/L')b_x$. We define a right $(Q,I)$-action on $B$ as follows: let $e\in Q_0$, $w\in kT/L'$ and $x\in T_0$. We denote by $f_x$ the idempotent of $kT/L'$ corresponding to $x$; and we set
   		$$
		(wb_x)e:=\left\{
		\begin{array}{*{3}{ll}}
			0,& \text{if $\pi x\neq e$};\\
			wf_xb_x,& \text{otherwise.}
		\end{array}
		\right.
		$$
    Suppose $\alpha:e\rightarrow f$ is an arrow in $Q$. If $x\in T_0$ with $\pi x=e$ and $\widetilde{\alpha}$ is an arrow in $T$ starting at $x$, then define $(wb_x)\alpha=(w\widetilde{\alpha})b_y$ where $y$ is the endpoint of $\widetilde{\alpha}$, and set $(wb_x)\alpha=0$ otherwise. Then through verifying the definitions, this action is a $(Q,I)$-action and $F_\lambda(M)\cong M\otimes_{kT} B$, canonically.
\end{proof}

\begin{Rem1}
The above proposition is adapted from \cite[Proposition~I.10.6]{E1990}, but we have modified two of its assumptions, corresponding to our conditions~(1) and~(2):

\begin{itemize}
  \item[(1)] Condition~(1) strengthens the requirement on the subquiver $T$ and is necessary for this proposition (communicated to us by Fei Zeng). Indeed, this condition used in the original proof without mentioning the reason. Example~\ref{exa:Zeng-counterexample} below gives a counterexample if we drop this condition.

  \item[(2)] Condition~(2) weakens the original assumption that ``$\pi$ maps $L'$ onto $I$''. In original proof, the assumption ``$\pi$ maps $L'$ onto $I$'' was used to ensure that the right $kQ/I$-action on the module $B$ constructed in conclusion~(ii) is well-defined. Our condition~(2) is sufficient for this purpose. In fact, in \cite[Propositions~I.10.7--I.10.12]{E1990}, one really uses the weaker condition (2).
\end{itemize}
\end{Rem1}

\begin{Ex1}\label{exa:Zeng-counterexample}
Let $k$ be a field whose characteristic is not equal to $2$. Let $Q$ be the quiver
$$\begin{tikzpicture}
\draw[->] (0.3,0.1) -- (1.7,0.1);
\draw[->] (0.3,-0.1) -- (1.7,-0.1);
\node at(0,0) {$x$};
\node at(2,0) {$y$};
\node at(1,0.3) {$a$};
\node at(1,-0.4) {$b$};
\end{tikzpicture}$$
and let $R$ be the quiver
$$\begin{tikzpicture}
\draw[->] (0.2,0.2) -- (0.8,0.8);
\draw[->] (0.2,-0.2) -- (0.8,-0.8);
\draw[->] (1.8,0.2) -- (1.2,0.8);
\draw[->] (1.8,-0.2) -- (1.2,-0.8);
\node at(0,0) {$1$};
\node at(2,0) {$1'$};
\node at(1,1) {$2$};
\node at(1,-1) {$2'$};
\node at(0.3,0.7) {$\alpha$};
\node at(1.7,-0.7) {$\alpha'$};
\node at(0.3,-0.7) {$\beta$};
\node at(1.7,0.7) {$\beta'$};
\node at(2.3,-0.4) {.};
\end{tikzpicture}$$
Let $I$ and $L$ be the zero ideal in $kQ$ and $kR$ respectively. Then $kQ/I$ and $kR/L$ are locally bounded categories and there exists a Galois covering of quivers with relations
$$\pi:(R,L)\rightarrow (Q,I)$$
which is given by $\pi(1)=\pi(1')=x$, $\pi(2)=\pi(2')=y$, $\pi(\alpha)=\pi(\alpha')=a$, $\pi(\beta)=\pi(\beta')=b$. Note that $\pi$ is a Galois covering of quivers with relations with group $G$, where $G=\langle g\rangle$ is a cyclic group of order $2$. Denote by $F: kR/L\rightarrow kQ/I$ the covering functor induced by $\pi$.

Let $T=R$ be a subquiver of $R$. Then ${}^gT=T$. However, the push-down functor $F_\lambda \colon \mathrm{mod}(T, L')_s \longrightarrow \mathrm{mod}(Q, I)$ does not preserve indecomposability. Indeed, let $M$ be an indecomposable sincere module represented by
$$\begin{tikzpicture}
\draw[->] (0.2,0.2) -- (0.8,0.8);
\draw[->] (0.2,-0.2) -- (0.8,-0.8);
\draw[->] (1.8,0.2) -- (1.2,0.8);
\draw[->] (1.8,-0.2) -- (1.2,-0.8);
\node at(0,0) {$k$};
\node at(2,0) {$k$};
\node at(1,1) {$k$};
\node at(1,-1) {$k$};
\node at(0.3,0.7) {$1$};
\node at(1.7,-0.7) {$1$};
\node at(0.3,-0.7) {$1$};
\node at(1.7,0.7) {$1$};
\node at(2.3,-0.4) {.};
\end{tikzpicture}$$
Then $F_\lambda M$ is decomposable, which is isomorphic to the direct sum of two modules $N_1$ and $N_2$ represented by
$$\begin{tikzpicture}
\draw[->] (0.3,0.1) -- (1.7,0.1);
\draw[->] (0.3,-0.1) -- (1.7,-0.1);
\node at(0,0) {$k$};
\node at(2,0) {$k$};
\node at(1,0.3) {$1$};
\node at(1,-0.4) {$1$};
\end{tikzpicture}$$
and
$$\begin{tikzpicture}
\draw[->] (0.3,0.1) -- (1.7,0.1);
\draw[->] (0.3,-0.1) -- (1.7,-0.1);
\node at(0,0) {$k$};
\node at(2,0) {$k$};
\node at(1,0.3) {$1$};
\node at(1,-0.4) {$-1$};
\end{tikzpicture}$$
respectively.

Moreover, $F_\lambda$ does not reflect isomorphism. Let $M_1$ and $M_2$ be indecomposable sincere modules given respectively by
$$\begin{tikzpicture}
\draw[->] (0.2,0.2) -- (0.8,0.8);
\draw[->] (0.2,-0.2) -- (0.8,-0.8);
\draw[->] (1.8,0.2) -- (1.2,0.8);
\draw[->] (1.8,-0.2) -- (1.2,-0.8);
\node at(0,0) {$k$};
\node at(2,0) {$k$};
\node at(1,1) {$k$};
\node at(1,-1) {$k$};
\node at(0.3,0.7) {$1$};
\node at(1.7,-0.7) {$1$};
\node at(0.3,-0.7) {$0$};
\node at(1.7,0.7) {$1$};
\draw[->] (3.2,0.2) -- (3.8,0.8);
\draw[->] (3.2,-0.2) -- (3.8,-0.8);
\draw[->] (4.8,0.2) -- (4.2,0.8);
\draw[->] (4.8,-0.2) -- (4.2,-0.8);
\node at(3,0) {$k$};
\node at(5,0) {$k$};
\node at(4,1) {$k$};
\node at(4,-1) {$k$};
\node at(3.3,0.7) {$1$};
\node at(4.7,-0.7) {$1$};
\node at(3.3,-0.7) {$1$};
\node at(4.7,0.7) {$0$};
\node at(5.3,-0.4) {.};
\end{tikzpicture}$$
Then $M_1 \not\cong M_2$, but $F_\lambda M_1 \cong F_\lambda M_2$.

\end{Ex1}

\subsection{Applications for determining representation types}
\

\subsubsection{Preliminaries on representation types of finite dimensional algebras}
\

\begin{Def}{\rm(\cite[Section I.4]{E1990})}
	Let $\Lambda$ be a finite dimensional $k$-algebra. Then $\Lambda$ is said to be of finite (representation) type provided there are finitely many indecomposable $\Lambda$-modules. Otherwise, $\Lambda$ is of infinite type.
\end{Def}

Compared with Definition \ref{locally-representation-finite-category}, for each quiver with relations $(Q,I)$ with $kQ/I$ finite dimensional, if $(Q,I)$ is locally representation-finite, then $kQ/I$ is of finite type.

\begin{Def}{\rm(\cite[Section I.2]{E1990})}
	Let $\Lambda$ be a finite dimensional $k$-algebra. Then $\Lambda$ is tame provided $\Lambda$ is not of finite type, whereas for any dimension $d>0$, there are a finite number of $k[T]$-$\Lambda$-bimodules $M_i$ which are free as left $k[T]$-modules such that all but a finite number of indecomposable $\Lambda$-modules of dimension $d$ are isomorphic to $N\otimes_{k[T]}M_i$ for some $i$ and some simple $k[T]$-module $N$.
\end{Def}

\begin{Def}{\rm(\cite[DefinitionI.4.4]{E1990})}
	A finite dimensional $k$-algebra $\Lambda$ is wild if there is a finitely generated $k\langle X,Y\rangle$-$\Lambda$-bimodule $B$ which is free as a left $k\langle X,Y\rangle$-module such that the functor $-\otimes_{k\langle X,Y\rangle}B$ from $\mo\text{-}k\langle X,Y\rangle$ to $\mo$-$\Lambda$ preserve indecomposability and reflects isomorphisms.
	
	In particular, a subcategory $\mathcal{C}$ of $\mo\Lambda$ is called wild if there is a finitely generated $k\langle X,Y\rangle$-$\Lambda$-bimodule $B$ fits the conditions above and $\mathrm{Im}(-\otimes_{k\langle X,Y\rangle}B)\subseteq \mathcal{C}$.
\end{Def}

\begin{Thm}{\rm(\cite{D})}
	Suppose $\Lambda$ is a finite dimensional algebra of infinite type. Then $\Lambda$ is either tame or wild.
\end{Thm}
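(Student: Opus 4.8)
The plan is to follow Drozd's original strategy, whose cleanest modern incarnation is the theory of \emph{bocses} (bimodules over a category equipped with a coalgebra structure), developed by Roiter, Kleiner and Crawley-Boevey. The first step is to replace the study of $\mod\Lambda$ by the representation theory of a bocs $\mathfrak{B}$ whose representation category is, as far as the tame/wild classification is concerned, equivalent to $\mod\Lambda$. Concretely, starting from minimal projective presentation data for $\Lambda$ one constructs the associated ``Drozd bocs'' $\mathfrak{B}$ together with a representation-embedding $\mod\Lambda\to\rep\mathfrak{B}$ that is dense onto the objects relevant for counting parameters. The whole point of passing to $\mathfrak{B}$ is that bocses admit a flexible reduction calculus that algebras themselves do not.

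The second step is to set up this calculus. A bocs is a pair $(\mathcal{A},\mathcal{V})$ with $\mathcal{A}$ a category and $\mathcal{V}$ an $\mathcal{A}$-coalgebra; its representations are the structure-preserving functors, organized by a dimension vector. One introduces a family of \emph{reduction functors} --- regularization (splitting off a summand of the differential that is an isomorphism), deletion of an idempotent vertex, edge reduction and loop reduction --- each replacing $\mathfrak{B}$ by a new bocs $\mathfrak{B}'$ and inducing, in the relevant dimensions, an equivalence $\rep\mathfrak{B}\simeq\rep\mathfrak{B}'$. The essential feature is that every reduction either strictly decreases a suitable complexity measure or exhibits a genuine one-parameter family, so that the process can be iterated under control.

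The third step is to prove the dichotomy for bocses and transfer it back up the reduction tree. Running the algorithm from $\mathfrak{B}$, and using at each stage that $k$ is algebraically closed (so that generic points are rational), one fixes a dimension $d$ and analyses the generic representations of that dimension surviving the reductions. Either the reductions terminate in a ``tame'' shape, meaning that for every $d$ all but finitely many indecomposables of dimension $d$ lie in the images of finitely many functors $N\otimes_{k[T]}M_i$ with $N$ a simple $k[T]$-module, in which case $\Lambda$ is tame; or at some finite stage a loop appears at a vertex carrying a free parameter, which permits an embedding of the representations of $k\langle X,Y\rangle$. Composing this embedding with the chain of reduction equivalences back to $\rep\mathfrak{B}$ and then to $\mod\Lambda$ produces a bimodule $B$ that is free as a left $k\langle X,Y\rangle$-module and whose associated functor $-\otimes_{k\langle X,Y\rangle}B$ preserves indecomposability and reflects isomorphisms, witnessing wildness.

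The hard part is precisely this last step: proving that \emph{not tame implies wild}. What must be shown is that if no finite collection of one-parameter families accounts for almost all indecomposables of some dimension, then the accumulated combinatorics of the reduction tree necessarily forces either two independent loops, or a single loop carrying a two-dimensional space of free parameters, at some surviving vertex --- exactly the configuration yielding the $k\langle X,Y\rangle$-embedding. Controlling genericity simultaneously across infinitely many dimensions, and guaranteeing that the algorithm cannot merely postpone the decision forever, is the delicate technical heart of Drozd's theorem; the algebraic closure of $k$ is used crucially to ensure that the surviving families are genuinely parametrized by $\mathbb{A}^1$ rather than by a nontrivial finite extension.
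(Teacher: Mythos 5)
The paper offers no proof of this theorem: it is quoted as a black box with a citation to Drozd's 1980 paper, so there is nothing in the text to compare your argument against. Judged on its own terms, your proposal is a faithful roadmap of the standard proof (Drozd's original argument, in the streamlined bocs formulation later given by Crawley-Boevey): pass from $\mod\Lambda$ to the representations of the associated Drozd bocs via projective presentations, run the reduction calculus (regularization, deletion, edge and loop reduction), and read off the dichotomy from whether a vertex with the $k\langle X,Y\rangle$-configuration ever appears. The role you assign to algebraic closedness and the identification of ``not tame implies wild'' as the crux are both accurate.

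However, what you have written is an outline, not a proof. You explicitly name the decisive step --- showing that if the one-parameter families never suffice then the reduction tree must eventually produce two independent loops or a loop with a two-dimensional parameter space, uniformly over all dimensions, and that the algorithm cannot defer this forever --- but you do not carry it out, and that step is essentially the entire content of the theorem. Two further points would need care in a complete write-up: (i) the passage $\mod\Lambda\to\rep\mathfrak{B}$ is not an equivalence but only a representation equivalence onto a cofinite subcategory (it loses the modules with no projective summand issues at the boundary), so one must check that tameness and wildness are both insensitive to this discrepancy; and (ii) termination of the reduction process requires the layered/triangular structure on the bocs and a well-founded complexity invariant, which you gesture at (``strictly decreases a suitable complexity measure'') but do not define. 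As it stands the proposal is a correct plan with the hard analysis still to be done, which is consistent with the paper's own choice to cite the result rather than prove it.
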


Note that more generally, the notions of tameness and wildness also make sense for locally bounded categories (and these will be used in next section), and it is known that a locally bounded category is tame if and only if any its finite full subcategory is tame (see \cite{DS1986}).

We now recall some classical results which can help us to recognize whether a finite dimensional algebra is wild or not.

\begin{Thm}{\rm(cf. \cite{ARS})}
	Let $Q$ be a finite quiver. A path algebra $kQ$ is of finite type if and only if the underlying graph $\bar{Q}$ is a Dynkin diagram. $kQ$ is tame if and only if the underlying graph $\bar{Q}$ is an Euclidean diagram.
\end{Thm}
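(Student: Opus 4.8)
The plan is to route everything through the Tits (Euler) quadratic form attached to $Q$. Since a path algebra $kQ$ is finite dimensional exactly when $Q$ has no oriented cycles, I may assume $Q$ is acyclic; then define the unit form $q_Q\colon\mathbb{Z}^{Q_0}\to\mathbb{Z}$ by $q_Q(d)=\sum_{i\in Q_0}d_i^2-\sum_{(\alpha\colon i\to j)\in Q_1}d_id_j$, and note that $q_Q$ depends only on the underlying graph $\bar Q$. The crucial structural input is Gabriel's theorem: the assignment $M\mapsto \underline{\mathrm{dim}}\,M$ is a bijection between isomorphism classes of indecomposable $kQ$-modules and the positive roots of $q_Q$ (the nonzero nonnegative integer vectors $d$ with $q_Q(d)=1$ in the definite case, together with imaginary roots in general). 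I would establish this bijection via the Bernstein--Gelfand--Ponomarev reflection functors $S_i^{\pm}$, which realise the simple reflections of the Weyl group on dimension vectors and allow one to move an indecomposable toward a source or sink and peel it off; the Coxeter transformation $\Phi=\prod_i s_i$ then governs the preprojective and preinjective indecomposables.

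For the finite-type statement I invoke the classical combinatorial classification of unit forms: a connected unit form is positive definite if and only if its graph is one of $A_n$, $D_n$, $E_6$, $E_7$, $E_8$. When $q_Q$ is positive definite it is anisotropic and integral, so the level set $\{q_Q=1\}$ is a finite subset of $\mathbb{Z}^{Q_0}$; by Gabriel's bijection there are then only finitely many indecomposables, i.e.\ $kQ$ is of finite type. Conversely, if $\bar Q$ is not Dynkin then $q_Q$ fails to be positive definite, and one produces either a connected positive vector $d$ with $q_Q(d)\le 0$ or an infinite family of positive roots; either way $kQ$ admits infinitely many indecomposables.

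For tameness I use the parallel fact that a connected unit form is positive semidefinite but not positive definite exactly when $\bar Q$ is Euclidean ($\tilde A_n,\tilde D_n,\tilde E_6,\tilde E_7,\tilde E_8$). In that case the radical of $q_Q$ is one-dimensional, spanned by a strictly positive null root $\delta$ with $q_Q(\delta)=0$; the positive roots split into finitely many orbits of real roots under $\Phi$ together with the imaginary roots $n\delta$, $n\ge 1$. The indecomposables of imaginary dimension organise into one-parameter families, and exhibiting these as $N\otimes_{k[T]}M_i$ for suitable $k[T]$-$kQ$-bimodules $M_i$ free over $k[T]$ — essentially the Nazarova and Donovan--Freislich description of the regular tubes — verifies the defining property of tameness. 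Finally, if $\bar Q$ is neither Dynkin nor Euclidean then $q_Q$ is indefinite, and combined with the already-settled finite case and the tame--wild dichotomy quoted above this forces $kQ$ to be wild; hence tame $\Leftrightarrow$ Euclidean.

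The main obstacle is not the combinatorics of quadratic forms, which is elementary though tedious, but the representation-theoretic core on each side. For finite type the substance lies in Gabriel's bijection, and the delicate point along the reflection-functor induction is controlling how indecomposability and dimension vectors behave under $S_i^{\pm}$ while accounting for the finitely many simples annihilated by a reflection. For tameness the genuine difficulty is constructing the one-parameter families and proving there are no further indecomposables of imaginary dimension — this is the heart of the Nazarova/Donovan--Freislich classification and is precisely what pins down the parametrising bimodules $M_i$ demanded by the definition of a tame algebra.
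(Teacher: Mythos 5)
The paper does not prove this theorem; it is quoted as a classical result with a pointer to \cite{ARS}, so there is no in-paper argument to compare against. Your outline follows the standard route (Tits form $q_Q$, Gabriel's bijection via Bernstein--Gelfand--Ponomarev reflection functors for the Dynkin case, and the tubular/one-parameter-family description of regular modules for the Euclidean case), and that route does work. One remark on the setup: your reduction to acyclic $Q$ is not cosmetic, since an oriented cycle has underlying graph $\widetilde{A}_n$ but infinite-dimensional path algebra, so the ``Euclidean $\Rightarrow$ tame'' direction genuinely needs acyclicity; it is worth saying this explicitly rather than folding it into ``I may assume''.

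There is, however, one step as written that is a non sequitur: in the last paragraph you conclude that for $\bar{Q}$ neither Dynkin nor Euclidean, indefiniteness of $q_Q$ ``combined with the already-settled finite case and the tame--wild dichotomy \ldots forces $kQ$ to be wild''. The dichotomy only says that an algebra of infinite type is tame \emph{or} wild; it does not select which, so it cannot by itself rule out tameness. To close the argument you must either (a) show that every connected graph that is neither Dynkin nor Euclidean properly contains a Euclidean subgraph, hence contains one of a finite list of minimal wild quivers, exhibit an explicit $k\langle X,Y\rangle$-$kQ'$-bimodule witnessing wildness for each of those, and then lift wildness to $kQ$ via the idempotent/quotient closure properties (the analogue of Proposition \ref{eAe} in this paper); or (b) argue directly that tameness fails because the number of parameters needed to describe the indecomposables of dimension vector $d$ grows faster than linearly when $q_Q$ is indefinite. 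Either repair is standard, but one of them must actually be carried out; as stated, the implication ``indefinite $\Rightarrow$ wild'' is asserted rather than proved. The rest of the sketch (positive definiteness $\Leftrightarrow$ Dynkin, finiteness of $\{q_Q=1\}$, the null root $\delta$ and the homogeneous tubes realising the bimodules $M_i$) is sound modulo the substantial but classical work you already flag.
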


\begin{Prop}{\rm(see \cite[Proposition I.4.7]{E1990})}\label{eAe}
	Suppose $\Lambda$ is a finite dimensional algebra. Then
	\begin{enumerate}[label = (\alph*)]
		\item If $I$ is some ideal of $\Lambda$ such that $\Lambda/I$ is wild, then so is $\Lambda$.
		
		\item If $e$ is an idempotent of $\Lambda$ such that $e\Lambda e$ is wild, then so is $\Lambda$.
		
		\item $\Lambda$ is wild if and only if its basic algebra is wild.
		
		\item If $\Lambda$ is wild, then so is $\Lambda^{op}$.			
	\end{enumerate}
	The same statements hold with wild replaced by ``of infinite type''. Moreover, (c) and (d) are true with tame instead of wild, and
	\begin{enumerate}[label=(b')]
		\item If $\Lambda$ is tame and $e$ is an idempotent of $\Lambda$, then $e\Lambda e$ is tame or of finite type.
	\end{enumerate}
\end{Prop}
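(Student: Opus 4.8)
The plan is to treat the whole proposition as a sequence of \emph{transport} statements: each reduction functor between module categories is a full embedding, an equivalence, or a duality, and one simply carries the defining datum of the representation type (the wild bimodule, the infinite set of non-isomorphic indecomposables, or the one-parameter families) across it. The four functors I would isolate at the outset are: the inflation $\mo(\Lambda/I)\hookrightarrow\mo\Lambda$ along the quotient map (identifying $\Lambda/I$-modules with the $\Lambda$-modules annihilated by $I$), which is a full exact embedding; the pair $(T_\ell,S)$ with $S=(-)e\colon\mo\Lambda\to\mo(e\Lambda e)$ and its left adjoint $T_\ell=-\otimes_{e\Lambda e}e\Lambda$; the Morita equivalence relating $\Lambda$ to its basic algebra $e\Lambda e$; and the $k$-duality $D=\Hom_k(-,k)\colon\mo\Lambda\to\mo\Lambda^{\op}$. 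The one structural fact I would record first is that $T_\ell$ is \emph{fully faithful}: since $ST_\ell(N)=(N\otimes_{e\Lambda e}e\Lambda)e\cong N\otimes_{e\Lambda e}e\Lambda e\cong N$, the adjunction gives $\Hom_\Lambda(T_\ell N,T_\ell N')\cong\Hom_{e\Lambda e}(N,(T_\ell N')e)\cong\Hom_{e\Lambda e}(N,N')$. A fully faithful functor (like a full embedding or an equivalence) preserves indecomposability and reflects isomorphisms, which is exactly the package needed for all three representation types.

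For (a) and (b) I would compose the wild datum for the smaller algebra with these functors. In (a), starting from a finitely generated $k\langle X,Y\rangle$-$(\Lambda/I)$-bimodule $B'$, free as a left $k\langle X,Y\rangle$-module, I view $B'$ as a $k\langle X,Y\rangle$-$\Lambda$-bimodule through $\Lambda\twoheadrightarrow\Lambda/I$; left-freeness is unchanged, and $-\otimes_{k\langle X,Y\rangle}B'$ into $\mo\Lambda$ is the old wild functor followed by inflation, so it still preserves indecomposability and reflects isomorphisms. In (b), I compose the wild functor for $e\Lambda e$ with $T_\ell$; the composite is $-\otimes_{k\langle X,Y\rangle}(B'\otimes_{e\Lambda e}e\Lambda)$ and inherits both properties from full faithfulness of $T_\ell$. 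The infinite-type versions are the same argument with ``bimodule'' replaced by ``infinite family of pairwise non-isomorphic indecomposables'', which inflation and $T_\ell$ carry to infinite families of pairwise non-isomorphic indecomposables.

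For (c) I would use that the basic algebra is exactly $e\Lambda e$ for an idempotent $e$ picking one primitive summand per isomorphism type, so one implication is immediate from (b), while Morita invariance of all three representation types (the equivalence is a tensor with a progenerator bimodule, hence transports wild bimodules and one-parameter families in both directions) gives the rest. For (d) I would combine $D$ with the anti-automorphisms $k\langle X,Y\rangle\cong k\langle X,Y\rangle^{\op}$ (word reversal) and $k[T]\cong k[T]^{\op}$: these make the relevant test categories self-dual, so applying $D$ to a wild (resp. tame) functor for $\Lambda$ and transposing its bimodule (resp. its families) yields the corresponding datum for $\Lambda^{\op}$. Finally, (b$'$) for the tame case follows formally: by Drozd's tame--wild dichotomy $e\Lambda e$ is of finite type, tame, or wild, and wildness is excluded because part (b) would then force $\Lambda$ to be wild, contrary to hypothesis; hence $e\Lambda e$ is tame or of finite type.

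The step I expect to be the genuine obstacle is the \emph{left-freeness} bookkeeping in (b) and (c). The functor $T_\ell=-\otimes_{e\Lambda e}e\Lambda$ is only right exact (as $e\Lambda$ need not be flat over $e\Lambda e$), so the composite bimodule $B'\otimes_{e\Lambda e}e\Lambda$ is a priori only finitely generated, not visibly free, as a left $k\langle X,Y\rangle$-module, even though the associated functor already preserves indecomposability and reflects isomorphisms. The resolution I would invoke is that $k\langle X,Y\rangle$ is a free ideal ring, so that every finitely generated projective left module is free; after checking that the composite bimodule is projective on the left, or replacing it by a left-free bimodule defining a naturally isomorphic functor, one recovers a bimodule of exactly the shape demanded by the definition of wildness. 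The corresponding bookkeeping for the tame statements---tracking that the finitely many $k[T]$-$\Lambda$-bimodules controlling dimension $d$ pass through $D$, Morita equivalence, and inflation to finitely many families controlling each dimension over the target algebra---is routine once this framework is in place.
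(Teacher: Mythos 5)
First, a point of order: the paper does not prove this proposition at all --- it is quoted from \cite[Proposition I.4.7]{E1990} and used as a black box --- so there is no in-paper proof to compare your route against. Judged on its own merits, your outline is the standard one, and most of it is sound: (a) via the full exact embedding $\mathrm{mod}(\Lambda/I)\hookrightarrow\mathrm{mod}\Lambda$, the full faithfulness of $T_\ell=-\otimes_{e\Lambda e}e\Lambda$ via $ST_\ell\cong\mathrm{Id}$, the infinite-type versions, (c) and (d) by Morita equivalence and duality, and the deduction of (b$'$) from (b) plus Drozd's dichotomy are all correct. In (c) the left-freeness bookkeeping genuinely closes: for a Morita context $e\Lambda$ is finitely generated projective as a left $e\Lambda e$-module (and $Be$ is a left $k\langle X,Y\rangle$-direct summand of $B$), so the transported bimodule is finitely generated projective, hence free, over the free algebra.

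The genuine gap is exactly where you suspected it, in (b), and neither of your proposed repairs closes it. Writing $e\Lambda=e\Lambda e\oplus e\Lambda(1-e)$ as left $e\Lambda e$-modules gives $B'\otimes_{e\Lambda e}e\Lambda\cong B'\oplus\bigl(B'\otimes_{e\Lambda e}e\Lambda(1-e)\bigr)$ as left $k\langle X,Y\rangle$-modules; the second summand is the cokernel of a map between finitely generated free left $k\langle X,Y\rangle$-modules (tensor a presentation of $e\Lambda(1-e)$ over $e\Lambda e$ with $B'$), and cokernels of such maps are in general not projective over the free algebra (already $B'=k\langle X,Y\rangle^2$ with a square-zero right action realizing multiplication by $X$ produces a summand $k\langle X,Y\rangle/Xk\langle X,Y\rangle$). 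So ``check that it is projective'' will fail, and ``replace it by a left-free bimodule representing the same functor'' is not a construction. The standard way to repair this --- and the one consistent with the fact that the paper later uses (b$'$) together with the dichotomy --- is to reverse your logical order: prove the infinite-type version of (b) and statement (b$'$) directly using the exact restriction functor $(-)e\colon\mathrm{mod}\Lambda\to\mathrm{mod}(e\Lambda e)$ (every indecomposable $e\Lambda e$-module $N$ is a direct summand of $Me$ for some indecomposable $\Lambda$-module $M$ of dimension at most $\dim_k N\cdot\dim_k e\Lambda$, and the parametrizing $k[T]$-$\Lambda$-bimodules $M_i$ restrict to $M_ie$, which are left $k[T]$-direct summands of $M_i$ and hence free because $k[T]$ is a principal ideal domain), and then deduce the wild case of (b) from Drozd's tame--wild dichotomy by contraposition: if $\Lambda$ were not wild it would be of finite type or tame, forcing $e\Lambda e$ to be of finite type or tame, contradicting its wildness.
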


\subsubsection{Determining finite type}
\

Recall that a locally bounded category $\Lambda$ is called locally representation-finite, if for every object $x$ of $\Lambda$, the number of isomorphism classes of finitely generated indecomposable $\Lambda$-module $\ell$ such that $\ell(x)\neq 0$ is finite (see Definition \ref{locally-representation-finite-category}).

\begin{Prop}{\rm(cf. \cite[Lemma 3.3]{Ga1981})}\label{Galois-cover-pre-representation-finite}
	Let $\pi:( R,L)\rightarrow( Q,I)$ be a Galois covering of quivers with relations with group $G$. If $kQ/I$ is a locally representation-finite category, then so is $kR/L$.
\end{Prop}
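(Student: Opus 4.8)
The plan is to fix an object $x\in R_0$ and bound the finitely generated indecomposable $(R,L)$-modules $M$ with $M_x\neq 0$, by pushing them down to the base and exploiting the local representation-finiteness there. Write $a=\pi x$. Since $kQ/I$ is locally representation-finite, the set $\mathcal N$ of isomorphism classes of finitely generated indecomposable $(Q,I)$-modules $N$ with $N_a\neq 0$ is finite (Definition \ref{locally-representation-finite-category}). The whole point is to show that every indecomposable $M$ through $x$ is a direct summand of $F_\bullet N$ for some $N\in\mathcal N$, and that for each such $N$ only finitely many of its indecomposable summands pass through $x$. I note that this route deliberately sidesteps any analysis of stabilizers, so Lemma \ref{pushdown-indec} (which only covers the free case $^gM\not\cong M$) is not needed.

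For the first reduction, let $M$ be a finitely generated indecomposable $(R,L)$-module with $M_x\neq 0$, and decompose the finitely generated $(Q,I)$-module $F_\lambda M=\bigoplus_i P_i$ into indecomposables (Krull--Schmidt over $kQ/I$). By Lemma \ref{pushdown-pull-up} we have $M\mid \bigoplus_{g\in G}{}^gM\cong F_\bullet F_\lambda M=\bigoplus_i F_\bullet P_i$, since $M={}^1M$ is one of the summands on the left. As $M$ is finitely generated, the split inclusion factors through a finite subsum; writing $1_M$ as a finite sum $\sum_i \rho_i\iota_i$ of endomorphisms routed through the $F_\bullet P_i$ and using that $\End(M)$ is local, some $\rho_i\iota_i$ is an automorphism, whence $M\mid F_\bullet P_i$ for one index $i$. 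Evaluating at $x$ gives $(F_\bullet P_i)_x=(P_i)_a$, and since $M_x\neq 0$ is a direct summand of this space we get $(P_i)_a\neq 0$, i.e.\ $P_i\cong N$ for some $N\in\mathcal N$. Thus $M\mid F_\bullet N$.

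For the second reduction, fix $N\in\mathcal N$ and consider the isomorphism classes of finitely generated indecomposable modules occurring as summands of $F_\bullet N$ that are nonzero at $x$. The key observation is that evaluation at $x$ is finite-dimensional: $(F_\bullet N)_x=N_a$ with $\dim_k N_a<\infty$. If $M^{(1)},\dots,M^{(s)}$ are pairwise non-isomorphic such summands, then, each being indecomposable with local endomorphism ring, each has the finite exchange property; peeling them off one at a time (and cancelling the finite-dimensional modules with local endomorphism rings, which are cancellable) produces a single decomposition $F_\bullet N\cong M^{(1)}\oplus\cdots\oplus M^{(s)}\oplus Z$. Evaluating at $x$ yields $\dim_k N_a=\sum_{l=1}^s \dim_k M^{(l)}_x+\dim_k Z_x\geq s$, so $s\leq\dim_k N_a$, and only finitely many isomorphism classes of indecomposable summands of $F_\bullet N$ pass through $x$.

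Combining the two reductions, every finitely generated indecomposable $(R,L)$-module through $x$ is isomorphic to one of the finitely many indecomposable summands through $x$ of the finitely many modules $F_\bullet N$, $N\in\mathcal N$; hence there are only finitely many of them, so $kR/L$ is locally representation-finite. I expect the second reduction to be the main obstacle: because $F_\bullet N$ is in general infinite-dimensional (its support is the full fibre $\pi^{-1}(\supp N)$), ordinary Krull--Schmidt is unavailable, and one must argue through the exchange and cancellation properties supplied by the \emph{local} endomorphism rings of the finite-dimensional indecomposables in order to separate distinct isomorphism classes and let the finite-dimensionality of the fibre $(F_\bullet N)_x=N_a$ force the bound.
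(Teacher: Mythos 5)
Your proof is correct, but it follows a genuinely different route from the paper's. The paper introduces the notion of an \emph{orbicular} module and invokes Warfield's Krull--Schmidt theorem for infinite direct sums of modules with local endomorphism rings to show that each indecomposable summand $P^j$ of $F_\lambda M$ satisfies $F_\bullet P^j\cong\bigoplus_{g\in H}{}^gM$; it then identifies $M$ with a translate ${}^gM^i$ of a chosen summand $M^i$ of $F_\bullet N^i$, and obtains finiteness from two facts: there are finitely many orbicular modules through $\pi x$, and each set $G_i=\{g\in G\mid M^i_{g^{-1}x}\neq 0\}$ is finite because $M^i$ is finite dimensional and $G$ acts freely on $R$. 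You instead split the problem into $M\mid F_\bullet N$ for some $N\in\mathcal N$ (an elementary consequence of $M\mid\bigoplus_i F_\bullet P_i$, the finiteness of that sum, and locality of $\End M$) and a counting bound $s\le\dim_kN_a$ on pairwise non-isomorphic indecomposable summands of $F_\bullet N$ through $x$. Your route avoids the orbicular analysis, the infinite Krull--Schmidt theorem, and any use of the free $G$-action in the counting step, and it produces an explicit numerical bound $\sum_{N\in\mathcal N}\dim_kN_{\pi x}$; what you lose is the finer structural output of the paper's argument (every indecomposable through $x$ is a translate ${}^gM^i$, and every $P^j$ is orbicular), which the paper reuses in the remark that follows. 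One small point of rigor in your second reduction: the parenthetical appeal to cancellation is not where the work is. The clean way to run the induction is to exchange $M^{(l+1)}$ into $M^{(1)}\oplus\cdots\oplus M^{(l)}\oplus C_l$, note that the exchange complements satisfy $M^{(l+1)}\cong\bigoplus_j\bigl(M^{(j)}/M_j'\bigr)\oplus\bigl(C_l/C_l'\bigr)$ with each $M_j'$ a direct summand of the indecomposable $M^{(j)}$, and observe that indecomposability of $M^{(l+1)}$ together with $M^{(l+1)}\not\cong M^{(j)}$ forces all $M_j'=M^{(j)}$, so the exchange happens entirely inside $C_l$; no cancellation is needed.
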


\begin{proof}
	We call a module $N\in \mo( Q,I)$ {\it orbicular} if $N$ is indecomposable and $F_\bullet N$ is a direct sum of an indecomposable finitely generated $( R,L)$-module $\ell$ and of some translates $^g\ell$ with $g\in G$. Given an object $x\in ( R,L)$, let $N^1,\cdots,N^r$ be representatives of the orbicular modules $N$ such that $N_{\pi x}\neq 0$. Choose an indecomposable direct summand $M^i$ of $F_\bullet N^i$ and denote by $G_i$ the set formed by the $g\in G$ such that $M^i_{g^{-1}x}\neq 0$. Note that since the finitely generated module $M^i$ has finite dimension and $G$ acts freely on $R$, each $G_i$ is a finite set.
	
	We claim that each indecomposable finitely generated $M\in \mo( R,L)$ such that $M_x\neq 0$ is isomorphic to some $^gM^i$ with $g\in G_i$. Indeed, consider an indecomposable decomposition $F_\lambda M=P^1\oplus\cdots\oplus P^s$. Then by Lemma \ref{pushdown-pull-up}, we have
	$$\oplus_{g\in G}\;^gM\cong F_{\bullet}F_\lambda M=F_\bullet P^1\oplus\cdots\oplus F_\bullet P^s,$$
	By the Krull-Schmidt theorem in \cite[Theorem 1]{War1969}, each $F_\bullet P^j$ is isomorphic to the direct sum of some modules $^gM$, which means $P^j$ is orbicular. In particular, there exists a $P^j$ such that $F_\bullet P^j=\oplus_{g\in H}\;^gM$ with $1\in H\subseteq G$, which implies that $P^j_{\pi x}\neq 0$. Therefore, $P^j$ is isomorphic to some $N^i$. Now suppose that $F_\bullet N^i=\oplus_{g\in H_i}\;^gM^i$ for some subset $H_i$ of $G$. Moreover,
	$$\oplus_{g\in H_i}\;^gM^i=F_\bullet N^i\cong F_\bullet P^j=\oplus_{g\in H}\;^gM,$$ and we get again by Krull-Schmidt theorem $M\cong\;^gM^i$ for some $j$ and some $g\in H_i$. Since $M_x\neq 0$ implies $M^i_{g^{-1}x}=(^gM^i)_x\neq 0$, it follows that $g\in G_i$.
\end{proof}

\begin{Rem1}
\begin{itemize}
\item[(1)] \cite[Theorem 5]{MP2} demonstrates that the converse of Proposition \ref{Galois-cover-pre-representation-finite} also holds.
\item[(2)] If $\pi:(R,L)\rightarrow(Q,I)$ is a Galois covering of quivers with relations with group $G$ with $kQ/I$ locally representation-finite, then all indecomposable $(Q,I)$-modules are orbicular. The reason is as follows: Since $kQ/I$ is locally representation-finite, by Proposition \ref{Galois-cover-pre-representation-finite} so is $kR/L$. Since $G$ acts freely on $R$, according to \cite[Theorem 4]{MP2}, it acts freely on $[\mathrm{ind}(R,L)]$. By \cite[Theorem 3.6(d)]{Ga1981}, each $N\in\mathrm{ind}(Q,I)$ is isomorphic to some $F_{\lambda}M$ for some $M\in\mathrm{ind}(R,L)$. Then $F_{\bullet}N\cong F_{\bullet}F_{\lambda}M\cong\oplus_{h\in G} \;^hM$. For a construction of non-orbicular modules, see \cite{D20012}.
\end{itemize}
\end{Rem1}

Proposition \ref{Galois-cover-pre-representation-finite} shows that the representation type of a finite dimensional algebra can be determined through its (universal) cover. We provide an illustrative example through the following proposition.

\begin{Prop}{\rm(\cite[Proposition I.10.11]{E1990})}
	Suppose $\Lambda$ is an algebra with quiver $Q$ such that for each vertex $e$ of $Q$ at least two arrows start and at least two arrows end at $e$. Then $\Lambda$ is not of finite type.
\end{Prop}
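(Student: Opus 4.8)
The plan is to discard the relations down to radical square zero and then rule out finite type via the separated quiver and a counting argument.

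First I would replace $\Lambda$ by $A:=\Lambda/\rad^2\Lambda$. Writing $\Lambda=kQ/I$ with $I\subseteq kQ^{\geq 2}$ an admissible ideal, one has $I+kQ^{\geq 2}=kQ^{\geq 2}$, so $A\cong kQ/kQ^{\geq 2}$ is the radical square zero algebra attached to $Q$, irrespective of the choice of $I$. By Proposition \ref{eAe}(a), in its ``of infinite type'' version and applied to the ideal $\rad^2\Lambda$, it is enough to show that $A$ is of infinite type.

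Next I would pass to the separated quiver $Q^{s}$, whose vertices are a copy $x$ and a primed copy $x'$ of each $x\in Q_0$, with one arrow $x\to y'$ for every arrow $x\to y$ of $Q$. This $Q^{s}$ is a finite acyclic quiver, so $kQ^{s}$ is a finite dimensional hereditary algebra, and the classical separation correspondence for radical square zero algebras (cf. \cite{ARS}) identifies the representation type of $A$ with that of $kQ^{s}$. By the theorem on path algebras recalled above, $kQ^{s}$ is of finite type precisely when the underlying graph of $Q^{s}$ is a disjoint union of Dynkin diagrams.

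It then remains to exclude this by counting. Set $n=|Q_0|$ and $m=|Q_1|$; then $Q^{s}$ has $2n$ vertices and exactly $m$ arrows. Summing the hypothesis ``at least two arrows start at each vertex'' over $Q_0$ gives $m\geq 2n$ (the dual hypothesis on incoming arrows is not needed for this estimate). On the other hand a disjoint union of Dynkin diagrams is a forest, and a forest on $2n$ vertices has at most $2n-1$ edges. As $m\geq 2n>2n-1$, the underlying graph of $Q^{s}$ contains a cycle and hence cannot be a disjoint union of Dynkin diagrams. Thus $kQ^{s}$, and with it $A$ and then $\Lambda$, is of infinite type, as claimed.

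The elementary counting and the reduction to $\rad^2=0$ are routine once Proposition \ref{eAe} is granted; the one genuinely external ingredient, and the step I would document most carefully, is the separation correspondence relating the type of $kQ/kQ^{\geq 2}$ to that of the hereditary algebra $kQ^{s}$, which is standard (e.g.\ \cite{ARS}) but is not developed in this note. One could alternatively argue strictly inside the covering framework of this section---assuming $\Lambda$ of finite type, lifting local representation-finiteness to the universal cover of Proposition \ref{universal-cover} via Proposition \ref{Galois-cover-pre-representation-finite}, and contradicting it using the degree condition transported along $\pi$---but turning the degree condition into a contradiction in the presence of the lifted relations $\widetilde I$ is precisely the delicate point that the separated quiver route sidesteps.
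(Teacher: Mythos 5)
Your proof is correct, but it takes a genuinely different route from the one in the paper. The note's own argument stays inside the covering framework it has just set up: after the same reduction to $\Lambda/\mathrm{rad}(\Lambda)^2$ (via Proposition \ref{eAe}), it observes that this radical-square-zero algebra is defined by zero relations, so its universal cover (Proposition \ref{universal-cover}) is a tree; the two-in/two-out hypothesis forces that tree to contain infinite alternating lines of type $A_\infty^\infty$, which are not locally representation-finite, and Proposition \ref{Galois-cover-pre-representation-finite} then pushes this down. You instead invoke the separated-quiver theorem for radical-square-zero algebras from \cite{ARS}, reducing to the hereditary algebra $kQ^{s}$, and kill finite type by the Euler count $m\geq 2n>2n-1$ showing the underlying graph of $Q^{s}$ is not a forest. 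Both arguments are sound. Yours is shorter and, as you note, needs only half the hypothesis (out-degree $\geq 2$ at every vertex already gives $m\geq 2n$), so it actually proves a slightly stronger statement and also handles double arrows painlessly; the price is importing the stable-equivalence/separated-quiver machinery, which is external to this lecture note, whereas the paper's proof is deliberately an advertisement for the covering technique of this section. Your closing remark about the "delicate point" in the covering route is overly cautious: since $\Lambda/\mathrm{rad}(\Lambda)^2$ is presented by zero relations only, the lifted ideal $\widetilde{I}$ consists of lifted paths of length two, and the alternating line contains no such paths, so no interference with $\widetilde{I}$ arises.
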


\begin{proof}
	We may assume that $\Lambda$ is basic. Consider the algebra $\Lambda/\mathrm{rad}(\Lambda)^2$. It has the same quiver as $\Lambda$ and may be defined by zero relations only. It has a universal cover which contains infinite lines of the form
	$$
	\begin{tikzcd}
		\cdots & \bullet \arrow[l] \arrow[r] & \bullet & \bullet \arrow[l] \arrow[r] & \bullet & \bullet \arrow[l] \arrow[r] & \cdots
	\end{tikzcd}$$
	Consequently, the universal cover of $\Lambda/\mathrm{rad}(\Lambda)^2$ is not locally representation-finite. By Proposition \ref{Galois-cover-pre-representation-finite}, this implies that $\Lambda/\mathrm{rad}(\Lambda)^2$ is also not locally representation-finite. Applying Proposition \ref{eAe}, we conclude that $\Lambda$ is not of finite type.
\end{proof}

\subsubsection{Determining wild type}
\

We demonstrate that covering theory, particularly through universal covers of finite dimensional algebras, can also be applied to determine wild type.

\begin{Thm}{\rm(\cite{D})}\label{Sincere-wildness}
	A finite dimensional algebra is wild if and only if the category $(\mo \;\Lambda)_s$ is wild.
\end{Thm}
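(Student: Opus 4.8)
The two implications have quite different weights. The implication ``$(\mo\Lambda)_s$ wild $\Rightarrow\Lambda$ wild'' is immediate: if a left-free bimodule $B$ witnesses wildness of the subcategory, so that $-\otimes_{k\langle X,Y\rangle}B$ preserves indecomposability, reflects isomorphisms and has image inside $(\mo\Lambda)_s\subseteq\mo\Lambda$, then the very same $B$ witnesses wildness of $\Lambda$. So the plan is to prove the converse, that wildness of $\Lambda$ forces the sincere subcategory to be wild.

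For the converse I would first reduce to $\Lambda=kQ/I$ basic, which is harmless by Proposition \ref{eAe}(c), and connected (connectedness being implicit here, since over a non-connected algebra every indecomposable is supported on a single block and so no indecomposable can be sincere). The next, and conceptually decisive, step is a remark on how support behaves. If $B$ is free as a left $k\langle X,Y\rangle$-module, then for each vertex idempotent $e_x$ the piece $Be_x$ is a direct summand of $B$, hence a projective and therefore free left $k\langle X,Y\rangle$-module; consequently $N\otimes_{k\langle X,Y\rangle}Be_x\neq 0$ as soon as $N\neq 0$ and $Be_x\neq 0$. Therefore the support of $N\otimes_{k\langle X,Y\rangle}B$ equals the right support $\{x:Be_x\neq 0\}$ of $B$ for \emph{every} nonzero $N$, independently of $N$. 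In particular $-\otimes_{k\langle X,Y\rangle}B$ has image in $(\mo\Lambda)_s$ if and only if $B$ is sincere as a right $\Lambda$-module. This reduces the whole theorem to one statement: given any wild bimodule $B$, manufacture a wild bimodule $B'$ (left-free, indecomposability-preserving, isomorphism-reflecting) that is in addition right-sincere.

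To produce such a $B'$ I would enlarge the right support of $B$ vertex by vertex, using connectedness of $Q$: if the right support $V_0$ is not all of $Q_0$, choose a vertex $x\notin V_0$ joined by an arrow to some $x'\in V_0$ and glue to $B$ a fresh copy of $Be_{x'}$ placed at $x$, rigidly tied to $Be_{x'}$ along that arrow; this keeps the bimodule left-free and enlarges the support to $V_0\cup\{x\}$, so after finitely many steps the support is all of $Q_0$. \textbf{The main obstacle is precisely this gluing.} One must arrange it so that the associated functor $F'=-\otimes_{k\langle X,Y\rangle}B'$ still sends indecomposables to indecomposables and still reflects isomorphisms; the danger is that the freshly added copies create spurious homomorphisms or idempotent endomorphisms concentrated on the new vertex, which would split $F'N$. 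The mechanism I would rely on is that $FN=N\otimes B$ is recovered from $F'N$ by a canonical morphism collapsing the added copies, and that each added copy is tied to $FN$ through the identity component of the chosen arrow, so that every endomorphism of $F'N$ is forced to act on the added copies through its action on $FN$, yielding $\End(F'N)\cong\End(FN)$; making this rigidity precise---choosing the attaching vertex and map (e.g.\ a source or sink, or a suitably generic position) so that no off-diagonal endomorphism survives, and checking that the relations $I$ are respected at the new vertex---is where essentially all the work of the theorem resides. The surrounding bookkeeping (left-freeness, the bimodule structure, termination of the induction) is then routine.
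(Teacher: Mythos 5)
First, a point of reference: the paper does not prove this statement at all --- it is quoted from Drozd \cite{D} and used as a black box --- so there is no in-house argument to measure yours against. Judged on its own terms, your proposal gets the easy implication right and makes a genuinely useful reduction: since $B$ is a finitely generated free left $k\langle X,Y\rangle$-module, each $Be_x$ is a finitely generated projective, hence free, left $k\langle X,Y\rangle$-module (Cohn), so $N\otimes_{k\langle X,Y\rangle}Be_x\neq 0$ whenever $N\neq 0$ and $Be_x\neq 0$; therefore $\mathrm{supp}(N\otimes_{k\langle X,Y\rangle}B)$ is the right support of $B$ for every nonzero $N$, and the theorem reduces (for $\Lambda$ basic and connected) to manufacturing a right-sincere wild bimodule from a given wild one.

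The remaining step --- the gluing --- is, however, the entire content of the theorem, and your sketch does not close it; as described it can actually fail. Two concrete obstructions. (1) \emph{Relations:} if you attach a fresh copy of $Be_{x'}$ at a new vertex $x$ by letting one chosen arrow $\alpha$ between $x$ and $x'$ act as the identity and all other arrows at $x$ act as zero, then any minimal relation $\sum_i\lambda_ip_i\in I$ whose paths end at $x$ with last arrow $\alpha$ forces $\sum_i\lambda_iB(q_i)=0$ for the truncated paths $q_i$, which need not hold; so the ``rigid tie along one arrow'' is not in general a $\Lambda$-module structure. (2) \emph{Left-freeness:} the natural repairs (placing the image or the top of $Be_{x'}$ under $\alpha$ at the new vertex, as one would do for a single module) produce sub- or quotient-bimodules that are no longer visibly free over $k\langle X,Y\rangle$, and freeness of $B'e_x$ is exactly what your support computation and the indecomposability transfer rely on; likewise $\End(F'N)\cong\End(FN)$ is asserted rather than argued. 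The known proofs do not proceed by such a gluing: they extract the sincere version from the tame--wild dichotomy machinery itself (if, for every idempotent $e$ of $\Lambda$, the sincere $e\Lambda e$-modules of each dimension vector were non-wild, hence parametrizable by the dichotomy applied to these constructible subsets, then $\mathrm{mod}\,\Lambda$, being the union of these pieces, would be tame). So your plan correctly isolates where the difficulty lies but leaves that step unproved, and the construction as sketched has genuine failure modes.
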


Therefore, by Proposition \ref{Sin} and Theorem \ref{Sincere-wildness}, we can show that a finite dimensional algebra $\Lambda=k Q/I$ is of wild type by choosing a finite subquiver $T$ of its Galois covering $ R$ which fits the conditions in Proposition \ref{Sin} such that $kT/L'$ is wild. Finally, we will show some applications of this method.

\begin{Prop}{\rm(\cite[Proposition I.10.8]{E1990})}
	The algebra $\Lambda$ is wild whose quiver contains one of the following quivers or their duals:
	\begin{enumerate}[label=\textnormal{(\roman*)}]
		\item $
		\begin{tikzcd}
			\bullet \arrow["\alpha"', loop, distance=2em, in=125, out=55] \arrow["\beta"', loop, distance=2em, in=305, out=235] \arrow[r, "\gamma"] & \bullet
		\end{tikzcd}$
		\item $
		\begin{tikzcd}
			\bullet \arrow["\alpha"', loop, distance=2em, in=215, out=145] \arrow[r, "\beta", shift left] \arrow[r, "\beta'"', shift right] & \bullet
		\end{tikzcd}$
		\item $
		\begin{tikzcd}
			\bullet \arrow[r, "\beta", shift left] \arrow[r, "\beta'"', shift right] & \bullet & \bullet \arrow[l, "\alpha"']
		\end{tikzcd}$
		\item $
		\begin{tikzcd}
			\bullet \arrow["\alpha"', loop, distance=2em, in=215, out=145] \arrow[r, "\beta", shift left] & \bullet \arrow["\eta"', loop, distance=2em, in=35, out=325] \arrow[l, "\gamma", shift left] \\
			\bullet \arrow[u, "\rho"]                                                                     &
		\end{tikzcd}$
		\item $
		\begin{tikzcd}
			\bullet \arrow[r, "\beta", shift left] \arrow["\alpha"', loop, distance=2em, in=215, out=145] \arrow[rr, "\rho"', bend right=49] & \bullet \arrow[l, "\gamma", shift left] \arrow[r, "\delta"] & \bullet
		\end{tikzcd}$
		\item $
		\begin{tikzcd}
			\bullet \arrow[rr, "\beta"] \arrow["\alpha"', loop, distance=2em, in=215, out=145] &                                                    & \bullet \arrow["\rho"', loop, distance=2em, in=35, out=325] \\
			& \bullet \arrow[lu, "\gamma"] \arrow[ru, "\delta"'] &
		\end{tikzcd}$
	\end{enumerate}
	
\end{Prop}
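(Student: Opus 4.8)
The plan is to strip $\Lambda$ down to a radical‑square‑zero algebra by the functorial reductions of Proposition~\ref{eAe}, and then to manufacture wildness of the reduced algebra by a push‑down argument resting on Proposition~\ref{Sin} and Theorem~\ref{Sincere-wildness}. First I would normalise: by Proposition~\ref{eAe}(c) we may assume $\Lambda$ is basic, and by Proposition~\ref{eAe}(d) it suffices to treat the six displayed quivers, the duals following by passage to $\Lambda^{\op}$. If the Gabriel quiver of $\Lambda$ contains one of the listed quivers $Q'$, let $e$ be the sum of the vertex idempotents at the vertices of $Q'$; the quiver of $e\Lambda e$ then contains $Q'$, and by Proposition~\ref{eAe}(b) it is enough to prove $e\Lambda e$ wild. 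Killing the surplus arrows together with every path of length $\ge 2$ realises $A:=kQ'/\operatorname{rad}^2$ as a quotient of $e\Lambda e$, so by Proposition~\ref{eAe}(a) the problem reduces to showing that $A$ is wild for each of the six quivers $Q'$.

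The mechanism for this last step is always the same. Suppose I can find a Galois covering $\pi\colon(R,L)\to(Q',\operatorname{rad}^2)$ and a \emph{finite} subquiver $T\subset R$, with $L'=kT\cap L$, such that $\pi$ maps $L'$ onto $\operatorname{rad}^2$ and $kT/L'$ is wild. Then Theorem~\ref{Sincere-wildness} supplies a representation embedding $-\otimes_{k\langle X,Y\rangle}B_0\colon\mo\text{-}k\langle X,Y\rangle\to\mo(T,L')_s$ landing in sincere modules, while Proposition~\ref{Sin} shows the restricted push‑down $F_\lambda\cong-\otimes_{kT/L'}B\colon\mo(T,L')_s\to\mo(Q',\operatorname{rad}^2)$ preserves indecomposability and reflects isomorphisms. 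Composing, $-\otimes_{k\langle X,Y\rangle}(B_0\otimes_{kT/L'}B)$ is free over $k\langle X,Y\rangle$, preserves indecomposability and reflects isomorphisms with image in $\mo A$, whence $A$, and with it $\Lambda$, is wild. Since $L'=kT\cap L$ consists of length‑two paths, $kT/L'$ is again radical‑square‑zero, so its wildness is read off from its separated quiver $T^{s}$ being neither a Dynkin nor a Euclidean diagram (the separated‑quiver criterion, cf.\ \cite{ARS}).

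The only genuine content is the choice of $(R,T)$ for each $Q'$ making $T^{s}$ wild while $\pi(L')=\operatorname{rad}^2$. For the loop‑free quiver (iii) no covering is needed: the separated quiver of $Q'$ itself is a double edge with a pendant vertex, already wild, so $A$ is wild. For the quivers carrying loops I would tailor the cover rather than rely on the universal one, which unfolds the homotopy classes of the multiple arrows apart — for the double arrow in (ii) it sends the two arrows to distinct vertices and collapses the separated quiver of every finite subquiver into a disjoint union of Dynkin diagrams. Instead I unfold only the loops, sending all loops to a common generator, so that parallel loops persist as parallel forward arrows while the finiteness‑destroying cycles disappear. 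For (i) this gives the $\mathbb{Z}$‑cover $\cdots\rightrightarrows v_i\rightrightarrows v_{i+1}\rightrightarrows\cdots$ with a pendant $w_i$ at each $v_i$; taking $T=\{v_0,v_1,v_2,w_1\}$ with $v_0\rightrightarrows v_1\rightrightarrows v_2$ and $v_1\to w_1$ realises all length‑two paths and gives $T^{s}$ a double edge with a pendant, hence wild. For (ii) the analogous $\mathbb{Z}$‑cover keeps the double arrow $v_i\rightrightarrows w_i$ intact, and $T=\{v_0,v_1,v_2,w_1\}$ with $v_0\xrightarrow{\tilde\alpha}v_1\xrightarrow{\tilde\alpha}v_2$ and $v_1\rightrightarrows w_1$ does the same job.

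The main obstacle is precisely this cover selection, and it is genuinely case‑sensitive. Covering reflects finiteness (Proposition~\ref{Galois-cover-pre-representation-finite}) but not wildness, and over‑unfolding destroys the very multiplicities or cycles that witness it. For (i)--(iii) the witness is a double edge, whereas for (iv)--(vi) the separated quiver of $Q'$ is a four‑cycle with a pendant; since a cycle in $T^{s}$ forces a cycle in $T$, a tree‑shaped cover would make every finite $T$ acyclic and $T^{s}$ a forest missing that four‑cycle. Hence for (iv)--(vi) I would keep the oriented cycle built from the loops together with the two‑cycle $\beta\gamma$, unfolding nothing that breaks it, and then locate a finite $T$ whose separated quiver contains the four‑cycle with a pendant. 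Carrying this out diagram by diagram — verifying in each case that $T$ is finite, that $\pi(L')=\operatorname{rad}^2$, and that $T^{s}$ is non‑Dynkin and non‑Euclidean, and then dualising — completes the argument. One could of course bypass covering entirely, since the separated quiver of every one of the six $Q'$ is already wild, but the covering formulation is the one that exhibits the method of this section.
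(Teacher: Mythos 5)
Your overall architecture --- pass to the radical-square-zero quotient via Proposition~\ref{eAe}, then exhibit a finite subquiver $T$ of a Galois cover with $kT/L'$ wild and push down via Proposition~\ref{Sin} and Theorem~\ref{Sincere-wildness} --- is exactly the paper's. The gap is in your treatment of the cover. You reject the universal cover on the grounds that a tree-shaped cover makes every finite $T$ acyclic, so that its separated quiver is a forest which (you claim) collapses to a disjoint union of Dynkin diagrams and cannot witness wildness. That premise is false: a forest can perfectly well be a wild diagram. The paper's proof uses precisely the universal cover of $\Lambda/\mathrm{rad}^2$ (a tree, since the relations are monomial) and in each of the six cases locates a finite subtree $T$ of underlying type $\tilde{\tilde{E_7}}$ (a path of eight vertices with one pendant, obtained by lifting a reduced walk of length $7$ that traverses each loop and each member of a double arrow, then attaching the remaining arrow) whose arrows alternate in orientation. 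Then $L'=kT\cap L=0$ and $kT/L'=kT$ is a wild \emph{hereditary} tree algebra, since $\tilde{\tilde{E_7}}$ properly contains the Euclidean diagram $\tilde{E}_7$; no double edge and no cycle in $T$ is needed, and even by your own separated-quiver test such a $T$ is wild, because the separated quiver of a bipartitely oriented tree is that same tree plus isolated vertices.

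The consequence of this misdiagnosis is that cases (iv)--(vi) are not actually proved in your proposal: you only gesture at ``keeping the oriented cycle built from the loops together with the two-cycle $\beta\gamma$'' without constructing the cover, checking that it is a \emph{Galois} covering of quivers with relations (which Proposition~\ref{Sin} requires --- its proof rests on Lemma~\ref{pushdown-indec}), or exhibiting the finite $T$ and verifying its wildness. Your ad hoc $\mathbb{Z}$-covers for (i) and (ii) do work (they are regular covers, the chosen $T$ realises all length-two relations and has wild separated quiver), and (iii) needs no cover at all; but the uniform and complete argument is the one you discarded.
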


\begin{proof}
	The algebra $\Lambda/\mathrm{rad}(\Lambda)^2$ is generated by zero relations. So it has a universal cover as described above. In each case we find a tree $\cong \tilde{\tilde{E_7}}$ in the universal cover of $\Lambda/\mathrm{rad}(\Lambda)^2$, namely
	\begin{enumerate}[label=(\roman*)]
		\item $
		\begin{tikzcd}
			\bullet & \bullet \arrow[l, "\alpha"'] \arrow[r, "\beta"] & \bullet & \bullet \arrow[l, "\alpha"'] \arrow[d, "\gamma"] \arrow[r, "\beta"] & \bullet & \bullet \arrow[l, "\alpha"'] \arrow[r, "\beta"] & \bullet & \bullet \arrow[l, "\alpha"'] \\
			&                                                 &         & \bullet                                                             &         &                                                 &         &
		\end{tikzcd}	$		
		\item$
		\begin{tikzcd}
			\bullet & \bullet \arrow[l, "\beta"'] \arrow[r, "\beta'"] & \bullet & \bullet \arrow[l, "\beta"'] \arrow[d, "\alpha"] \arrow[r, "\beta'"] & \bullet & \bullet \arrow[l, "\beta"'] \arrow[r, "\beta'"] & \bullet & \bullet \arrow[l, "\beta"'] \\
			&                                                 &         & \bullet                                                             &         &                                                 &         &
		\end{tikzcd}$
		\item$
		\begin{tikzcd}
			\bullet \arrow[r, "\beta"] & \bullet & \bullet \arrow[l, "\beta'"'] \arrow[r, "\beta"] & \bullet                      & \bullet \arrow[l, "\beta'"'] \arrow[r, "\beta"] & \bullet & \bullet \arrow[l, "\beta'"'] \arrow[r, "\beta"] & \bullet \\
			&         &                                                 & \bullet \arrow[u, "\alpha"'] &                                                 &         &                                                 &
		\end{tikzcd}$
		\item$
		\begin{tikzcd}
			\bullet \arrow[r, "\eta"] & \bullet & \bullet \arrow[l, "\beta"'] \arrow[r, "\alpha"] & \bullet                    & \bullet \arrow[l, "\gamma"'] \arrow[r, "\eta"] & \bullet & \bullet \arrow[l, "\beta"'] \arrow[r, "\alpha"] & \bullet \\
			&         &                                                 & \bullet \arrow[u, "\rho"'] &                                                &         &                                                 &
		\end{tikzcd}$
		\item$
		\begin{tikzcd}
			\bullet & \bullet \arrow[l, "\gamma"'] \arrow[r, "\delta"] & \bullet & \bullet \arrow[l, "\rho"'] \arrow[d, "\beta"] \arrow[r, "\alpha"] & \bullet & \bullet \arrow[l, "\gamma"'] \arrow[r, "\delta"] & \bullet & \bullet \arrow[l, "\rho"'] \\
			&                                                  &         & \bullet                                                           &         &                                                  &         &
		\end{tikzcd}$			
		\item$
		\begin{tikzcd}
			\bullet \arrow[r, "\gamma"] & \bullet & \bullet \arrow[l, "\alpha"'] \arrow[r, "\beta"] & \bullet                    & \bullet \arrow[l, "\delta"'] \arrow[r, "\gamma"] & \bullet & \bullet \arrow[l, "\alpha"'] \arrow[r, "\beta"] & \bullet \\
			&         &                                                 & \bullet \arrow[u, "\rho"'] &                                                  &         &                                                 &
		\end{tikzcd}$
	\end{enumerate}
	Hence by Proposition \ref{Sin}, $\Lambda/\mathrm{rad}(\Lambda)^2$ is wild. By Proposition \ref{eAe}, $\Lambda$ is wild.
\end{proof}

\begin{Lem}{\rm(\cite[Lemma I.10.9]{E1990})}
	Let $\Lambda$ be an algebra which is not wild and let $Q$ be the quiver of $\Lambda$. Suppose that for each vertex $e$ of $Q$, at least two arrows start and two arrows end at $e$. Given any arrow $\alpha$ of $Q$ then there is at most one arrow $\beta$ such that $\beta\alpha$ is not involved in any relation.
\end{Lem}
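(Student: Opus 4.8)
The plan is to prove the contrapositive: supposing there are two distinct arrows $\beta_1,\beta_2$ with source $t(\alpha)$ such that neither $\beta_1\alpha$ nor $\beta_2\alpha$ is involved in any relation, I will show $\Lambda$ is wild, contradicting the hypothesis. Writing $\alpha\colon 1\to 0$ and $\beta_i\colon 0\to i{+}1$ near the relevant vertex, the decisive observation is the following. Because every vertex of $Q$ has in-degree and out-degree at least $2$ but possibly exactly $2$, a vertex of degree $3$ in a covering tree cannot be made a pure source or a pure sink; it is forced to be a \emph{mixed} vertex, and the two directed length-$2$ paths it then carries are precisely $\beta_1\alpha,\beta_2\alpha$. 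The hypothesis that these are relation-free is exactly what lets me realise a degree-$3$ branch point without introducing any relation, and hence build a wild hereditary configuration.

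First I would reduce to a monomial algebra. Let $\bar I\supseteq I$ be the ideal generated by all paths of length $\geq 3$ together with all paths of length $2$ other than $\beta_1\alpha$ and $\beta_2\alpha$, and set $\bar\Lambda=kQ/\bar I$. Since no relation of $I$ may involve $\beta_1\alpha$ or $\beta_2\alpha$, every generator of $I$ already lies in $\bar I$, so $I\subseteq\bar I$, the algebra $\bar\Lambda$ is a quotient of $\Lambda$, the two paths survive ($\beta_1\alpha,\beta_2\alpha\neq 0$ in $\bar\Lambda$), and $\bar I$ is generated by monomials. By Proposition~\ref{eAe}(a) it suffices to prove $\bar\Lambda$ wild. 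As $\bar I$ is monomial, its minimal relations are all zero relations, so $\Pi(Q,\bar I)=\Pi(Q)$, and by Proposition~\ref{universal-cover} the universal cover $\pi\colon(\widetilde Q,\widetilde{\bar I})\to(Q,\bar I)$ has $\widetilde Q$ equal to the topological universal cover of $Q$, an infinite locally finite tree; it is a Galois covering with group $G=\Pi(Q)$.

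Next I would locate a wild tree inside $\widetilde Q$. Lift $\alpha,\beta_1,\beta_2$ to arrows $\widetilde\alpha\colon 1\to 0$, $\widetilde\beta_1\colon 0\to 2$, $\widetilde\beta_2\colon 0\to 3$ of $\widetilde Q$, whose targets are distinct since $\widetilde Q$ is a tree, so that $\widetilde{\beta_1}\widetilde\alpha,\widetilde{\beta_2}\widetilde\alpha\notin\widetilde{\bar I}$. Using the degree hypothesis, inherited by $\widetilde Q$, I would grow three arms out of $0$ through $1$, $2$, $3$ of lengths $4$, $3$, $1$, orienting every newly adjoined vertex as a source or a sink so that $0$ is the only mixed vertex of the resulting finite subtree $T$. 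Then the only directed length-$2$ paths of $T$ are $\widetilde{\beta_1}\widetilde\alpha$ and $\widetilde{\beta_2}\widetilde\alpha$, and $T$ has no directed paths of length $\geq 3$; hence $L'=kT\cap\widetilde{\bar I}=0$ and $kT/L'=kT$ is the path algebra of the underlying graph $\widetilde{\widetilde{E_7}}=T_{4,3,1}$. Since this graph is neither Dynkin nor Euclidean, $kT$ is wild by the classification of hereditary algebras together with the tame–wild dichotomy.

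Finally I would transport the wildness down and then up. Because $T$ is finite, $G=\Pi(Q)$ is infinite, and $G$ acts freely, no nontrivial $g\in G$ fixes $T$ setwise; thus a sincere $kT$-module extended by zero has support exactly $T$ and is not isomorphic to any proper translate, so Lemma~\ref{pushdown-indec} and Proposition~\ref{Sin} apply and $F_{\lambda}\colon\mo(T,L')_s\to\mo(Q,\bar I)$ preserves indecomposability, reflects isomorphism, and is a bimodule functor. Applying Theorem~\ref{Sincere-wildness} to the wild algebra $kT/L'=kT$ yields a wild family inside $\mo(T,L')_s$; pushing it down along $F_{\lambda}$ gives a wild family in $\mo(Q,\bar I)$, so $\bar\Lambda$ is wild, whence $\Lambda$ is wild by Proposition~\ref{eAe}(a). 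I expect the main obstacle to be the verification underlying Proposition~\ref{Sin}: one must check that a sincere $kT$-module extended by zero is a genuine $(\widetilde Q,\widetilde{\bar I})$-module, i.e. that every relation of $\widetilde{\bar I}$ acts as zero on it. This is where the precise design of $T$ is essential—no relation of $\widetilde{\bar I}$ lies entirely inside $T$, whereas $\widetilde{\beta_i}\widetilde\alpha$ do lie in $T$ but are not relations—together with confirming that the arms can always be extended to the required lengths using only the degree-$\geq 2$ hypothesis.
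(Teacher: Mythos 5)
Your proposal is correct and follows essentially the same route as the paper: pass to the monomial quotient $\Lambda/\langle\mathrm{rad}^2\setminus\{\beta_1\alpha,\beta_2\alpha\}\rangle$, take its universal cover (a tree), extract a subtree of type $\tilde{\tilde{E_7}}$ whose only directed length-two paths are the two lifted compositions, and conclude wildness via Proposition \ref{Sin}, Theorem \ref{Sincere-wildness} and Proposition \ref{eAe}. The only (immaterial) difference is how the arm lengths $4,3,1$ are distributed among the three branches at $t(\alpha)$, and your version supplies the verifications the paper leaves implicit.
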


\begin{proof}
	We may assume that $\Lambda$ is basic. Suppose $\beta\alpha$ and $\gamma\alpha$ are both not involved in any relation. Let $I:=\mathrm{rad}(\Lambda)^2\backslash\{\beta\alpha,\gamma\alpha\}$. This is an ideal of $\Lambda$, and the factor algebra $\Lambda/I$ may be generated by zero relations. Hence $\Lambda/I$ has a universal cover.
	
	By the hypothesis, $\Lambda$ has enough arrows, and hence the universal cover contains a tree of the form $\tilde{\tilde{E_7}}$, namely
	$$\begin{tikzcd}
		\bullet & \bullet \arrow[l] \arrow[r] & \bullet & \bullet \arrow[l, "\beta"'] \arrow[r, "\gamma"] & \bullet & \bullet \arrow[l] \arrow[r] & \bullet & \bullet \arrow[l] \\
		&                             &         & \bullet \arrow[u, "\alpha"']                    &         &                             &         &
	\end{tikzcd}$$
	Hence by Proposition \ref{Sin}, $\Lambda/I$ is wild. By Proposition \ref{eAe}, $\Lambda$ is wild.
\end{proof}

\section{Galois coverings of representation-infinite algebras}\label{sec-Galois-coverings-of-representation-infinite-algebras}

Let $F: \Lambda\rightarrow \Lambda/G$ be a Galois covering of locally bounded categories with group $G$, that is, $\Lambda$ is a locally bounded category and $G$ is a group of $k$-linear automorphisms of $\Lambda$ which acts freely on the objects of $\Lambda$. According to \cite[Proposition 2]{DS1985}, if $\Lambda/G$ is tame then so is $\Lambda$. It is natural to ask whether the converse is also true.

Our main objective in this section is to study modules of the second kind over $\Lambda/G$, that is, $\Lambda/G$-modules which are direct sums of indecomposable $\Lambda/G$-modules that are not belong to the image of the push-down functor $F_{\lambda}$. As an application, we show that under some conditions $\Lambda/G$ is tame if and only if $\Lambda$ is tame.
The main references of this section are \cite{DS1987, DS1985}.

\subsection{Preliminaries}
\

We keep the following notations throughout this section: \\
$A$: $k$-algebra (not necessarily finite dimensional). \\
MOD$A$: the category of right $A$-modules. \\
$\Lambda$: a connected, locally bounded $k$-category.\\
supp$M$: the support of $\Lambda$-module $M$, that is, the full subcategory of $\Lambda$ formed by all objects $x\in \Lambda$ such that $M(x)\neq 0$. \\
MOD$\Lambda$: the category of $\Lambda$-modules. \\
Mod$\Lambda$: the category of locally finite dimensional $\Lambda$-modules, that is, the category of $\Lambda$-modules $M$ with $\mathrm{dim}_{k}M(x)<\infty$ for all $x\in \Lambda$. \\
Ind$\Lambda$: the full subcategory of Mod$\Lambda$ formed by chosen representatives of all indecomposable objects. \\
mod$\Lambda$: the category of finite dimensional $\Lambda$-modules, that is, the category of $\Lambda$-modules $M$ with $\sum_{x\in \Lambda}\mathrm{dim}_{k}M(x)<\infty$. \\
ind$\Lambda$: the full subcategory of mod$\Lambda$ formed by chosen representatives of all indecomposable objects.\\
$[\mathrm{ind}\Lambda]$: the set of isomorphism classes of the finitely generated indecomposable $\Lambda$-modules.\\
$G$: a group of $k$-linear automorphisms of $\Lambda$ which acts freely on $[\mathrm{ind}\Lambda]$, that is, for any $M\in \mathrm{ind}\Lambda$, we have $^gM\not\cong M$ if $1\neq g\in G$. (Note that if $G$ acts freely on $[\mathrm{ind}\Lambda]$, then $G$ also acts freely on $\Lambda$; conversely, if $G$ acts freely on $\Lambda$ and is torsion-free, then $G$ acts freely on $[\mathrm{ind}\Lambda]$.) \\
$F$: the Galois covering functor $\Lambda\rightarrow \Lambda/G$. \\
$F_{\lambda}$: the push-down functor mod$\Lambda\rightarrow$mod$(\Lambda/G)$ associated with $F$. \\
$G_L$: the stabilizer $\{g\in G\mid gL=L\}$ of the full subcategory $L$ of $\Lambda$. \\
$\prescript{g}{}{M}$: the $\Lambda$-module $M\circ g^{-1}$, where $g\in G$. \\
$G_M$: the stabilizer $\{g\in G\mid \prescript{g}{}{M}\cong M\}$ of the $\Lambda$-module $M$. \\
$\Lambda_x$: the full subcategory of $\Lambda$ consisting of the points of all supp$M$, where $M\in\mathrm{ind}\Lambda$ with $M(x)\neq 0$.

\medskip
We have the following two general results.

\begin{Prop}
The push-down functor $F_{\lambda}: \mathrm{mod}\Lambda\rightarrow\mathrm{mod}(\Lambda/G)$ induces an injection from the set $[\mathrm{ind}\Lambda]/G$ of $G$-orbits of $[\mathrm{ind}\Lambda]$ into $[\mathrm{ind}(\Lambda/G)]$.
\end{Prop}

\begin{proof}
Since $G$ acts freely on $[\mathrm{ind}\Lambda]$, by Lemma \ref{pushdown-indec}, $F_{\lambda}M$ is indecomposable of each $M\in\mathrm{ind}\Lambda$. If $M\in\mathrm{ind}\Lambda$ and $g\in G$, then by Lemma \ref{pushdown-pull-up} $F_{\lambda} \;^gM\cong F_{\lambda}M$. So $F_{\lambda}$ induces a map $[\mathrm{ind}\Lambda]/G\rightarrow[\mathrm{ind}\Lambda/G]$. For $M,N\in\mathrm{ind}\Lambda$ with $F_{\lambda}M\cong F_{\lambda}N$, by Lemma \ref{pushdown-indec} we have $N\cong \;^gM$ for some $g\in G$. Therefore the above map is injective.
\end{proof}

\begin{Prop} {\rm(see \cite[Theorem 3.6]{Ga1981})}
The push-down functor $F_{\lambda}$ preserves Auslander-Reiten sequences and induces an isomorphism of the quotient $\Gamma_{\Lambda}/G$ of the Auslander-Reiten quiver $\Gamma_\Lambda$ of $\Lambda$ onto the union of some connected components of $\Gamma_{\Lambda/G}$.
\end{Prop}

Let $\mathrm{ind}_{1}(\Lambda/G)$ be the full subcategory of ind$(\Lambda/G)$ consists of all objects isomorphic to $F_{\lambda}M$ for some $M\in\mathrm{ind}\Lambda$, and let $\mathrm{ind}_{2}(\Lambda/G)$ be the full subcategory of ind$(\Lambda/G)$ formed by remaining indecomposables. It follows from \cite[Theorem 3.6]{Ga1981} that the push-down functor $F_{\lambda}: \mathrm{mod}\Lambda\rightarrow\mathrm{mod}(\Lambda/G)$ induces a Galois covering $\mathrm{ind}\Lambda\rightarrow\mathrm{ind}_{1}(\Lambda/G)$ with group $G$. A finite dimensional $\Lambda/G$-module $N$ is called of the first (resp. second) kind if it is isomorphic to a direct sum of modules in $\mathrm{ind}_{1}(\Lambda/G)$ (resp. $\mathrm{ind}_{2}(\Lambda/G)$). Denote $\mathrm{mod}_{1}(\Lambda/G)$ (resp. $\mathrm{mod}_{2}(\Lambda/G)$) the full subcategory of mod$(\Lambda/G)$ formed by all modules of the first (resp. second) kind.

A module $Y\in\mathrm{Ind}\Lambda$ is called weakly-$G$-periodic if supp$Y$ is infinite and $(\mathrm{supp}Y)/G_Y$ is finite. A locally bounded category $\Lambda$ is called locally support-finite if $\Lambda_x$ is finite for all $x\in \Lambda$.

A full subcategory $L$ of $\Lambda$ is said to be convex if each path of the ordinary quiver $Q_\Lambda$ of $\Lambda$ with source and terminal in $L$ has all its points in $L$. A convex full subcategory $L$ of $\Lambda$ is called a line if $L$ is isomorphic to the path category of a linear quiver (of type $A_{n}$, $A_{\infty}$ or $A_{\infty}^{\infty}$). A line $L$ is called $G$-periodic if $G_L\neq\{1\}$.

\begin{Lem} \label{extend}
Let $L$ be a convex full subcategory of $\Lambda$. Then each $L$-module $M$ can be extended uniquely to a $\Lambda$-module $\widetilde{M}$ such that $\widetilde{M}(x)=0$ for all $x\notin L$.
\end{Lem}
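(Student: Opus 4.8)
The plan is to define $\widetilde{M}$ by an explicit formula that is visibly the only possible one, so that existence and uniqueness come out of the same construction. On objects I set $\widetilde{M}(x)=M(x)$ for $x\in L$ and $\widetilde{M}(x)=0$ for $x\notin L$. On a morphism $\phi\in\Lambda(x,y)$ I set $\widetilde{M}(\phi)=M(\phi)$ when both $x,y\in L$ — this is legitimate because $L$ is a \emph{full} subcategory, so $\Lambda(x,y)=L(x,y)$ and $M(\phi)$ is defined — and $\widetilde{M}(\phi)=0$ otherwise, which is forced, since in that case the source $\widetilde{M}(y)$ or the target $\widetilde{M}(x)$ of the required linear map is the zero space.

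First I would verify that $\widetilde{M}$ is a well-defined contravariant functor $\Lambda\to\mathrm{MOD}\,k$. Preservation of identities is immediate from the two cases of the definition. For composition, given $\phi\in\Lambda(x,y)$ and $\psi\in\Lambda(y,z)$, I must check $\widetilde{M}(\psi\phi)=\widetilde{M}(\phi)\circ\widetilde{M}(\psi)$. When $x,y,z$ all lie in $L$, this is exactly the functoriality of $M$ on the full subcategory $L$ (composition in $L$ agreeing with that in $\Lambda$). When $x\notin L$ or $z\notin L$, both sides are maps into or out of a zero space and hence coincide. The single remaining case, $x,z\in L$ but $y\notin L$, is the crux.

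The hard part — and the only place convexity enters — is this last case. Here I would argue that convexity forces $\Lambda(x,y)=0$ or $\Lambda(y,z)=0$: if there existed a path from $x$ to $y$ and a path from $y$ to $z$ in $Q_{\Lambda}$, their concatenation would be a path from $x$ to $z$ passing through $y$; since $x,z\in L$, convexity of $L$ would place $y\in L$, contradicting $y\notin L$. Hence at least one of $\phi,\psi$ is zero, so $\psi\phi=0$ and both sides of the composition identity vanish. (Equivalently, viewing an $L$-module as a representation of $(Q_{\Lambda},I)$, one checks directly that $\widetilde{M}$ kills every relation $\rho\in I(x,z)$: if $x,z\in L$ then by convexity every path occurring in $\rho$ has all its points in $L$, so $\rho$ already lies in the ideal defining $L$ and is annihilated by $M$; if $x\notin L$ or $z\notin L$ the relevant spaces are zero.)

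Finally, uniqueness is essentially built into the construction. If $N$ is any $\Lambda$-module with $N|_{L}=M$ and $N(x)=0$ for all $x\notin L$, then $N$ and $\widetilde{M}$ agree on objects, agree on every morphism between two objects of $L$ (both equal $M(\phi)$ by the restriction hypothesis), and agree on every other morphism (both are zero, a source or target space being zero); hence $N=\widetilde{M}$. I would close by remarking that the extension leaves the support unchanged, so it preserves the finiteness conditions, placing $\widetilde{M}$ in $\mathrm{mod}\,\Lambda$ (resp.\ $\mathrm{Mod}\,\Lambda$) whenever $M$ lies in $\mathrm{mod}\,L$ (resp.\ $\mathrm{Mod}\,L$).
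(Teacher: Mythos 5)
Your proposal is correct and follows essentially the same route as the paper: both define $\widetilde{M}$ by the evident formula and reduce the verification of compatibility with composition to the case $x,z\in L$, where convexity (via the observation that a nonzero morphism forces the existence of a path in $Q_\Lambda$) puts the intermediate vertex $y$ in $L$. Your explicit uniqueness paragraph and the remark on relations are fine additions but add nothing beyond what the paper's argument already implies.
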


\begin{proof}
For each $x\in \Lambda$, let \begin{equation*} \widetilde{M}(x)=
\begin{cases}
M(x), \text{ if } x\in L; \\
0, \text{ otherwise,}
\end{cases}
\end{equation*}
and for each $\alpha\in \Lambda(x,y)$, let \begin{equation*} \widetilde{M}(\alpha)=
\begin{cases}
M(\alpha), \text{ if } x,y\in L; \\
0, \text{ otherwise.}
\end{cases}
\end{equation*}
To show that $\widetilde{M}$ is a $\Lambda$-module, it suffices to show that $\widetilde{M}(\alpha)\widetilde{M}(\beta)=\widetilde{M}(\beta\alpha)$ for each nonzero morphism $\alpha\in \Lambda(x,y)$ and each nonzero morphism $\beta\in \Lambda(y,z)$. If $x\notin L$ or $z\notin L$, we have $\widetilde{M}(\alpha)\widetilde{M}(\beta)=0=\widetilde{M}(\beta\alpha)$. If $x,z\in L$, since there exist nonzero morphisms $\alpha:x\rightarrow y$ and $\beta:y\rightarrow z$ in $\Lambda$, there exist a path $p$ from $x$ to $y$ and a path $q$ from $y$ to $z$ in the ordinary quiver $Q_\Lambda$ of $\Lambda$. Then there exists a path $qp$ in $Q_\Lambda$ from $x$ to $z$. Since $x,z\in L$ and since $y$ is a point on the path $qp$, we have $y\in L$. Then $\widetilde{M}(\alpha)\widetilde{M}(\beta)=M(\alpha)M(\beta)=M(\beta\alpha)=\widetilde{M}(\beta\alpha)$.
\end{proof}

For each line $L$ in $\Lambda$, we can associate a $\Lambda$-module $B_L$ as follows: Let $Y$ be an $L$-module with $Y(x)=k$ for each $x\in L$ and $Y(\alpha)\neq 0$ for each nonzero morphism $\alpha$ in $L$ (since $L$ is isomorphic to the path category of a linear quiver, such $L$-module $Y$ exists and is unique up to isomorphism). By Lemma \ref{extend}, $Y$ extends uniquely to a $\Lambda$-module $B_L$ with $(B_L)(x)=0$ for all $x\notin L$.

\begin{Lem}\label{invariant-subgroups}
Let $L$ be a line in $\Lambda$ and $B_L$ be the associated $\Lambda$-module. Then $G_L=G_{B_L}$.
\end{Lem}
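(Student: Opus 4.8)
The plan is to prove the two inclusions $G_{B_L}\subseteq G_L$ and $G_L\subseteq G_{B_L}$ separately, the common engine being a computation of the support of a translate. First I would record, for every $g\in G$ and every object $x$, the identity $({}^{g}B_L)(x)=B_L(g^{-1}x)$, which is nonzero precisely when $g^{-1}x\in L$, that is when $x\in gL$. Hence $\operatorname{supp}({}^{g}B_L)=gL$, while by the very construction of $B_L$ we have $\operatorname{supp}(B_L)=L$ (it takes the value $k$ on $L$ and $0$ off $L$). Everything then hinges on comparing supports and on the uniqueness clause in the definition of $B_L$.

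For the inclusion $G_{B_L}\subseteq G_L$, suppose $g\in G_{B_L}$, so that ${}^{g}B_L\cong B_L$. Since the support of a module is an isomorphism invariant, the computation above yields $gL=\operatorname{supp}({}^{g}B_L)=\operatorname{supp}(B_L)=L$, whence $g\in G_L$; this direction uses nothing beyond supports. For the reverse inclusion $G_L\subseteq G_{B_L}$, suppose $g\in G_L$, i.e. $gL=L$. Because $g$ is a $k$-linear automorphism of $\Lambda$ permuting the objects of the \emph{full} subcategory $L$, it restricts to an automorphism of $L$ and in particular sends nonzero morphisms of $L$ to nonzero morphisms of $L$. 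By the support computation ${}^{g}B_L$ vanishes off $gL=L$, and on $L$ it satisfies $({}^{g}B_L)(x)=B_L(g^{-1}x)=k$ for each $x\in L$ and $({}^{g}B_L)(\alpha)=B_L(g^{-1}\alpha)\neq 0$ for each nonzero morphism $\alpha$ of $L$, since $g^{-1}\alpha$ is again a nonzero morphism of $L$. Thus the restriction $({}^{g}B_L)|_L$ is a thin $L$-module with all structure maps nonzero, hence isomorphic to the module $Y$ used to build $B_L$, by the uniqueness of such $Y$. As both ${}^{g}B_L$ and $B_L$ vanish outside $L$, Lemma \ref{extend} identifies each of them with the unique extension of its restriction to $L$, giving ${}^{g}B_L\cong B_L$ and therefore $g\in G_{B_L}$.

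The only step needing genuine care, and thus the main (if mild) obstacle, is the verification in the second inclusion that ${}^{g}B_L$ is again a thin module with nonzero structure maps whenever $gL=L$. This rests entirely on the facts that a $k$-linear category automorphism preserves the nonvanishing of morphisms and that it restricts to an automorphism of a full $g$-stable subcategory; once these are in hand the argument is pure bookkeeping, and no deeper difficulty arises.
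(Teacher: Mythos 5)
Your proof is correct and follows essentially the same route as the paper: the inclusion $G_{B_L}\subseteq G_L$ via the support computation $\operatorname{supp}({}^{g}B_L)=gL$, and the reverse inclusion by observing that for $g\in G_L$ the translate ${}^{g}B_L$ is again a thin module on $L$ with nonzero structure maps, hence isomorphic to $B_L$ by the uniqueness of such a module on a line. Your write-up merely makes explicit a few points the paper leaves implicit (that support is an isomorphism invariant, and that $g$ restricts to an automorphism of the full subcategory $L$).
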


\begin{proof}
Since supp$(B_L)=L$, we have $G_{B_L}\leq G_L$. Conversely, if $g\in G_L$, then $gL=L$, so $\prescript{g}{}{B_L}=B_L\circ g^{-1}$ is a $\Lambda$-module with supp$(\prescript{g}{}{B_L})=L$, such that $(\prescript{g}{}{B_L})(x)=k$ for each $x\in L$ and $(\prescript{g}{}{B_L})(\alpha)=(B_L)(g^{-1}\alpha)\neq 0$ for each nonzero morphism $\alpha$ in $L$. Since $L$ is a line, it follows that $B_L\cong \prescript{g}{}{B_L}$. So $g\in G_{B_L}$.
\end{proof}

\begin{Lem} \label{invariant-subgroup-of-line}
Let $L$ be a line in $\Lambda$ and $B_L$ be the associated $\Lambda$-module. If $B_L$ is weakly-$G$-periodic, then $G_{B_L}\cong\mathbb{Z}$.
\end{Lem}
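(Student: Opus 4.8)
The plan is to reduce the statement to the previous lemma and then classify the possible free actions on an infinite linear quiver. By Lemma~\ref{invariant-subgroups} we have $G_{B_L}=G_L$, so it suffices to analyze $G_L$. Since $\mathrm{supp}\,B_L=L$, the hypothesis that $B_L$ is weakly-$G$-periodic says exactly that $L$ is infinite while the orbit space $L/G_L$ is finite. I would set up the action concretely as follows: because $G$ acts freely on the objects of $\Lambda$, the restriction map $g\mapsto g|_L$ is injective on $G_L$ (if $g$ fixes any object of $L$ it must be the identity), and each $g|_L$ is an automorphism of the category $L\cong k(\text{linear quiver})$, hence induces a quiver automorphism of the underlying linear quiver. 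In this way $G_L$ embeds into the quiver-automorphism group of $L$, with the embedded copy acting freely on the vertex set. Thus the whole problem becomes: a group acting freely on an infinite linear quiver with finitely many vertex-orbits is infinite cyclic.

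Next I would identify that automorphism group and dispose of the half-infinite case. Since $L$ is infinite, its type is $A_\infty$ or $A_\infty^\infty$. The underlying graph of $A_\infty$ has a unique vertex of degree one, so its only graph automorphism (a fortiori its only quiver automorphism) is the identity; this would give $G_L=\{1\}$ and hence $L/G_L=L$ infinite, contradicting weak periodicity. Therefore $L$ has type $A_\infty^\infty$, a bi-infinite path with vertex set identified with $\mathbb{Z}$, whose graph-automorphism group is the infinite dihedral group $D_\infty$, generated by the translations $n\mapsto n+k$ and the reflections $n\mapsto c-n$. Every quiver automorphism is in particular a graph automorphism, so the image of $G_L$ lies in $D_\infty$.

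The key step, and the main technical point, is to rule out all reflections. A reflection $r_c\colon n\mapsto c-n$ with $c$ even fixes the vertex $c/2$, so it cannot occur in a free action. A reflection with $c$ odd swaps the two central vertices $m=(c-1)/2$ and $m+1$, so it sends the unique arrow $a$ joining them to an arrow with source $r_c(s(a))=t(a)$ and target $r_c(t(a))=s(a)$; that is, it reverses the direction of $a$, and since the linear quiver has no arrow in the reversed direction, $r_c$ is not a quiver automorphism. Hence no nontrivial reflection survives, and the image of $G_L$ is contained in the translation subgroup, which is isomorphic to $\mathbb{Z}$. A subgroup of $\mathbb{Z}$ is either trivial or infinite cyclic, and the trivial case is again excluded because it would force $L/G_L=L$ to be infinite; therefore $G_L\cong\mathbb{Z}$, and so $G_{B_L}=G_L\cong\mathbb{Z}$. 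I expect the only real obstacle to be the orientation bookkeeping in the reflection argument, namely showing cleanly that a fixed-point-free reflection necessarily reverses the central arrow and so destroys the quiver structure; the rest is the standard subgroup structure of $D_\infty$ and the translation back through Lemma~\ref{invariant-subgroups}.
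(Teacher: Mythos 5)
Your proof is correct and follows essentially the same route as the paper's: rule out type $A_\infty$ via the triviality of its automorphism group, view the quiver automorphisms of $A_\infty^\infty$ inside the infinite dihedral group, eliminate vertex-fixing reflections by freeness of the action and mid-edge reflections by the arrow-reversal argument, and conclude that the image is an infinite subgroup of the translation group $\mathbb{Z}$. The only cosmetic differences are that you invoke Lemma \ref{invariant-subgroups} explicitly at the outset and that you spell out the arrow-reversal step, which the paper states without elaboration when it asserts that every quiver automorphism of $A_\infty^\infty$ is a translation or a reflection with a fixed point.
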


\begin{proof}
Since supp$B_L=L$ is infinite, $L$ is isomorphic to the path category $kQ$ of a quiver $Q$ of type $A_{\infty}$ or $A_{\infty}^{\infty}$. Since $(\mathrm{supp}B_L)/G_{B_L}$ is finite, $G_{B_L}$ is infinite. Suppose that $Q$ is of type $A_{\infty}$. For each $g\in G_{B_L}$, $g$ induces an automorphism of $L$, which also induces an automorphism of the quiver $Q$. Since $Q$ is a quiver of type $A_{\infty}$, each automorphism of $Q$ is identity. Therefore the restriction of $g$ on $L$ is identity on objects. Since $G$ acts freely on $\Lambda$, we have $g=id_{\Lambda}$. Then $G_{B_L}$ is trivial, a contradiction. So $Q$ is of type $A_{\infty}^{\infty}$.

For each $g\in G_{B_L}$, let $\phi(g)$ be the automorphism of $Q$ induced from $g$. Since $G$ acts freely on $\Lambda$, the map $\phi:G_{B_L}\rightarrow\mathrm{Aut}(Q)$, $g\mapsto\phi(g)$ is injective, and the subgroup im$\phi$ of Aut$(Q)$ acts freely on $Q$. Since $Q$ is of type $A_{\infty}^{\infty}$, each automorphism of $Q$ is either a translation or a reflection which has a fixed point. Since im$\phi$ acts freely on $Q$, it contains only translations. Then im$\phi$ is cyclic. Since $G_{B_L}\cong\mathrm{im}\phi$ is infinite, it is an infinite cyclic group.
\end{proof}

\begin{Rem1}
If $\Lambda$ is tame and $G$ is torsion-free, then it was shown in \cite{D2001} that the stabilizer $G_Y$ of every weakly-$G$-periodic $\Lambda$-module $Y$ is an infinite cyclic group.
\end{Rem1}

\begin{Def} {\rm(\cite[Section 3.9]{Ga1981})}
A $\Lambda$-action $\nu$ of $G$ on a $\Lambda$-module $M$ consists of $k$-linear maps $\nu(g,x):M(x)\rightarrow M(gx)$ for each $g\in G$ and each $x\in \Lambda$, which satisfy
\begin{itemize}
\item $\nu(1,x)=1_{M(x)}$ for each $x\in \Lambda$;
\item $\nu(h,gx)\nu(g,x)=\nu(hg,x)$ for each $x\in \Lambda$ and each $g,h\in G$;
\item $\nu(g,x)M(\alpha)=M(g\alpha)\nu(g,y)$ for each $\alpha\in \Lambda(x,y)$ and each $g\in G$ (in other words, for each $g\in G$, the maps $\{\nu(g,x)\}_{x\in \Lambda}$ provide us with a morphism $\nu(g):M\rightarrow\prescript{g^{-1}}{}{M}$).
\end{itemize}
\end{Def}

\begin{Lem} {\rm(see \cite[Lemma 4.1]{D1996})} \label{R-action}
Let $M$ be a $\Lambda$-module such that $G_M$ is an infinite cyclic group generated by $g$. If $\mu:M\rightarrow\prescript{g^{-1}}{}{M}$ is an isomorphism, then there is a $\Lambda$-action $\nu$ of $G_M$ on $M$ given by
\begin{equation*}
\nu(g^n,x)=\begin{cases}
\mu_{g^{n-1}x}\cdots\mu_{gx}\mu_x, \text{ if } n>0; \\
\mu_{g^n x}^{-1}\mu_{g^{n+1} x}^{-1}\cdots\mu_{g^{-1} x}^{-1}, \text{ if } n<0; \\
1_{M(x)}, \text{ if } n=0.
\end{cases}
\end{equation*}
\end{Lem}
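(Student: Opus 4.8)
The plan is to verify directly that the family $\nu$ given by the displayed formula satisfies the three defining conditions of a $\Lambda$-action of $G_M=\langle g\rangle$ on $M$. The first thing to observe is that, since $\prescript{g^{-1}}{}{M}=M\circ g$, the isomorphism $\mu$ has components $\mu_x\colon M(x)\to(\prescript{g^{-1}}{}{M})(x)=M(gx)$; hence for $n>0$ the composite $\mu_{g^{n-1}x}\cdots\mu_x$ and for $n<0$ the composite $\mu_{g^nx}^{-1}\cdots\mu_{g^{-1}x}^{-1}$ are both well-defined maps $M(x)\to M(g^nx)$, so each $\nu(g^n,x)$ has precisely the source and target demanded by the definition. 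The normalization $\nu(1,x)=1_{M(x)}$ holds by the $n=0$ clause, so the first condition is immediate.

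For the cocycle condition $\nu(g^m,g^nx)\,\nu(g^n,x)=\nu(g^{m+n},x)$, I would first record the one-step recursion $\nu(g^{n+1},x)=\mu_{g^nx}\,\nu(g^n,x)$, valid for every $n\in\mathbb{Z}$ and checked by inspecting the formula across the sign change at $n=0$, where the adjacent factors $\mu_{g^{-1}x}$ and $\mu_{g^{-1}x}^{-1}$ telescope to the identity. Fixing $n$ and $x$ and inducting on $m$ in both directions (using the recursion, or its rearrangement $\nu(g^n,x)=\mu_{g^nx}^{-1}\,\nu(g^{n+1},x)$, at the base point $g^nx$), the base case $m=0$ is trivial and the inductive step follows from the identity $g^m(g^nx)=g^{m+n}x$. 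This step is essentially bookkeeping: the products of the $\mu$'s and their inverses concatenate and telescope.

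The compatibility condition $\nu(g^n,x)M(\alpha)=M(g^n\alpha)\,\nu(g^n,y)$ for $\alpha\in\Lambda(x,y)$ is the heart of the matter, and I expect it to be the main obstacle because of the interplay between the contravariance of $M$ and the index shifts by $g$. The key input is that $\mu$ is a morphism of $\Lambda$-modules, so $\mu_uM(\beta)=M(g\beta)\mu_v$ for every $\beta\in\Lambda(u,v)$; applying this with $\beta=g^k\alpha$ gives $\mu_{g^kx}M(g^k\alpha)=M(g^{k+1}\alpha)\mu_{g^ky}$. For $n>0$ I would push the factor $M(\alpha)$ leftward through the product $\mu_{g^{n-1}x}\cdots\mu_x$ one step at a time, each step converting $M(g^k\alpha)$ into $M(g^{k+1}\alpha)$ and peeling off a factor $\mu_{g^ky}$ on the right, until $M(g^n\alpha)$ emerges on the left and $\nu(g^n,y)$ stands assembled on the right. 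Conceptually this records the fact that $\nu(g^n)$ equals the composite $\prescript{g^{-(n-1)}}{}{\mu}\circ\cdots\circ\prescript{g^{-1}}{}{\mu}\circ\mu\colon M\to\prescript{g^{-n}}{}{M}$ of translates of $\mu$, hence is itself a morphism.

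For $n<0$ the identical computation runs with $\mu^{-1}$ in place of $\mu$: since $\mu^{-1}$ is again an isomorphism of $\Lambda$-modules, one has $\mu_{g^kx}^{-1}M(g^{k+1}\alpha)=M(g^k\alpha)\mu_{g^ky}^{-1}$, and pushing $M(\alpha)$ through $\mu_{g^nx}^{-1}\cdots\mu_{g^{-1}x}^{-1}$ gives the claim; equivalently, $\nu(g^n)$ is the analogous composite of translates of the morphism $\mu^{-1}$, and the case $n=0$ is trivial. Having checked all three conditions, $\nu$ is a $\Lambda$-action of $G_M$ on $M$.
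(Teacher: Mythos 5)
Your proposal is correct and follows essentially the same route as the paper: the cocycle condition is dispatched as a telescoping bookkeeping check, and the module-compatibility condition is verified by pushing $M(\alpha)$ through the product of the $\mu_{g^kx}$ (resp.\ their inverses) using the naturality identity $\mu_{g^kx}M(g^k\alpha)=M(g^{k+1}\alpha)\mu_{g^ky}$, exactly as in the paper's displayed computations for $n\geq 0$ and $n<0$. The only addition is your conceptual remark identifying $\nu(g^n)$ with a composite of translates of $\mu$, which is a nice gloss but not a different argument.
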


\begin{proof}
By definition it is straightforward to show that $\nu(g^m,g^n x)\nu(g^n,x)=\nu(g^{m+n},x)$ for each $m,n\in\mathbb{Z}$ and each $x\in \Lambda$. For $\alpha\in \Lambda(x,y)$ and $n\in\mathbb{Z}$, if $n\geq 0$, then
\begin{multline*}
\nu(g^n,x)M(\alpha)=\mu_{g^{n-1}x}\cdots\mu_{gx}\mu_x M(\alpha)=\mu_{g^{n-1}x}\cdots\mu_{gx}M(g\alpha)\mu_y=\cdots \\
=\mu_{g^{n-1}x}M(g^{n-1}\alpha)\mu_{g^{n-2}y}\cdots\mu_y=M(g^n \alpha)\mu_{g^{n-1}y}\cdots\mu_{gy}\mu_y=M(g^n \alpha)\nu(g^n,y);
\end{multline*}
if $n<0$, then
\begin{multline*}
\nu(g^n,x)M(\alpha)=\mu_{g^n x}^{-1}\mu_{g^{n+1} x}^{-1}\cdots\mu_{g^{-1} x}^{-1} M(\alpha)=\mu_{g^n x}^{-1}\mu_{g^{n+1} x}^{-1}\cdots\mu_{g^{-2} x}^{-1}M(g^{-1}\alpha)\mu_{g^{-1}y}^{-1}=\cdots \\
=\mu_{g^{n}x}^{-1}M(g^{n+1}\alpha)\mu_{g^{n+1} y}^{-1}\cdots\mu_{g^{-1} y}^{-1}=M(g^n \alpha)\mu_{g^n y}^{-1}\mu_{g^{n+1} y}^{-1}\cdots\mu_{g^{-1} y}^{-1}=M(g^n \alpha)\nu(g^n,y).
\end{multline*}
Therefore $\nu(h,x)M(\alpha)=M(h\alpha)\nu(h,y)$ for each $\alpha\in \Lambda(x,y)$ and each $h\in G_M$.
\end{proof}

\medskip
\subsection{Modules of the second kind}
\

\begin{Def} {\rm(\cite[Definition 3.1]{DS1987})} \label{separating family of subcategories}
A family $\mathscr{S}$ of full subcategories of $\Lambda$ is called separating (with respect to $G$) if $\mathscr{S}$ satisfies the following conditions:
\begin{itemize}
\item[$(i)$] for each $L\in\mathscr{S}$ and $g\in G$, $gL\in\mathscr{S}$;
\item[$(ii)$] for each $L\in\mathscr{S}$ and for each $G$-orbit $\mathscr{O}$ of $\Lambda$, $L\cap\mathscr{O}$ is contained in finitely many $G_L$-orbits;
\item[$(iii)$] for any two different $L,L'\in\mathscr{S}$, $L\cap L'$ is locally support-finite;
\item[$(iv)$] for each weakly-$G$-periodic $\Lambda$-module $Y$ there exists an $L\in\mathscr{S}$ such that $\mathrm{supp}Y\subseteq L$.
\end{itemize}
\end{Def}

\begin{Ex1}
If there exists a weakly-$G$-periodic $\Lambda$-module $Y$ such that $\mathrm{supp}Y=\Lambda$, then $\mathscr{S}=\{\Lambda\}$ is a separating family of subcategories of $\Lambda$ with respect to $G$.
\end{Ex1}

\begin{Thm} {\rm(\cite[Theorem 3.1]{DS1987})} \label{modules-of-the-second-kind}
Let $\Lambda$ be a locally bounded $k$-category and $G$ a group of automorphisms of $\Lambda$ which acts freely on $[\mathrm{ind}\Lambda]$. Let $\mathscr{S}$ be a separating family of subcategories of $\Lambda$ with respect to $G$ and $\mathscr{S}_0$ a fixed set of representatives of $G$-orbits of $\mathscr{S}$. Then there is an equivalence of categories
$$E:\coprod_{L\in\mathscr{S}_0}(\mathrm{mod}L/G_L)/(\mathrm{mod}_1 L/G_L)\rightarrow(\mathrm{mod}(\Lambda/G))/(\mathrm{mod}_1 (\Lambda/G)).$$

As a consequence, the AR-quiver $\Gamma_{\Lambda/G}$ of $\Lambda/G$ is isomorphic to the disjoint union of translation quivers $(\Gamma_{\Lambda}/G)\sqcup(\sqcup_{L\in\mathscr{S}_0}(\Gamma_{L/G_L})_2)$, where $(\Gamma_{L/G_L})_2$ is the union of connected components of $\Gamma_{L/G_L}$ whose points are $L/G_L$-modules of second kind.
\end{Thm}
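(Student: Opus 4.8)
The plan is to realize $E$ summand-by-summand through the chain pull-up $\to$ extension $\to$ induction-and-descent, and then to prove that the induced functor on quotients is dense and fully faithful; the AR-quiver assertion is then formal. Fix $L\in\mathscr{S}_0$, so that $\pi_L\colon L\to L/G_L$ is a Galois covering with group $G_L$. For a finite dimensional $L/G_L$-module $\bar M$, I first pull up to the $G_L$-equivariant, locally finite dimensional $L$-module $Y=(\pi_L)_\bullet\bar M$, extend $Y$ uniquely to a $\Lambda$-module $\widetilde Y$ supported on $L$ by Lemma~\ref{extend} (using convexity of $L$), and let $E(\bar M)$ be the $\Lambda/G$-module obtained by descending $\bigoplus_{g\in G/G_L}{}^{g}\widetilde Y$, where $G$ permutes the summands through the $G_L$-equivariance of $\widetilde Y$. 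Condition~$(ii)$ of the separating family (finiteness of $(L\cap\mathscr{O})/G_L$) guarantees that each fibre of $E(\bar M)$ is finite dimensional, so $E(\bar M)\in\mathrm{mod}\,\Lambda/G$. If $\bar M=(\pi_L)_\lambda M_0$ is of the first kind, Lemma~\ref{pushdown-pull-up} gives $\bigoplus_{G/G_L}{}^g\widetilde Y\cong\bigoplus_{g\in G}{}^g\widetilde{M_0}$, whose descent is $F_\lambda\widetilde{M_0}$, again of the first kind; hence $E$ descends to the quotient categories.

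For density, take $N\in\mathrm{ind}_2\,\Lambda/G$ and pull it up to the locally finite dimensional $\Lambda$-module $F_\bullet N$, whose support is the finite union of $G$-orbits $\pi^{-1}(\mathrm{supp}\,N)$. Decomposing $F_\bullet N$ by the Krull--Schmidt theorem of \cite{War1969}, the decisive structural claim, and the place where second-kindness enters, is that every indecomposable summand $Y$ is weakly-$G$-periodic: a finite-support summand would, via Lemma~\ref{pushdown-pull-up} and Lemma~\ref{pushdown-indec}, exhibit $N$ as of the first kind. By Lemma~\ref{invariant-subgroup-of-line} (together with the remark preceding it) $G_Y$ is infinite cyclic, and by Lemma~\ref{R-action} it carries a $\Lambda$-action. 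Condition~$(iv)$ places $\mathrm{supp}\,Y\subseteq L$ for some $L\in\mathscr{S}$, and condition~$(i)$ with a $G$-translation lets me take $L\in\mathscr{S}_0$; restricting $Y$ to $L$ and recording its $G_L$-equivariance then yields a second-kind $L/G_L$-module $\bar M$ with $E(\bar M)\cong N$ modulo first-kind summands.

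For full faithfulness, I would use the adjunction $(F_\lambda,F_\bullet)$ and the faithfulness of both functors to identify $\mathrm{Hom}_{\Lambda/G}(E\bar M,E\bar M')$ with the $G$-equivariant homomorphisms $\bigoplus_{G/G_L}{}^{g}\widetilde Y\to\bigoplus_{G/G_{L'}}{}^{g'}\widetilde Y'$, which reduce to the individual spaces $\mathrm{Hom}_\Lambda(\widetilde Y,{}^{g'}\widetilde Y')$. The contributions with $L=L'$ and $g'\in G_L$ reassemble exactly to $\mathrm{Hom}_{L/G_L}(\bar M,\bar M')$; every other contribution is a homomorphism whose image lies on an intersection $L\cap g'L'$ that is locally support-finite by condition~$(iii)$. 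The technical heart here is the lemma that a homomorphism factoring through a locally support-finite subcategory is of the first kind, hence zero in the quotient: this simultaneously gives full faithfulness on each summand and annihilates all morphisms between distinct $L,L'\in\mathscr{S}_0$, producing the coproduct on the source.

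Once $E$ is an equivalence of quotient categories, the decomposition of $\Gamma_{\Lambda/G}$ is formal: $F_\lambda$ preserves Auslander--Reiten sequences and embeds $\Gamma_\Lambda/G$ as the union of the first-kind components (as recalled before the statement, following \cite[Theorem 3.6]{Ga1981}), while $E$ transports Auslander--Reiten sequences and so matches the remaining second-kind components with $\sqcup_{L\in\mathscr{S}_0}(\Gamma_{L/G_L})_2$. I expect the two genuine obstacles to be precisely the structural claims flagged above: that the pull-up of an indecomposable second-kind module splits into weakly-$G$-periodic indecomposables (controlling an infinite $G$-set of summands via Warfield's Krull--Schmidt theorem), and that homomorphisms factoring through a locally support-finite overlap are of the first kind. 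Everything else is bookkeeping with conditions~$(i)$--$(iv)$ and the formal properties of $F_\lambda$ and $F_\bullet$.
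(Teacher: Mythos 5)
The paper does not actually prove this theorem: it is imported verbatim from \cite[Theorem 3.1]{DS1987} and used as a black box, so there is no in-paper argument to measure your proposal against. Judged on its own terms, your outline names the right ingredients (Warfield's Krull--Schmidt theorem applied to $F_\bullet N$, weak $G$-periodicity of the summands, and the roles of conditions $(i)$--$(iv)$), but it leaves the two hardest steps as unproved assertions and contains one step that fails as written.

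The step that fails: your construction of $E(\bar M)$ extends $(\pi_L)_\bullet\bar M$ by zero ``using convexity of $L$'' via Lemma \ref{extend}. But a separating family is only required to satisfy conditions $(i)$--$(iv)$ of Definition \ref{separating family of subcategories}; its members need not be convex, so Lemma \ref{extend} does not apply, and zero-extension of an arbitrary $L$-module need not be a $\Lambda$-module (a composite $\beta\alpha$ between two objects of $L$ that passes through an object outside $L$ is forced to act by zero on the extension, which the $L$-module structure does not guarantee). The genuine gaps: first, your argument for weak $G$-periodicity of an indecomposable summand $Y$ of $F_\bullet N$ only shows $\mathrm{supp}\,Y$ is infinite; the other half, finiteness of $(\mathrm{supp}\,Y)/G_Y$, does not follow from $\mathrm{supp}(F_\bullet N)$ meeting only finitely many $G$-orbits, since $G_Y$ could a priori be trivial while $\mathrm{supp}\,Y$ fills a whole orbit --- this is precisely the substantial input from \cite{DS1985} and \cite[Section 2]{DS1987} that your sketch does not supply. (Your appeal to Lemma \ref{invariant-subgroup-of-line} and Lemma \ref{R-action} here is also misplaced: those concern lines, respectively need tameness or torsion-freeness, none of which are hypotheses of this theorem, and the conclusion $G_Y\cong\mathbb{Z}$ is not needed for it.) Second, the claim that every morphism factoring through a locally support-finite intersection $L\cap L'$ is of the first kind, hence vanishes in the quotient category, is the technical core of both full faithfulness and the coproduct decomposition on the source; you flag it but give no argument, and it is not a formal consequence of the adjunction $(F_\lambda,F_\bullet)$. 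Until these two points are established, the proposal is an accurate table of contents for the Dowbor--Skowro\'nski proof rather than a proof.
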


Let $A$ be a $k$-algebra (not necessarily finite dimensional). An $A$-$\Lambda$-bimodule $Q$ is a contravariant functor from $\Lambda$ to MOD$A^{op}$. Each $A$-$\Lambda$-bimodule $Q$ induces a functor $-\otimes_{A}Q:$MOD$A\rightarrow$MOD$\Lambda$, where $V\otimes_{A}Q$ is the $\Lambda$-module given by $(V\otimes_{A}Q)(x)=V\otimes_{A}Q(x)$ for each $V\in$MOD$A$ and $x\in \Lambda$.

For a weakly-$G$-periodic $\Lambda$-module $B$ with a $\Lambda$-action $\nu$ of $G_B$ on $B$, $F_{\lambda}B$ carries the structure of a $kG_B$-$\Lambda/G$-bimodule: for each $Gx\in \Lambda/G$, the action of $G_B$ on $(F_{\lambda}B)(Gx)=\oplus_{z\in Gx}B(z)$ is given by $g\cdot b=\nu(g,z)(b)$ for each $g\in G_B$ and each $b\in B(z)$. Note that if $W_x$ is a set of representatives of the $G_B$-orbits of $Gx$, and if $\{b_{y1},\cdots,b_{y,n_y}\}$ is a $k$-basis of $B(y)$ for each $y\in W_x$, then $(F_{\lambda}B)(Gx)$ is a free $kG_B$-module on the set $\{b_{yi}\mid y\in W_x, 1\leq i\leq n_y\}$. In particular, if $B=B_L$ is weakly-$G$-periodic, where $L$ is a line in $\Lambda$, according to Lemma \ref{invariant-subgroup-of-line} and Lemma \ref{R-action}, there is a $\Lambda$-action $\nu$ of $G_B\cong\mathbb{Z}$ on $B$, and then $F_{\lambda}B$ is a $k[T,T^{-1}]$-$\Lambda/G$-bimodule. Denote $\Phi^L$ the functor $-\otimes_{k[T,T^{-1}]}F_{\lambda}B:\mathrm{mod}k[T,T^{-1}]\rightarrow\mathrm{mod}(\Lambda/G)$.

\begin{Ex1} \label{Kronecker-quiver}
Let $\Lambda$ be the path category of the quiver
$$\begin{tikzpicture}
\draw[->] (-3.2,0.8) -- (-3.8,0.2);
\draw[->] (-2.8,0.8) -- (-2.2,0.2);
\draw[->] (-1.2,0.8) -- (-1.8,0.2);
\draw[->] (-0.8,0.8) -- (-0.2,0.2);
\draw[->] (0.8,0.8) -- (0.2,0.2);
\draw[->] (1.2,0.8) -- (1.8,0.2);
\draw[->] (2.8,0.8) -- (2.2,0.2);
\draw[->] (3.2,0.8) -- (3.8,0.2);
\node at(-4,0) {$-4$};
\node at(-3,1) {$-3$};
\node at(-2,0) {$-2$};
\node at(-1,1) {$-1$};
\node at(0,0) {$0$};
\node at(2,0) {$2$};
\node at(1,1) {$1$};
\node at(3,1) {$3$};
\node at(4,0) {$4$};
\node at(-5,0.5) {$\cdots$};
\node at(5,0.5) {$\cdots$};
\node at(-3.65,0.65) {$\alpha$};
\node at(-2.35,0.7) {$\beta$};
\node at(-1.65,0.65) {$\alpha$};
\node at(-0.35,0.7) {$\beta$};
\node at(0.35,0.65) {$\alpha$};
\node at(1.65,0.7) {$\beta$};
\node at(2.35,0.65) {$\alpha$};
\node at(3.65,0.7) {$\beta$};
\end{tikzpicture}$$
and let $G=\langle g\rangle$ be a group of automorphisms of $\Lambda$, where $g$ is the automorphism of $\Lambda$ induced from the quiver automorphism $i\mapsto i+2$. Then $\Lambda/G$ is the path category of the quiver
$$\begin{tikzpicture}
\draw[->] (0.3,0.1) -- (1.7,0.1);
\draw[->] (0.3,-0.1) -- (1.7,-0.1);
\node at(0,0) {$y$};
\node at(2,0) {$x$};
\node at(1,0.3) {$a$};
\node at(1,-0.4) {$b$};
\end{tikzpicture}.$$
Let $L=\Lambda$ be a line in $\Lambda$. Then $B=B_L$ is the $\Lambda$-module
$$\begin{tikzpicture}
\draw[->] (-3.8,0.2) -- (-3.2,0.8);
\draw[->] (-2.2,0.2) -- (-2.8,0.8);
\draw[->] (-1.8,0.2) -- (-1.2,0.8);
\draw[->] (-0.2,0.2) -- (-0.8,0.8);
\draw[->] (0.2,0.2) -- (0.8,0.8);
\draw[->] (1.8,0.2) -- (1.2,0.8);
\draw[->] (2.2,0.2) -- (2.8,0.8);
\draw[->] (3.8,0.2) -- (3.2,0.8);
\node at(-4,0) {$k$};
\node at(-3,1) {$k$};
\node at(-2,0) {$k$};
\node at(-1,1) {$k$};
\node at(0,0) {$k$};
\node at(2,0) {$k$};
\node at(1,1) {$k$};
\node at(3,1) {$k$};
\node at(4,0) {$k$};
\node at(-5,0.5) {$\cdots$};
\node at(5,0.5) {$\cdots$};
\node at(-3.65,0.65) {$1$};
\node at(-2.35,0.7) {$1$};
\node at(-1.65,0.65) {$1$};
\node at(-0.35,0.7) {$1$};
\node at(0.35,0.65) {$1$};
\node at(1.65,0.7) {$1$};
\node at(2.35,0.65) {$1$};
\node at(3.65,0.7) {$1$};
\end{tikzpicture}$$
with a $\Lambda$-action $\nu$ of $G_B=G$ on $B$ given by $\nu(g^i,j)=id_{k}:B(j)\rightarrow B(2i+j)$.
It follows that $F_{\lambda}B$ is the $k[T,T^{-1}]$-$R/G$-bimodule
$$\begin{tikzpicture}
\draw[->] (2.4,0.1) -- (1,0.1);
\draw[->] (2.4,-0.1) -- (1,-0.1);
\node at(0,0) {$k[T,T^{-1}]$};
\node at(3.4,0) {$k[T,T^{-1}]$};
\node at(1.7,0.3) {$T$};
\node at(1.7,-0.3) {$1$};
\end{tikzpicture}.$$
\end{Ex1}

\medskip
Let $\mathscr{L}$ be the set of all subcategories supp$Y\subseteq \Lambda$, where $Y$ ranges over all weakly-$G$-periodic $\Lambda$-modules, and let $\mathscr{L}_0$ be a fixed set of representatives of the $G$-orbits of $\mathscr{L}$.

\begin{Thm} {\rm(\cite[Theorem 3.6]{DS1987})} \label{modules-of-the-second-kind-2}
Let $\Lambda$ be a locally bounded $k$-category and let $G$ be a group of automorphisms of $\Lambda$ which acts freely on $[\mathrm{ind}\Lambda]$. Assume that $\mathscr{L}$ consists only of lines in $\Lambda$. The family of functors $\Phi^L$, $L\in\mathscr{L}_0$, induces an equivalence of categories
$$\Phi:\coprod_{\mathscr{L}_0}\mathrm{mod}k[T,T^{-1}]\rightarrow(\mathrm{mod}(\Lambda/G))/(\mathrm{mod}_1 (\Lambda/G)).$$

In particular, $(\Gamma_{\Lambda/G})_2=\sqcup_{\mathscr{L}_0}\Gamma_{k[T,T^{-1}]}$, where $\Gamma_{k[T,T^{-1}]}$ is the translation quiver of the category of finite dimensional $k[T,T^{-1}]$-modules.

Moreover, $\Lambda/G$ is tame if and only if so is $\Lambda$.

\end{Thm}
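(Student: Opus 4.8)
The plan is to deduce the theorem from Theorem \ref{modules-of-the-second-kind} applied to the separating family $\mathscr{L}$, and then to identify each local factor $(\mathrm{mod}\,L/G_L)/(\mathrm{mod}_1\,L/G_L)$ with $\mathrm{mod}\,k[T,T^{-1}]$ via $\Phi^{L}$. \textbf{Step 1.} First I would verify that $\mathscr{L}$ is separating in the sense of Definition \ref{separating family of subcategories}. Condition $(i)$ holds since $\mathrm{supp}({}^{g}Y)=g\,\mathrm{supp}Y$ and translates of weakly-$G$-periodic modules are weakly-$G$-periodic; condition $(iv)$ is immediate. For $(ii)$, each $L\in\mathscr{L}$ has $L/G_L$ finite, so $L$ meets every $G$-orbit in finitely many $G_L$-orbits. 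Condition $(iii)$ is exactly where the hypothesis that $\mathscr{L}$ consists of lines enters: a line has no branch vertices, so two distinct lines share only a locally support-finite subcategory. With $\mathscr{S}=\mathscr{L}$, Theorem \ref{modules-of-the-second-kind} then yields the equivalence $E:\coprod_{L\in\mathscr{L}_0}(\mathrm{mod}\,L/G_L)/(\mathrm{mod}_1\,L/G_L)\to(\mathrm{mod}\,\Lambda/G)/(\mathrm{mod}_1\,\Lambda/G)$.

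\textbf{Step 2--3.} Fix $L\in\mathscr{L}_0$. Since $\mathrm{supp}B_L=L$ with $L/G_L$ finite, $B_L$ is weakly-$G$-periodic, so by Lemma \ref{invariant-subgroup-of-line} we have $G_L\cong\mathbb{Z}$ and $L$ is of type $A_{\infty}^{\infty}$ with $\mathbb{Z}$ acting by translation; hence $A:=L/G_L$ is a finite-dimensional algebra whose quiver is a single oriented cycle. As $L$ is a path category of a linear quiver, through each vertex there is at most one composable length-two path, so $A$ is a string (special biserial) algebra, and its unique primitive band $\mathfrak{b}_L$ is the cyclic walk around the cycle (available precisely because $B_L$ is periodic). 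By Butler--Ringel the indecomposable $A$-modules are strings and bands. I would then show that the local push-down $\Psi^{L}:=-\otimes_{k[T,T^{-1}]}F_{\lambda}^{L}B_{L}:\mathrm{mod}\,k[T,T^{-1}]\to\mathrm{mod}\,A$ (where $F_{\lambda}^{L}$ is the push-down of the covering $L\to A$) induces an equivalence $\mathrm{mod}\,k[T,T^{-1}]\xrightarrow{\sim}(\mathrm{mod}\,A)/(\mathrm{mod}_1\,A)$. Here the first-kind indecomposables are the string modules (push-downs of finite interval $L$-modules, indecomposable by Lemma \ref{pushdown-indec}), while, using the $\mathbb{Z}$-action $\nu$ of Lemma \ref{R-action} to supply the holonomy, $\Psi^{L}$ carries $k[T,T^{-1}]/(T-\lambda)^{n}$ to the band module $M(\mathfrak{b}_L,\lambda,n)$; the parameter $\lambda$ runs over $k^{*}$ because the holonomy is invertible, the degenerate values $0,\infty$ giving strings. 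This makes $\Psi^{L}$ a bijection on indecomposables, but proving that the \emph{induced functor on the quotient is full and faithful} is the real work.

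\textbf{Step 4.} Next I would check the compatibility $\overline{\Phi^{L}}=E\circ(\text{inclusion})\circ\overline{\Psi^{L}}$. Because $B_L$ is supported on the convex subcategory $L$ (Lemma \ref{extend}), its global push-down $F_{\lambda}B_L$ is the extension by zero along $A=L/G_L\hookrightarrow\Lambda/G$ of the local push-down $F_{\lambda}^{L}B_L$; this is precisely the recipe by which $E$ transports the $L$-factor, the verification being a naturality diagram comparing the two factorizations of $L\hookrightarrow\Lambda\to\Lambda/G$. Thus $\Phi=E\circ\coprod_{L}\overline{\Psi^{L}}$ is a composite of equivalences, which proves the first assertion. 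The equality $(\Gamma_{\Lambda/G})_{2}=\sqcup_{\mathscr{L}_0}\Gamma_{k[T,T^{-1}]}$ then follows by transporting Auslander--Reiten sequences through $\Phi$: the consequence of Theorem \ref{modules-of-the-second-kind} expresses $(\Gamma_{\Lambda/G})_2$ as $\sqcup_{\mathscr{L}_0}(\Gamma_{A})_2$, and $(\Gamma_{A})_2$ is the family of homogeneous tubes indexed by $k^{*}$, which is exactly $\Gamma_{k[T,T^{-1}]}$.

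\textbf{Tameness and the main obstacle.} For the final assertion, the implication $\Lambda/G$ tame $\Rightarrow\Lambda$ tame is \cite[Proposition 2]{DS1985}. Conversely, if $\Lambda$ is tame, I would split $\mathrm{ind}\,\Lambda/G=\mathrm{ind}_1\,\Lambda/G\sqcup\mathrm{ind}_2\,\Lambda/G$: the first-kind part is in $G$-orbit bijection with $[\mathrm{ind}\,\Lambda]$ (Lemmas \ref{pushdown-pull-up} and \ref{pushdown-indec}) and is therefore tamely parametrized when $\Lambda$ is, while the second-kind part is $\sqcup_{\mathscr{L}_0}\mathrm{ind}\,k[T,T^{-1}]$ by the equivalence $\Phi$, a disjoint union of one-parameter families only finitely many of which meet any fixed dimension (by local boundedness of $\Lambda/G$); hence $\Lambda/G$ is tame. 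The main obstacle throughout is the full-faithfulness in Step 3: one must show that every homomorphism between band modules over the string algebra $L/G_L$ that is not $k[T,T^{-1}]$-linear factors through a string (first-kind) module, which requires the functorial-filtration analysis of band-module homomorphisms.
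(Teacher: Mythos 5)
The paper itself states this theorem without proof (it is quoted from \cite{DS1987}), so there is no in-paper argument to compare against; judged on its own terms, your outline does follow the route of Dowbor--Skowro\'nski (apply Theorem \ref{modules-of-the-second-kind} with $\mathscr{S}=\mathscr{L}$, then identify each local factor $(\mathrm{mod}\,L/G_L)/(\mathrm{mod}_1\,L/G_L)$ with $\mathrm{mod}\,k[T,T^{-1}]$), but it contains genuine gaps. The first is in Step 1: your justification of condition $(iii)$, ``a line has no branch vertices, so two distinct lines share only a locally support-finite subcategory,'' is not valid. Two distinct lines of type $A_{\infty}^{\infty}$ can in principle intersect in an infinite ray, and a ray of type $A_{\infty}$ is \emph{not} locally support-finite: through any fixed vertex there pass interval modules of unbounded length. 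What actually rescues $(iii)$ is the group action, not the absence of branch points: each $L\in\mathscr{L}$ has $L/G_L$ finite and $G_L\cong\mathbb{Z}$ acting by translation (Lemmas \ref{invariant-subgroups} and \ref{invariant-subgroup-of-line}); if $L\cap L'$ were infinite, then choosing infinitely many of its points in a single $G_L$-orbit and refining to a single $G_{L'}$-orbit, freeness of $G$ on objects forces a nontrivial relation $g^{n}=h^{m}$ between the generators $g$ of $G_L$ and $h$ of $G_{L'}$; then $L\cap L'$ is a convex subset of the line $L$ stable under the nontrivial translation $g^{n}$, hence equals $L$, and symmetrically equals $L'$, contradicting $L\neq L'$. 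Without an argument of this type, Step 1 does not go through.

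Second, the heart of the theorem --- that the induced functor on the quotient category is full and faithful --- is exactly what you defer (your own ``real work''/``main obstacle''), so the proposal is an outline rather than a proof; this functorial-filtration analysis of maps between band modules is the bulk of the original proof. Three further points need repair. $(a)$ $L/G_L$ is not an algebra ``whose quiver is a single oriented cycle'': it is the path algebra, with \emph{no} relations, of a cycle-shaped quiver with non-cyclic orientation, i.e.\ a hereditary algebra of type $\widetilde{A}_{n}$ (a directed-cycle orientation is impossible, since a line carrying arbitrarily long directed paths would violate local boundedness of $\Lambda$, and the path algebra of an oriented cycle is infinite dimensional in any case). $(b)$ The compatibility in Step 4 is not mere ``extension by zero along $L/G_L\hookrightarrow\Lambda/G$'': the canonical functor $L/G_L\rightarrow\Lambda/G$ is in general neither injective on objects (a $G$-orbit may meet $L$ in several $G_L$-orbits) nor full, and controlling this discrepancy is precisely where conditions $(ii)$ and $(iii)$ enter in \cite{DS1987}. $(c)$ In the tameness argument, the claim that the first-kind part is ``tamely parametrized'' needs the additional observation that, for a fixed dimension $d$, only finitely many supports up to $G$-translation, and only finitely many lines in $\mathscr{L}_0$, can carry indecomposables of dimension $d$.
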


Assume that $\Lambda=kQ/I$ is a locally bounded $k$-category satisfying the following conditions: $(a)$ $\Lambda$ is Schurian (that is, $\mathrm{dim}_{k}\Lambda(x,y)\leq 1$ for all $x,y\in \Lambda$), $(b)$ $Q$ is connected, directed and interval-finite (that is, for all $x,y\in Q_0$, the number of paths of $Q$ from $x$ to $y$ is finite), $(c)$ $\Pi(Q,I)=\{1\}$, $(d)$ the support of any indecomposable finite dimensional $\Lambda$-module is representation-finite or belongs to the Bongartz-Happel-Vossieck list (see \cite[Section 10.8]{BGRS1985}) of critical algebras. Let $G$ be a group of $k$-linear automorphisms of $\Lambda$ acting freely in the objects of $\Lambda$. It follows that $G$ acts also freely on $[\mathrm{ind}\Lambda]$.

\begin{Prop} {\rm(\cite[Proposition 5.1]{DS1987})} \label{tame}
Assume that $\Lambda$ and $G$ satisfy the conditions above. Then every weakly-$G$-periodic $\Lambda$-module is linear. As a consequence $\Lambda/G$ is tame.
\end{Prop}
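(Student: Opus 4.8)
\section*{Proof proposal for Proposition \ref{tame}}

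The plan is to prove the two assertions in turn: the structural one, that every weakly-$G$-periodic $\Lambda$-module is linear, and then the tameness of $\Lambda/G$, which will follow formally. Granting the structural statement, the family $\mathscr{L}$ of supports of weakly-$G$-periodic modules consists only of lines, which is exactly the standing hypothesis of Theorem \ref{modules-of-the-second-kind-2}. That theorem then yields the equivalence ``$\Lambda/G$ is tame if and only if $\Lambda$ is tame'', so it will remain only to verify that $\Lambda$ itself is tame.

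To see that $\Lambda$ is tame I would use the local criterion that a locally bounded category is tame as soon as each of its finite full subcategories is tame (see \cite{DS1986}), together with Drozd's dichotomy. If some finite full subcategory were representation-infinite, it would contain a minimal representation-infinite convex subcategory $B'$, and such a $B'$ carries a sincere indecomposable finite-dimensional module $M$ with $\mathrm{supp}M=B'$. By condition (d) the algebra $B'=\mathrm{supp}M$ is then representation-finite or a critical algebra from the Bongartz--Happel--Vossieck list; being representation-infinite it must be tame-concealed. Since every minimal representation-infinite convex subcategory of $\Lambda$ is thus tame rather than wild, the dichotomy and the local criterion force $\Lambda$ to be tame, whence so is $\Lambda/G$.

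For the structural statement, let $Y$ be weakly-$G$-periodic and set $C:=\mathrm{supp}Y$; then $C$ is connected and infinite, it is stable under $G_Y$ (since $\prescript{g}{}{Y}\cong Y$ gives $\mathrm{supp}(\prescript{g}{}{Y})=C$ for $g\in G_Y$), and $C/G_Y$ is finite. As $G$ acts freely, Proposition \ref{elementary-properties-of-projection} makes the projection $C\to C/G_Y$ a Galois covering of quivers with relations with group $G_Y$. The first real step is to show the finite algebra $C/G_Y$ is representation-infinite: were it representation-finite, Proposition \ref{Galois-cover-pre-representation-finite} would make the cover $C$ locally representation-finite, and since every indecomposable over a locally representation-finite category is finite-dimensional (cf.\ \cite{BG1982}), the indecomposable $Y$ would be finite-dimensional, contradicting that $\mathrm{supp}Y$ is infinite. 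Applying condition (d) to the sincere finite-dimensional indecomposables supported on finite convex pieces of $C$ then shows that every minimal representation-infinite convex subcategory of $C$ is tame-concealed from the Bongartz--Happel--Vossieck list.

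It remains to deduce that the infinite, directed (by (b) the quiver $Q$ has no oriented cycles), essentially simply connected (by (c)) category $C$, a Galois cover with finite representation-infinite quotient $C/G_Y$ all of whose critical subcategories are tame-concealed, must be a line; once this is known, $C$ is infinite of type $A_{\infty}^{\infty}$ and Lemma \ref{invariant-subgroup-of-line} gives $G_Y\cong\mathbb{Z}$, so $Y$ is linear. This last step is the main obstacle. It requires the structure theory of critical algebras to identify $C/G_Y$ as being of Euclidean type $\widetilde{A}$ --- the only critical algebras that are not simply connected, and whose universal cover is the line $A_{\infty}^{\infty}$ --- and then to match $C$, as the unique infinite connected cover, with that universal cover. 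The covering-theoretic lemmas of Sections I and II reduce all the surrounding scaffolding to bookkeeping, but controlling the global shape of $C$ through the critical classification is where the actual content lies.
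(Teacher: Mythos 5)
The paper itself gives no proof of this proposition --- it is quoted from \cite[Proposition 5.1]{DS1987} without argument --- so your proposal can only be judged on its own terms. Its skeleton is reasonable: prove that $\mathrm{supp}\,Y$ is a line for every weakly-$G$-periodic $Y$, feed this into Theorem \ref{modules-of-the-second-kind-2} to reduce tameness of $\Lambda/G$ to tameness of $\Lambda$, and obtain tameness of $\Lambda$ from condition $(d)$ together with the local criterion of \cite{DS1986}. But the central structural claim is not actually established: you write that ``controlling the global shape of $C$ through the critical classification is where the actual content lies'' and leave that step as an acknowledged obstacle. A proof that defers its main step is a sketch, not a proof.

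Moreover, the route you sketch for that step has concrete defects. Condition $(d)$ concerns only supports of \emph{finite-dimensional} indecomposables, whereas $Y$ is infinite-dimensional, so $(d)$ says nothing directly about $C=\mathrm{supp}\,Y$ or about $C/G_Y$. The missing bridge is the exhaustion result of \cite[Section 4]{DS1987} (invoked later in this paper in the string-algebra application): $\mathrm{supp}\,X=\bigcup_{n\geq 0}\mathrm{supp}\,Y_n$ for an increasing chain of finite-dimensional indecomposables $Y_n$, and only through the subcategories $\mathrm{supp}\,Y_n$ can $(d)$ be brought to bear on $C$. Second, nothing forces $C/G_Y$ to be a critical algebra; it is merely a finite representation-infinite category and may properly contain its critical convex subcategories, so the chain ``$C/G_Y$ is of type $\widetilde{A}$, hence its universal cover is $A_\infty^\infty$, hence $C$ is a line'' does not follow as stated. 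Even granting a critical convex piece of type $\widetilde{A}$, one must still exclude vertices of $C$ hanging off the putative line, which is precisely where conditions $(a)$--$(c)$ and the $G_Y$-periodicity have to be exploited and where your argument stops.
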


\subsection{Examples and applications}
\

The following example is related to the covering map in Example \ref{Kronecker-quiver}.

\begin{Ex1} \
Let $\Lambda$ and $G$ be the locally bounded category and the group of automorphisms of $\Lambda$ in Example \ref{Kronecker-quiver} respectively. If $Y$ is a weakly-$G$-periodic $\Lambda$-module, then $L=\mathrm{supp}Y$ is an infinite connected subcategory of $\Lambda$ with $G_L$ infinite. It follows that $L=\Lambda$. Since $G$ is torsion-free which acts freely on $\Lambda$, it also acts freely on $[\mathrm{ind}\Lambda]$. Since each $M\in\mathrm{ind}\Lambda$ is isomorphic to some $B_{L'}$ with $L'$ a finite line in $\Lambda$, each indecomposable $\Lambda/G$-module of the first kind belongs to one of the following forms.
$$\begin{tikzpicture}
\draw[->] (3.6,0.1) -- (0.4,0.1);
\draw[->] (3.6,-0.1) -- (0.4,-0.1);
\draw[dashed] (2,0.2) -- (2,0.6);
\draw[dashed] (2,-0.2) -- (2,-0.6);
\node at(0,0) {$k^n$};
\node at(4,0) {$k^{n+1}$};
\node at(2,0.4) {$\begin{bmatrix}
I_n & 0
\end{bmatrix}$};
\node at(2,-0.4) {$\begin{bmatrix}
0 & I_n
\end{bmatrix}$};
\end{tikzpicture}$$

$$\begin{tikzpicture}
\draw[->] (3.6,0.1) -- (0.4,0.1);
\draw[->] (3.6,-0.1) -- (0.4,-0.1);
\draw[dashed] (1.75,0.8) -- (2.25,0.8);
\draw[dashed] (1.75,-0.8) -- (2.25,-0.8);
\node at(0,0) {$k^{n+1}$};
\node at(4,0) {$k^{n}$};
\node at(2,0.8) {$\begin{bmatrix}
I_n \\
0
\end{bmatrix}$};
\node at(2,-0.8) {$\begin{bmatrix}
0 \\
I_n
\end{bmatrix}$};
\end{tikzpicture}$$

$$\begin{tikzpicture}
\draw[->] (3.6,0.1) -- (0.4,0.1);
\draw[->] (3.6,-0.1) -- (0.4,-0.1);
\node at(0,0) {$k^{n}$};
\node at(4,0) {$k^{n}$};
\node at(2,0.4) {$I_n$};
\node at(2,-1.4) {$\begin{bmatrix}
0 & 1 & & \\
 & 0 & \ddots & \\
& & \ddots & 1 \\
& & & 0
\end{bmatrix}$};
\end{tikzpicture}$$

$$\begin{tikzpicture}
\draw[->] (3.6,0.1) -- (0.4,0.1);
\draw[->] (3.6,-0.1) -- (0.4,-0.1);
\node at(0,0) {$k^{n}$};
\node at(4,0) {$k^{n}$};
\node at(2,1.4) {$\begin{bmatrix}
0 & & & \\
1 & 0 & & \\
& \ddots & \ddots & \\
& & 1 & 0
\end{bmatrix}$};
\node at(2,-0.4) {$I_n$};
\end{tikzpicture}$$

According to Theorem \ref{modules-of-the-second-kind-2}, the functor $$\Phi^\Lambda=-\otimes_{k[T,T^{-1}]}F_{\lambda}(B_\Lambda):\mathrm{mod}k[T,T^{-1}]\rightarrow\mathrm{mod}(\Lambda/G)$$
induces an equivalence of categories
$$\mathrm{mod}k[T,T^{-1}]\rightarrow(\mathrm{mod}(\Lambda/G))/[\mathrm{mod}_1 (\Lambda/G)],$$
where $F_{\lambda}(B_\Lambda)$ is the $k[T,T^{-1}]$-$\Lambda/G$-bimodule
$$\begin{tikzpicture}
\draw[->] (2.4,0.1) -- (1,0.1);
\draw[->] (2.4,-0.1) -- (1,-0.1);
\node at(0,0) {$k[T,T^{-1}]$};
\node at(3.4,0) {$k[T,T^{-1}]$};
\node at(1.7,0.3) {$T$};
\node at(1.7,-0.3) {$1$};
\end{tikzpicture}.$$
Since each indecomposable finite dimensional $k[T,T^{-1}]$-module is isomorphic to a $k[T,T^{-1}]$-module of the form $k[T]/(T-\lambda)^n$ for some positive integer $n$ and some $\lambda\in k^{*}$, each indecomposable $\Lambda/G$-module of the second kind is of the form
$$\begin{tikzpicture}
\draw[->] (3,0.1) -- (1.4,0.1);
\draw[->] (3,-0.1) -- (1.4,-0.1);
\node at(0,0) {$k[T]/(T-\lambda)^n$};
\node at(4.4,0) {$k[T]/(T-\lambda)^n$};
\node at(2.2,0.3) {$T$};
\node at(2.2,-0.3) {$1$};
\end{tikzpicture},$$
which is isomorphic to
$$\begin{tikzpicture}
\draw[->] (3.6,0.1) -- (0.4,0.1);
\draw[->] (3.6,-0.1) -- (0.4,-0.1);
\node at(0,0) {$k^n$};
\node at(4,0) {$k^n$};
\node at(2,0.4) {$J_n(\lambda)$};
\node at(2,-0.3) {$1$};
\end{tikzpicture}$$
where $J_{n}(\lambda)$ is the matrix
$$\begin{pmatrix}
\lambda & & & \\
1 & \lambda & & \\
& \ddots & \ddots & \\
& & 1 & \lambda
\end{pmatrix}_{n\times n}.$$
\end{Ex1}

\bigskip
Next we give an application to determining the indecomposable modules of a special biserial algebra. Recall that a finite dimensional algebra $A$ is said to be special biserial if it is Morita equivalent to an algebra of the form $kQ/I$, where $Q$ is a quiver and $I$ is an admissible ideal in $kQ$ such that
\begin{itemize}
\item[$(R1)$] For each vertex $x$ of $Q$, there are at most two arrows of $Q$ starting at $x$ and there are at most two arrows of $Q$ ending at $x$.
\item[$(R2)$] For each arrow $\alpha$ of $Q$, there is at most one arrow $\beta$ (resp. $\gamma$) of $Q$ such that $\beta\alpha\notin I$ (resp. $\alpha\gamma\notin I$).
\end{itemize}
Moreover, a special biserial algebra $A$ is called a string algebra if it is Morita equivalent to an algebra of the form $kQ/I$, where $(Q,I)$ satisfies $(R1), (R2)$ and $I$ is generated by paths.

\medskip
In order to study the representations of special biserial algebras we only need to consider string algebras (see \cite[II.1.3]{E1990}).

Let $A$ be a string algebra given by the connected quiver with relations $(Q,I)$, where $(Q,I)$ satisfies $(R1), (R2)$ and $I$ is generated by paths. According to Proposition \ref{universal-cover}, there exists a universal cover $\pi:(\widetilde{Q},\widetilde{I})\rightarrow(Q,I)$ of $(Q,I)$, which is a Galois covering with group $G=\Pi(Q,I)$. It follows that $\pi$ induces a Galois covering $F:\Lambda=k\widetilde{Q}/\widetilde{I}\rightarrow A=kQ/I$ with group $G$ of associated categories (here we consider $A$ as a $k$-category). Since the ideal $I$ of $kQ$ is generated by zero relations, $G=\Pi(Q,I)$ is isomorphic to the fundamental group of $Q$, which is free. It follows that $G$ is torsion-free. Since $G$ acts freely on $\Lambda$, it also acts freely on $[\mathrm{ind}\Lambda]$.

\begin{Prop} {\rm(\cite[Proposition 5.2]{DS1987})}
Let $A=kQ/I$ be a string algebra. Then $A$ is tame, and each indecomposable $A$-module of the first (resp. second) kind is just a string (resp. band) module.
\end{Prop}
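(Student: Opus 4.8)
The plan is to transport the classification along the Galois covering $F:\Lambda=k\widetilde{Q}/\widetilde{I}\to A=kQ/I$ constructed above, using Theorem \ref{modules-of-the-second-kind-2} as the decisive tool. The first task is to understand $\Lambda$ itself. Because $\pi:(\widetilde{Q},\widetilde{I})\to(Q,I)$ is a covering of quivers, it induces a bijection between the arrows at each vertex and its image, so the conditions $(R1)$, $(R2)$ are inherited by $(\widetilde{Q},\widetilde{I})$ and $\widetilde{I}$ is again generated by paths; thus $\Lambda$ is itself a string algebra. Since $I$ is generated by zero relations, $\widetilde{Q}$ is the topological universal cover of $Q$ and hence a tree, so it contains no reduced closed walk and $\Lambda$ has no band modules. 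Consequently every indecomposable object of $\mathrm{Ind}\,\Lambda$ is a string module $M(w)$ attached to a (finite or infinite) string $w$ in $\widetilde{Q}$, whose support is the linear subquiver underlying $w$.

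Next I would verify the hypothesis of Theorem \ref{modules-of-the-second-kind-2}, namely that $\mathscr{L}$ consists only of lines. If $Y$ is weakly-$G$-periodic then $\mathrm{supp}\,Y$ is infinite, so by the previous paragraph $Y\cong M(w)$ for an infinite string $w$; its support is therefore a convex linear subquiver, i.e. a line, and the finiteness of $(\mathrm{supp}\,Y)/G_Y$ forces it to be $G$-periodic of type $A_\infty^\infty$, exactly as in Lemma \ref{invariant-subgroup-of-line}. With this hypothesis in hand, Theorem \ref{modules-of-the-second-kind-2} applies. For tameness I would observe that every finite full subcategory of $\Lambda$ is a finite-dimensional string algebra whose quiver is a finite tree, hence representation-finite and in particular tame; therefore $\Lambda$ is tame, and the last clause of Theorem \ref{modules-of-the-second-kind-2} yields that $A=\Lambda/G$ is tame.

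It remains to match the two kinds of indecomposables with strings and bands. For the first kind, an indecomposable finitely generated $\Lambda$-module is $M(w)$ for a finite string $w$ in $\widetilde{Q}$; since $\pi$ sends strings to strings and $F_\lambda$ carries $M(w)$ to the string module $M(\pi w)$, every module of the first kind is a string module of $A$, and conversely every finite string of $Q$ lifts uniquely (after fixing a lift of its starting vertex, using Lemma \ref{preserve-minimal-relation}) to a finite string of $\widetilde{Q}$, so every string module of $A$ is of the first kind. For the second kind I would use the equivalence $\Phi:\coprod_{\mathscr{L}_0}\mathrm{mod}\,k[T,T^{-1}]\to(\mathrm{mod}\,A)/(\mathrm{mod}_1 A)$ of Theorem \ref{modules-of-the-second-kind-2}: its indecomposable objects correspond to the indecomposable finite-dimensional $k[T,T^{-1}]$-modules $k[T]/(T-\lambda)^n$ indexed by the $G$-periodic lines $L\in\mathscr{L}_0$, and each such $L$ projects under $\pi$ to a reduced closed walk, that is, a band $b$ of $Q$. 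Unwinding the bimodule $F_\lambda B_L$ should then show that $\Phi^L\big(k[T]/(T-\lambda)^n\big)$ is precisely the band module determined by $(b,\lambda,n)$, so that the modules of the second kind are exactly the band modules.

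The main obstacle is this last matching step: one must unwind the definition of the $k[T,T^{-1}]$-$A$-bimodule $F_\lambda B_L$, built from the $\Lambda$-action of $G_L\cong\mathbb{Z}$ on $B_L$ via Lemma \ref{invariant-subgroup-of-line} and Lemma \ref{R-action}, and check that tensoring with the Jordan block $k[T]/(T-\lambda)^n$ reproduces the usual band-module construction, including the correct identification of the $k[T,T^{-1}]$-parameter with the scalar attached to the band. Verifying that $\mathscr{L}_0$ matches the set of bands of $Q$ up to the standard equivalence is the other point requiring care; both rest on the combinatorial dictionary between $G$-periodic infinite strings in the tree $\widetilde{Q}$ and bands in $Q$.
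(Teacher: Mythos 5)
Your overall strategy coincides with the paper's: pass to the universal cover $\Lambda=k\widetilde{Q}/\widetilde{I}$, verify that the family $\mathscr{L}$ of supports of weakly-$G$-periodic modules consists of lines, apply Theorem \ref{modules-of-the-second-kind-2}, and match the first kind with finite lines (strings) and the second kind with $G$-periodic lines (bands). The one genuinely different choice is how you get tameness: the paper checks conditions $(a)$--$(d)$ and invokes Proposition \ref{tame}, whereas you argue that $\Lambda$ itself is tame (finite full subcategories being tree string algebras, hence representation-finite) and then use the ``Moreover'' clause of Theorem \ref{modules-of-the-second-kind-2}. That route is legitimate and arguably more self-contained, since it avoids the Bongartz--Happel--Vossieck list, but it does require you to justify that a \emph{finite full} (not necessarily convex) subcategory of $\Lambda$ is again a string algebra on a tree --- its Gabriel quiver has composite arrows, so conditions $(R1)$, $(R2)$ and acyclicity must be re-verified rather than simply ``inherited.'' The more substantive thin spot is your assertion that every object of $\mathrm{Ind}\,\Lambda$ (including the locally finite dimensional, infinite-dimensional ones, which is exactly where the weakly-$G$-periodic modules live) is a string module with linear support. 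This is true but not automatic: the paper derives it by combining the theorem that finite dimensional indecomposables over such $\Lambda$ have finite-line supports (cited from Bongartz--Ringel and Pogorza\l y--Skowro\'nski) with the exhaustion result $\mathrm{supp}\,X=\bigcup_n\mathrm{supp}\,Y_n$ from \cite[Section 4]{DS1987}. Without some such argument your verification that $\mathscr{L}$ consists only of lines --- the hypothesis on which everything else rests --- is an assertion rather than a proof. The final matching of $\Phi^L(k[T]/(T-\lambda)^n)$ with band modules, which you flag as the main obstacle, is handled in the paper exactly as you anticipate, by unwinding $F_\lambda B_L$ and citing \cite[Section II.3]{E1990}, together with the check that two $G$-periodic lines give the same band iff they differ by the action of $G$ and a shift.
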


\begin{proof}
Since the ideal $I$ of $kQ$ is generated by paths, $\Pi(Q,I)$ is isomorphic to the fundamental group of the quiver $Q$, and $\widetilde{Q}$ is a tree (cf. the remarks after Corollary \ref{trivial-fundamental-group-the-converse}). Therefore the conditions $(a)$, $(b)$, $(c)$ before Proposition \ref{tame} hold for $\Lambda=k\widetilde{Q}/\widetilde{I}$. Moreover, it is known that the support of each module in $\mathrm{ind}\Lambda$ is a finite line (see \cite{BR1981} and \cite{PS1983}). So the condition $(d)$ before Proposition \ref{tame} also holds for $\Lambda$. By Proposition \ref{tame}, $A\cong \Lambda/G$ is tame.

Denote $\mathscr{L}$ the set of all subcategories $\mathrm{supp}Y\subseteq \Lambda$, where $Y$ ranges over all weakly-$G$-periodic $\Lambda$-modules, and denote $\mathscr{L}_0$ a fixed set of representatives of the $G$-orbits of $\mathscr{L}$. For any $X\in\mathrm{Ind}\Lambda$, it follows from \cite[Section 4]{DS1987} that $\mathrm{supp}X=\cup_{n\geq 0}\mathrm{supp}Y_n$ for a sequence of modules $\{Y_n\}_{n\geq 0}$ in $\mathrm{ind}\Lambda$ such that $\mathrm{supp}Y_n\subseteq \mathrm{supp}Y_{n+1}$ for all $n\geq 0$. Since the support of each $Y_n$ is a finite line in $\Lambda$, the support of $X$ is also a line in $\Lambda$. According to Theorem \ref{modules-of-the-second-kind-2}, the family of functors $\Phi^L$, $L\in\mathscr{L}_0$, induces an equivalence of categories
$$\Phi:\coprod_{\mathscr{L}_0}\mathrm{mod}k[T,T^{-1}]\rightarrow(\mathrm{mod}A)/[\mathrm{mod}_1 A].$$

Let $M\in\mathrm{ind}\Lambda$. Suppose that the support of $M$ is a finite line $L$, then $M\cong B_L$. It follows that $F_{\lambda}M$ is a string module. Conversely, for each string module $X$ of $A$ given by the string $w$, we can lift $w$ to a finite line $L$ in $\Lambda$, and $F_{\lambda}(B_L)\cong X$. Therefore each indecomposable $A$-module of the first kind is just a string module.

For each weakly-$G$-periodic $\Lambda$-module $Y$, $\mathrm{supp}Y$ is a $G$-periodic line, that is, a line $L$ of type $A_{\infty}^{\infty}$ with $G_L\cong\mathbb{Z}$. Conversely, for each $G$-periodic line $L$ in $\Lambda$, there exists a weakly-$G$-periodic $\Lambda$-module $B_L$ such that $\mathrm{supp}B_L=L$. Therefore $\mathscr{L}$ is the set of $G$-periodic lines in $\Lambda$. Each $G$-periodic line can be seen as an infinite reduced walk
$$w=\cdots\alpha_2\alpha_1\alpha_0\alpha_{-1}\alpha_{-2}\cdots$$
of $\widetilde{Q}$ with $\alpha_i\in\widetilde{Q}_{1}\cup\widetilde{Q}_{1}^{-1}$ such that
\begin{itemize}
\item there is no subpath of $w$ or $w^{-1}$ which belongs to $\widetilde{I}$;
\item there exist some $1\neq g\in G$ and some positive integer $m$ such that $g\alpha_i=\alpha_{i+m}$ for all $i\in\mathbb{Z}$.
\end{itemize}
Let $m$ be the smallest positive integer such that there exists some $g\in G$ with $g\alpha_i=\alpha_{i+m}$ for all $i\in\mathbb{Z}$. It follows that such a walk $w$ corresponds to a band $\pi(\alpha_{m-1})\cdots\pi(\alpha_1)\pi(\alpha_0)$ in $A=kQ/I$ (see \cite[Section II.2]{E1990}), and two such walks
$$w=\cdots\alpha_2\alpha_1\alpha_0\alpha_{-1}\alpha_{-2}\cdots$$
and
$$v=\cdots\beta_2\beta_1\beta_0\beta_{-1}\beta_{-2}\cdots$$
correspond to the same band in $A$ if and only if there exist $g\in G$ and $m\in\mathbb{Z}$ such that $\beta_i=g\alpha_{i+m}$ for all $i\in\mathbb{Z}$. So there is a one-to-one correspondence between the $G$-orbits of $G$-periodic lines in $\Lambda$ and the equivalence classes of bands in $A$. Moreover, for each $L\in\mathscr{L}_0$ and each finite dimensional indecomposable $k[T,T^{-1}]$-module $V$, if $L$ corresponds to the band $b$ in $A$, then $\Phi^L(V)$ is a band module over $A$ given by the band $b$ and the $k[T,T^{-1}]$-module $V$ (see \cite[Section II.3]{E1990}). Therefore each indecomposable $A$-module of the second kind is just a band module.
\end{proof}

The algebra in the following example is monomial but not special biserial. We will show that it is tame by using Proposition \ref{tame}.

\begin{Ex1} {\rm(\cite[Example 5.3]{DS1987})}
Let $A=kQ/I$, where $Q$ is the quiver
$$\begin{tikzpicture}
\draw[->] (0.9,0.9) -- (0.1,0.1);
\draw[->] (0.9,-0.9) -- (0.1,-0.1);
\draw[->] (1.9,0.1) -- (1.1,0.9);
\draw[->] (1.9,-0.1) -- (1.1,-0.9);
\draw[->] (0,0.15) -- (0.9,1.9);
\draw[->] (1.1,1.9) -- (2,0.15);
\draw[->] (0,-0.15) -- (0.9,-1.9);
\draw[->] (1.1,-1.9) -- (2,-0.15);
\node at(0.6,0.4) {$\alpha$};
\node at(1.4,0.3) {$\beta$};
\node at(0.6,-0.4) {$\gamma$};
\node at(1.4,-0.4) {$\sigma$};
\node at(0.3,1.2) {$\eta$};
\node at(1.7,1.2) {$\xi$};
\node at(0.3,-1.2) {$\mu$};
\node at(1.7,-1.2) {$\nu$};
\node at(0,0) {$\bullet$};
\node at(2,0) {$\bullet$};
\node at(1,1) {$\bullet$};
\node at(1,-1) {$\bullet$};
\node at(1,2) {$\bullet$};
\node at(1,-2) {$\bullet$};
\end{tikzpicture}$$
and $I$ is the ideal in $kQ$ generated by
$$\beta\xi,\sigma\nu,\eta\alpha\beta,\mu\alpha\beta,\eta\gamma\sigma,\mu\gamma\sigma,\nu\mu\alpha,\nu\mu\gamma,\xi\eta\alpha,\xi\eta\gamma.$$
There is a universal Galois covering $F:\Lambda\rightarrow \Lambda/G=A$ with group $G=\Pi(Q,I)$, where $\Lambda=k\widetilde{Q}/\widetilde{I}$ is given by the following quiver $\widetilde{Q}$ and the ideal $\widetilde{I}$ is generated by all elements of the form $\beta\xi,\sigma\nu,\eta\alpha\beta,\mu\alpha\beta,\eta\gamma\sigma,\mu\gamma\sigma,\nu\mu\alpha,\nu\mu\gamma,\xi\eta\alpha,\xi\eta\gamma$.
$$\begin{tikzpicture}
\draw[->] (0.9,0.9) -- (0.1,0.1);
\draw[->] (-0.9,0.9) -- (-0.1,0.1);
\draw[->] (-0.1,-0.1) -- (-0.9,-1.1);
\draw[->] (0.1,-0.1) -- (0.9,-1.1);
\draw[->] (1.9,1.9) -- (1.1,1.1);
\draw[->] (2.9,2.9) -- (2.1,2.1);
\draw[->] (3.9,3.9) -- (3.1,3.1);
\draw[->] (4.9,4.9) -- (4.1,4.1);
\draw[->] (2.1,1.9) -- (2.9,1.1);
\draw[->] (1.1,2.9) -- (1.9,2.1);
\draw[->] (3.1,0.9) -- (3.9,0.1);
\draw[->] (4.1,-0.1) -- (4.9,-0.9);
\draw[->] (3.9,-0.1) -- (3.1,-0.9);
\draw[->] (4.9,0.9) -- (4.1,0.1);
\draw[->] (-1.9,1.9) -- (-1.1,1.1);
\draw[->] (-2.9,2.9) -- (-2.1,2.1);
\draw[->] (-3.9,3.9) -- (-3.1,3.1);
\draw[->] (-4.9,4.9) -- (-4.1,4.1);
\draw[->] (-2.1,1.9) -- (-2.9,1.1);
\draw[->] (-1.1,2.9) -- (-1.9,2.1);
\draw[->] (-3.1,0.9) -- (-3.9,0.1);
\draw[->] (-4.1,-0.1) -- (-4.9,-0.9);
\draw[->] (-3.9,-0.1) -- (-3.1,-0.9);
\draw[->] (-4.9,0.9) -- (-4.1,0.1);
\draw[->] (-1.1,-1.3) -- (-1.9,-2.3);
\draw[->] (-2.1,-2.5) -- (-2.9,-3.5);
\draw[->] (-3.1,-3.7) -- (-3.9,-4.7);
\draw[->] (-4.1,-4.9) -- (-4.9,-5.9);
\draw[->] (1.1,-1.3) -- (1.9,-2.3);
\draw[->] (2.1,-2.5) -- (2.9,-3.5);
\draw[->] (3.1,-3.7) -- (3.9,-4.7);
\draw[->] (4.1,-4.9) -- (4.9,-5.9);
\draw[->] (-2.9,-1.5) -- (-2.1,-2.3);
\draw[->] (-1.9,-2.5) -- (-1.1,-3.3);
\draw[->] (-4.9,-3.9) -- (-4.1,-4.7);
\draw[->] (-3.9,-4.9) -- (-3.1,-5.7);
\draw[->] (2.9,-1.5) -- (2.1,-2.3);
\draw[->] (1.9,-2.5) -- (1.1,-3.3);
\draw[->] (4.9,-3.9) -- (4.1,-4.7);
\draw[->] (3.9,-4.9) -- (3.1,-5.7);
\node at(0.4,0.6) {$\alpha$};
\node at(1.4,1.6) {$\beta$};
\node at(2.4,2.6) {$\nu$};
\node at(3.4,3.6) {$\mu$};
\node at(4.4,4.6) {$\alpha$};
\node at(-0.4,0.6) {$\gamma$};
\node at(-1.4,1.6) {$\sigma$};
\node at(-2.4,2.6) {$\xi$};
\node at(-3.4,3.6) {$\eta$};
\node at(-4.4,4.6) {$\gamma$};
\node at(4.6,-0.4) {$\eta$};
\node at(3.6,0.6) {$\gamma$};
\node at(2.6,1.6) {$\sigma$};
\node at(1.6,2.6) {$\xi$};
\node at(4.4,0.6) {$\alpha$};
\node at(3.4,-0.4) {$\mu$};
\node at(-4.6,-0.4) {$\mu$};
\node at(-3.6,0.6) {$\alpha$};
\node at(-2.6,1.6) {$\beta$};
\node at(-1.6,2.6) {$\nu$};
\node at(-4.4,0.6) {$\gamma$};
\node at(-3.4,-0.4) {$\eta$};
\node at(-0.6,-0.5) {$\mu$};
\node at(-1.6,-1.7) {$\nu$};
\node at(-2.6,-2.9) {$\beta$};
\node at(-3.6,-4.1) {$\alpha$};
\node at(-4.6,-5.3) {$\mu$};
\node at(-4.4,-4.2) {$\gamma$};
\node at(-3.4,-5.2) {$\eta$};
\node at(0.6,-0.5) {$\eta$};
\node at(1.6,-1.7) {$\xi$};
\node at(2.6,-2.9) {$\sigma$};
\node at(3.6,-4.1) {$\gamma$};
\node at(4.6,-5.3) {$\eta$};
\node at(4.4,-4.2) {$\alpha$};
\node at(3.4,-5.2) {$\mu$};
\node at(-2.6,-2.1) {$\xi$};
\node at(-1.6,-3) {$\sigma$};
\node at(2.6,-2.1) {$\nu$};
\node at(1.6,-3) {$\beta$};
\node at(-3.1,-1.3) {$\cdot$};
\node at(-3.2,-1.2) {$\cdot$};
\node at(-3.3,-1.1) {$\cdot$};
\node at(-0.9,-3.5) {$\cdot$};
\node at(-0.8,-3.6) {$\cdot$};
\node at(-0.7,-3.7) {$\cdot$};
\node at(-5.1,-3.7) {$\cdot$};
\node at(-5.2,-3.6) {$\cdot$};
\node at(-5.3,-3.5) {$\cdot$};
\node at(-2.9,-5.9) {$\cdot$};
\node at(-2.8,-6) {$\cdot$};
\node at(-2.7,-6.1) {$\cdot$};
\node at(-5.1,-6.1) {$\cdot$};
\node at(-5.2,-6.22) {$\cdot$};
\node at(-5.3,-6.34) {$\cdot$};
\node at(3.1,-1.3) {$\cdot$};
\node at(3.2,-1.2) {$\cdot$};
\node at(3.3,-1.1) {$\cdot$};
\node at(0.9,-3.5) {$\cdot$};
\node at(0.8,-3.6) {$\cdot$};
\node at(0.7,-3.7) {$\cdot$};
\node at(5.1,-3.7) {$\cdot$};
\node at(5.2,-3.6) {$\cdot$};
\node at(5.3,-3.5) {$\cdot$};
\node at(2.9,-5.9) {$\cdot$};
\node at(2.8,-6) {$\cdot$};
\node at(2.7,-6.1) {$\cdot$};
\node at(5.1,-6.1) {$\cdot$};
\node at(5.2,-6.22) {$\cdot$};
\node at(5.3,-6.34) {$\cdot$};
\node at(5.1,5.1) {$\cdot$};
\node at(5.2,5.2) {$\cdot$};
\node at(5.3,5.3) {$\cdot$};
\node at(5.1,1.1) {$\cdot$};
\node at(5.2,1.2) {$\cdot$};
\node at(5.3,1.3) {$\cdot$};
\node at(0.9,3.1) {$\cdot$};
\node at(0.8,3.2) {$\cdot$};
\node at(0.7,3.3) {$\cdot$};
\node at(2.9,-1.1) {$\cdot$};
\node at(2.8,-1.2) {$\cdot$};
\node at(2.7,-1.3) {$\cdot$};
\node at(5.1,-1.1) {$\cdot$};
\node at(5.2,-1.2) {$\cdot$};
\node at(5.3,-1.3) {$\cdot$};
\node at(-5.1,5.1) {$\cdot$};
\node at(-5.2,5.2) {$\cdot$};
\node at(-5.3,5.3) {$\cdot$};
\node at(-5.1,1.1) {$\cdot$};
\node at(-5.2,1.2) {$\cdot$};
\node at(-5.3,1.3) {$\cdot$};
\node at(-0.9,3.1) {$\cdot$};
\node at(-0.8,3.2) {$\cdot$};
\node at(-0.7,3.3) {$\cdot$};
\node at(-2.9,-1.1) {$\cdot$};
\node at(-2.8,-1.2) {$\cdot$};
\node at(-2.7,-1.3) {$\cdot$};
\node at(-5.1,-1.1) {$\cdot$};
\node at(-5.2,-1.2) {$\cdot$};
\node at(-5.3,-1.3) {$\cdot$};
\node at(0,0) {$\bullet$};
\node at(-1,1) {$\bullet$};
\node at(-1,-1.2) {$\bullet$};
\node at(1,-1.2) {$\bullet$};
\node at(1,1) {$\bullet$};
\node at(-2,2) {$\bullet$};
\node at(-3,3) {$\bullet$};
\node at(-3,1) {$\bullet$};
\node at(-4,4) {$\bullet$};
\node at(-4,0) {$\bullet$};
\node at(2,2) {$\bullet$};
\node at(3,3) {$\bullet$};
\node at(3,1) {$\bullet$};
\node at(4,4) {$\bullet$};
\node at(4,0) {$\bullet$};
\node at(-2,-2.4) {$\bullet$};
\node at(-3,-3.6) {$\bullet$};
\node at(-4,-4.8) {$\bullet$};
\node at(2,-2.4) {$\bullet$};
\node at(3,-3.6) {$\bullet$};
\node at(4,-4.8) {$\bullet$};
\end{tikzpicture}$$
Let $M\in\mathrm{ind}\Lambda$ such that $\mathrm{supp}M$ is not contained in each full subcategory of $\Lambda$ of the form
$$\begin{tikzpicture}
\draw[->] (0.9,0.9) -- (0.1,0.1);
\draw[->] (-0.9,0.9) -- (-0.1,0.1);
\draw[->] (-0.1,-0.1) -- (-0.9,-0.9);
\draw[->] (0.1,-0.1) -- (0.9,-0.9);
\node at(0,0) {$\bullet$};
\node at(1,1) {$\bullet$};
\node at(-1,1) {$\bullet$};
\node at(-1,-1) {$\bullet$};
\node at(1,-1) {$\bullet$};
\node at(0.6,0.4) {$\alpha$};
\node at(0.6,-0.4) {$\eta$};
\node at(-0.4,0.6) {$\gamma$};
\node at(-0.4,-0.6) {$\mu$};
\end{tikzpicture}.$$
Then $\mathrm{supp}M$ contains a vertex $x$ of $\widetilde{Q}$ of the form
$$\begin{tikzpicture}
\draw[->] (0.9,0.9) -- (0.1,0.1);
\draw[->] (-0.9,0.9) -- (-0.1,0.1);
\draw[->] (-0.1,-0.1) -- (-0.9,-0.9);
\draw[->] (0.1,-0.1) -- (0.9,-0.9);
\node at(0,0) {$\bullet$};
\node at(0.6,0.4) {$\nu$};
\node at(0.6,-0.4) {$\sigma$};
\node at(-0.4,0.6) {$\xi$};
\node at(-0.4,-0.6) {$\beta$};
\node at(1.2,-1) {$\cdot$};
\end{tikzpicture}$$
Denote $L$ the full subcategory of $\Lambda$ formed by vertices of $\widetilde{Q}$ that can be connected to $x$ via a reduced walk which contains no subwalks of the form $\mu\alpha$, $\eta\alpha$, $\mu\gamma$, $\eta\gamma$ and their inverses. Then $L$ is a special biserial category. Let $N\in\mathrm{ind}L$ be an indecomposable direct summand of $M|_{L}$ with $N(x)\neq 0$. Since $L$ is a special biserial category whose quiver is simply connected, $\mathrm{supp}N$ is a finite line. Note that since $L$ is a convex full subcategory of $\Lambda$, by Lemma \ref{extend} we may view $N$ as a $\Lambda$-modules.

To show that $M=N$, it suffices to show that $N$ is a direct summand of $M$ (here we consider $N$ as a $\Lambda$-module). Suppose that $M|_{L}=N\oplus N'$. We will show that the following two assertions (1),(2) hold, and therefore $M=N\oplus M'$.
\begin{itemize}
\item[(1)] $N$ is a submodule of $M$.
\item[(2)] $N'\in\mathrm{mod}L$ can be extended to a submodule $M'$ of $M$ with
\begin{equation*}
M'(z)=\begin{cases}
N'(z), \text{ if } z\in L; \\
M(z), \text{ if } z\notin L.
\end{cases}
\end{equation*}
\end{itemize}

If $\mathrm{supp}N$ contains no vertex of the form
$$\begin{tikzpicture}
\draw[->] (0.9,0.9) -- (0.1,0.1);
\draw[->] (-0.9,0.9) -- (-0.1,0.1);
\draw[->] (-0.1,-0.1) -- (-0.9,-0.9);
\draw[->] (0.1,-0.1) -- (0.9,-0.9);
\node at(0,0) {$\bullet$};
\node at(0.6,0.4) {$\alpha$};
\node at(0.6,-0.4) {$\eta$};
\node at(-0.4,0.6) {$\gamma$};
\node at(-0.4,-0.6) {$\mu$};
\node at(1.2,-1) {,};
\end{tikzpicture}$$
then it is straightforward to show that the assertions (1),(2) hold. Otherwise, for each vertex $y$ in $\mathrm{supp}N$ of the form
$$\begin{tikzpicture}
\draw[->] (0.9,0.9) -- (0.1,0.1);
\draw[->] (-0.9,0.9) -- (-0.1,0.1);
\draw[->] (-0.1,-0.1) -- (-0.9,-0.9);
\draw[->] (0.1,-0.1) -- (0.9,-0.9);
\node at(0,0) {$\bullet$};
\node at(0.6,0.4) {$\alpha$};
\node at(0.6,-0.4) {$\eta$};
\node at(-0.4,0.6) {$\gamma$};
\node at(-0.4,-0.6) {$\mu$};
\node at(1.2,-1) {,};
\end{tikzpicture}$$
consider the full subcategory $E$ of $\Lambda$:
$$\begin{tikzpicture}
\draw[->] (0.8,0.8) -- (0.2,0.2);
\draw[->] (-0.8,0.8) -- (-0.2,0.2);
\draw[->] (-0.2,-0.2) -- (-0.8,-0.8);
\draw[->] (0.2,-0.2) -- (0.8,-0.8);
\draw[->] (1.8,1.8) -- (1.2,1.2);
\draw[->] (1.2,-1.2) -- (1.8,-1.8);
\draw[->] (-1.8,1.8) -- (-1.2,1.2);
\draw[->] (-1.2,-1.2) -- (-1.8,-1.8);
\node at(0,0) {$y$};
\node at(-1,1) {$y_1$};
\node at(-2,2) {$y_2$};
\node at(1,1) {$y_3$};
\node at(2,2) {$y_4$};
\node at(-1,-1) {$y_5$};
\node at(-2,-2) {$y_6$};
\node at(1,-1) {$y_7$};
\node at(2,-2) {$y_8$};
\node at(0.6,0.4) {$\alpha$};
\node at(0.6,-0.4) {$\eta$};
\node at(-0.4,0.6) {$\gamma$};
\node at(-0.4,-0.6) {$\mu$};
\node at(1.6,1.4) {$\beta$};
\node at(1.6,-1.4) {$\xi$};
\node at(-1.4,1.6) {$\sigma$};
\node at(-1.4,-1.6) {$\nu$};
\end{tikzpicture}$$
Let $\Lambda_1$ (resp. $\Lambda_2$, $\Lambda_3$, $\Lambda_4$) be the full subcategory of $\Lambda$ formed by vertices that can be connected to $y$ via a reduced walk which contains the subwalk $\gamma:y_1\rightarrow y$ (resp. $\alpha:y_3\rightarrow y$, $\mu^{-1}:y_5\rightarrow y$, $\eta^{-1}:y_7\rightarrow y$). It follows that $\Lambda=\Lambda_1\sqcup \Lambda_2\sqcup \Lambda_3\sqcup \Lambda_4\sqcup\{y\}$. Let $w$ be the reduced walk of $\widetilde{Q}$ from $x$ to $y$. Since $x,y\in\mathrm{supp}N$ and since $\mathrm{supp}N$ is a line, each point on $w$ belongs to $\mathrm{supp}N$. It follows that if $x\in \Lambda_i$, then $y_{2i-1},y_{2i}\in\mathrm{supp}N$. Moreover, if $x\in \Lambda_1\sqcup \Lambda_2$ (resp. $x\in \Lambda_3\sqcup \Lambda_4$), then $L\cap E$ is the full subcategory of $E$ formed by objects $y$, $y_1$, $y_2$, $y_3$, $y_4$ (resp. $y$, $y_5$, $y_6$, $y_7$, $y_8$).

If $x\in \Lambda_3$, then the restriction of $N$ on the full subcategory
$$\begin{tikzpicture}
\node at(0,0) {$y$};
\node at(1,0) {$y_5$};
\node at(2,0) {$y_6$};
\node at(0.5,0.2) {$\mu$};
\node at(1.5,0.2) {$\nu$};
\draw[->] (0.2,0) -- (0.8,0);
\draw[->] (1.2,0) -- (1.8,0);
\end{tikzpicture}$$
is isomorphic to
$$\begin{tikzpicture}
\node at(0,0) {$k$};
\node at(1,0) {$k$};
\node at(2,0) {$k$};
\node at(0.5,0.2) {$1$};
\node at(1.5,0.2) {$1$};
\draw[->] (0.8,0) -- (0.2,0);
\draw[->] (1.8,0) -- (1.2,0);
\end{tikzpicture}.$$
Since $\nu\mu\gamma=0$ and $\nu\mu\alpha=0$ in $\Lambda$, the restrictions of $M(\gamma):M(y)\rightarrow M(y_1)$ and $M(\alpha):M(y)\rightarrow M(y_3)$ on $N(y)$ are $0$. Similarly, if $x\in \Lambda_4$, then the restrictions of $M(\gamma):M(y)\rightarrow M(y_1)$ and $M(\alpha):M(y)\rightarrow M(y_3)$ on $N(y)$ are also $0$. Therefore $N$ is a submodule of $M$ and the assertion (1) holds.

If $x\in \Lambda_1$, then the restriction of $N$ on the full subcategory
$$\begin{tikzpicture}
\node at(0,0) {$y_2$};
\node at(1,0) {$y_1$};
\node at(2,0) {$y$};
\node at(0.5,0.2) {$\sigma$};
\node at(1.5,0.2) {$\gamma$};
\draw[->] (0.2,0) -- (0.8,0);
\draw[->] (1.2,0) -- (1.8,0);
\end{tikzpicture}$$
is isomorphic to
$$\begin{tikzpicture}
\node at(0,0) {$k$};
\node at(1,0) {$k$};
\node at(2,0) {$k$};
\node at(0.5,0.2) {$1$};
\node at(1.5,0.2) {$1$};
\draw[->] (0.8,0) -- (0.2,0);
\draw[->] (1.8,0) -- (1.2,0);
\end{tikzpicture}.$$
Since $\mu\gamma\sigma=0$ and $\eta\gamma\sigma=0$ in $\Lambda$, the images of $M(\mu):M(y_5)\rightarrow M(y)$ and $M(\eta):M(y_7)\rightarrow M(y)$ are contained in $N'(y)$. Similarly, if $x\in \Lambda_2$, then the images of $M(\mu):M(y_5)\rightarrow M(y)$ and $M(\eta):M(y_7)\rightarrow M(y)$ are also contained in $N'(y)$. Therefore $N'\in\mathrm{mod}L$ can be extended to a submodule $M'$ of $M$ with
\begin{equation*}
M'(z)=\begin{cases}
N'(z), \text{ if } z\in L; \\
M(z), \text{ if } z\notin L,
\end{cases}
\end{equation*}
and the assertion (2) holds.

The above discussions show that the support of any indecomposable finite dimensional $\Lambda$-module is either a finite line or is contained in an extended Dynkin quiver of type $\widetilde{D}_4$; so $\Lambda$ satisfies the assumptions of Proposition \ref{tame}, and $A=\Lambda/G$ is tame.
\end{Ex1}

\subsection{Further remarks on tameness}
\

By the end of this section, we mention the following conjecture, which was kindly informed to us by Piotr Dowbor: If $G$ acts freely on the set $[\mathrm{ind}\Lambda]$ and if $\Lambda$ is tame, then $\Lambda/G$ is also tame.

Note that if $G$ does not act freely on the set $[\mathrm{ind}\Lambda]$, then $\Lambda/G$ can be wild:

\begin{Ex1} {\rm(see \cite{GP1993})} \label{counter-example}
Suppose that char$(k)=2$. Let $\Lambda=kQ/I$ be the locally bounded $k$-category given by the quiver $Q$
$$\begin{tikzpicture}
\draw[->] (0.2,2) -- (1.8,2);
\draw[->] (2.2,2) -- (3.8,2);
\draw[->] (0.2,0) -- (1.8,0);
\draw[->] (2.2,0) -- (3.8,0);
\draw[->] (0.2,0.2) -- (1.8,1.8);
\draw[->] (0.2,1.8) -- (1.8,0.2);
\draw[->] (2.2,0.2) -- (3.8,1.8);
\draw[->] (2.2,1.8) -- (3.8,0.2);
\node at(1,2.2) {$\alpha$};
\node at(3,2.2) {$\alpha$};
\node at(1,-0.2) {$\alpha$};
\node at(3,-0.2) {$\alpha$};
\node at(0.35,0.6) {$\beta$};
\node at(0.35,1.35) {$\beta$};
\node at(3.65,0.65) {$\beta$};
\node at(3.65,1.4) {$\beta$};
\node at(0,0) {$\bullet$};
\node at(2,0) {$\bullet$};
\node at(4,0) {$\bullet$};
\node at(0,2) {$\bullet$};
\node at(2,2) {$\bullet$};
\node at(4,2) {$\bullet$};
\end{tikzpicture}.$$
with $I$ generated by all relations $\alpha^2-\beta^2$ and $\alpha\beta-\beta\alpha$; and let $G=\langle g\rangle$ be the group of automorphisms of $\Lambda$ generated by $g$, where $g$ is induced by the unique quiver automorphism of $Q$ of order $2$. It follows that $\Lambda/G$ is given by the quiver
$$\begin{tikzpicture}
\draw[->] (0.2,0.1) -- (1.8,0.1);
\draw[->] (0.2,-0.1) -- (1.8,-0.1);
\draw[->] (2.2,0.1) -- (3.8,0.1);
\draw[->] (2.2,-0.1) -- (3.8,-0.1);
\node at(1,0.3) {$a$};
\node at(1,-0.3) {$b$};
\node at(3,0.3) {$a$};
\node at(3,-0.3) {$b$};
\node at(0,0) {$\bullet$};
\node at(2,0) {$\bullet$};
\node at(4,0) {$\bullet$};
\end{tikzpicture}.$$
with relations $a^2-b^2$ and $ab-ba$.

Note that the algebra corresponding to $\Lambda$ is skew-gentle and always tame, but $\Lambda/G$ is wild when char$(k)=2$, as can be shown using Proposition \ref{Sin} and Theorem \ref{Sincere-wildness} (see \cite[Claim 3.2]{GP1993}).
\end{Ex1}

\section{Covering theory for general $k$-categories}
	
In \cite{Ga1981}, Gabriel proved that a push-down functor of a Galois covering between two locally bounded categories induces a Galois covering between the associated Auslander-Reiten quivers. This covering technique was extended by Asashiba in \cite{Asa97} to study the relationship between the associated derived categories. However, this technique can only be used on skeletal categories, that is, categories in which isomorphic objects are equal. To overcome this limitation, Asashiba defined in \cite{Asa2011} the precovering between (general $k$-)categories and proved that the canonical functor $F: \Lambda\ra \Lambda/G$ induces a push-down functor $F_\lambda\colon \cal{K}^b(\proj \Lambda)\ra \cal{K}^b(\proj (\Lambda/G))$ which is a precovering.

In \cite{BL}, Bautista and Liu defined the Galois $G$-covering between categories and showed that the Galois $G$-covering functor preserves the Auslander-Reiten theory for Krull-Schmidt categories and can induces the push-down functor between the associated derived categories. This section is a brief introduction on the work of Bautista and Liu \cite{BL} without proofs. We also present some concrete examples related to graded algebras and graded module categories.
	
	\subsection{$G$-precoverings and Galois $G$-coverings}\

		In this section we introduce the definition of precoverings and Galois coverings for categories, and the notion of graded adjoint pairs. Throughout this section, when we say a group $G$ acts on a category $\cal{A}$, we mean that $G$ is a group of automorphisms of $\cal{A}$.
		
		\begin{Def}
			We say that $\cal{A}$ has direct sums if any family of objects $\{X_i\}_{ i\in I}$ has a direct sum, where $I$ is an arbitrary index set. In this case, we denote the canonical injections by $q_j\colon X_j\ra \bigoplus_{i\in I}X_i$, $j\in I$. Then there exist unique morphisms $p_j\colon \bigoplus_{i\in I}X_i\ra X_j$ in $\cal{A}$ such that \[p_i q_j =
			\begin{cases*}
				1_{X_i}, & if $i=j$; \\
				0, & if $i\neq j$,
			\end{cases*}
			\]
			for all $i,j\in I$. An object $M\in \cal{A}$ is called essential in $\bigoplus_{i\in I}X_i$ if for any morphism $f\colon M\ra \bigoplus_{i\in I}X_i$, that $f=0$ if and only if $p_jf=0$ for all $j\in I$. If every object in $\cal{A}$ is essential in $\bigoplus_{i\in I}X_i$, then $\bigoplus_{i\in I}X_i$ is called an essential direct sum. If each family of objects $\{X_i\}_{i\in I}$ has an essential direct sum, then we say that $\cal{A}$ has essential direct sums.
		\end{Def}

		\begin{Rem1}
			\textnormal{(1)} Finite direct sums in a category are always essential. Let $\Lambda$ be a $k$-algebra. The category $\Mod \Lambda$ has essential direct sums, see \cite[Lemma 1.1]{BL}.

			\textnormal{(2)} Let $\cal{A}$ be an abelian category. If $\cal{A}$ has essential direct sums, then the derived category $\cal{D}(\cal{A})$ of $\cal{A}$ has direct sums, see \cite[Theorem 1.8]{BL}. 			
		\end{Rem1}

		\begin{Def}{\rm(\cite[Definition 2.1]{BL})}
			Let $\cal{A}$ be a category with $G$ a group acting on $\cal{A}$. The $G$-action on $\cal{A}$ is called
			\begin{enumerate}[label=\textnormal{(\arabic*)}]
				\item free, if $g\cdot X\not\cong X$ for any indecomposable $X\in\cal{A}$ and any non-identity $g\in G$;
				\item locally bounded, if for any indecomposable $X,Y\in\cal{A}$, $\cal{A}(X,g\cdot Y)=0$ for all but finitely many $g\in G$;
				\item admissible, if it is both free and locally bounded.
			\end{enumerate}
		\end{Def}

		\begin{Def}{\rm(\cite[Definition 2.3]{BL})}
			Let $\cal{A},\cal{B}$ be categories with $G$ a group acting on $\cal{A}$.
			A functor $F\colon \cal{A}\ra\cal{B}$ is called $G$-stable if there exist functorial isomorphisms $\delta_g\colon F\circ g\ra F$, $g\in G$, such that $\delta_{h,X}\circ\delta_{g,h\cdot X}=\delta_{gh,X}$ for all $g,h\in G$ and $X\in\cal{A}$. In this case, we call $\delta=(\delta_g)_{g\in G}$ a $G$-stabilizer for $F$.
		\end{Def}

		\begin{Def}{\rm(\cite[Definition 2.5]{BL}, see also \cite[Definition 2.1]{Asa2011})}
			Let $\cal{A},\cal{B}$ be categories with $G$ a group acting on $\cal{A}$. A functor $F\colon \cal{A}\ra\cal{B}$ is called a $G$-precovering if $F$ has a $G$-stabilizer $\delta$ such that, for any $X,Y\in\cal{A}$, the following two maps are isomorphisms:
			\[F_{X,Y}:\bigoplus_{g\in G}\mathcal{A}(X,g\cdot Y)\to\mathcal{B}\left(F(X),F(Y)\right):(u_{g})_{g\in G}\mapsto\sum_{g\in G}\delta_{g,Y}\circ F(u_{g}).\]
			\[F^{X,Y}:\bigoplus_{g\in G}\mathcal{A}(g\cdot X,Y)\to\mathcal{B}\left(F(X),F(Y)\right):(v_g)_{g\in G}\mapsto\sum_{g\in G}F(v_g)\circ\delta_{g,X}^{-1}.\]
		\end{Def}

		\begin{Rem1}
			In the above definition, it is sufficient to require that all $F_{X,Y}$ be isomorphisms, or all $F^{X,Y}$ be isomorphisms, see \cite[Proposition 1.6]{Asa2011}.
		\end{Rem1}

		\begin{Def}{\rm(\cite[Definition 2.8]{BL})}\label{BL, Def 2.8}
			Let $\cal{A},\cal{B}$ be categories with $G$ a group acting admissibly on $\cal{A}$. A $G$-precovering $F\colon \cal{A}\ra\cal{B}$ is called a Galois $G$-covering if the following conditions hold:
			\begin{enumerate}[label=\textnormal{(\arabic*)}]
				\item The functor $F$ is almost dense, that is, each indecomposable object in $\cal{B}$ is isomorphic to an object lying in the image of $F$.
				\item If $X\in\cal{A}$ is indecomposable, then $F(X)$ is indecomposable.
				\item If $X,Y\in\cal{A}$ are indecomposable with $F(X)\cong F(Y)$, then there exists some $g\in G$ such that $Y=g\cdot X$.
			\end{enumerate}
		\end{Def}

		\begin{Rem1}
			\textnormal{(1)} In case $\cal{A},\cal{B}$ are Krull-Schmidt, a Galois $G$-covering $F\colon \cal{A}\ra \cal{B}$ is a dense functor, and consequently, $F$ is an equivalence if and only if $G$ is trivial.

			\textnormal{(2)} If $\cal{A},\cal{B}$ are locally bounded categories, then a Galois covering defined by Definition \ref{galois-covering-functor} is a $G$-invariant Galois $G$-covering, that is, the $G$-stabilizer $\delta=(\delta_g)_{g\in G}$ satisfies $\delta_g=1_F$ for all $g\in G$.
		\end{Rem1}
		
		To help the reader understand these concepts, we present a concrete example of a $G$-precovering and Galois $G$-covering. In our discussion, we focus on graded module categories of graded (finite dimensional) algebras (specifically $\bb{Z}$-graded algebras for simplicity). We prove that:
		\begin{enumerate}[label=\textnormal{(\arabic*)}]
			\item The forgetful functor $F$ on the (finite dimensional) graded module category $\gr \Lambda$ forms a $\bb{Z}$-precovering;
			\item This functor induces a Galois $\bb{Z}$-covering from the graded module category $\gr \Lambda$ to a specified subcategory $\mod_\infty\Lambda$ of the (finite dimensional) module category $\mod \Lambda$.
		\end{enumerate}

		\begin{Ex1}\label{Kronecker 1}
			Let $\Lambda=\bigoplus_{i \in \mathbb{Z}} \Lambda_i$ be a $\mathbb{Z}$-graded finite dimensional algebra. The graded module category $\gr \Lambda$ is defined as follows:

			\begin{itemize}
				\item Objects: finite dimensional $\mathbb{Z}$-graded (right) $\Lambda$-modules $X = \bigoplus_{i \in \mathbb{Z}} X_i$, where the grading is compatible with the $\Lambda$-action, i.e., $X_i \cdot \Lambda_j \subseteq X_{i+j}$ for all $i, j \in \mathbb{Z}$.
				\item Morphisms: $\Lambda$-linear maps $f \colon X \to Y$  such that $f(X_i) \subseteq Y_i$ for all $i \in \mathbb{Z}$.
			\end{itemize}
			There are several important structures associated with $\gr\Lambda$:

			\begin{enumerate}[label=\textnormal{(\arabic*)}]
				\item The forgetful functor $F: \gr \Lambda \to \mod \Lambda$ sends a graded module $X = \bigoplus_{i \in \mathbb{Z}} X_i$ to its underlying ungraded $\Lambda$-module, ignoring the grading. On morphisms, $F$ acts by retaining the module homomorphism while disregarding the degree condition.
				\item For each $n \in \mathbb{Z}$, the grading shift functor $\sigma(n) \colon \gr \Lambda \to \gr \Lambda$ acts on objects by: $$(\sigma(n)(X))_i=X_{i-n}.$$ On morphisms, $(\sigma(n)(f))_i= f_{i-n}$, since $f$ preserves degrees and the shift is applied uniformly.
				\item The shift functors induce a $\mathbb{Z}$-action on $\gr \Lambda$ via: $$n\cdot X\coloneqq \sigma(n)(X)$$ and for morphisms, $n \cdot f \coloneqq \sigma(n)(f)$. This action is natural in the sense that $(n + m) \cdot X = n \cdot (m \cdot X)$ and $0 \cdot X = X$.
			\end{enumerate}
			We donote the full subcategory of $\mod \Lambda$ consisting of the gradable modules by $$\mod_\infty\Lambda= \{F (X)\mid X\in \gr \Lambda\}.$$
			
			Let $\delta=(\delta_n)_{n\in \bb{Z}}$ be the family of functorial isomorphisms $\delta_n\colon F\circ n\ra F$, satisfying that $\delta_{n,X}$ is the identity map of $\Hom_\Lambda(F\circ \sigma(n)(X),F (X))=\End_\Lambda(F (X))$, where $n\in \bb{Z}$ and $X\in\gr\Lambda$. Then the forgetful functor $F$ is a ($\bb{Z}$-invariant) $\bb{Z}$-precovering with the $\bb{Z}$-stabilizer $\delta$, and it induces a Galois $\bb{Z}$-covering $F\colon\gr\Lambda\ra\mod_\infty\Lambda,X\mapsto F(X)$.
		\end{Ex1}

		\subsection{Auslander-Reiten Theory via Galois $G$-covering}
		\

		In this section, we show that the Galois $G$-covering functor preserves the Auslander-Reiten theory for Krull-Schmidt categories.

		\begin{Def}{\rm(\cite[Section 1]{Liu})}
			Let $\cal{A}$ be a Krull-Schmidt category. Let $\eta\colon X\xra{f} Y\xra{g} Z$ be a sequence of morphisms in $\cal{A}$. We call $f$ a pseudo-kernel of $g$ if
			\[\Hom_\cal{A} (M,X)\xrightarrow{f_*} \Hom_\cal{A} (M,Y)\xrightarrow{g_*} \Hom_\cal{A} (M,Z)\]
			is an exact sequence for all $M\in \cal{A}$. Dually, we call $g$ a pseudo-cokernel of $f$ if
			\[\Hom_\cal{A} (Z,N)\xrightarrow{g^*} \Hom_\cal{A} (Y,N)\xrightarrow{f^*} \Hom_\cal{A} (X,N)\]
			is an exact sequence for all $N\in \cal{A}$. And we call the sequence $\eta$ short pseudo-exact if $f$ is a pseudo-kernel of $g$ and $g$ is a pseudo-cokernel of $f$.
		\end{Def}

		Note that a pseudo-kernel (resp. a pseudo-cokernel) is a kernel (resp. a cokernel) if it is a monomorphism (resp. an epimorphism).

\begin{Def}{\rm(\cite[Section 3]{BL})}
			Let $\cal{A}$ be a Krull-Schmidt category. A short pseudo-exact sequence $\eta\colon X\xra{f} Y\xra{g} Z$ in $\cal{A}$ is called an almost split sequence if $Y$ is non-zero, $f$ is a minimal left almost split morphism and $g$ is a minimal right almost split morphism.
		\end{Def}

Note that in an abelian category, an almost split sequence $\eta\colon X\xra{f} Y\xra{g} Z$ in $\cal{A}$ in the sense of above definition must be a short exact sequence (see \cite[Proposition 1.5]{Liu}).

\begin{Lem}{\rm(\cite[Lemma 3.6]{BL})}
			Let $\cal{A},\cal{B}$ be Krull-Schmidt categories with $G$ a group acting admissibly on $\cal{A}$, and let $F\colon \cal{A}\ra\cal{B}$ be a Galois $G$-covering. If $\eta$ is a short sequence in $\cal{A}$, then $\eta$ is pseudo-exact if and only if $F(\eta)$ is pseudo-exact.
		\end{Lem}

		\begin{Rem1}
			In particular, if $\cal{A},\cal{B}$ are Krull-Schmidt abelian categories, then the Galois $G$-covering functor $F\colon \cal{A}\ra\cal{B}$ is an exact functor.
		\end{Rem1}

		\begin{Thm}{\rm(\cite[Theorem 3.7]{BL})}
			Let $\cal{A},\cal{B}$ be Krull-Schmidt categories with $G$ a group acting admissibly on $\cal{A}$, and let $F\colon \cal{A}\ra\cal{B}$ be a Galois $G$-covering. Then
			\begin{enumerate}[label=\textnormal{(\arabic*)}]
				\item A short sequence $\eta$ in $\cal{A}$ is almost split if and only if $F(\eta)$ is almost split.
				\item An object $X\in\cal{A}$ is the starting term or the ending term of an almost split sequence if and only if $F(X)$ is the starting term or the ending term of an almost split sequence, respectively.
			\end{enumerate}
		\end{Thm}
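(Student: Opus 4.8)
The plan is to reduce the whole statement to a single transfer principle: a Galois $G$-covering preserves and reflects right almost split (equivalently, by duality, left almost split) morphisms. This suffices because a short (pseudo-)exact sequence $\eta\colon 0\to L\xrightarrow{f} M\xrightarrow{p} N\to 0$ is almost split exactly when $L$ is indecomposable and $p$ is right almost split. Before analysing $p$ I would record the elementary consequences of the defining isomorphisms $F_{X,Y}$ and $F^{X,Y}$ of the $G$-precovering. Taking the identity component ($g=1$) shows $F$ is faithful; since $F$ is additive, $F(X_1\oplus X_2)=F(X_1)\oplus F(X_2)$, so $F$ reflects indecomposability (if $F(X)$ is indecomposable then faithfulness forces one summand of $X$ to vanish). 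Using freeness of the $G$-action one obtains the key local computation: for indecomposable $N$, the isomorphism $F_{N,N}$ identifies $\End_{\cal{B}}(FN)$ with $\bigoplus_{g\in G}\cal{A}(N,g\cdot N)$, and the summands with $g\neq 1$ consist of non-isomorphisms since $N\not\cong g\cdot N$; hence $\End_{\cal{B}}(FN)$ is local, and a morphism $\phi\colon FW\to FN$ between $F$-images of indecomposables fails to be a retraction if and only if every one of its components $u_g\colon W\to g\cdot N$ under $F_{W,N}$ is a non-isomorphism. This last equivalence is where freeness does the essential work.

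For the forward direction of (1), start from an almost split $\eta$ in $\cal{A}$. Then $F(\eta)$ is again short (pseudo-)exact with $FL,FN$ indecomposable; it is non-split because a hypothetical splitting $s$ of $Fp$, expanded through $F_{N,M}$ as $s=\sum_g\delta_{g,M}F(s_g)$, yields via the naturality square $\delta_{g,N}\circ F(g\cdot p)=F(p)\circ\delta_{g,M}$ a section $p\circ s_1=1_N$ from its identity component. To see $Fp$ is right almost split, take a non-retraction $\phi\colon Z\to FN$ with $Z$ indecomposable; by almost density $Z\cong FW$ with $W$ indecomposable, and $F_{W,N}$ gives $\phi=\sum_g\delta_{g,N}F(u_g)$. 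By the local computation each $u_g\colon W\to g\cdot N$ is a non-retraction, hence factors through the right almost split map $g\cdot p$, say $u_g=(g\cdot p)\circ v_g$; then $\psi=\sum_g\delta_{g,M}F(v_g)$ satisfies $Fp\circ\psi=\phi$ by the same naturality square, so $Fp$ is right almost split and $F(\eta)$ is almost split. The reverse direction is symmetric: if $F(\eta)$ is almost split, reflecting indecomposability gives $L,N$ indecomposable, $p$ is not a retraction (functors preserve split epimorphisms), and for any non-retraction $\phi\colon X\to N$ in $\cal{A}$ one first checks that $F\phi$ is a non-retraction (via $F_{N,X}$ and injectivity at the identity component), factors $F\phi=Fp\circ\theta$, lifts $\theta$ through $F_{X,M}$ as $\theta=\sum_g\delta_{g,M}F(t_g)$, and reads off $p\circ t_1=\phi$ from the identity component of the resulting equality in $\bigoplus_g\cal{A}(X,g\cdot N)$. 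Thus $p$ is right almost split and $\eta$ is almost split.

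Part (2) then follows from (1). The forward implications are immediate, since if $\eta$ is almost split with ending (resp. starting) term $X$, then $F(\eta)$ is almost split with ending (resp. starting) term $FX$. For the reverse I would lift a sink map: given an almost split sequence in $\cal{B}$ ending at $FX$ with minimal right almost split map $p'\colon M'\to FX$, reflecting indecomposability shows $X$ is indecomposable, and decomposing $M'\cong\bigoplus_i FW_i$ and taking the identity components $p_i\colon W_i\to X$ of each $p'|_{FW_i}$ under $F_{W_i,X}$ produces $\gamma=(p_i)\colon\bigoplus_i W_i\to X$. The same Hom-isomorphism bookkeeping as in (1) shows $\gamma$ is right almost split in $\cal{A}$; its right-minimal version is then a minimal right almost split morphism into $X$, which completes to an almost split sequence ending at $X$. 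The starting-term statement is dual. I expect the principal obstacle to lie precisely in this reverse half of (2): confirming that the lifted $\gamma$ is right minimal and that it genuinely extends to a short (pseudo-)exact sequence, i.e.\ transferring the \emph{existence} of almost split sequences across $F$ rather than merely the almost split property of a sequence already given. Throughout, local boundedness of the $G$-action is indispensable, as it guarantees that the sums over $G$ occurring in $F_{X,Y}$ and $F^{X,Y}$ are finite, so that all the above manipulations are well defined.
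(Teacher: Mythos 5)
The paper does not prove this statement: Section~4 is announced as ``a brief introduction on the work of Bautista and Liu \cite{BL} without proofs,'' and the theorem is quoted verbatim from \cite[Theorem 3.7]{BL}. So there is no in-paper argument to compare yours against; I can only assess your proposal against the argument in the cited source, which it in fact reconstructs along essentially the same lines. Your reduction of (1) to the transfer of right almost split morphisms, the local computation identifying $\End_{\cal{B}}(F(N))$ with $\bigoplus_{g}\cal{A}(N,g\cdot N)$ and using freeness to see that the $g\neq 1$ components are non-isomorphisms, and the back-and-forth factorization through $g\cdot p$ via the naturality of the stabilizer $\delta$ are exactly the mechanisms of \cite{BL} (their Section~3 first establishes that a Galois $G$-covering preserves and reflects radical morphisms, irreducible morphisms and minimal left/right almost split morphisms, and Theorem~3.7 is then assembled from those lemmas).

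The two points you flag yourself are the genuine ones, and you should not wave them away. First, in the forward direction of (1) the assertion that $F(\eta)$ ``is again short (pseudo-)exact'' is not formal: in the Krull--Schmidt setting of \cite{BL} exactness means pseudo-kernel/pseudo-cokernel conditions, and verifying that $F(p)$ is a pseudo-cokernel of $F(f)$ requires running every test morphism $F(W)\to F(N)$ (resp.\ out of $F(L)$) through the isomorphisms $F_{W,N}$ and the translates $g\cdot\eta$; this is a separate lemma in \cite{BL}, not a consequence of additivity. Second, the reverse half of (2) is where the real work lies: lifting the minimal right almost split morphism $p'\colon M'\to F(X)$ componentwise gives a right almost split $\gamma$ into $X$, but you must still prove right minimality of the lift (or pass to a right-minimal version, which needs the Krull--Schmidt hypothesis on $\cal{A}$) and then invoke the characterization, internal to the pseudo-exact formalism, of objects that are ending terms of almost split sequences in order to ``complete'' $\gamma$ to a sequence. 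In \cite{BL} this is handled by proving preservation and reflection of \emph{minimal} right (left) almost split morphisms first and only then deducing statement (2); if you fill in those two lemmas, your outline is a correct proof.
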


		Now we introduce the notion of Galois $G$-coverings for quivers and translation quivers, which is similar but with some additional conditions compared to the one for quivers with relations. And the following theorem shows that the Galois $G$-covering functor between Hom-finite Krull-Schmidt categories induces a Galois $G$-covering between the associated Auslander-Reiten quivers.

		\begin{Def}{\rm(\cite[Definition 4.1]{BL})}
			Let $Q,Q'$ be quivers with a group $G$ acting freely on $Q$. A quiver morphism $\phi=(\phi_0,\phi_1)\colon Q\ra Q'$ is called a Galois $G$-covering if the following conditions hold:
			\begin{enumerate}[label=\textnormal{(\arabic*)}]
				\item The associated vertex map $\phi_0$ is surjective.
				\item If $g\in G$, then $\phi\circ g=\phi$.
				\item If $x,y\in Q_0$ with $\phi_0(x)=\phi_0(y)$, then $y=g\cdot x$ for some $g\in G$.
				\item If $x\in Q_0$, then $\phi_1$ induces two bijections $x^+\ra \phi_0(x)^+$ and $x^-\ra \phi_0(x)^-$.
			\end{enumerate}
		\end{Def}

Note that a Galois $G$-covering $Q\ra Q'$ is equivalent to a Galois covering of $(Q,0)\rightarrow (Q',0)$ of quivers with empty relations with group $G$ (cf. Definition \ref{galois-covering}).

		\begin{Def}{\rm(\cite[Definition 4.5]{BL})}
			A morphism of translation quivers $\phi\colon (\Delta,\tau)\ra (\Omega,\rho)$ is a quiver morphism $\phi\colon \Delta\ra \Omega$ such that for any non-projective vertex $x\in \Delta_0$, the image $\phi(x)$ is also non-projective and $\rho(\phi(x))=\phi(\tau(x))$.
		\end{Def}

		\begin{Def}{\rm(\cite[Definition 4.6]{BL})}
			Let $(\Delta,\tau)$ be a translation quiver with a free action of a group $G$. A morphism of translation quivers $\phi\colon (\Delta,\tau)\ra (\Omega,\rho)$ is called a Galois $G$-covering if $\phi\colon \Delta\ra \Omega$ is a Galois $G$-covering with the following property: for any projective vertex $x\in \Delta_0$, the image $\phi(x)$ is also projective.
		\end{Def}

		\begin{Thm}{\rm(\cite[Theorem 4.7]{BL})}\label{BL, Theorem 4.7}
			Let $\cal{A},\cal{B}$ be Hom-finite Krull-Schmidt categories with $G$ a group acting admissibly on $\cal{A}$, and let $F\colon \cal{A}\ra\cal{B}$ be a Galois $G$-covering. Then
			\begin{enumerate}[label=\textnormal{(\arabic*)}]
				\item The functor $F$ induces a Galois $G$-covering $\phi\colon \Gamma_\cal{A}\ra \Gamma_\cal{B}$ of translation quivers between the associated Auslander-Reiten quivers.
				\item If $\Gamma$ is a connected component of $\Gamma_\cal{A}$, then $\phi(\Gamma)$ is a connected component of $\Gamma_\cal{B}$, and $\phi$ restricts to a Galois $G_\Gamma$-covering $\phi_\Gamma\colon \Gamma\ra \phi(\Gamma)$, where $G_\Gamma\coloneqq \{g\in G\mid g\cdot \Gamma=\Gamma\}$ is the induced subgroup of $G$.
			\end{enumerate}
		\end{Thm}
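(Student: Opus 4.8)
The plan is to build the translation-quiver morphism $\phi=(\phi_0,\phi_1)$ vertex-by-vertex and arrow-by-arrow out of the data of the Galois $G$-covering $F$, and then to read off each of the four defining conditions (together with the $\tau$-compatibility) from the two structural inputs already available: the $G$-precovering isomorphisms $F_{X,Y}\colon\bigoplus_{g\in G}\cal{A}(X,g\cdot Y)\xrightarrow{\sim}\cal{B}(F(X),F(Y))$, and the fact that $F$ preserves almost split sequences (\cite[Theorem 3.7]{BL}). On vertices I would set $\phi_0([X])=[F(X)]$ for $X$ indecomposable; this is well defined because $F$ takes indecomposables to indecomposables (condition $(2)$ of Definition \ref{BL, Def 2.8}), and $\phi_0$ is surjective because $F$ is almost dense (condition $(1)$ there).

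The key technical step, and the one I expect to be the main obstacle, is to promote $F_{X,Y}$ to a statement about \emph{irreducible} morphisms. Concretely I would first check that each $F_{X,Y}$ is compatible with the radical filtration of the two Hom-finite Krull--Schmidt categories, i.e.\ that it restricts to isomorphisms $\bigoplus_g\rad(X,g\cdot Y)\cong\rad(F(X),F(Y))$ and $\bigoplus_g\rad^2(X,g\cdot Y)\cong\rad^2(F(X),F(Y))$; this is where the bookkeeping with the $G$-stabilizer $\delta$ and the compatibility of $F_{X,Y}$ with composition must be handled carefully. Passing to quotients then yields natural isomorphisms $\bigoplus_{g\in G}\Irr_{\cal{A}}(X,g\cdot Y)\cong\Irr_{\cal{B}}(F(X),F(Y))$, and since arrows of an Auslander--Reiten quiver between two vertices form a basis of the corresponding $\Irr$-space (here $k$ is algebraically closed), a choice of such bases defines $\phi_1$ on arrows.

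With this in hand, condition $(4)$ (that $\phi_1$ induces bijections $x^+\ra\phi_0(x)^+$ and $x^-\ra\phi_0(x)^-$) follows from admissibility: since the $G$-action is free and locally bounded, for fixed indecomposable $X$ the vertices $g\cdot Y$ ($g\in G$) are pairwise distinct and only finitely many contribute, while by condition $(3)$ of Definition \ref{BL, Def 2.8} distinct $G$-orbits of indecomposables have distinct $F$-images. Summing the $\Irr$-isomorphism over a transversal of the $G$-orbits therefore matches, bijectively, the arrows starting at $[X]$ (which run to the various $[g\cdot Y]$) with the arrows starting at $[F(X)]$ (which run to the various $[F(Y)]$); the predecessor case is dual, using the $F^{X,Y}$ family. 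Conditions $(1)$--$(3)$ of the quiver Galois $G$-covering are then immediate: $(1)$ and $(3)$ restate almost-density and condition $(3)$ of Definition \ref{BL, Def 2.8}, while $\phi\circ g=\phi$ is exactly the assertion that $F\circ g\cong F$ via $\delta$, translated to vertices and arrows.

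To see that $\phi$ is a morphism of \emph{translation} quivers, and in fact a translation-quiver Galois $G$-covering, I would invoke preservation of almost split sequences. If $x=[X]$ is non-projective, there is an almost split sequence $0\ra\tau X\ra E\ra X\ra 0$ in $\cal{A}$; by \cite[Theorem 3.7]{BL} its image is almost split in $\cal{B}$, so $F(X)$ is non-projective and $\rho(\phi_0(x))=[\tau_{\cal{B}}F(X)]=[F(\tau_{\cal{A}}X)]=\phi_0(\tau x)$, giving $\rho\circ\phi=\phi\circ\tau$; the same theorem shows $X$ is projective iff $F(X)$ is projective, which is the extra condition for a translation-quiver Galois $G$-covering. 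Finally, for part $(2)$ I would use that a covering of quivers sends connected components to connected components: if $\Gamma$ is a component of $\Gamma_{\cal{A}}$ then $\phi(\Gamma)$ is a component of $\Gamma_{\cal{B}}$ by condition $(4)$, and $\phi$ restricts to $\phi_\Gamma\colon\Gamma\ra\phi(\Gamma)$. That $\phi_\Gamma$ is a Galois $G_\Gamma$-covering reduces to checking the fibre condition: if $x,y\in\Gamma$ with $\phi(x)=\phi(y)$ then $y=g\cdot x$ for some $g\in G$, and then $g\cdot\Gamma$ is a component meeting $\Gamma$ at $y$, hence $g\cdot\Gamma=\Gamma$ and $g\in G_\Gamma$; the remaining conditions $(1)$, $(2)$, $(4)$ are inherited by restriction.
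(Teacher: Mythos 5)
The paper itself states this result without proof --- Section IV is explicitly ``a brief introduction on the work of Bautista and Liu \cite{BL} without proofs'' --- so there is no in-paper argument to compare against; your proposal has to be judged against the argument of \cite[Theorem 4.7]{BL} itself, and it reconstructs that argument correctly in outline. You put the weight in the right place: everything reduces to showing that the precovering isomorphisms $F_{X,Y}$ are compatible with the radical filtration, so that they descend to isomorphisms $\bigoplus_{g\in G}\Irr_{\cal{A}}(X,g\cdot Y)\cong\Irr_{\cal{B}}(F(X),F(Y))$; combined with freeness and local boundedness this gives condition (4), while conditions (1)--(3) and the $\tau$-compatibility are formal consequences of Definition \ref{BL, Def 2.8} and the preservation of almost split sequences, exactly as you say. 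The one step you flag but do not carry out --- that $F_{X,Y}$ restricts to $\bigoplus_g\rad(X,g\cdot Y)\cong\rad(F(X),F(Y))$ --- is genuinely the technical core: the nontrivial direction is that if some component $u_{g_0}\colon X\ra g_0\cdot Y$ were an isomorphism then $\delta_{g_0,Y}\circ F(u_{g_0})$ would be an isomorphism, and one must use the inverse precovering isomorphism $F^{Y,X}$ together with freeness of the $G$-action and the Krull--Schmidt property to rule out $\sum_g\delta_{g,Y}\circ F(u_g)$ lying in the radical; a complete write-up would have to include this. Two smaller points worth making explicit: the identification of arrows with basis vectors of $\Irr$ uses that $k$ is algebraically closed (so the AR-quivers are unvalued), which is a standing assumption of the paper; and the equivariance $\phi\circ g=\phi$ on \emph{arrows} requires choosing the bases of the $\Irr$-spaces on a transversal of the $G$-orbits and transporting them by the group action, not an arbitrary choice. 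With these caveats the proposal is sound and matches the route of \cite{BL}.
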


		In Example \ref{Kronecker 1}, we demonstrate that for any $\mathbb{Z}$-graded algebra $\Lambda$, there exists a canonical Galois $\mathbb{Z}$-covering
		$$F\colon\gr\Lambda\ra\mod_\infty\Lambda$$ where $F$ is induced by the forgetful functor. By Theorem \ref{BL, Theorem 4.7}, this covering $F$ further induces a Galois $\mathbb{Z}$-covering between their corresponding Auslander-Reiten quivers $$\phi\colon \Gamma_{\gr\Lambda}\ra \Gamma_{\mod_\infty\Lambda}.$$ We illustrate this phenomenon explicitly in the following example.

		\begin{Ex1}\label{Kronecker 2}
			Let $Q$ be the quiver
			\begin{tikzcd}
				1 & 2
				\arrow["\alpha", shift left, from=1-1, to=1-2]
				\arrow["\beta"', shift right, from=1-1, to=1-2]
			\end{tikzcd}
			and let $\Lambda$ be the $\bb{Z}$-graded algebra \[\Lambda=kQ=\bigoplus_{i=0}^2 \Lambda_i\] with $\Lambda_0=ke_1\oplus ke_2$, $\Lambda_1=k\alpha$ and $\Lambda_2=k\beta$.
			Then there are four components in the Auslander-Reiten quiver $\Gamma_{\mod_\infty\Lambda}$ of $\mod_\infty\Lambda$:
				\[\begin{tikzcd}[column sep=tiny]
					\cdots&& M_{[43]} && I_2 \\
					&\cdots && M_{[32]} && {I_1}
					\arrow[shift left, from=2-2, to=1-3]
					\arrow[shift right, from=2-2, to=1-3]
					\arrow[shift right, from=1-3, to=2-4]
					\arrow[shift left, from=1-3, to=2-4]
					\arrow[shift left, from=2-4, to=1-5]
					\arrow[shift right, from=2-4, to=1-5]
					\arrow[shift right, from=1-5, to=2-6]
					\arrow[shift left, from=1-5, to=2-6]
					\arrow[dashed, from=2-4, to=2-2]
					\arrow[dashed, from=2-6, to=2-4]
					\arrow[dashed, from=1-5, to=1-3]
					\arrow[dashed, from=1-3, to=1-1]
				\end{tikzcd}\quad
				\begin{tikzcd}[column sep=tiny]
					& {P_1} && M_{[34]} && \cdots\\
					{P_2} && M_{[23]} && \cdots
					\arrow[shift left, from=2-1, to=1-2]
					\arrow[shift right, from=2-1, to=1-2]
					\arrow[shift right, from=1-2, to=2-3]
					\arrow[shift left, from=1-2, to=2-3]
					\arrow[shift left, from=2-3, to=1-4]
					\arrow[shift right, from=2-3, to=1-4]
					\arrow[shift right, from=1-4, to=2-5]
					\arrow[shift left, from=1-4, to=2-5]
					\arrow[dashed, from=2-3, to=2-1]
					\arrow[dashed, from=2-5, to=2-3]
					\arrow[dashed, from=1-4, to=1-2]
					\arrow[dashed, from=1-6, to=1-4]
				\end{tikzcd}\]
				\[\begin{tikzcd}[row sep=15, column sep=6]
					&&& \cdots &&&& \cdots \\
					&& {M_{R_3}'} &&&& {M_{R_3}''} \\
					& {M_{R_2}'} &&&& {M_{R_2}''} \\
					{M_{R_1}'} &&&& {M_{R_1}''}
					\arrow[shift left, from=4-1, to=3-2]
					\arrow[shift left, from=3-2, to=4-1]
					\arrow[shift left, from=3-2, to=2-3]
					\arrow[shift left, from=2-3, to=3-2]
					\arrow[shift left, from=2-3, to=1-4]
					\arrow[shift left, from=1-4, to=2-3]
					\arrow[shift left, from=4-5, to=3-6]
					\arrow[shift left, from=3-6, to=4-5]
					\arrow[shift left, from=3-6, to=2-7]
					\arrow[shift left, from=2-7, to=3-6]
					\arrow[shift left, from=2-7, to=1-8]
					\arrow[shift left, from=1-8, to=2-7]
					\arrow[from=4-1, to=4-1, dashed, distance=6em, in=15, out=345]
					\arrow[from=3-2, to=3-2, dashed, distance=6em, in=15, out=345]
					\arrow[from=2-3, to=2-3, dashed, distance=6em, in=15, out=345]
					\arrow[from=4-5, to=4-5, dashed, distance=6em, in=15, out=345]
					\arrow[from=3-6, to=3-6, dashed, distance=6em, in=15, out=345]
					\arrow[from=2-7, to=2-7, dashed, distance=6em, in=15, out=345]
				\end{tikzcd},\]
					where the representations of $M_{R_1}'$ and $M_{R_1}''$ are given by
				$\begin{tikzcd}
					k & k
					\arrow["1", shift left, from=1-1, to=1-2]
					\arrow["0"', shift right, from=1-1, to=1-2]
				\end{tikzcd}$
				and
				$\begin{tikzcd}
					k & k
					\arrow["0", shift left, from=1-1, to=1-2]
					\arrow["1"', shift right, from=1-1, to=1-2]
				\end{tikzcd}$.

				Indeed, for all $\lambda\in k\setminus\{0\}$, the components of the Auslander-Reiten quiver of $\mod \Lambda$ containing the module
				$\begin{tikzcd}
					k & k
					\arrow["1", shift left, from=1-1, to=1-2]
					\arrow["\lambda"', shift right, from=1-1, to=1-2]
				\end{tikzcd}$
				are ungradable. Otherwise, by \cite[Theorem 3.4]{GG}, each module in such a component is gradable. However, consider the module
				$\begin{tikzcd}
					k & k
					\arrow["1", shift left, from=1-1, to=1-2]
					\arrow["\lambda"', shift right, from=1-1, to=1-2]
				\end{tikzcd}$. If we assign degree $d$ to the vector space $k$ at vertex $1$, then the vector space $k$ at vertex $2$ would need to simultaneously have degrees $d+1$ (for the arrow $\alpha$) and $d+2$ (for the arrow $\beta$), which contradicts the definition of a graded module.

			At the same time, the Auslander-Reiten quiver $\Gamma_{\gr\Lambda}$ of $\gr\Lambda$ also has four components as follows.
			
				\[\begin{tikzcd}[column sep=tiny]
					& \cdots & \vdots & \vdots & \vdots \\
					\cdots && {M_{[32]}[-2]} && {I_1[-1]} \\
					& \cdots && {I_2[-1]} \\
					\cdots && {M_{[32]}[-1]} && {I_1} \\
					& \cdots && {I_2} \\
					&& \vdots & \vdots & \vdots
					\arrow[from=1-2, to=2-3]
					\arrow[from=1-4, to=2-5]
					\arrow[from=2-3, to=1-4]
					\arrow[dashed, from=2-3, to=2-1]
					\arrow[from=2-3, to=3-4]
					\arrow[dashed, from=2-5, to=2-3]
					\arrow[from=3-2, to=2-3]
					\arrow[from=3-2, to=4-3]
					\arrow[from=3-4, to=2-5]
					\arrow[dashed, from=3-4, to=3-2]
					\arrow[from=3-4, to=4-5]
					\arrow[from=4-3, to=3-4]
					\arrow[dashed, from=4-3, to=4-1]
					\arrow[from=4-3, to=5-4]
					\arrow[dashed, from=4-5, to=4-3]
					\arrow[from=5-2, to=4-3]
					\arrow[from=5-2, to=6-3]
					\arrow[from=5-4, to=4-5]
					\arrow[dashed, from=5-4, to=5-2]
					\arrow[from=5-4, to=6-5]
					\arrow[from=6-3, to=5-4]
				\end{tikzcd}\quad
				\begin{tikzcd}[column sep=tiny]
					\vdots & \vdots & \vdots & \cdots \\
					{P_2} && {M_{[23]}[-2]} && \cdots \\
					& {P_1[-1]} && \cdots \\
					{P_2[1]} && {M_{[23]}[-1]} && \cdots \\
					& {P_1} && \cdots \\
					\vdots & \vdots & \vdots
					\arrow[from=1-2, to=2-3]
					\arrow[from=2-1, to=1-2]
					\arrow[from=2-1, to=3-2]
					\arrow[from=2-3, to=1-4]
					\arrow[dashed, from=2-3, to=2-1]
					\arrow[from=2-3, to=3-4]
					\arrow[dashed, from=2-5, to=2-3]
					\arrow[from=3-2, to=2-3]
					\arrow[from=3-2, to=4-3]
					\arrow[dashed, from=3-4, to=3-2]
					\arrow[from=4-1, to=3-2]
					\arrow[from=4-1, to=5-2]
					\arrow[from=4-3, to=3-4]
					\arrow[dashed, from=4-3, to=4-1]
					\arrow[from=4-3, to=5-4]
					\arrow[dashed, from=4-5, to=4-3]
					\arrow[from=5-2, to=4-3]
					\arrow[from=5-2, to=6-3]
					\arrow[dashed, from=5-4, to=5-2]
					\arrow[from=6-1, to=5-2]
					\arrow[from=6-3, to=5-4]
				\end{tikzcd}\]

				\[\begin{tikzcd}[column sep=tiny]
					\vdots & \vdots & \vdots & \cdots && \vdots & \vdots & \vdots & \cdots \\
					{M'_{R_1}[-1]} & {M'_{R_2}[-1]} & {M'_{R_3}[-1]} & \cdots && {M''_{R_1}[-1]} & {M''_{R_2}[-1]} & {M''_{R_3}[-1]} & \cdots \\
					{M'_{R_1}} & {M'_{R_2}} & {M'_{R_3}} & \cdots && {M''_{R_1}} & {M''_{R_2}} & {M''_{R_3}} & \cdots \\
					{M'_{R_1}[1]} & {M'_{R_2}[1]} & {M'_{R_3}[1]} & \cdots && {M''_{R_1}[1]} & {M''_{R_2}[1]} & {M''_{R_3}[1]} & \cdots \\
					\vdots & \vdots & \vdots &&& \vdots & \vdots & \vdots
					\arrow[dashed, from=1-1, to=2-1]
					\arrow[dashed, from=1-2, to=2-2]
					\arrow[dashed, from=1-3, to=2-3]
					\arrow[from=1-7, to=2-6]
					\arrow[from=1-8, to=2-7]
					\arrow[from=1-9, to=2-8]
					\arrow[from=2-1, to=1-2]
					\arrow[dashed, from=2-1, to=3-1]
					\arrow[from=2-2, to=1-3]
					\arrow[from=2-2, to=2-1]
					\arrow[dashed, from=2-2, to=3-2]
					\arrow[from=2-3, to=1-4]
					\arrow[from=2-3, to=2-2]
					\arrow[dashed, from=2-3, to=3-3]
					\arrow[from=2-4, to=2-3]
					\arrow[dashed, from=2-6, to=1-6]
					\arrow[from=2-6, to=2-7]
					\arrow[dashed, from=2-7, to=1-7]
					\arrow[from=2-7, to=2-8]
					\arrow[from=2-7, to=3-6]
					\arrow[dashed, from=2-8, to=1-8]
					\arrow[from=2-8, to=2-9]
					\arrow[from=2-8, to=3-7]
					\arrow[from=2-9, to=3-8]
					\arrow[from=3-1, to=2-2]
					\arrow[dashed, from=3-1, to=4-1]
					\arrow[from=3-2, to=2-3]
					\arrow[from=3-2, to=3-1]
					\arrow[dashed, from=3-2, to=4-2]
					\arrow[from=3-3, to=2-4]
					\arrow[from=3-3, to=3-2]
					\arrow[dashed, from=3-3, to=4-3]
					\arrow[from=3-4, to=3-3]
					\arrow[dashed, from=3-6, to=2-6]
					\arrow[from=3-6, to=3-7]
					\arrow[dashed, from=3-7, to=2-7]
					\arrow[from=3-7, to=3-8]
					\arrow[from=3-7, to=4-6]
					\arrow[dashed, from=3-8, to=2-8]
					\arrow[from=3-8, to=3-9]
					\arrow[from=3-8, to=4-7]
					\arrow[from=3-9, to=4-8]
					\arrow[from=4-1, to=3-2]
					\arrow[dashed, from=4-1, to=5-1]
					\arrow[from=4-2, to=3-3]
					\arrow[from=4-2, to=4-1]
					\arrow[dashed, from=4-2, to=5-2]
					\arrow[from=4-3, to=3-4]
					\arrow[from=4-3, to=4-2]
					\arrow[dashed, from=4-3, to=5-3]
					\arrow[from=4-4, to=4-3]
					\arrow[dashed, from=4-6, to=3-6]
					\arrow[from=4-6, to=4-7]
					\arrow[dashed, from=4-7, to=3-7]
					\arrow[from=4-7, to=4-8]
					\arrow[from=4-7, to=5-6]
					\arrow[dashed, from=4-8, to=3-8]
					\arrow[from=4-8, to=4-9]
					\arrow[from=4-8, to=5-7]
					\arrow[from=4-9, to=5-8]
					\arrow[from=5-1, to=4-2]
					\arrow[from=5-2, to=4-3]
					\arrow[dashed, from=5-6, to=4-6]
					\arrow[dashed, from=5-7, to=4-7]
					\arrow[dashed, from=5-8, to=4-8]
				\end{tikzcd}. \]
			By Theorem \ref{BL, Theorem 4.7}, the Galois $\bb{Z}$-covering $F\colon \gr\Lambda\ra \mod_\infty\Lambda$ induces a Galois $\bb{Z}$-covering $\phi\colon \Gamma_{\gr\Lambda}\ra \Gamma_{\mod_\infty\Lambda}$ of the translation quivers.

		Note that if the gradings on $kQ$ is given by paths: $(kQ)_0=ke_1\oplus ke_2$ and $(kQ)_1=k\alpha\oplus k\beta$, then all the modules in $\mod kQ$ are gradable and $\Gamma_{\gr kQ}$ is just the $\bb{Z}$-indexed copies of $\Gamma_{\mod kQ}$. See also \cite{GG}.
		\end{Ex1}

	\subsection{$G$-graded adjoint pairs and push-down functors}\

		In this section, we employ covering functors between locally bounded categories to construct $G$-graded adjoint pairs. We then apply these results to produce $G$-precoverings and Galois $G$-coverings for their associated module categories and derived categories.

		\begin{Def}{\rm(\cite[Definition 2.11]{BL})}
			Let $\cal{A},\cal{B}$ be categories such that $\cal{A}$ has direct sums and admits an action of a group $G$. Let $F\colon \cal{A}\ra\cal{B}$ and $E\colon \cal{B}\ra\cal{A}$ be functors such that $(F,E)$ is an adjoint pair with adjoint isomorphism $\phi$. We say that $(F,E)$ is $G$-graded if the following conditions hold:
			\begin{enumerate}[label=\textnormal{(\arabic*)}]
				\item There exists a functorial isomorphism $\gamma\colon\bigoplus_{g\in G}g\ra E\circ F$.
				\item There exists a $G$-stabilizer $\delta$ for $F$ such that
				\[\phi_{X,FY}(\gamma_Y\circ j_{g,Y}\circ u)=\delta_{g,Y}\circ F(u),\]
				for any $X,Y\in \cal{A}$, $g\in G$ and $u\in \cal{A}(X,g\cdot Y)$, where $j_{g,Y}\colon g\cdot Y\ra \cal{G}Y$ is the canonical injection which sends $g\cdot Y$ to $\cal{G}Y\coloneqq \bigoplus_{g\in G}g\cdot Y$.
			\end{enumerate}
		\end{Def}

			In Section \ref{sec:push-down-pull-up}, we introduced the notion of the push-down and pull-up functors. Recall that for a locally bounded category $\Lambda$, we denote by $\Mod\Lambda$ the categories of $\Lambda$-modules. Let $\Lambda,A$ be locally bounded catgories with a group $G$ acting admissibly on $\Lambda$, a Galois covering functor $F\colon \Lambda\ra A$ induces a push-down functor $F_\lambda\colon \Mod \Lambda\ra \Mod A$ and a pull-up functor $F_\bullet\colon \Mod A\ra \Mod \Lambda$.

		The following Proposition shows that $(F_\lambda,F_\bullet)$ forms a $G$-graded adjoint pair.

		\begin{Prop}{\rm(\cite[Proposition 6.4]{BL})}\label{BL, Prop 6.4}
			Let $\Lambda,A$ be locally bounded categories with $G$ a group acting admissibly on $\Lambda$. Let $F\colon \Lambda\ra A$ be a $G$-invariant Galois $G$-covering. Then the push-down functor $F_\lambda\colon \Mod\Lambda\ra \Mod A$ and the pull-up functor $F_\bullet\colon \Mod A\ra \Mod \Lambda$ form a $G$-graded adjoint pair $(F_\lambda,F_\bullet)$.
		\end{Prop}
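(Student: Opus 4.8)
The plan is to produce the three pieces of data $(\phi,\gamma,\delta)$ demanded by the definition of a $G$-graded adjoint pair and then to verify its single compatibility axiom, building everything out of the canonical isomorphisms of Lemma~\ref{pushdown-pull-up}. The adjoint pair $(F_\lambda,F_\bullet)$ together with its adjunction isomorphism
$$\phi_{X,V}\colon \Hom_\Lambda(X,F_\bullet V)\xrightarrow{\sim}\Hom_A(F_\lambda X,V)$$
is already in hand from Section~\ref{sec:push-down-pull-up}; moreover $\Mod\Lambda$ has arbitrary direct sums and $G$ acts on it by $g\cdot M={}^{g}M$. For the functorial isomorphism in condition~(1) I would take $\gamma_M\colon \bigoplus_{g\in G}{}^{g}M\to F_\bullet F_\lambda M$ to be exactly the canonical isomorphism of Lemma~\ref{pushdown-pull-up}, which at each object $x$ carries the summand $({}^{g}M)_x=M_{g^{-1}x}$ identically onto the summand of $(F_\bullet F_\lambda M)_x=\bigoplus_{Fy=Fx}M_y$ indexed by $g^{-1}x$; naturality in $M$ is immediate from this component description. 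For the $G$-stabilizer I would take $\delta_{g,M}\colon F_\lambda({}^{g}M)\to F_\lambda M$ to be the companion canonical isomorphism of the same lemma, which at each object $a$ reindexes the summand $M_{g^{-1}x}$ sitting in degree $x$ to degree $g^{-1}x$.

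First I would confirm that $\delta=(\delta_g)_{g\in G}$ is genuinely a $G$-stabilizer, i.e.\ that $\delta_{h,M}\circ\delta_{g,\,{}^{h}M}=\delta_{gh,M}$. Since $F$ is $G$-invariant we have $Fg=F$, so $G$ preserves each fibre $F^{-1}(a)$, and because the action is free this fibre is a $G$-torsor; the cocycle identity then reduces to the evident bookkeeping statement that the reindexings $x\mapsto g^{-1}x$ compose correctly. This step, like the naturality of $\gamma$ and $\delta$, is routine.

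The essential content is condition~(2): for all $X,Y$, all $g\in G$ and all $u\in\Hom_\Lambda(X,{}^{g}Y)$,
$$\phi_{X,F_\lambda Y}(\gamma_Y\circ j_{g,Y}\circ u)=\delta_{g,Y}\circ F_\lambda(u).$$
Here I would first strip off $u$. By naturality of $\phi$ in its first variable, $\phi_{X,F_\lambda Y}\big((\gamma_Y\circ j_{g,Y})\circ u\big)=\phi_{{}^{g}Y,F_\lambda Y}(\gamma_Y\circ j_{g,Y})\circ F_\lambda(u)$, so it suffices to prove the single identity $\phi_{{}^{g}Y,F_\lambda Y}(\gamma_Y\circ j_{g,Y})=\delta_{g,Y}$. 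Writing the adjunction isomorphism through the counit $\epsilon$ as $\phi_{X,V}(h)=\epsilon_V\circ F_\lambda(h)$, this is the same as
$$\epsilon_{F_\lambda Y}\circ F_\lambda(\gamma_Y\circ j_{g,Y})=\delta_{g,Y}.$$

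To finish I would make the counit explicit. From the componentwise definitions of $F_\lambda$ and $F_\bullet$, the unit $\eta_M$ of this adjunction works out to be the inclusion of $M_x$ as the degree-$x$ summand of $(F_\bullet F_\lambda M)_x$ (that is, $\eta_M=\gamma_M\circ j_{1,M}$), and the triangle identities then force $\epsilon_{V,a}\colon (F_\lambda F_\bullet V)_a=\bigoplus_{Fx=a}V_a\to V_a$ to be the codiagonal (sum) map. A componentwise chase at each object $a$ now shows that $F_\lambda(\gamma_Y\circ j_{g,Y})$ sends the degree-$x$ summand $Y_{g^{-1}x}$ of $(F_\lambda{}^{g}Y)_a$ into the $(x,\,g^{-1}x)$ place of $\bigoplus_{Fx=a}\bigoplus_{Fz=a}Y_z$, after which $\epsilon_{F_\lambda Y,a}$ collapses it identically onto the degree-$g^{-1}x$ summand of $(F_\lambda Y)_a$; this is precisely the reindexing defining $\delta_{g,Y}$, so the two agree. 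I expect this last step to be the main obstacle: it requires correctly identifying the counit with the codiagonal map and then carrying out the double-index bookkeeping, where freeness of the $G$-action — making each fibre $F^{-1}(a)$ a $G$-torsor — is exactly what guarantees that all the reindexings are well defined and match on the nose.
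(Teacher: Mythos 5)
The paper does not actually prove this proposition: Section IV is explicitly ``a brief introduction on the work of Bautista and Liu \cite{BL} without proofs,'' and the statement is simply quoted from \cite[Proposition 6.4]{BL}. So there is no in-paper argument to compare against, and your proposal has to stand on its own. It does. Your choices of $\gamma_M\colon\bigoplus_{g}{}^{g}M\to F_\bullet F_\lambda M$ and $\delta_{g,M}\colon F_\lambda({}^{g}M)\to F_\lambda M$ are exactly the canonical componentwise isomorphisms of Lemma \ref{pushdown-pull-up} (whose proof is purely local and applies verbatim to all of $\Mod\Lambda$, not just $\mo\Lambda$); the cocycle identity for $\delta$ is the reindexing bookkeeping you describe, using that $G$-invariance plus freeness plus transitivity make each fibre $F^{-1}(a)$ a $G$-torsor; and the reduction of condition (2) to the single identity $\phi_{{}^{g}Y,F_\lambda Y}(\gamma_Y\circ j_{g,Y})=\delta_{g,Y}$ via naturality of $\phi$ in the first variable, followed by the counit computation, is the standard and correct route. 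The only expository quibble is that you phrase the counit as being ``forced by the triangle identities'' from the unit, whereas it is cleaner to observe directly that the componentwise adjunction isomorphism of Section \ref{sec:push-down-pull-up} has unit the summand inclusion $M_x\hookrightarrow\bigoplus_{\pi y=\pi x}M_y$ and counit the codiagonal $\bigoplus_{\pi x=a}V_a\to V_a$; either way the final diagram chase identifying $\epsilon_{F_\lambda Y}\circ F_\lambda(\gamma_Y\circ j_{g,Y})$ with the reindexing $\delta_{g,Y}$ is correct.
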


		\subsubsection{Applications to module categories}
		\

		A full subcategory $\cal{C}$ of $\cal{A}$ is called a $G$-\textit{subcategory} if $g\cdot X\in \cal{C}$ for any $X\in \cal{C}$ and $g\in G$. In this case, the $G$-action on $\cal{A}$ restricts to a $G$-action on $\cal{C}$. This $G$-action is called $\cal{A}$\textit{-essential} if $X$ is essential in $\cal{G}(Y)\coloneqq\bigoplus_{g\in G}g\cdot Y$ for any $X,Y\in \cal{C}$.

		\begin{Thm}{\rm(\cite[Theorem 2.12]{BL})}\label{BL, Theorem 2.12}
			Let $\cal{A},\cal{B}$ be categories such that $\cal{A}$ has direct sums and admits an action of a group $G$. Let $F\colon \cal{A}\ra\cal{B}$ and $E\colon \cal{B}\ra\cal{A}$ be functors forming a $G$-graded adjoint pair $(F,E)$. Let $\cal{C}$ be a $G$-subcategory of $\cal{A}$ with a locally bounded and $\cal{A}$-essential $G$-action, and $\cal{D}$ a full subcategory of $\cal{B}$. If $F$ sends $\cal{C}$ into $\cal{D}$, then it restricts to a $G$-precovering $F'\colon \cal{C}\ra\cal{D}$. 		
		\end{Thm}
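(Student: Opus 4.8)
The plan is to transport the $G$-stabilizer and adjunction data of the pair $(F,E)$ to the restriction $F'$, and then to recognize the comparison map of $F'$ as the composite of the adjunction isomorphism, the isomorphism $\gamma$, and the canonical ``assembly'' map of the direct sum $\bigoplus_{g\in G}g\cdot Y$; the statement then reduces to the bijectivity of this last map, which is exactly what local boundedness and $\cal{A}$-essentiality of the $G$-action are designed to supply. First I would check that $F'$ is $G$-stable. Since $\cal{C}$ is a $G$-subcategory, $g\cdot X\in\cal{C}$ for all $X\in\cal{C}$ and $g\in G$, so the functorial isomorphisms $\delta_g\colon F\circ g\to F$ restrict to isomorphisms $F'\circ g\to F'$, and the cocycle relation $\delta_{h,X}\circ\delta_{g,h\cdot X}=\delta_{gh,X}$ passes to $\cal{C}$ unchanged. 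Thus $\delta|_{\cal{C}}$ is a $G$-stabilizer for $F'$. By the remark following the definition of a $G$-precovering, it suffices to prove that for all $X,Y\in\cal{C}$ the map
$$F'_{X,Y}\colon\bigoplus_{g\in G}\cal{C}(X,g\cdot Y)\to\cal{D}(F'X,F'Y),\qquad (u_g)_g\mapsto\sum_{g\in G}\delta_{g,Y}\circ F(u_g),$$
is an isomorphism. As $\cal{C}$ and $\cal{D}$ are full, $g\cdot Y\in\cal{C}$ and $FX,FY\in\cal{D}$, we may identify $\cal{C}(X,g\cdot Y)=\cal{A}(X,g\cdot Y)$ and $\cal{D}(F'X,F'Y)=\cal{B}(FX,FY)$, so $F'_{X,Y}$ coincides with the map $F_{X,Y}$ built from the ambient data.

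Next I would exploit the $G$-graded structure. Write $\cal{G}Y=\bigoplus_{g\in G}g\cdot Y$, with canonical injections $j_{g,Y}$ and projections $p_{g,Y}$, and let $\phi$ and $\gamma$ be the adjunction isomorphism and the functorial isomorphism $\bigoplus_{g\in G}g\xrightarrow{\sim}E\circ F$. The defining identity of a $G$-graded adjoint pair reads $\delta_{g,Y}\circ F(u)=\phi_{X,FY}(\gamma_Y\circ j_{g,Y}\circ u)$ for $u\in\cal{A}(X,g\cdot Y)$; summing over the finitely many nonzero components and using additivity gives
$$F_{X,Y}\big((u_g)_g\big)=\phi_{X,FY}\Big(\gamma_Y\circ\textstyle\sum_{g\in G}j_{g,Y}\circ u_g\Big)=\big(\phi_{X,FY}\circ(\gamma_Y\circ-)\circ\Psi\big)\big((u_g)_g\big),$$
where $\Psi\colon\bigoplus_{g\in G}\cal{A}(X,g\cdot Y)\to\cal{A}(X,\cal{G}Y)$ is the canonical assembly map $(u_g)_g\mapsto\sum_g j_{g,Y}\circ u_g$. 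Since $\phi_{X,FY}$ is an isomorphism (adjunction) and post-composition with the isomorphism $\gamma_Y$ is an isomorphism, $F_{X,Y}$ is an isomorphism if and only if $\Psi$ is.

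Finally I would prove $\Psi$ bijective, which is where both hypotheses on the action are consumed. Injectivity is formal: applying $p_{h,Y}$ to a relation $\sum_g j_{g,Y}\circ u_g=0$, and using that $p_{h,Y}\circ j_{g,Y}$ equals $1_{g\cdot Y}$ for $g=h$ and $0$ otherwise, yields $u_h=0$ for every $h$. For surjectivity, given $f\in\cal{A}(X,\cal{G}Y)$ set $u_g:=p_{g,Y}\circ f\in\cal{A}(X,g\cdot Y)=\cal{C}(X,g\cdot Y)$; local boundedness of the $G$-action on $\cal{C}$ (applied to the indecomposable summands of $X$ and $Y$) forces $u_g=0$ for all but finitely many $g$, so $\tilde f:=\sum_g j_{g,Y}\circ u_g$ is a genuine morphism in the image of $\Psi$, and $p_{h,Y}\circ(f-\tilde f)=0$ for all $h$. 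The $\cal{A}$-essentiality of the action, namely that $X$ is essential in $\cal{G}Y$, then forces $f=\tilde f$, completing the proof.

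I expect this surjectivity to be the main obstacle: it is the only step that consumes both hypotheses, and the delicate point is precisely the passage from local boundedness on indecomposables to the finiteness of the ``support'' of an arbitrary morphism $f\colon X\to\cal{G}Y$, after which essentiality reconstructs $f$ from its projections. Everything else (the transport of the stabilizer, the fullness identifications, and the factorization through $\phi$ and $\gamma$) is formal once the $G$-graded identity is in hand.
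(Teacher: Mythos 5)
The lecture note states this theorem without proof (Section IV is explicitly an exposition of Bautista--Liu ``without proofs''), so there is no in-paper argument to compare against. Judged on its own, your proposal is the standard argument and, as far as I can tell, essentially the one in \cite[Theorem 2.12]{BL}: restrict the stabilizer (legitimate because $\mathcal{C}$ is a full $G$-subcategory and $\mathcal{D}$ is full, so the $\delta_{g,X}$ live in $\mathcal{D}$), identify $F'_{X,Y}$ with $F_{X,Y}$, use the graded-adjunction identity to factor $F_{X,Y}=\phi_{X,FY}\circ(\gamma_Y\circ-)\circ\Psi$, and reduce to bijectivity of the assembly map $\Psi\colon\bigoplus_{g}\mathcal{A}(X,g\cdot Y)\to\mathcal{A}(X,\mathcal{G}Y)$. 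The factorization, the formal injectivity via the projections, and the use of $\mathcal{A}$-essentiality to conclude $f=\tilde f$ from $p_{h,Y}(f-\tilde f)=0$ are all correct, and you correctly identify that it suffices to treat the maps $F_{X,Y}$ by the remark following the definition of $G$-precovering.

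The one step that deserves more care is the finiteness claim in the surjectivity argument. As defined in this note, a locally bounded $G$-action is a condition on \emph{indecomposable} objects only, so your phrase ``applied to the indecomposable summands of $X$ and $Y$'' tacitly assumes that every object of $\mathcal{C}$ is a finite direct sum of indecomposables; that hypothesis is not in the statement, and without it the indecomposable version of local boundedness gives no control on $\mathcal{A}(X,g\cdot Y)$ for general $X,Y\in\mathcal{C}$. (Note that the set $\{g: \mathcal{A}(X,g\cdot Y)\neq 0\}$ really must be finite for $\Psi$ to be surjective, since any element of its image has finitely supported projections.) In the applications the note has in mind, e.g.\ $\operatorname{Mod}^b\Lambda$, objects need not be finite direct sums of indecomposables, and the finiteness is instead checked directly for arbitrary pairs of objects (finite supports meet only finitely many translates); so the intended reading of ``locally bounded $G$-action on $\mathcal{C}$'' here is the condition for all pairs of objects of $\mathcal{C}$, under which your finiteness step is immediate and the detour through indecomposables is unnecessary. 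With that reading your proof is complete; with the literal indecomposable-only reading it has a genuine (if small) gap at exactly the point you yourself flag as delicate.
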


		For a locally bounded category $\Lambda$, denote by $\Mod^b\Lambda$ the full subcategory of $\Lambda$-modules consisting of those $M\in \Mod\Lambda$ for which the set $\{x\in\Lambda\mid M(x)\neq 0\}$ is finite, and $\mod \Lambda$ the category of finite dimensional $\Lambda$-modules.

		As in Section \ref{sec:push-down-pull-up}, the push-down functor $F_\lambda\colon \Mod \Lambda\ra \Mod A$ naturally restricts to both $\Mod^b \Lambda$ and $\mod \Lambda$. By using Theorem \ref{BL, Theorem 2.12}, we conclude that $F_\lambda$ induces $G$-precoverings $F_\lambda\colon \Mod^b \Lambda\ra \Mod^b A$ and $F_\lambda\colon \mod \Lambda\ra \mod A$.

		\begin{Thm}{\rm(\cite[Theorem 6.5]{BL})}\label{BL, Thm 6.5}
			Let $\Lambda,A$ be locally bounded categories with $G$ a group acting admissibly on $\Lambda$. Let $F\colon \Lambda\ra A$ be a $G$-invariant Galois $G$-covering. Then
			\begin{enumerate}[label=\textnormal{(\arabic*)}]
				\item The push-down functor $F_\lambda\colon \Mod^b \Lambda\ra \Mod^b A$ is a $G$-precovering.
				\item The push-down functor $F_\lambda\colon \mod \Lambda\ra \mod A$ is a $G$-precovering, and in case $G$ is torsion-free, it has the following properties:
				\begin{enumerate}[label=\textnormal{(\alph*)}]
					\item If $M\in \mod \Lambda$ is indecomposable, then $F_\lambda(M)$ is indecomposable.
					\item If $M,N\in \mod \Lambda$ are indecomposable with $F_\lambda(M)\cong F_\lambda(N)$, then $N\cong g\cdot M$ for some $g\in G$.
				\end{enumerate}
			\end{enumerate}
		\end{Thm}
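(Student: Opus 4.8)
The plan is to derive the two $G$-precovering assertions from the general transfer result Theorem \ref{BL, Theorem 2.12}, feeding it the $G$-graded adjoint pair supplied by Proposition \ref{BL, Prop 6.4}, and then to obtain properties (a) and (b) from the orbit description of the push-down encoded in Lemma \ref{pushdown-indec}. The first thing to record is that, by part (2) of the remark following Definition \ref{BL, Def 2.8}, a $G$-invariant Galois $G$-covering $F\colon\Lambda\to A$ between locally bounded categories is exactly a Galois covering in the sense of Definition \ref{galois-covering-functor}; hence Proposition \ref{BL, Prop 6.4} applies and $(F_\lambda,F_\bullet)$ is a $G$-graded adjoint pair between $\Mod\Lambda$ and $\Mod A$, and simultaneously the push-down formalism of Lemmas \ref{pushdown-pull-up}--\ref{pushdown-indec} becomes available.

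For the precovering statements I would apply Theorem \ref{BL, Theorem 2.12} with $\cal{A}=\Mod\Lambda$ (which has arbitrary direct sums) and $\cal{B}=\Mod A$, taking $\cal{C}=\Mod^b\Lambda$ and $\cal{D}=\Mod^b A$ for (1), and $\cal{C}=\mod\Lambda$, $\cal{D}=\mod A$ for the opening of (2). Thus it suffices to check, in either case, that $\cal{C}$ is a $G$-subcategory carrying a locally bounded and $\Mod\Lambda$-essential $G$-action, and that $F_\lambda$ maps $\cal{C}$ into $\cal{D}$. That $\cal{C}$ is a $G$-subcategory is immediate from $\mathrm{supp}({}^gM)=g\cdot\mathrm{supp}(M)$ together with $\dim_k({}^gM)(x)=\dim_k M(g^{-1}x)$. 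The $\Mod\Lambda$-essential condition is automatic: direct sums in $\Mod\Lambda$ are formed objectwise, and any element of $\bigoplus_{g}({}^gY)(x)$ has finite support, so a morphism $f\colon X\to\bigoplus_g{}^gY$ with $p_g f=0$ for all $g$ must vanish objectwise, hence $f=0$. Local boundedness follows from freeness of the action on objects: a nonzero morphism $X\to{}^gY$ forces some $x\in\mathrm{supp}(X)$ with $g^{-1}x\in\mathrm{supp}(Y)$, and since the supports are finite and each pair $(x,y)\in\mathrm{supp}(X)\times\mathrm{supp}(Y)$ admits at most one $g\in G$ with $gy=x$, only finitely many $g$ give $\Mod\Lambda(X,{}^gY)\neq 0$. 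Finally $\mathrm{supp}(F_\lambda M)\subseteq F(\mathrm{supp}\,M)$ and $\sum_a\dim_k(F_\lambda M)(a)=\sum_x\dim_k M_x$, so $F_\lambda$ respects both finiteness conditions. Theorem \ref{BL, Theorem 2.12} then delivers the $G$-precoverings $F_\lambda\colon\Mod^b\Lambda\to\Mod^b A$ and $F_\lambda\colon\mod\Lambda\to\mod A$.

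It remains to establish (a) and (b) when $G$ is torsion-free, and here the goal is to verify the hypothesis of Lemma \ref{pushdown-indec}, namely that ${}^gM\not\cong M$ for every indecomposable $M\in\mod\Lambda$ and every $1\neq g\in G$. Since the $G$-action on $\Lambda$ is admissible it is free on objects; if ${}^gM\cong M$ then $g$ permutes the finite, nonempty set $\mathrm{supp}(M)$, so some power $g^n$ with $n\ge 1$ fixes a point of it, whence $g^n=1$ by freeness, which contradicts torsion-freeness unless $g=1$. With the hypothesis in force, Lemma \ref{pushdown-indec} gives directly that $F_\lambda M$ is indecomposable, which is (a), and that any indecomposable $N$ with $F_\lambda N\cong F_\lambda M$ satisfies $N\cong{}^gM$ for some $g\in G$, which is (b).

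I expect the only real obstacle to be organisational rather than mathematical: one must verify precisely the two nontrivial clauses in the hypotheses of Theorem \ref{BL, Theorem 2.12}, the locally bounded and $\Mod\Lambda$-essential conditions on the restricted $G$-action, and must make the identification of the Galois $G$-covering of Section IV with the classical push-down setup so that Lemmas \ref{pushdown-pull-up}--\ref{pushdown-indec} may be cited verbatim. Once these are in place, the whole argument reduces to the elementary support estimates above and to the three cited results.
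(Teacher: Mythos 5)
Your proposal is correct and follows exactly the route the paper indicates: the paper states this theorem without proof (the whole section is an exposition of Bautista--Liu), but the preceding paragraph derives the two $G$-precovering claims precisely as you do, by feeding the $G$-graded adjoint pair of Proposition \ref{BL, Prop 6.4} into Theorem \ref{BL, Theorem 2.12} after checking that the restricted $G$-actions are locally bounded and essential. Your treatment of (a) and (b) via Lemma \ref{pushdown-indec}, using that a torsion-free group acting freely on objects acts freely on $[\mathrm{ind}\Lambda]$ (an observation the paper itself records at the start of Section III), is the natural completion and is sound.
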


		We conclude this subsection with the following example.

			\begin{Ex1} (Example \ref{Kronecker 2} revisited)
			Let $Q$ be the quiver
			\begin{tikzcd}
				1 & 2
				\arrow["\alpha", shift left, from=1-1, to=1-2]
				\arrow["\beta"', shift right, from=1-1, to=1-2]
			\end{tikzcd}
			and let $\Lambda\coloneqq kQ$ be the same graded algebra as in Example \ref{Kronecker 2}. Let $\widetilde{\Lambda}$ be the quiver algebra with the quiver $\widetilde{Q}$ given by
			\[\begin{tikzcd}[column sep=small]
				& {1[2]} && {1[1]} && 1 \\
				\cdots && 2 && {2[-1]} && \cdots
				\arrow[from=1-2, to=2-1]
				\arrow[from=1-2, to=2-3]
				\arrow[from=1-4, to=2-3]
				\arrow[from=1-4, to=2-5]
				\arrow[from=1-6, to=2-5]
				\arrow[from=1-6, to=2-7]
			\end{tikzcd}.\]
			Then
			\begin{enumerate}[label=\textnormal{(\arabic*)}]
				\item There is a universal cover $\pi\colon \widetilde{Q}\ra Q$, and the induced functor between locally bounded categories $F\colon \widetilde{\Lambda}\ra \Lambda$ is a Galois $\bb{Z}$-covering (see Example \ref{Kron-example}).
				\item The category $\mod \widetilde{\Lambda}$ is equivalent to $\gr \Lambda$ (see \cite[Theorem 3.4]{Gre83a}).
				\item By Theorem \ref{BL, Thm 6.5} and Definition \ref{BL, Def 2.8}, the functor $F\colon \widetilde{\Lambda}\ra \Lambda$ induces a $\bb{Z}$-precovering $F_\lambda\colon \Mod^b \widetilde{\Lambda}\ra \Mod^b \Lambda$ and a Galois $\bb{Z}$-covering $F_\lambda\colon \mod \widetilde{\Lambda}\ra \mod_\infty \Lambda$.
				\item The Galois $\bb{Z}$-covering $F_\lambda\colon \mod \widetilde{\Lambda}\ra \mod_\infty \Lambda$ above induces the same covering of Auslander-Reiten quivers as $\phi\colon\Gamma_{\gr\Lambda}\ra\Gamma_{\mod_\infty\Lambda}$.
			\end{enumerate}
		\end{Ex1}

		\subsubsection{Applications to derived categories}
		\
	
		In this subsection, we will introduce some results of derived graded adjoint pairs and push-down functors. Let $\cal{A},\cal{B}$ be abelian categories, and let $F\colon \cal{A}\ra\cal{B}$ be a functor. We denote the complex category, the homotopy category and the derived category of $\cal{A}$ by $\cal{C}(\cal{A})$, $\cal{K}(\cal{A})$ and $\cal{D}(\cal{A})$, respectively.
		
		For $X^\bullet\in \cal{C}(\cal{A})$, define $F^\cal{C}(X^\bullet)$ to be the complex of which the component and the differentiation of degree $n$ are $F(X^n)$ and $F(d^n_X)$, respectively, for all $n\in \bb{Z}$. Then $F$ induces a linear functor $F^\cal{C}\colon \cal{C}^b(\cal{A})\ra \cal{C}^b(\cal{B})$, $X^\bullet\ra F^\cal{C}(X^\bullet)$, which induces functors on the homotopy and derived categories, denoted by $F^\cal{K}\colon \cal{K}^b(\cal{A})\ra \cal{K}^b(\cal{B})$ and $F^\cal{D}\colon \cal{D}^b(\cal{A})\ra \cal{D}^b(\cal{B})$, respectively.

		\begin{Prop}{\rm(\cite[Proposition 5.1]{BL})}
			Let $\cal{A},\cal{B}$ be abelian categories. If $F\colon \cal{A}\ra\cal{B}$ is an exact functor, then it induces a commutative diagram of functors
				\[\begin{tikzcd}
					{\cal{C}(\cal{A})} & {\cal{K}(\cal{A})} & {\cal{D}(\cal{A})} \\
					{\cal{C}(\cal{B})} & {\cal{K}(\cal{B})} & {\cal{D}(\cal{B})}
					\arrow["{P_{\cal{A}}}", from=1-1, to=1-2]
					\arrow["{F^{\cal{C}}}"', from=1-1, to=2-1]
					\arrow["{L_{\cal{A}}}", from=1-2, to=1-3]
					\arrow["{F^{\cal{K}}}"', from=1-2, to=2-2]
					\arrow["{F^{\cal{D}}}"', from=1-3, to=2-3]
					\arrow["{P_{\cal{B}}}", from=2-1, to=2-2]
					\arrow["{L_{\cal{B}}}", from=2-2, to=2-3]
				\end{tikzcd}\]
			where $F^\cal{C}$, is an exact functor between abelian categories, while $F^\cal{K}$ and $F^\cal{D}$ are exact functors between triangulated categories.
		\end{Prop}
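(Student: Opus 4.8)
The plan is to build the three functors in turn, moving rightward along the diagram, verifying at each stage both that the functor is well defined and that it carries the appropriate structure. First I would treat $F^{\cal{C}}$. Given a complex $X^\bullet=(X^n,d_X^n)$ in $\cal{C}(\cal{A})$, I define $F^{\cal{C}}(X^\bullet)$ to have $n$-th component $F(X^n)$ and $n$-th differential $F(d_X^n)$; this is again a complex because functoriality gives $F(d_X^{n+1})F(d_X^n)=F(d_X^{n+1}d_X^n)=F(0)=0$, and on a chain map one applies $F$ componentwise. Since kernels, cokernels and direct sums in $\cal{C}(\cal{A})$ are computed degreewise, and since $F$ is exact (in particular additive), it follows at once that $F^{\cal{C}}$ is additive and preserves kernels and cokernels, hence is exact between the abelian categories $\cal{C}(\cal{A})$ and $\cal{C}(\cal{B})$.

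Next I would pass to the homotopy categories, for which the key point is that $F^{\cal{C}}$ respects homotopies: if $f,g\colon X^\bullet\ra Y^\bullet$ satisfy $f^n-g^n=d_Y^{n-1}h^n+h^{n+1}d_X^n$ for a homotopy $h=(h^n)$, then applying $F$ and using additivity gives $F(f^n)-F(g^n)=F(d_Y^{n-1})F(h^n)+F(h^{n+1})F(d_X^n)$, so $(F(h^n))_n$ is a homotopy from $F^{\cal{C}}(f)$ to $F^{\cal{C}}(g)$. Thus $F^{\cal{C}}$ descends to $F^{\cal{K}}\colon\cal{K}(\cal{A})\ra\cal{K}(\cal{B})$, and the left square $P_{\cal{B}}F^{\cal{C}}=F^{\cal{K}}P_{\cal{A}}$ commutes by construction. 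To see that $F^{\cal{K}}$ is triangulated I would check the two defining compatibilities: it commutes with the shift, since $F^{\cal{C}}(X^\bullet[1])$ and $F^{\cal{C}}(X^\bullet)[1]$ agree degreewise and the sign on the shifted differential is respected because $F(-d)=-F(d)$; and it carries the standard triangle of a chain map $f$ to that of $F^{\cal{C}}(f)$, because additivity yields a natural isomorphism $F^{\cal{C}}(\mathrm{Cone}(f))\cong\mathrm{Cone}(F^{\cal{C}}(f))$ (the cone's components are the biproducts $X^{n+1}\oplus Y^n$, preserved by $F$, and its matrix differential is sent entrywise to the corresponding one). As every distinguished triangle in $\cal{K}(\cal{A})$ is isomorphic to a cone triangle, $F^{\cal{K}}$ preserves distinguished triangles.

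Finally I would descend to the derived categories, which is where exactness, and not merely additivity, is used. Because $F$ is exact it commutes with cohomology: writing $H^n(X^\bullet)=\coker(\Im d_X^{n-1}\hookrightarrow\Ker d_X^n)$ and using that an exact functor preserves kernels, images and cokernels, one obtains a natural isomorphism $H^n(F^{\cal{C}}(X^\bullet))\cong F(H^n(X^\bullet))$ for all $n$ and $X^\bullet$. Hence if $s$ is a quasi-isomorphism in $\cal{K}(\cal{A})$, then each $H^n(F^{\cal{K}}(s))\cong F(H^n(s))$ is an isomorphism, so $F^{\cal{K}}(s)$ is a quasi-isomorphism. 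Therefore $L_{\cal{B}}\circ F^{\cal{K}}$ inverts all quasi-isomorphisms, and by the universal property of the Verdier localization $L_{\cal{A}}$ there is a unique functor $F^{\cal{D}}\colon\cal{D}(\cal{A})\ra\cal{D}(\cal{B})$ with $F^{\cal{D}}L_{\cal{A}}=L_{\cal{B}}F^{\cal{K}}$, so the right square commutes. That $F^{\cal{D}}$ is triangulated then follows formally, since the triangles of $\cal{D}(\cal{A})$ are the $L_{\cal{A}}$-images of triangles of $\cal{K}(\cal{A})$, both localizations are triangulated, and $F^{\cal{K}}$ is triangulated.

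The genuinely load-bearing step is this preservation of quasi-isomorphisms; everything preceding it is degreewise bookkeeping requiring only additivity, whereas the passage to $\cal{D}(\cal{A})$ fails for a merely additive $F$. The one point deserving care is thus the interchange $H^n\circ F^{\cal{C}}\cong F\circ H^n$, which I would justify exactly as above by tracking the kernel–image–cokernel description of cohomology through the exact functor $F$, naturally in $X^\bullet$.
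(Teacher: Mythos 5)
Your argument is correct and complete: the degreewise construction of $F^{\cal{C}}$, the preservation of homotopies giving $F^{\cal{K}}$, the identification $H^n\circ F^{\cal{C}}\cong F\circ H^n$ (which uses exactness and is indeed the load-bearing step) to show quasi-isomorphisms are preserved, and the universal property of Verdier localization to obtain $F^{\cal{D}}$ with the commuting right-hand square. The paper itself states this result without proof, merely citing \cite[Proposition 5.1]{BL}, so there is no in-paper argument to compare against; your proof is the standard one and is exactly what the cited reference carries out.
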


		\begin{Prop}{\rm(\cite[Theorem 5.3]{BL})}
			Let $\cal{A},\cal{B}$ be abelian categories, and let $F\colon \cal{A}\ra\cal{B}$ and $E\colon \cal{B}\ra\cal{A}$ be exact functors. If $(F,E)$ is an adjoint pair, then the induced pairs $(F^\cal{C},E^\cal{C})$, $(F^\cal{K},E^\cal{K})$ and $(F^\cal{D},E^\cal{D})$ are adjoint pairs.
		\end{Prop}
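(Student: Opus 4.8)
The plan is to transport the given adjunction isomorphism $\phi_{X,Y}\colon\cal{B}(F(X),Y)\xrightarrow{\sim}\cal{A}(X,E(Y))$ (natural in $X\in\cal{A}$, $Y\in\cal{B}$) through the three levels one at a time, treating the complex and homotopy categories by direct componentwise transport of $\phi$, and then reducing the derived category to the homotopy category via localization.

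First I would treat the complex category. Given $X^\bullet\in\cal{C}(\cal{A})$ and $Y^\bullet\in\cal{C}(\cal{B})$, a morphism $f\colon F^\cal{C}(X^\bullet)\ra Y^\bullet$ is a family $(f^n)_{n\in\bb{Z}}$ with $f^n\colon F(X^n)\ra Y^n$ satisfying $d_Y^n f^n=f^{n+1}F(d_X^n)$. Applying $\phi$ componentwise gives $\phi(f^n)\colon X^n\ra E(Y^n)$, and I would check it is a chain map $X^\bullet\ra E^\cal{C}(Y^\bullet)$: naturality of $\phi$ in the first variable gives $\phi(f^{n+1}F(d_X^n))=\phi(f^{n+1})d_X^n$, while naturality in the second variable gives $\phi(d_Y^n f^n)=E(d_Y^n)\phi(f^n)$; since $f$ is a chain map the two left-hand sides coincide, so $E(d_Y^n)\phi(f^n)=\phi(f^{n+1})d_X^n$, which is exactly the chain-map condition for $E^\cal{C}(Y^\bullet)$. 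The same computation with $\phi^{-1}$ shows the assignment is a bijection, and naturality in $X^\bullet,Y^\bullet$ is inherited from that of $\phi$; this yields the adjunction $(F^\cal{C},E^\cal{C})$.

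Next I would pass to $\cal{K}$ by checking the bijection respects null-homotopies. If $(s^n)$ with $s^n\colon F(X^n)\ra Y^{n-1}$ witnesses $f^n=d_Y^{n-1}s^n+s^{n+1}F(d_X^n)$, then applying $\phi$ and the same two naturality identities gives $\phi(f^n)=E(d_Y^{n-1})\phi(s^n)+\phi(s^{n+1})d_X^n$, so $(\phi(s^n))$ is a homotopy for $(\phi(f^n))$; the converse follows by symmetry via $\phi^{-1}$. Hence $\phi$ descends to a natural bijection $\cal{K}(\cal{B})(F^\cal{K}(X^\bullet),Y^\bullet)\cong\cal{K}(\cal{A})(X^\bullet,E^\cal{K}(Y^\bullet))$, giving $(F^\cal{K},E^\cal{K})$. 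For the derived level the crucial input is exactness: an exact functor between abelian categories commutes with cohomology, hence preserves acyclic complexes and quasi-isomorphisms, so $F^\cal{K},E^\cal{K}$ carry the localizing class of quasi-isomorphisms into itself, and by the universal property of Verdier localization they induce $F^\cal{D},E^\cal{D}$ with $L_\cal{B}F^\cal{K}=F^\cal{D}L_\cal{A}$ and $L_\cal{A}E^\cal{K}=E^\cal{D}L_\cal{B}$. To descend the adjunction I would take the unit $\eta\colon\id\ra E^\cal{K}F^\cal{K}$ and counit $\varepsilon\colon F^\cal{K}E^\cal{K}\ra\id$, compose with $L_\cal{A},L_\cal{B}$, and use the two compatibility equalities to obtain $\bar\eta\colon\id\ra E^\cal{D}F^\cal{D}$ and $\bar\varepsilon\colon F^\cal{D}E^\cal{D}\ra\id$; since every object of the derived category is the image of one in $\cal{K}$, the triangle identities for $(\bar\eta,\bar\varepsilon)$ are obtained by applying the localization functors to those for $(\eta,\varepsilon)$.

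The step I expect to be the main obstacle is the derived-category descent, and specifically the well-definedness and naturality of $\bar\eta,\bar\varepsilon$ on \emph{all} of $\cal{D}(\cal{A})$ and $\cal{D}(\cal{B})$ rather than merely on objects in the image of $L_\cal{A},L_\cal{B}$. The delicate point is naturality with respect to morphisms given by roofs: for a quasi-isomorphism $s$ one must check the naturality square for $L_\cal{A}(s)^{-1}$, which follows from naturality of $\eta$ for $s$ together with the fact that $E^\cal{K}F^\cal{K}(s)$ is again a quasi-isomorphism (so its image is invertible). This uses that the quasi-isomorphisms admit a calculus of fractions; alternatively one can bypass the unit/counit bookkeeping by representing a morphism in $\cal{D}(\cal{A})(X^\bullet,E^\cal{D}(Y^\bullet))$ by a roof $X^\bullet\xleftarrow{s}Z^\bullet\ra E^\cal{D}(Y^\bullet)$ with $s$ a quasi-isomorphism, applying the homotopy-level adjunction together with the fact that $F^\cal{K}(s)$ remains a quasi-isomorphism, and verifying that the resulting correspondence is independent of the chosen representative.
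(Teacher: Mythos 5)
The paper states this proposition without proof (it is quoted from [BL, Theorem 5.3], and the section explicitly presents Bautista--Liu's results ``without proofs''), so there is no in-paper argument to compare against; your proposal is the standard and correct argument: componentwise transport of the adjunction isomorphism through $\cal{C}$, descent to $\cal{K}$ via compatibility with null-homotopies (which uses the additivity of $\phi$ --- automatic here since $F$ and $E$ are additive), and descent to $\cal{D}$ using that exact functors preserve quasi-isomorphisms. You also correctly identify and resolve the one delicate point, namely naturality of the localized unit and counit on inverses of quasi-isomorphisms.
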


		And in the case that $\cal{A}$ has essential direct sums and $(F,G)$ is a $G$-graded adjoint pair, the induced pairs $(F^\cal{C},E^\cal{C})$, $(F^\cal{K},E^\cal{K})$ and $(F^\cal{D},E^\cal{D})$ are also $G$-graded adjoint pairs.

		\begin{Prop}{\rm(\cite[Proposition 5.7]{BL})}\label{BL, Prop 5.7}
			Let $\cal{A},\cal{B}$ be abelian catgories such that $\cal{A}$ has essential direct sums and admits an action of a group $G$. Let $F\colon \cal{A}\ra\cal{B}$ and $E\colon \cal{B}\ra\cal{A}$ be exact functors. If $(F,E)$ is a $G$-graded adjoint pair, then the induced pairs $(F^\cal{C},E^\cal{C})$, $(F^\cal{K},E^\cal{K})$ and $(F^\cal{D},E^\cal{D})$ are $G$-graded adjoint pairs.
		\end{Prop}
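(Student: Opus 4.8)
The plan is to install the $G$-graded structure one level at a time, starting with the complex category and then descending to the homotopy and derived categories. Adjointness of the three induced pairs is already provided by the preceding proposition (\cite[Theorem 5.3]{BL}), so what remains is to produce, at each level, the data that upgrade an adjoint pair to a $G$-graded one: the ambient $G$-action, the requisite direct sums, a functorial isomorphism $\gamma$, and a $G$-stabilizer $\delta$ compatible with the adjoint isomorphism. The guiding principle is that every piece of structure on $\cal{A}$ is defined objectwise, hence can be applied degreewise to a complex, and then survives localization because the relevant natural transformations are genuine isomorphisms.

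First I would treat $\cal{C}(\cal{A})$. Each $g\in G$ is an automorphism, hence exact, and induces $g^\cal{C}\colon \cal{C}(\cal{A})\ra \cal{C}(\cal{A})$, giving the $G$-action; since direct sums in $\cal{C}(\cal{A})$ are computed degreewise, the assumption that $\cal{A}$ has direct sums yields $\bigoplus_{g\in G} g^\cal{C}$. I would then define $\gamma^\cal{C}_{X^\bullet}$ and $\delta^\cal{C}_{g,X^\bullet}$ in degree $n$ to be $\gamma_{X^n}$ and $\delta_{g,X^n}$. Naturality of $\gamma$ and of each $\delta_g$ against the differentials $d^n_X$ shows these collections are chain maps, and being degreewise isomorphisms they are isomorphisms of complexes. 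The $2$-cocycle identity $\delta^\cal{C}_{h,X^\bullet}\circ\delta^\cal{C}_{g,h\cdot X^\bullet}=\delta^\cal{C}_{gh,X^\bullet}$ and the compatibility identity $\phi^\cal{C}_{X^\bullet,F^\cal{C}Y^\bullet}(\gamma^\cal{C}_{Y^\bullet}\circ j_{g,Y^\bullet}\circ u)=\delta^\cal{C}_{g,Y^\bullet}\circ F^\cal{C}(u)$ then reduce degree by degree to the corresponding identities in $\cal{A}$; here I would check that $j_{g,Y^\bullet}$ and the induced adjoint isomorphism $\phi^\cal{C}$ both act degreewise, so that the verification is literally the componentwise one. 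This shows $(F^\cal{C},E^\cal{C})$ is $G$-graded.

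Next I would pass to $\cal{K}(\cal{A})$. Since additive functors preserve null-homotopies, $g^\cal{C}$, $F^\cal{C}$, $E^\cal{C}$ and all the isomorphisms $\gamma^\cal{C}$, $\delta^\cal{C}$, $\phi^\cal{C}$ descend to the homotopy category; the descended maps remain isomorphisms and the two defining identities persist because they already hold on the chain level, while direct sums in $\cal{K}(\cal{A})$ are inherited from $\cal{C}(\cal{A})$. For $\cal{D}(\cal{A})$ the same descent runs through the localization functor $L_\cal{A}$: each $g$ being exact, $g^\cal{D}$ preserves quasi-isomorphisms, and $\gamma$, $\delta$, $\phi$ push forward to natural transformations of the derived functors. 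The existence of the ambient direct sums demanded in the definition is exactly \cite[Theorem 1.8]{BL}, which guarantees that $\cal{D}(\cal{A})$ has direct sums once $\cal{A}$ has essential direct sums.

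The hard part will be the derived level, and precisely the role of the essential direct sums hypothesis. While $g^\cal{D}$ and $E^\cal{D}F^\cal{D}$ are unproblematic, one must know that the coproduct $\bigoplus_{g\in G} g^\cal{D}(X^\bullet)$ \emph{formed in $\cal{D}(\cal{A})$} agrees with the image under $L_\cal{A}$ of the complex-level direct sum $\bigoplus_{g\in G} g^\cal{C}(X^\bullet)$; only then is the descended $\gamma^\cal{D}$ genuinely an isomorphism rather than merely a morphism. For a possibly infinite group $G$ this is a real issue, since arbitrary direct sums need not be exact in a general abelian category and so need not commute with passage to the derived category. Essential direct sums are exactly the condition that repairs this: through \cite[Theorem 1.8]{BL} the localization functor preserves these coproducts, so the chain-level isomorphism $\gamma^\cal{C}$ localizes to an isomorphism $\gamma^\cal{D}$. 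Once this compatibility is secured, the degreewise identities established on $\cal{C}(\cal{A})$ transport verbatim, completing the verification that $(F^\cal{D},E^\cal{D})$ is $G$-graded.
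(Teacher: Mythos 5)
The paper does not actually prove this statement: Section IV is explicitly ``a brief introduction on the work of Bautista and Liu \cite{BL} without proofs,'' and Proposition \ref{BL, Prop 5.7} is simply quoted from \cite[Proposition 5.7]{BL}. So there is no proof in the paper to compare against; I can only assess your argument on its own terms.

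Your outline is the natural one and I see no gap in it. Adjointness of the three induced pairs is indeed already supplied by the preceding proposition, direct sums in $\cal{C}(\cal{A})$ and $\cal{K}(\cal{A})$ are computed degreewise, and the degreewise definitions of $\gamma^{\cal{C}}$ and $\delta^{\cal{C}}$ together with componentwise verification of the stabilizer and compatibility identities are exactly what one expects. You also correctly isolate the only genuinely delicate point, namely that at the derived level the coproduct $\bigoplus_{g\in G}g\cdot X^\bullet$ must be computed by the chain-level direct sum, which is precisely what the essential-direct-sums hypothesis and \cite[Theorem 1.8]{BL} guarantee. One small refinement worth making explicit: at the homotopy and derived levels the compatibility identity $\phi_{X,FY}(\gamma_Y\circ j_{g,Y}\circ u)=\delta_{g,Y}\circ F(u)$ must hold for \emph{all} morphisms $u$, which in $\cal{D}(\cal{A})$ are fractions rather than chain maps, so it does not literally ``transport degreewise''; but by naturality of $\phi$ in the first variable the identity for all $u$ is equivalent to the single identity $\phi(\gamma_Y\circ j_{g,Y})=\delta_{g,Y}$ obtained at $u=\mathrm{id}_{g\cdot Y}$, and that equality of specific morphisms does descend from $\cal{C}(\cal{A})$ through the quotient and localization functors. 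With that remark inserted, your proof is complete.
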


		By Proposition \ref{BL, Prop 5.7} and \ref{BL, Prop 6.4}, we obtain a graded adjoint pair $(F_\lambda^\cal{D},F_\bullet^\cal{D})$ between the associated derived categories. The following Theorem shows that by restricting the derived push-down functor, $F$ induces a precovering between the bounded derived categories with the same properties as the push-down functor between the module categories.

		\begin{Thm}{\rm(\cite[Theorem 6.7]{BL})}\label{BL, Thm 6.7}
			Let $\Lambda,A$ be locally bounded categories with $G$ a group acting admissibly on $\Lambda$. Let $F\colon \Lambda\ra A$ be a $G$-invariant Galois $G$-covering. Then
			\begin{enumerate}[label=\textnormal{(\arabic*)}]
				\item The push-down functor $F_\lambda^\cal{D}\colon \cal{D}^b(\Mod^b \Lambda)\ra \cal{D}^b(\Mod^b A)$ is a $G$-precovering.
				\item The push-down functor $F_\lambda^\cal{D}\colon \cal{D}^b(\mod \Lambda)\ra \cal{D}^b(\mod A)$ is a $G$-precovering, and in case $G$ is torsion-free, it has the following properties:
				\begin{enumerate}[label=\textnormal{(\alph*)}]
					\item If $M^\bullet \in \cal{D}^b(\mod \Lambda)$ is indecomposable, then $F_\lambda^\cal{D}(M^\bullet)$ is indecomposable.
					\item If $M^\bullet,N^\bullet \in \cal{D}^b(\mod \Lambda)$ are indecomposable with $F_\lambda^\cal{D}(M^\bullet)\cong F_\lambda^\cal{D}(N^\bullet)$, then $N^\bullet\cong g\cdot M^\bullet$ for some $g\in G$.
				\end{enumerate}
			\end{enumerate}
		\end{Thm}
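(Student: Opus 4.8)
The plan is to derive both statements from the machinery already assembled: the $G$-graded adjoint pair of Proposition \ref{BL, Prop 6.4}, its passage to derived categories in Proposition \ref{BL, Prop 5.7}, and the restriction criterion of Theorem \ref{BL, Theorem 2.12}. First I would record that $\Mod\Lambda$ and $\Mod A$ are abelian with essential direct sums, so that $\cal{D}(\Mod\Lambda)$ and $\cal{D}(\Mod A)$ have direct sums and carry the induced $G$-action. By Proposition \ref{BL, Prop 6.4} the pair $(F_\lambda, F_\bullet)$ is a $G$-graded adjoint pair $\Mod\Lambda \to \Mod A$, and Proposition \ref{BL, Prop 5.7} then upgrades this to a $G$-graded adjoint pair $(F_\lambda^\cal{D}, F_\bullet^\cal{D})$ between $\cal{D}(\Mod\Lambda)$ and $\cal{D}(\Mod A)$. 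With this global pair in hand, both parts become instances of Theorem \ref{BL, Theorem 2.12}: for (1) I would take $\cal{C} = \cal{D}^b(\Mod^b\Lambda)$ as a full $G$-subcategory of $\cal{A} = \cal{D}(\Mod\Lambda)$ and $\cal{D} = \cal{D}^b(\Mod^b A)$; for (2) I would take $\cal{C} = \cal{D}^b(\mod\Lambda)$ and $\cal{D} = \cal{D}^b(\mod A)$. Since $F_\lambda$ preserves finite support and finite dimensionality, $F_\lambda^\cal{D}$ carries $\cal{C}$ into $\cal{D}$ in each case, so the only hypotheses left to verify are that the induced $G$-action on $\cal{C}$ is locally bounded and $\cal{A}$-essential.

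Establishing local boundedness of the $G$-action on the bounded derived subcategories is the technical heart. I would compute $\Hom_{\cal{D}^b}(M^\bullet, g\cdot N^\bullet)$ for indecomposable $M^\bullet, N^\bullet$ by replacing $M^\bullet$ with a projective resolution $P^\bullet$; since $N^\bullet$ is bounded, only the finitely many components $P^j$ lying in the amplitude of $N^\bullet$ contribute, and each such $P^j$ is supported on a finite set of objects because $\Lambda$ is locally bounded. Writing $S$ for the (finite) union of these supports and $T = \mathrm{supp}(N^\bullet)$, a nonzero $\Hom$ forces $g\cdot T \cap S \neq \emptyset$; since $G$ acts freely on the objects of $\Lambda$, at most $|S|\cdot|T|$ group elements can satisfy this, giving vanishing for all but finitely many $g$. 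The $\cal{A}$-essentiality then follows formally: local boundedness makes every morphism from $M^\bullet$ into $\bigoplus_{g} g\cdot N^\bullet$ factor through a finite subsum, where essentiality of finite direct sums is automatic.

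For part (2) it remains to prove (a) and (b) under the torsion-free hypothesis, and here I would mirror the module-level argument of Lemma \ref{pushdown-indec} and Theorem \ref{BL, Thm 6.5}. The starting point is the canonical isomorphism $F_\bullet^\cal{D} F_\lambda^\cal{D} M^\bullet \cong \bigoplus_{g\in G} g\cdot M^\bullet$ supplied by the $G$-graded structure. The crucial observation is that local boundedness together with torsion-freeness forces the $G$-action on indecomposables of $\cal{D}^b(\mod\Lambda)$ to be free: the stabilizer $\{g : g\cdot M^\bullet \cong M^\bullet\}$ is finite, because $g\cdot M^\bullet \cong M^\bullet$ makes $\Hom(M^\bullet, g\cdot M^\bullet) \neq 0$ and only finitely many such $g$ exist, and a finite subgroup of a torsion-free group is trivial. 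Given this, the $g\cdot M^\bullet$ are pairwise non-isomorphic indecomposables, so a decomposition $F_\lambda^\cal{D} M^\bullet \cong N \oplus N'$ yields, after applying $F_\bullet^\cal{D}$ and invoking the Krull--Schmidt property of the Hom-finite category $\cal{D}^b(\mod A)$, that $F_\bullet^\cal{D} N$ and $F_\bullet^\cal{D} N'$ are sums of $g\cdot M^\bullet$ over complementary $G$-stable index sets; $G$-stability forces one set to be empty, proving (a). For (b), $F_\lambda^\cal{D} M^\bullet \cong F_\lambda^\cal{D} N^\bullet$ makes $N^\bullet$ a direct summand of $F_\bullet^\cal{D} F_\lambda^\cal{D} M^\bullet \cong \bigoplus_g g\cdot M^\bullet$, whence $N^\bullet \cong g\cdot M^\bullet$ for some $g$.

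I would flag that the main obstacle is the local-boundedness verification on the derived level: unlike the module case, one must control the supports appearing in an entire projective resolution, and the saving fact is that only the part of the resolution lying within the bounded amplitude of the target complex is relevant, so a finite union of finite supports suffices. The $\cal{A}$-essentiality and the Krull--Schmidt bookkeeping are then routine, and the torsion-free hypothesis enters only through the elementary finiteness-of-stabilizer argument.
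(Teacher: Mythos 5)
You should note at the outset that the paper does not prove this statement: Section IV is announced as an introduction to \cite{BL} ``without proofs'', and the theorem is simply quoted as \cite[Theorem 6.7]{BL}. The sentence preceding it in the paper does, however, indicate precisely the route you follow --- obtain the $G$-graded adjoint pair $(F_\lambda^{\cal{D}},F_\bullet^{\cal{D}})$ from Propositions \ref{BL, Prop 6.4} and \ref{BL, Prop 5.7} and restrict to the bounded subcategories via Theorem \ref{BL, Theorem 2.12} --- so your reconstruction is consistent with the intended argument; moreover your support-theoretic verification of local boundedness (only the finitely many terms of a projective resolution lying in the amplitude of the bounded target contribute, each with finite support) and your finite-stabilizer-in-a-torsion-free-group observation for the freeness of the action on indecomposables are both correct, as is the transplantation of the pattern of Lemma \ref{pushdown-indec} and Theorem \ref{BL, Thm 6.5} to parts (a) and (b).

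Two points are genuinely glossed over. First, everything that touches the infinite direct sum $\bigoplus_{g\in G}g\cdot M^\bullet$ lives in $\cal{D}(\Mod\Lambda)$, not in $\cal{D}^b(\mod A)$: the pull-up does not preserve finite-dimensionality, so decomposing the summands $F_\bullet^{\cal{D}}N$ and $F_\bullet^{\cal{D}}N'$ into subfamilies of the $g\cdot M^\bullet$ is not an application of the Krull--Schmidt property of $\cal{D}^b(\mod A)$ but of an Azumaya/exchange-type theorem for infinite direct sums of objects with local endomorphism rings, which presupposes that these direct sums exist and are essential in $\cal{D}(\Mod\Lambda)$. This is exactly the role of \cite{War1969} in the module-level proof of Proposition \ref{Galois-cover-pre-representation-finite}, and it is why \cite{BL} introduce essential direct sums at all; your claim that $\cal{A}$-essentiality ``follows formally'' because every morphism into $\bigoplus_{g}g\cdot N^\bullet$ factors through a finite subsum is not formal either, since a bounded complex of finite-dimensional modules need not be compact (perfect) in $\cal{D}(\Mod\Lambda)$. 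Second, invoking Theorem \ref{BL, Theorem 2.12} requires $\cal{D}^b(\Mod^b\Lambda)$ and $\cal{D}^b(\mod\Lambda)$ to sit as \emph{full} $G$-subcategories of $\cal{D}(\Mod\Lambda)$ (and similarly on the target side); the full faithfulness of these embeddings should be justified rather than assumed. Neither issue derails the strategy --- both are settled by the Section 1 machinery of \cite{BL} --- but they are where the real technical content of the theorem sits.
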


		\begin{Rem1}
			\textnormal{(1)} Theorem \ref{BL, Thm 6.7} says in particular that if $G$ is a torsion-free group, then $F_\lambda^\cal{D}\colon \cal{D}^b(\mod \Lambda)\ra \cal{D}^b(\mod A)$ is a Galois $G$-covering if and only if it is dense.
			
			\textnormal{(2)} The same result holds for the push-down functors between the complex catgories and between the homotopy catgories. 			
		\end{Rem1}

		We conclude this subsection with a classical application of Proposition \ref{BL, Prop 5.7} and Theorem \ref{BL, Thm 6.7}.

		\begin{Ex1}
			In \cite[Theorem 3.1]{Ric89}, Rickard established that for any two derived equivalent algebras $A$ and $B$, their trivial extensions $T(A)$ and $T(B)$ are also derived equivalent. Asashiba \cite{Asa97} later provided an alternative interpretation of this result using covering techniques.

			Indeed, the existence of a natural Galois $G$-covering $F\colon A^\bb{Z}\ra T(A)$, where $G=\langle \nu\rangle$ is the cyclic group generated by the Nakayama automorphism $\nu$ of $A^\bb{Z}$, is well-established. Building on this structure, Asashiba \cite[Theorem 1.5]{Asa97} demonstrates that for any pair of derived equivalent algebras $A$ and $B$, their repetitive algebras $A^\bb{Z}$ and $B^\bb{Z}$ are themselves derived equivalent. Notably, this derived equivalence for repetitive algebras is more immediately evident than the corresponding result for trivial extensions.

			By Proposition \ref{BL, Prop 5.7} and \ref{BL, Prop 6.4}, we obtain a graded adjoint pair $(F_\lambda^\cal{K},F_\bullet^\cal{K})$ between the associated homotopy categories. Following the approach of Theorem \ref{BL, Thm 6.7},  the restriction of the push-down functor $F_\lambda^\cal{K}$ induces a $G$-precovering $$F_\lambda^\cal{K}\colon\cal{K}^b(\proj A^\bb{Z})\ra\cal{K}^b(\proj T(A))$$ that inherits the same properties as the module category push-down functor (see also in \cite[Theorem 4.4]{Asa2011}).

			Since the repetitive algebras $A^\bb{Z}$ and $B^\bb{Z}$ are derived equivalent, there exists a tilting subcategory $E$ of $\cal{K}^b(\proj A^\bb{Z})$ that is equivalent to $B^\bb{Z}$. The push-down functor $F_\lambda^\cal{K}$ then induces a corresponding tilting subcategory $F_\lambda^\cal{K}(E)$ of $\cal{K}^b(\proj T(A))$ that is equivalent to $T(B)$. This construction yields the following commutative diagram:
			$$\begin{tikzcd}
\cal{K}^b(\proj A^\bb{Z}) \arrow[d, "F_\lambda^\cal{K}"'] \arrow[rr, "\cong"] &  & \cal{K}^b(\proj B^\bb{Z}) \arrow[d, "F_\lambda^\cal{K}"] \\
\cal{K}^b(\proj T(A)) \arrow[rr, "\cong", dashed]                         &  & \cal{K}^b(\proj T(B))
\end{tikzcd}$$
			where the bottom equivalence establishes the derived equivalence between $T(A)$ and $T(B)$.

			Finally, we should point out that the covering technique developed by Asashiba \cite{Asa97} naturally yields the derived equivalence of the $r$-fold trivial extensions $T^r(A):=A^\bb{Z}/\langle \nu^r\rangle$ and $T^r(B)$. This follows from the observation that the projection $A^\bb{Z}\ra T^r(A)$ constitutes a natural Galois $\langle \nu^r\rangle$-covering.
		\end{Ex1}

\section*{Some further remarks on covering theory}

In \cite{Pa1,Pa2}, Pastuszak investigated the relation between the Krull-Gabriel dimension of two locally bounded categories under a Galois covering by developing the Galois covering theory
of functor categories. The Krull-Gabriel dimension of a locally bounded category $\Lambda$ is defined by using the category of all finitely presented contravariant $k$-linear functors from mod$\Lambda$ to mod$k$ and can be applied to study the representation type of $\Lambda$. Let $R\rightarrow A$ be a Galois covering with group $G$ between two locally bounded categories. It was shown in \cite[Theorem 1.2]{Pa2} that the Krull-Gabriel dimension KG$(R)$ of $R$ is less than or equal to the Krull-Gabriel dimension KG$(A)$ of $A$. Moreover, if $R$ is locally support-finite (see Section \ref{sec-Galois-coverings-of-representation-infinite-algebras}), $G$ is torsion-free, and $A$ contains finitely many objects, then it follows from \cite[Theorem 6.3]{Pa1} that KG$(R)=$KG$(A)$.

\section*{Acknowledgements} We are very grateful to Professor Piotr Dowbor for explaining many results and questions to us on Galois covering and representation type of algebras and sending us a list of publications on coverings, which greatly improve our presentation. We also thank Fei Zeng from East China Normal University for pointing out a counterexample to \cite[Proposition~I.10.6]{E1990}. His example helped us to correct the statement in Proposition \ref{Sin}.

\bigskip
\normalem


\begin{thebibliography}{88}

	\bibitem{ARS}
	{{M. Auslander, I. Reiten and S. Smal\o,} {\em Representation Theory of
	Artin Algebras.} Cambridge Studies in Advanced Mathematics 36,
	Cambridge University Press, 1995.}
	
	\bibitem{Asa97}{{H. Asashiba,} A covering technique for derived equivalence. J. Algebra, \textbf{191} (1997), 382-415.}


     \bibitem{Asashiba2003}{{H.Asashiba,} On a lift of an individual stable equivalence to a standard derived equivalence for representation-finite self-injective algebras.
     Algebr. Represent. Th. \textbf{6} (4) (2003), 427-447.}

	\bibitem{Asa2011}{{H. Asashiba,} A generalization of Gabriel's Galois covering functors and derived equivalences. J. Algebra, \textbf{334} (2011), 109-149.}

	\bibitem{BGRS1985}{{R. Bautista, P. Gabriel, A. V. Rojter and L. Salmer\'on,}
	Representation-finite algebras and multiplicative bases. Invent. Math. \textbf{81}
	(1985), 217-285.}

	\bibitem{BL}{{R. Bautista and S. Liu,}
	Covering theory for linear categories with application to derived categories. J. Algebra, \textbf{406} (2014), 173-225.}

	\bibitem{BG1982}{{K. Bongartz and P. Gabriel,}
	Covering spaces in representation theory. Invent. Math. \textbf{65}
	(1982), 331-378.}

	\bibitem{BR1981}{{K. Bongartz and C. Ringel,}
	Representation-finite tree algebras. LNM 903 (Springer, 1981) 39-54.}

	\bibitem{Br-G1983}{{O. Bretscher and P. Gabriel,}
	The standard form of a representation-finite algebra. Bull. Soc. math. France, \textbf{111} (1983), 21-40.}

   \bibitem{D1996}{{P. Dowbor,}
	On the category of modules of second kind for Galois coverings. Fund. Math. \textbf{149}  (1996), 31-54.}


    \bibitem{D2001}{{P. Dowbor,}
	Stabilizer conjecture for representation-tame Galois coverings of algebras. J. Algebra, \textbf{239}  (2001), 112-149.}

    \bibitem{D20012}{{P. Dowbor,}
	Non-orbicular modules for Galois coverings. Colloq. Math. \textbf{89}  (2001), 241-309.}

	\bibitem{DLS1984}{{P. Dowbor, H. Lenzing and A. Skowro\'nski,}
	Galois coverings of algebras by locally
	support-finite categories. LNM 1177 (Springer, 1984) 91-93.}

	\bibitem{DS1985}{{P. Dowbor and A. Skowro\'nski,}
	On Galois coverings of tame algebras.  Arch. Math. \textbf{44} (1985), 522-529.}

	\bibitem{DS1986}{{P. Dowbor and A. Skowro\'nski,}
    On the representation type of locally bounded categories. Tsukuba J. Math. \textbf{10} (1986), 63-72.}

   \bibitem{DS1987}{{P. Dowbor and A. Skowro\'nski,}
	Galois coverings of representation-infinite algebras. Comment. Math. Helv. \textbf{62}  (1987), 311-337.}

	\bibitem{D}{{Y. A. Drozd,}
		Tame and wild matrix problems. LNM 832 (Springer, 1980) 242-258.}
		
	\bibitem{E1990}{{K. Erdmann,}
	{\em Blocks of tame representation type and related algebras.} LNM 1428 (Springer, 1990).}

	\bibitem{Ga1981}{{P. Gabriel,}
	The universal cover of a representation-finite algebra. LNM 903 (Springer, 1981) 68-105.}

	\bibitem{GP1993}{{Ch. Geiss and J. A. De La Pe\~na,}
    An interesting family of algebras. Arch. Math. \textbf{60} (1993), 25-35.}

	\bibitem{GG}{{R. Gordon and E. L. Green E L,} Representation theory of graded Artin algebras. J. Algebra, \textbf{76}(1) (1982), 138-152.}

	\bibitem{Gr1981}{{E. L. Green,}
	Group-graded algebras and the zero relation problem. LNM 903 (Springer, 1981) 106-115.}

	\bibitem{Gre83a}{{E. L. Green,}
	Graphs with relations, coverings and group-graded algebras. Trans. Amer. Math. Soc. \textbf{279} (1983), 297-310.}

	\bibitem{LL2025}{{N. Li and Y. Liu,}
	Fractional Brauer configuration algebras II: covering theory. arXiv: 2412.13445v3 (2025), 1-47.}

	\bibitem{Liu}{{S. Liu,}
	Auslander-Reiten theory in a Krull-Schmidt category. Sao Paulo J. Math. Sci., \textbf{4} (2010), no. 3, 425--472.}

	\bibitem{Ma1977}{{W. S. Massey,}
	{\em Algebraic topology: an introduction.} GTM 56 (Springer, 1977).}

	\bibitem{MP1983}{{R. Mart\'inez-Villa and J. A. de la Pe\~na,}
	The universal cover of a quiver with relations. J. Pure Appl. Algebra, \textbf{30} (1983), 277-292.}

	\bibitem{MP2}{{R. Mart\'inez-Villa and J. A. de la Pe\~na,}
	Automorphisms of representation finite algebras.  Inv. Math. \textbf{72} (3) (1983), 359-362.}

    \bibitem{Pa1}{{G. Pastuszak,}
	On Krull-Gabriel dimension and Galois coverings.  Adv. Math. \textbf{349} (2019), 959-991.}

   \bibitem{Pa2}{{G. Pastuszak,}
	Covering theory, functor categories and the Krull-Gabriel dimension.  Preprint,1-64.}

   \bibitem{PS1983}{{Z. Pogarza\l y and A. Skowro\'nski,}
	On algebras whose indecomposable modules are multiplicity-free. Proc. London Math. Soc. \textbf{47} (1983), 463-479.}

	\bibitem{RR1985}{{I. Reiten and Ch. Riedtmann,}
Skew group algebras in the representation theory of Artin algebras. J. Algebra \textbf{92} (1985), 224-282.}

	\bibitem{Ric89}
	{J. Rickard}, {Derived categories and stable equivalence}, J. Pure Appl. Algebra, 61 (1989), 303--317.




	\bibitem{Riedtmann1980a}{{Ch. Riedtmann,} Algebren, Darstellungsk\"{o}cher,
	\"{U}berlagerungen and zur\"{u}ck.
	Comment. Math. Helv. \textbf{55} (2) (1980), 199-224.}

	\bibitem{Riedtmann1980b}{{Ch. Riedtmann,} Representation-finite self-injective
	algebras of class $A_n$. LNM 832, 1980, 449-520.}

	\bibitem{Riedtmann1983}{{Ch. Riedtmann,} Representation-finite self-injective
	algebras of class $D_n$. Compos. Math. \textbf{49} (1983), 231-282.}

	\bibitem{Sk1989}{{A. Skowro\'nski,}
	Selfinjective algebras of polynomial growth. Math. Ann. \textbf{285} (1989), 177-199.}

	\bibitem{War1969}{{R. B. Warfield,}
		A Krull-Schmidt theorem for infinite sums of modules. Proc. Amer. Math. Soc. \textbf{22} (2) (1969), 460-465.}

		
\end{thebibliography}

\end{document}